\documentclass[10pt,leqno,a4paper]{article}

\usepackage{amsmath} 
\usepackage{amsthm}
\usepackage{amssymb}

\usepackage{yhmath} 

\usepackage{color}


\usepackage{hyperref} 
\hypersetup{
    colorlinks=true,       
    linkcolor=blue,          
    citecolor=green,        
    filecolor=magenta,      
    urlcolor=cyan           
} 

\usepackage{tikz}
\usepackage{tikz-cd}
\usetikzlibrary{matrix,arrows,decorations.pathmorphing}

\usepackage{pdfsync}

\usepackage[english]{babel}
\usepackage[totalheight=22 true cm, totalwidth=14 true cm]{geometry}

\usepackage{setspace}
\usepackage{mathrsfs}



%
%

\newtheorem{thm}{Theorem}[section]
\newtheorem{cor}[thm]{Corollary}
\newtheorem{lemma}[thm]{Lemma}

\newtheorem{prop}[thm]{Proposition}

\newtheorem{defn}[thm]{Definition}

\theoremstyle{remark}

\theoremstyle{definition}

\newtheorem{rmk}[thm]{Remark}

\newtheorem{notation}[thm]{Notation}

\numberwithin{equation}{thm}
\def\beq{\begin{equation}}
\def\eeq{\end{equation}}
\def\beqa{\begin{equation*}}
\def\eeqa{\end{equation*}}
\def\ben{\begin{enumerate}}
\def\een{\end{enumerate}}
\def\besp{\begin{split}}
\def\eesp{\end{split}}

\def\crash#1{}
\def\N{{\mathbb N}}

\def\Z{{\mathbb Z}}
\def\0{{\mathbb O}}

\def\P{{\mathbb P}}
\def\Q{{\mathbb Q}}
\def\R{{\mathbb R}}
\def\C{{\mathbb C}}

\def\A{{\mathbb A}}

\def\F{{\mathbb F}}

\def\G{{\mathbb G}}

\def\P{{\mathbb P}}

\def\V{{\mathbb V}}

\def\1{{\mathbf 1}}

\def\l{\left}
\def\r{\right}
\def\[[{\l[\l[}
\def\]]{\r]\r]}

\def\padic{{p-{\rm adic}}\,}

\def\neat{{\rm neat}}

\def\ie{\emph{i.e.}\,}
\def\lc{\emph{loc.cit.\,}}
\def\eg{\emph{e.g.}\,}

\def\cA{{\mathcal A}}

\def\cC{{\mathcal C}}
\def\cD{{\mathcal D}}

\def\cF{{\mathcal F}}
\def\cG{{\mathcal G}}

\def\cM{{\mathcal M}}

\def\cO{{\mathcal O}}

\def\cK{{\mathcal K}}
\def\cL{{\mathcal L}}

\def\cP{{\mathcal P}}

\def\cS{{\mathcal S}}
\def\cT{{\mathcal T}}

\def\cU{{\mathcal U}}

\def\g"{``}
\def\g'{`}

\def\sB{{\mathscr B}}
\def\sC{{\mathscr C}}
\def\sD{{\mathscr D}}

\def\sF{{\mathscr F}}
\def\sH{{\mathscr H}}
\def\sJ{{\mathscr J}}
\def\sK{{\mathscr K}}

\def\sP{{\mathscr P}}
\def\sQ{{\mathscr Q}}
\def\sR{{\mathscr R}}

\def\sU{{\mathscr U}}

\def\fP{{\mathfrak P}}

\def\fX{{\mathfrak X}}

\def\wtilde{\widetilde}
\def\what{\widehat}
\def\nwhat{\wideparen}

\def\veps{\varepsilon}

\def\Loc{{\rm Loc}}

\def\inc{{\rm inc}}

\def\pfg{{\rm pfg}}
\def\dis{{\rm dis}}

\def\vect{{\rm W}}

\def\weak{{\rm weak}}
\def\strong{{\rm strong}}

\def\unif{{\rm unif}}
\def\univ{{\rm univ}}

\def\acs{{\rm acs}}

\def\cs{{\rm cs}}

\def\id{{\rm id}}
\def\Ker{{\rm Ker}}
\def\Coker{{\rm Coker}}

\def\dis{{\rm dis\,}}

\def\ol{\overline}
\def\iso{\xrightarrow{\ \sim\ }}
\def\map#1{\xrightarrow{\ #1\ }}
\def\til#1{\widetilde{#1}}

\def\limit{{\rm lim}}
\def\colimit{{\rm colim}}

\def\PRODcan{{\prod}^{\can}}
\def\PRODsqu{\prod^{\square,\un}}



\def\Mod{{\cM}od}

\def\Top{{\cT}op}

\def\cLMu{{\cL\cM}^\un}
\def\cSLMu{{\cS\cL\cM}^\un}

\def\cCLMcan{{\cC\cL\cM}^{\rm can}}

\def\SUMU{ \mathop{{\bigoplus}^\un}\limits}

\def\cCLMou{\cC\cL\cM^{\omega,\rm u}}
\def\cCLMu{\cC\cL\cM^{\rm u}}

\def\un{{\rm u}}
\def\tu{{\bf t}}

\def\for{{\rm for}}

\def\Inv{{\rm Inv}}

\def\can{{\rm can}}
\def\pfg{{\rm pfg}}
\def\Bil{{\rm Bil}}
\def\hom{{\rm Hom}}
\def\End{{\rm End}}
\def\Inv{{\rm Inv}}



\def\wt{\what{\otimes}}

\author{Francesco Baldassarri
\thanks{Universit\`{a} di Padova,
Dipartimento di Matematica, Via Trieste, 63, 35121 Padova, Italy.}
 }
\title{Non-archimedean integration on totally disconnected spaces.}

\bibliographystyle{plain}

\begin{document}
\date{\today}

\maketitle

\begin{abstract}
\smallskip
We work in the category $\cCLMu_k$ of \cite{closed} of separated complete bounded  $k$-linearly topologized modules over a complete linearly topologized ring $k$ 
 and 
discuss  duality on certain exact subcategories. We study topological and 
uniform structures on locally compact paracompact $0$-dimensional topological spaces $X$, named \emph{$td$-spaces} in \cite{cartier} and \cite{ngo}, and the corresponding algebras  $\sC_?(X,k)$ of  continuous   $k$-valued 
functions, with a choice of 
support and 
uniformity conditions. We apply the previous duality theory to define and study the dual coalgebras $\sD_?(X,k)$  of  $k$-valued measures on $X$.  
 We then complete the picture by providing a direct definition of the various types of measures.
   In the case of $X$ a commutative $td$-group $G$ the integration pairing provides perfect dualities of Hopf $k$-algebras 
   between 
   $$
 \sC_\unif(G,k)   \longrightarrow 
\sC(G,k) \;
\;\;\mbox{and}\;\;\; \sD_\acs(G,k) 
  \longrightarrow   \sD_\unif(G,k)   \;.
$$ 
We conclude the paper with 
 the remarkable example 
 of $G= \G_a(\Q_p)$ and $k = \Z_p$,  leading to the basic Fontaine ring   
 $${\bf A}_{\inf}   =   \vect \l(\what{\F_p[[t^{1/p^\infty}]]}\r) = \sD_\unif(\Q_p,\Z_p) \;.$$
We discuss Fourier duality between ${\bf A}_{\inf}$ and $\sC_\unif(\Q_p,\Z_p)$ and exhibit a remarkable Fr\'echet basis of 
$\sC_\unif(\Q_p,\Z_p)$ related to the classical binomial coefficients. 
\end{abstract}

\tableofcontents

\bigskip
\setcounter{section}{-1}
\begin{section}{Conventions} \label{conventions}
\par
A prime number $p$ is fixed throughout this paper although it will only   play a role in the applications of the last sections~\ref{Integrations} to \ref{fourier}.
We fix a universe $\cU$ and assume  every object we deal with, except $\cU$ itself,  is ``small'', \ie is an object of $\cU$. In particular limits and colimits are understood to be ``small''. Unless otherwise specified, a  \emph{ring} is meant to be commutative with 1. For any ring $R$, $\Mod_R$ will denote the category of $R$-modules. It is a bicomplete abelian category. For $M \in \Mod_R$ and any small set $A$, $M^A$ (resp. $M^{(A)}$) will denote the product (resp. sum) in $\Mod_R$ of the family $(M_\alpha)_{\alpha \in A}$ with $M_\alpha = M$ for any $\alpha \in A$. 
 A topological ring is \emph{linearly topologized} if it admits a basis of neighborhoods of $0$ consisting of ideals. If $R$ is a linearly topologized ring, a topological $R$-module $M$ is said to be \emph{linearly topologized} if $M$ admits a basis of open neighborhoods of $0$ consisting of $R$-submodules. 
 The ring underlying the topological ring $R$ will be denoted  by $R^\for$; similarly, the $R^\for$-module underlying the topological  $R$-module $M$, will be denoted  by $M^\for$. Such care will often be waived unless there is a risk of confusion.  
 \par
 If $K$ is a non-archimedean field, both the ring of integers $K^\circ$ and the residue field $\til{K} = K^\circ/K^{\circ \circ}$ are linearly topologized rings, while $K$, unless its valuation is trivial,  is only linearly topologized as a topological $K^\circ$-module  \par 
  Let $R$ be a  linearly topologized ring and let $M$ be a linearly topologized $R$-module. A formal series 
$\sum_{\alpha \in A} m_\alpha$ of elements $m_\alpha \in M$, indexed by any set $A$, is said to be (an $A$-series) \emph{unconditionally convergent to $m \in M$} if, for any open submodule $N$ of $M$, there is a finite subset $A_N$ of $A$
such that  
 $\sum_{\alpha \in F} m_\alpha \in m + N$  for  any finite set $F$, $A_N \subset F \subset A$. Then  $m_\alpha \in N$ for any $\alpha \notin A_N$ and we write $m = \sum_{\alpha \in A} m_\alpha$.  If $M$ is separated and complete and $\{m_\alpha\}_{\alpha \in A}$ is a family of elements of $M$ such that, for any open $R$-submodule $P$ of $M$, $m_\alpha \in P$, for almost all $\alpha \in A$, then 
 $m = \sum_{\alpha \in A} m_\alpha$, for a unique $m \in M$.
In particular, if $M$ is discrete, an $A$-series $\sum_{\alpha \in A} m_\alpha$ of elements $m_\alpha \in M$ is unconditionally convergent if and only if $m_\alpha = 0$ for all but a finite set of $\alpha \in A$. In that case the $A$-series converges to the finite sum $\sum'_{\alpha \in A} m_\alpha$ of non-zero $m_\alpha$'s. 
\par
  For any set $X$, $\fP(X)$ (resp. $\sF(X)$) will be the family of (resp. finite) subsets of $X$. We use the term \emph{clopen} to indicate a subset of a topological space $X$ which is both open and closed: the set of clopens of $X$ is denoted $\Sigma(X)$. We adopt the convention that ``compact''  or ``locally compact'' does not imply Hausdorff. The family of compact open subsets of a topological space $X$ is denoted by $\cK(X)$. 
  \par
 For any  subset $U \subset X$, we denote by $\chi_U^{(X)} = \chi_U^{(X,k)}: X \to \{0,1\} \subset k$, or just $\chi_U$,  the characteristic function of $U \subset X$. If $U = \{x\}$ we also write $\chi_x^{(X)}$, or simply $\chi_x$,  for $\chi_{\{x\}}^{(X)}$. If $X$ is a topological space and $U \in \Sigma(X)$,  then 
$\chi_U^{(X)}$ is a continuous function $X \to k$.  
\par
We denote by $\Top$ the category of topological spaces and continuous maps.
\par
 For generalities on quasi-abelian categories we refer to \cite{schneiders}. In a quasi-abelian category $\sC$, a \emph{strict subobject} (resp. a \emph{strict quotient object}) of $X \in \sC$ is the kernel (resp. cokernel) of a $\sC$-morphism $X \to Y$ (resp. $Y \to X$). In the category $\Top$ an \emph{open} (resp. \emph{closed}) \emph{embedding} $X \to Y$ is a homeomorphism of $X$ to an open (resp. closed) subspace $X'$ of $Y$. In the categories of complete topological modules considered in this paper a strict subobject is not in general a closed embedding of the underlying topological spaces. See Remark~\ref{compunif2} below.
 

\section{Introduction} \label{intro} 
The origin of the present discussion lies in a problem   my Teacher Iacopo Barsotti had suggested 
in view of my 1974 Padova Thesis. Namely, the function-theoretic interpretation of  self-duality for  
$p$-divisible groups attached to an ordinary abelian variety in characteristic $p>0$. 
In particular, that had to include a functional interpretation of some of the Barsotti-Witt rings of bivectors attached by Barsotti 
to any $p$-divisible group. After partial results obtained at that time, the problem was left dormient for 
over two score because of  difficulties related to topological algebra, and in particular to duality,  over a $p$-adic 
ring and for lack of an immediate application. 
More recently, I came back to this question~: the motivation was  to show that a certain entire 
$p$-adic analytic function I had encountered in my thesis \cite{psi}, \cite{psi2} could be useful in the construction of an analytic Fourier theory for bounded uniformly continuous functions on $\Q_p$ with $p$-adically entire test functions  very similar to the Fourier expansion of a function $\R \map{} \C$ in terms of $\{\exp{{2 i \pi z}\over n}\}_{n \in \N}$; this will appear in \cite{psi3}. Moreover, we had realized over time  that the weak dual of the Hopf $\Z_p$-algebra of uniformy continuous functions $\G_a(\Q_p) \map{} \Z_p$, namely the Hopf $\Z_p$-algebra $\sD_\unif(\Q_p,\Z_p)$ of ``uniform'' $\Z_p$-valued measures on $\G_a(\Q_p)$, equipped with the topology of uniform convergence on  balls of one same radius
 is nothing but Fontaine's ring ${\bf A}_{\inf} = \vect (\cO_{C})$ on the perfectoid field $C$ = $t$-adic  completion of 
  $\F_p((t^{1/p^\infty}))$, where $t^{1/p^n} = \delta_{1/p^n} - \delta_0$ for $n \in \N$, and $ \vect (\cO_{C})$ is equipped with the product topology. 
Of course, the realization of a non trivial intersection with Fontaine's theory \cite{FFcurve} was a major motivation for completing this research. 
\par
Foundations of functional analysis over a very general linearly topologized ring is separately discussed in \cite{closed}, but we have summarized the essentials in 
sections~\ref{lintop} and \ref{duality} so that, with the precious reference to \cite[Chap. 8]{GR}, this paper stays self-contained. 

\par \medskip We fix  a separated and complete linearly topologized ring $k$ satisfying properties
 \eqref{kaxiom} below (\eg \, $k = \F_p$ or $\Z_p$, but not $k =\Q_p$, where $p$ is a prime number).
  We develop the basics of $k$-valued integration theory over  locally compact paracompact $0$-dimensional  topological (or uniform) spaces~:  we call them  ``$td$-spaces'' as in \cite{ngo} and \cite{cartier}. 
  This category of spaces is sufficiently general to include  non-archimedean locally analytic varieties (and in particular   locally analytic groups) 
  of the type considered by Schneider and Teitelbaum \cite{ST1}, \cite{ST2} which arise from finite field extensions $L \supset K \supset \Q_p$, as the set   $V = \V(L)$(resp. $G =\G(L)$) of $L$-valued points of an algebraic variety  $\V$ (resp. algebraic group $\G$) defined over $K$.  
  In that case, for any further completely valued  extension $E \supset K$, our methods permit to study continuous  (resp. uniformly continuous) functions on $V$ and $G$ with values 
 in $k=\wtilde{E}$ or $k = E^\circ$, and the dual notions of  measures on $V$ and $G$ which will be introduced below. 
  Notice that we do not consider explicitly here \emph{$E$-valued unbounded} functions and measures on $V$ or $G$, nor do we discuss differentiability or local analyticity of functions and the corresponding spaces of distributions. 
 We content ourselves with an  abstract  duality theory for  (resp. uniformly)  continuous  bounded functions and measures on (resp. uniform)   $td$-spaces and $td$-groups.
  \par 
  We decided  to revisit these topics because  topological algebra over a linearly topologized ring presents remarkable difficulties, related to the description of limits and colimits, and in particular of kernels and cokernels.  
 Such  categories of topological modules are in general not abelian~: in some cases of interest we get however quasi-abelian categories \cite{schneiders}, namely exact categories with kernels and cokernels. We are especially interested in the case when those categories carry a closed monoidal structure compatible with the additive structure. 
 We deal  with these problems systematically in \cite{closed},  and summarize in section~\ref{lintop}  below the   results which are relevant for our present purposes.  Our approach to topological algebra borrows from the one of  Gabber and Ramero \cite{GR}, and we refer to their very informative book project whenever helpful. 
  \par  
 The   difficulties arising in  topological algebra recently led Clausen and Scholze to base their theory of analytic stacks on the more robust notion of \emph{condensed sets}, namely sheaves on a Grothendieck site of profinite sets \cite{condensed}. More recently still,  they chose to restrict  their foundational universe to \emph{light} condensed sets so obtaining a more straightforward theory \cite{PSanalytic}. We were tempted to follow  their approach, but we were facing two difficulties. On the one hand, the most interesting examples of integration pairing we had in mind involve   \emph{uniform} $td$-spaces. Such notion however does not appear, as far as we understand, in the present state  of art of Condensed Mathematics. For viewing uniform  $td$-spaces as condensed sets, we might have adopted Andr\'e's notion of a \emph{uniform sheaf}, nicely explained in  \cite{yves}; 
but that looked like a long term project. 
\par On the other hand, part of the charm  of Fourier theory lies  in the possibility of exhibiting explicit  dual topological bases for spaces of functions and of measures, which could hardly be done in a general framework of triangulated categories of 
quasi-coherent sheaves. Certainly, in the setting  of topological $k$-modules duality is not well-behaved in general~: we did our best in some particular cases of perfect duality. For example, in section~\ref{duality} we define weak and strong duals and establish the yoga for their application  to limits, colimits, tensor products in the category $\cCLMu_k$. 
Those results seem to be new and are used all over the present paper.   
  \par \smallskip
Section~\ref{STS} is completely topological. It provides a fairly complete description of  $td$-spaces and of a certain type of uniform structures, called \emph{$td$-uniformities}, on them. As mentioned above, a $td$-space may naturally  be seen as a 
condensed set, and the restriction to light condensed sets would be the proper setting to discuss \emph{metrizable} $td$-spaces of which we give an \emph{ad hoc} characterization. We dedicate some effort to describe  the family of  $td$-uniformities $(X,\Theta_\sJ)$  (see section~\ref{uniftd} below) on a given $td$-space $X$. For us $\sJ$ is a cofiltered projective sub-system of the system $\sQ(X)$ of all discrete quotient spaces $\pi_D:X \longrightarrow D$ of $X$, where the quotient map $\pi_D$ is proper. It turns out that in the category of topological spaces (resp. of uniform spaces) for any $td$-space $X$ (resp. for any $td$-uniform space $(X,\Theta_\sJ)$)
\beq \label{limitexp} 
X = \colimit_{C \in \cK(X)} C = \limit_{D \in \sQ(X)} D \;\;\mbox{(resp.}\; (X,\Theta_\sJ) = \limit_{D \in \sJ} (D,\Theta_D) \;\mbox{)} 
\eeq
(where the uniformity $\Theta_D$ on the discrete space $D$ is meant to be the discrete uniformity). 
\par \smallskip 
In section~\ref{STScontinuity} we analyze the structure of the topological $k$-module  $\sC(X,k)$ of  continuous  functions on 
a $td$-space  $X$ equipped with the topology of uniform convergence on compact subsets of $X$. For a $td$-uniform space $(X,\Theta_\sJ)$  we similarly analyze the space $\sC_\unif((X,\Theta_\sJ),k)$
of  uniformly continuous  functions,    with the topology of uniform convergence on $X$.  
We also introduce the  space  $\sC_\acs(X,k)$, of functions with  \emph{almost compact support} (equivalently, \emph{vanishing at infinity}),   with the topology of uniform convergence on $X$. For any uniform structure $(X,\Theta_\sJ)$ on $X$, 
$\sC_\acs(X,k)$ is a closed subspace of $\sC_\unif((X,\Theta_\sJ),k)$, while it is a dense subset of $\sC(X,k)$. 
Our strategy for understanding any of the topological $k$-modules of continuous functions  above, for a general $td$-space $X$,  is based on a previous discussion of the simpler special  cases of $X=C$  a Stone space, and $X=D$ a discrete space. We then use the description~\ref{limitexp} to clarify the structure of the previous topological $k$-modules in the general case in terms of limits or colimits of simpler structures. 
\par \smallskip
In the following section~\ref{measdual} we discuss 
the dual structures of spaces of measures; their topology comes in the classical variants of ``strong'' and ``weak''. We concentrate on the choices that will be more useful to us in the future.  We use intensively the results of section~\ref{duality}. Duality considerations  appear both in  \cite{ST1}, \cite{ST2}, and in \cite{colmez2}, \cite{colmez3}, but they are there finalized to the study of $p$-adic representations and  appeal to  sophisticated methods of  $p$-adic functional analysis \emph{over a  field}. 
We work instead on first principles over a  linearly topologized ring  $k$ with topological $k$-modules of continuous functions and measures. Most of these results are quite subtle and, to the best of our knowledge,  new. 
\par  \smallskip
Our conceptually   naive approach based on the general description of  certain subcategories of  $\cCLMu_k$  \cite{closed}
and on some basic duality results, still permits to distinguish important patterns. 
After the abstract definition in section~\ref{measdual}  of dual spaces of measures for various spaces of continuous functions, section~\ref{meastheory} proceeds with an  independent  and explicit   description of  $k$-valued measures on $X$ as functions on measurable sets. \par \smallskip
 It turns out that the weak dual $\sD(X,k)$ (resp. $\sD_\acs(X,k)$)  of the space $\sC_\acs(X,k)$ (resp. $\sC(X,k)$)
 admits a natural direct description (see  Proposition~\ref{measinter}).
Moreover, the weak dual of $\sD(X,k)$ (resp. $\sD_\acs(X,k)$) identifies with $\sC_\acs(X,k)$ (resp. $\sC(X,k)$) \eqref{bidual1} (resp. \eqref{generalbidual}). 
In partial contrast, if $X$ is equipped with a $td$-uniform structure $\Theta_\sJ$,  the space of \emph{uniform measures on $(X,\Theta_\sJ)$}  is the weak dual $\sD_\unif((X,\Theta_\sJ),k)$ of $\sC_\unif((X,\Theta_\sJ),k)$ \eqref{funcunifdual}, while the latter is the \emph{strong} dual of the former \eqref{measunifdefst}.  The weak dual of $\sD_\unif((X,\Theta_\sJ),k)$ is instead  the space $\sC(X,k)$ of continuous functions on $X$ \eqref{funcunifdef}. 
  \par \smallskip
 After the very general theory developed so far,  section~\ref{td-groups} is regrettably restricted to commutative $td$-groups and is still perhaps too short.  We have summarized here without proofs some well-known constructions for commutative formal groups which will come out handy in the next sections. 
  We hope to be 
  able to come back to this topic in the future seriously enough to be able to illustrate some instance of Fourier theory on a classical non-compact non-commutative $td$-group. The $td$-groups studied by Wesolek \cite{wesolek1}, \cite{wesolek2} seem 
  to offer a rich area of application.
\par \smallskip
We then come to the example we had in mind, namely Fourier theory for bounded (uniformly) continuous $p$-adic functions on $\Q_p$. 
\par \smallskip In section~\ref{MAss} we  present  in a slightly unusual way the Mahler-Amice theory for $\Z_p$-valued  functions and measures on $\Z_p$. 
 We explain our view of the Iwasawa identification of the formal torus $\Z_p[[T]]$ (where $\P(T) = T \wt 1 + 1 \wt T + T \wt T$)   with the Hopf algebra $\sD(\Z_p,\Z_p) = \sC(\Z_p,\Z_p)'_\weak$ of $\Z_p$-valued  measures on $\Z_p$, via the map $T \longmapsto \Delta_1- \Delta_0$ (see $\mathit 3$  of Proposition~\ref{mahleramice} below). 
 An observation  which does not seem to appear explicitly in the literature is the fact 
  that the  $(p,T)$-adic topology  on $\Z_p[[T]]$ corresponds to  the topology 
 of uniform convergence on any family of balls $a + p^h \Z_p \subset \Z_p$ of the same radius $p^h$  on $\sD(\Z_p,\Z_p)$. We name it the \emph{natural topology} of $\sD(\Z_p,\Z_p)$. The proof is actually not immediate (see Proposition~\ref{Iwasawa3} below). 
 \par \smallskip
  In section~\ref{Intpairingss} we extend the results of section~\ref{MAss}  to the case of the Hopf algebra $\sD_\unif(\Q_p,\Z_p)$ of $\Z_p$-valued uniform measures on $\Q_p$. We show that the $p$-adic filtration makes $\sD_\unif(\Q_p,\Z_p)$ into a strict $p$-adic ring with residue ring $\sD_\unif(\Q_p,\F_p) \cong \what{\F_p[[t^{1/p^\infty}]]}$. Here $t = \delta_1-\delta_0$, where $\delta_a$ (resp. $\Delta_a$) is the $\F_p$-valued (resp. $\Z_p$-valued) Dirac mass concentrated in $a \in \Q_p$. So, for the $p$-adic topology
\beq
\label{Tlift}
\wtilde{T} = \limit_n (\Delta_{1/p^n} -\Delta_0)^{p^n} =[\delta_1-\delta_0] \in \sD_\unif(\Q_p,\Z_p) = \vect \l(\what{\F_p[[t^{1/p^\infty}]]}\r) \;.
\eeq
The reader will then recognize the appearance of Fontaine's ring  ${\bf A}_{\inf}$ in the guise of the ring of uniform $\Z_p$-valued measures on $\Q_p$.  
Again, it is more convenient to endow $\sD_\unif(\Q_p,\Z_p)$ with the (weaker) \emph{natural topology}, namely the topology 
 of uniform convergence on balls of a given radius of $\Q_p$. Our main result here is the identification 
 of $\sD_\unif(\Q_p,\Z_p)$ equipped with the natural topology, with the completion of its subring $\Z_p[\wtilde{T}^{1/p^\infty}]$, for $\wtilde{T}$ as in \eqref{Tlift}, in the $(p,\wtilde{T})$-adic topology (see $\mathit 4$ of Proposition~\ref{unifconvballs}). 
The reason why it is reasonable  to endow $\sD_\unif(\Q_p,\Z_p)$ with the natural topology  is twofold. First of all,  
it is the topology used in the general theory of section~\ref{measdual} and it yields in particular the left-weak   and right-strong perfect duality pairing \eqref{dualex1}
\beq
\label{fourierunif}
\sD_\unif(\Q_p,\Z_p) \times \sC_\unif(\Q_p,\Z_p) \longrightarrow \Z_p\;\;,\;\; (\mu , f) \longmapsto \int_{\Q_p}f(x) d\mu(x) \;,
\eeq
 in which the choice of topology plays a crucial role. 
Secondly, the natural topology is sufficiently weak to allow the interesting changes of variable of section~\ref{ChgVarss}. In particular the \emph{canonical measure}  $\mu_\can \in \sD_\unif(\Q_p,\Z_p) = {\bf A}_{\inf}$ (see Proposition~\ref{repr1}) is related to $T$ via 
an analytic procedure, based on the Artin-Hasse exponential, which would not converge $p$-adically.     The canonical measure will play a central  role in the analytic Fourier theory of \cite{psi3}. 
\par \smallskip
In section~\ref{fourier} we develop the ``uniformly continuous'' Fourier theory for the pairing \eqref{fourierunif}.  This is a novelty of this paper.    For $q \in S := \Z[1/p] \cap \R_{\geq 0}$ we define  a uniformly continuous function ${x \choose q}^{(p)}  : \Q_p \longrightarrow \Z_p$ (see 
\eqref{binQp}). We prove orthogonality (see  \eqref{orthrel1})  of $\{{x \choose q}^{(p)} \}_{q \in S}$ and $\{\wtilde{T}^q\}_{q\in S}$ 
w.r.t. the pairing \eqref{fourierunif}. In Proposition~\ref{binQp2}, we compute the Fourier expansion of the  additive character (see \eqref{addchar})
\beq
\label{charexp}
\Q_p  \longrightarrow 1+\C_p^{\circ \circ} \;\;,\;\;
s  \longmapsto (\gamma^s)^\sharp = \sum_{q \in S} {s \choose q}^{(p)}  (\varpi^q)^\sharp \;, 
\eeq
where $\gamma = 1 + \varpi \in 1+\C_p^{\flat \circ \circ}$, where we have used the notation of \cite{scholze}.  
In the paper  \cite{psi3} the present uniformly continuous Fourier theory for $\sD_\unif(\Q_p,\Z_p)$ will be improved into  a $p$-adically entire Fourier theory in which the dual basis of $\{\mu_\can^q\}_{q\in S}$
will be a family $\{G_q(x)\}_{q \in S}$  of $p$-adically entire functions,  obtained from the entire function $\Psi$ of \cite{psi2},  such that $(G_q)_{|\Q_p} \in \sC_\unif(\Q_p,\Z_p)$ for any $q \in S$.   In \lc
 formula \eqref{charexp} will be ameliorated into the analytic formula
\beq
\label{charanal}
(\gamma^x)^\sharp = \sum_{q \in S} G_q(x) \int_{\Q_p} (\gamma^s)^\sharp \, d\mu_\can^q(s) 
\eeq
 which has applications to the theory of $p$-adic Mellin transforms. The existence and properties of $\mu_\can \in  {\bf A}_{\inf}$ have gone unnoticed till now. 
 
\begin{subsection}{Aknowledgements} The main ideas of this paper had already appeared in my thesis, and were largely inspired by my Teacher Iacopo Barsotti.   I wish to express my gratitude to Maurizio Cailotto who patiently listened to my ideas and often corrected them.  
 I profited from  discussions with  Umberto Marconi who helped me to understand the topology of locally compact $0$-dimensional spaces and the uniformities on them. 
 \par
 Finally, I want to express my gratitude to Adrian Iovita for his constant encouragement. 
\end{subsection}  
\end{section}

\begin{section}{Generalities on linear topologies} \label{lintop}
 \begin{subsection}{Linear topologies} \label{lincat}
Let $k$ be a  separated and complete linearly topologized  ring.  We require that $k$ satisfies  the following properties
\begin{itemize}\label{kaxiom}
\item ($\omega$-admissibility) $k$ has a countable basis of open ideals;
\item (the ``finiteness'' condition) any open ideal of  $k$ contains a finitely generated open ideal;
\item (the ``op'' condition) 
for any open ideals $I,J$ of $k$,  the ideal $IJ$ is open in $k$. 
\end{itemize}
 We will also denote by $\cP(k)$ the family of open ideals of $k$.  
\begin{rmk}\label{ophodge} By \cite[Rmk. 8.3.3 (iii)]{GR} for any linearly topologized ring $k$ (satisfying $\omega$-admissibility and finiteness) the op condition is equivalent to the following condition
\begin{itemize}
\item There exists a basis $\sB$ of open ideals of $k$ such that, for any $I \in \sB$, the ideal $I^2$ is open. 
\end{itemize}
\end{rmk}

We will work in the category $\cLMu_k$   of  linearly topologized  $k$-modules $M$ such that 
the map multiplication by scalars
$$
k \times M \longrightarrow M\;\;,\;\; (r,m) \longmapsto rm
$$
is \emph{uniformly continuous} for the product uniformity of $k \times M$. This condition means that the topology of $M$ is weaker than the \emph{naive canonical topology} of $M$, the latter being the topology with fundamental system of open submodules the family $\{IM\}_{I \in \cP(k)}$. The same condition is also equivalent to  the fact that the  $k$-linearly topologized module $M$ be bounded \cite{closed}.  Morphisms of $\cLMu_k$ are continuous $k$-linear maps.  We are interested in the full subcategory $\cSLMu_k$ (resp. $\cCLMu_k$) of separated (resp. separated and complete) modules.  
The  category $\cCLMu_k$ is the one of \cite[\S 15.1.2]{GR} (implicitly described in \cite[Chap. 0, \S 7.7.1]{EGA}). Its full subcategory of $\cCLMu_k$ consisting of objects with a countable basis of open $k$-submodules is denoted by $\cCLMou_k$. In  \cite{closed} we prove that 
$\cCLMou_k$ is a quasi-abelian category with enough injectives \cite[Defn. 1.3.18]{schneiders}. 
For any object $M$ of $\cCLMu_k$, $\cP(M)$ will denote the family of open $k$-submodules of $M$. Notice that if the topology of the object $M$ of $\cCLMu_k$ is discrete, then $IM = (0)$ for some $I \in \cP(k)$. 
For any   $k/I$-module $M$ we will denote by $M^\dis$ the object of $\cCLMu_k$ with underlying $k^\for$-module $M^\for$, equipped with the discrete topology.
\par
The category $\cCLMu_k$ admits  all limits and colimits. 
The former are calculated in the category of $k$-modules and carry the weak topology of the canonical projections, but the latter need some explanation (to be given below).  So, a limit will be denoted by $\limit$ while a colimit will carry an apex $(-)^\un$ as in  $\colimit^\un$. 
For an inductive system $\{M_\alpha\}_{\alpha \in A}$ in $\cCLMu_k$, let 
$$(M := \colimit_\alpha M_\alpha, \{\iota_\alpha : M_\alpha \to M\}_\alpha)$$
 be the familiar colimit in $\Mod_k$. 
Then 
$\colimit^\un_\alpha M_\alpha$ is the (separated) completion of $M$ equipped with the $k$-linear topology for which a basis of open $k$-submodules consists of the submodules $N \subset M$ such that $\iota_\alpha^{-1}(N) \in \cP(M_\alpha)$ for any $\alpha$, and \emph{such that there exists $I \in \cP(k)$ with  $IM \subset N$.} If $k$ and all $M_\alpha$'s are discrete, then 
$\colimit^\un_\alpha M_\alpha$ is discrete, as well. 
\par
The cokernel of a morphism $f: N \to M$ in $\cCLMu_k$ is 
$$
\Coker^\un (f) = \limit_{P \in \cP(M)} M/(f(N) + P)
$$
where $M/(f(N) + P)$ is calculated in $\Mod_k$ and is endowed with the discrete topology. In general, $\Coker^\un (f)$  contains as a subspace $M/\ol{f(N)}$,  which is the cokernel of $f$ in $\cSLMu_k$.
If $M \in \cCLMou_k$, then $\Coker^\un (f) = M/\ol{f(N)}$; in particular, if $M \in \cCLMou_k$ and $f$ is closed, then 
$(\Coker^\un (f))^\for$ coincides with $\Coker (f^\for)$.
For any injective $\cCLMu_k$-morphism $i: N \hookrightarrow M$, we write $(M/N)^\un$ for $\Coker^\un (i)$.
For an inductive system $\{M_\alpha\}_{\alpha \in A}$ in $\cCLMu_k$, 
\beq \label{indu}
\colimit^\un_{\alpha \in A} M_\alpha = \limit_{I \in \cP(k)}  \colimit^\un_{\alpha \in A} (M_\alpha / I M_\alpha )^\un \;. 
\eeq  
An object $M$ of $\cCLMu_k$ whose topology is the naive canonical topology is called \emph{canonical}, or $k$-\emph{canonical} if necessary. The full subcategory of $\cCLMu_k$ consisting of $k$-canonical modules is a quasi-abelian category with enough projectives \cite[Defn. 1.3.18]{schneiders}, denoted $\cCLMcan_k$. 
See \cite{closed}  for more detail on canonical modules, and in particular for the fact that the fully faithful functor $\cCLMcan_k \subset \cCLMu_k$ admits a right adjoint $(-)^\can$, where, for $M \in \cCLMu_k$, $M^\can$ is simply $M$ endowed with the naive canonical topology.  
 \begin{rmk} \label{8.3.3}  
 The crucial consequence of our assumptions on $k$ is that any $M \in \cCLMu_k$ is complete in its naive canonical topology. This is proven in \cite{closed}. A weaker result states that this is the case if $k$ is a $I$-adic ring for a finitely generated ideal $I$. This appears as Lemma 8.3.12 (b) of \cite{GR}. In the special case when  $M \in \cCLMou_k$, this also appears 
as Proposition 0.7.2.5 of \cite{FK}. 
\end{rmk}
\begin{rmk} \label{complfop} For any $M \in \Mod_k$, let $\nwhat{M}$ be the completion of $M$ equipped with
the naive canonical topology. It follows from \cite[Rmk. 8.3.3]{GR} that the topology of the $k$-module $\nwhat{M}$ is again the 
naive canonical topology. 
\end{rmk}
\begin{rmk} \label{limcan}
Assume  in \eqref{indu} $M_\alpha \in \cCLMcan_k$, $\forall \alpha \in A$, and let  $I \in \cP(k)$ be finitely generated. Then, by 
\cite[Rmk. 8.3.3 (iv)]{GR}, 
$\ol{I M_\alpha} =  I M_\alpha$, $\forall \alpha \in A$. Therefore in $\Mod_k$, for $M =  \colimit_{\alpha \in A} M_\alpha$  we have 
$$
 M/IM = \colimit_{\alpha \in A} (M_\alpha/IM_\alpha) \;,
 $$
 so that
\eqref{indu} becomes
\beq \label{indu1}
\colimit^\un_{\alpha \in A} M_\alpha = \limit_{I \in \cP(k)} (M/IM)^\dis = \nwhat{M}  \;.\eeq
It follows from Remark~\ref{complfop} that $\nwhat{M}$ is canonical, hence that 
$\cCLMcan_k$ is closed under $\colimit^\un$ (\ie under colimits   in $\cCLMu_k$). 
\par On the other hand, since $(-)^\can$ is a right adjoint, $\cCLMcan_k$ admits projective limits, calculated by applying $(-)^\can$ to $\limit$\,: the latter are  denoted by $\limit^\can$ and will be described below.
\end{rmk}
\end{subsection}
\begin{subsection}{Sums and products}
For any family $M_\alpha$, $\alpha \in A$, of objects of $\cCLMu_k$, the direct sum and direct product exist and will be denoted by 
$$
\SUMU_{\alpha \in A} M_\alpha \;\;,\;\; \prod_{\alpha \in A} M_\alpha \;,
$$
respectively. 
We explicitly notice that $\SUMU_{\alpha \in A} M_\alpha$, called the \emph{uniform direct sum} of the $M_\alpha$'s,  is the completion of the algebraic direct sum 
$\bigoplus_{\alpha \in A} M_\alpha$ of the algebraic $k$-modules $M_\alpha$'s, equipped with the $k$-linear topology for which a fundamental system of open $k$-modules consists of the $k$-submodules 
$$
\bigoplus_{\alpha \in A} (U_\alpha + I M_\alpha)\;\;\mbox{such that} \;U_\alpha \in \cP(M_\alpha)\;\; \forall \alpha\;, \; \mbox{and}\;I \in \cP(k)\; \mbox{is independent of} \, \alpha \; .
$$
 Then the  $k$-module underlying $\SUMU_{\alpha \in A} M_\alpha$ differs in general  from the algebraic direct sum $\bigoplus_{\alpha \in A} M_\alpha$, even if  all $M_\alpha$ are discrete. 
 But if $k$ is discrete as well, then 
 $\SUMU_{\alpha \in A} M_\alpha$ coincides with $\bigoplus_{\alpha \in A} M_\alpha$ equipped with the discrete topology. 
 \par 
 For $M_\alpha$ a copy of $M \in \cCLMu_k$ for any $\alpha \in A$, we denote  by $M^{(A,\un)}$ the uniform direct sum of the $M_\alpha$'s. 
 When all $M_\alpha$'s are canonical,  ${\bigoplus}^\un_{\alpha \in A} M_\alpha$ is the completion of $\bigoplus_{\alpha \in A} M_\alpha$ in its naive canonical topology. 
 In particular, for any small set $A$, we have defined 
 $k^{(A,\un)}$. The following result is proven in \cite{closed}.
\begin{thm} \label{canmod} $\cCLMcan_k$ is the full subcategory of $\cCLMu_k$ consisting of 
 quotients of objects of the form $k^{(A,\un)}$, for small sets $A$.
 \end{thm}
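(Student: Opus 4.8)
The plan is to prove the two inclusions of Theorem~\ref{canmod} separately, reading ``quotient'' as \emph{strict quotient}, i.e. as an object of the form $\Coker^\un(i)$ for a $\cCLMu_k$-monomorphism $i\colon N \hookrightarrow k^{(A,\un)}$. As a preliminary I record that each $k^{(A,\un)}$ is itself canonical: since $k$ is $k$-canonical, the uniform direct sum $\SUMU_{\alpha\in A} k$ is by construction the completion of $\bigoplus_{\alpha\in A} k$ for the naive canonical topology, and such a completion is canonical by Remark~\ref{complfop}. (Equivalently, $k^{(A,\un)} = \colimit^\un_{F} k^{F}$ over finite $F \subset A$, so it lies in $\cCLMcan_k$ by the closure of $\cCLMcan_k$ under $\colimit^\un$ established in Remark~\ref{limcan}.)

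For the inclusion ``every strict quotient is canonical'', I would fix $i\colon N \hookrightarrow P := k^{(A,\un)}$ and set $\bar P := P/i(N)$, the algebraic quotient in $\Mod_k$. Because $P$ is canonical, the family $\{IP\}_{I\in\cP(k)}$ is cofinal in $\cP(P)$, so the defining formula for the cokernel may be computed over this subsystem:
\[
\Coker^\un(i) = \limit_{I \in \cP(k)} P/(i(N) + IP).
\]
Writing $\pi\colon P \twoheadrightarrow \bar P$ for the projection, one has $\pi(IP) = I\bar P$ and $\pi^{-1}(I\bar P) = i(N) + IP$, whence $P/(i(N)+IP) \cong \bar P/I\bar P$ with the discrete topology. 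Thus $\Coker^\un(i) = \limit_I \bar P/I\bar P = \nwhat{\bar P}$ is exactly the completion of $\bar P$ in its naive canonical topology, which is canonical by Remark~\ref{complfop}. Hence every strict quotient of $k^{(A,\un)}$ lies in $\cCLMcan_k$.

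For the reverse inclusion, let $M \in \cCLMcan_k$ and take $A := M^\for$, the (small) underlying set of $M$. I would define $f\colon k^{(A,\un)} \to M$ on the algebraic direct sum by $e_m \mapsto m$ and extend by continuity. This is well defined and continuous precisely because $M$ is canonical: for $(c_m)_m \in k^{(A,\un)}$ and any $I\in\cP(k)$ one has $c_m \in I$ for almost all $m$, hence $c_m m \in IM$ for almost all $m$; as $IM$ is open in $M$, the family is summable and $f$ sends the open submodule $I\cdot k^{(A,\un)}$ into $IM$. Since $f$ is visibly surjective ($e_m\mapsto m$), $k$-linearity gives $f(I\cdot k^{(A,\un)}) = IM$, so the quotient topology on $M \cong k^{(A,\un)}/\Ker f$ has $\{IM\}_{I}$ as a basis, i.e. it is the canonical topology of $M$. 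As $M$ is already complete for this topology, $M = \nwhat{k^{(A,\un)}/\Ker f} = \Coker^\un(\Ker f \hookrightarrow k^{(A,\un)})$ by the computation above with $N=\Ker f$, exhibiting $M$ as a strict quotient of $k^{(A,\un)}$.

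The main obstacle is this reverse inclusion, and specifically the verification that the tautological map $f$ is \emph{strict} (open onto $M$), not merely continuous and surjective. This is exactly where canonicity of $M$ is indispensable: it is what makes the balls $IM$ open, simultaneously guaranteeing convergence of $\sum_m c_m m$, continuity of $f$, and the equality $f(I\cdot k^{(A,\un)}) = IM$ that forces the quotient topology to agree with the canonical one. For a general $M \in \cCLMu_k$ the same recipe yields only a continuous $f$ whose image topology is strictly finer than that of $M$, consistent with the theorem asserting that precisely the canonical modules arise as such quotients.
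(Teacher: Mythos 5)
Your argument is correct, but there is nothing in the paper to compare it against: Theorem~\ref{canmod} is stated with the remark that it ``is proven in \cite{closed}'', a reference in preparation, so the paper contains no proof of its own. Judged on its merits, your proof is sound and uses only facts the paper does record. For the forward inclusion, you correctly reduce the defining formula $\Coker^\un(i)=\limit_{P'\in\cP(P)}P/(i(N)+P')$ to the cofinal family $\{IP\}_{I\in\cP(k)}$ — legitimate because $P=k^{(A,\un)}$ is canonical, which you justify from the paper's description of uniform direct sums of canonical modules together with Remark~\ref{complfop} — and the identification $P/(i(N)+IP)\cong \ol{P}/I\ol{P}$ then exhibits the cokernel as $\nwhat{\ol{P}}$, canonical again by Remark~\ref{complfop}. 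For the reverse inclusion, the tautological surjection $e_m\mapsto m$ indexed by $A=M^\for$ is the expected construction, and you isolate the right crux: the equality $f(I\cdot k^{(A,\un)})=IM$ together with completeness of $M$ in its (canonical) topology forces $\Coker^\un(\Ker f\hookrightarrow k^{(A,\un)})=M$ on the nose, not merely a bijective morphism onto $M$. Two cosmetic points. First, by the conventions of Section~\ref{conventions} and Remark~\ref{canmodpres}, ``quotient'' means cokernel of an arbitrary morphism $g\colon Y\to k^{(A,\un)}$, not only of a monomorphism; but since $\Coker^\un(g)$ depends only on the image submodule $g(Y)$, your computation covers the general case verbatim. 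Second, in your closing remark, for a non-canonical $M\in\cCLMu_k$ the same recipe does not merely give a finer image topology: by Remark~\ref{8.3.3} the cokernel it produces is exactly $M^\can$, which coincides with $M$ precisely when $M$ is canonical — which is the cleanest way to see that the construction characterizes $\cCLMcan_k$.
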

 \begin{rmk}  \label{canmodpres}  It follows from Theorem~\ref{canmod}  that an object of $\cCLMcan_k$ is the 
 cokernel  of a morphism  of the form $k^{(A,\un)} \to k^{(B,\un)}$ for small sets $A,B$. 
 \end{rmk}
For any family $M_\alpha$, $\alpha \in A$, of objects of $\cCLMu_k$, as before,  it will  be useful to introduce the \emph{uniform box product} of the   $M_\alpha$'s~:
\beq
\label{square}
{\prod}_{\alpha \in A}^{\square,\un} M_\alpha    \;.
\eeq
Set-theoretically,  \eqref{square} coincides with $\prod_{\alpha \in A} M_\alpha$, but a basis of open submodules of  \eqref{square} consists 
of all $\prod_{\alpha \in A} U_\alpha$, with $U_\alpha \in \cP(M_\alpha)$, for which there exists $I \in \cP(k)$ such that 
$I M_\alpha \subset U_\alpha$, for any $\alpha \in A$.   
Notice that  
\beq \label{limsq}
{\prod}_{\alpha  \in A}^{\square,\un} M_\alpha = 
 \limit_{I \in \cP(k)}  
{\prod} _{\alpha  \in A}^{\square,\un}
(M_\alpha/IM_\alpha)^\un \;.
\eeq
There are natural continuous injections  
\beq \label{sumsqprod} {\bigoplus}^\un_{\alpha \in A} M_\alpha \subset {\prod}_{\alpha \in A}^{\square,u} M_\alpha \subset  {\prod}_{\alpha \in A} M_\alpha
 \;,
 \eeq
 the first of which is a closed subspace embedding while the second is a bijection. On the other hand, the inclusion 
  $$ {\bigoplus}^\un_{\alpha \in A} M_\alpha \subset  {\prod}_{\alpha \in A} M_\alpha
 $$
 is dense. 
More generally, for a projective system $(M_\alpha)_{\alpha  \in A}$, indexed by the cofiltered poset $A$, in $\cCLMu_k$,    the $k$-submodule 
${\limit_{\alpha \in A}} M_\alpha^\for$ of  $\prod_{\alpha \in A}^{\square,u} M_\alpha$ is closed. We then define the 
 \emph{uniform box limit} of the previous projective system 
\beq \label{sqlimit}
{\limit^{\square,\un}_{\alpha \in A}} M_\alpha 
\eeq
as this closed submodule equipped with the subspace topology.   In particular,
\beq \label{sqlimit1}
({\limit^{\square, \un}_{\alpha \in A}} M_\alpha )^\for = {\limit_{\alpha \in A}} M_\alpha^\for \;.
\eeq
From \eqref{limsq} we deduce that 
\beq  \label{boxdefn}
{\limit^{\square,\un}_{\alpha \in A}} M_\alpha  = \limit_{I \in \cP(k)} {\limit^{\square,\un}_{\alpha \in A}} 
(M_\alpha/IM_\alpha)^\un \;.
\eeq
\begin{cor} \label{canlim}
In the particular case when every $M_\alpha$   is $k$-canonical, we have
\beq \label{canlim1}
\begin{split}
{\prod}_{\alpha \in A}^{\square,\un} M_\alpha &= \limit_{I \in \cP(k)} (\prod_{\alpha \in A} M_\alpha/{I M_\alpha})^\dis \\
\mbox{(resp.}\;{\limit^{\square,\un}_{\alpha \in A}} M_\alpha  &= \limit_{I \in \cP(k)} (\limit_{\alpha \in A} M_\alpha/{I M_\alpha})^\dis 
\;\;\mbox{)}\;.
\end{split}
\eeq
So, in this special case, $\PRODsqu_{\alpha \in A}  M_\alpha$ (risp. ${\limit^{\square,\un}_{\alpha \in A}} M_\alpha$) coincides with   $\prod_{\alpha \in A}  M_\alpha^\for$ (risp. ${\limit_{\alpha \in A}}M_\alpha^\for$) 
equipped with the naive canonical topology; it then coincides with the
direct product $\PRODcan_{\alpha \in A} M_\alpha$ (risp. the limit ${\limit^\can_{\alpha \in A}} M_\alpha$) in the category $\cCLMcan_k$. If, for any $\alpha \in A$, $M_\alpha = M \in \cCLMcan_k$, then $\prod^\can_{\alpha \in A} M_\alpha$ is denoted $M^{A,\can}$ and it is simply the product $k$-module $M^A$ equipped with the naive $k$-canonical topology. In particular, for any $I \in \cP(k)$ we have 
\beq \label{canlim11}
\begin{split}
{\prod}_{\alpha \in A}^{\square,\un} M_\alpha/ I {\prod}_{\alpha \in A}^{\square,\un} M_\alpha&=  (\prod_{\alpha \in A} M_\alpha/{I M_\alpha})^\dis \\
\mbox{(resp.}\;{\limit^{\square,\un}_{\alpha \in A}} M_\alpha/ I {\limit^{\square,\un}_{\alpha \in A}} M_\alpha &= (\limit_{\alpha \in A} M_\alpha/{I M_\alpha})^\dis \;\;\mbox{)}\;.
\end{split}
\eeq
 \end{cor}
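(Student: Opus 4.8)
The plan is to reduce the whole statement to the case of discrete modules by stripping off scalars one open ideal at a time, as already encoded in \eqref{limsq} and \eqref{boxdefn}, and then to exploit that box products and box limits of discrete families which are \emph{uniformly} annihilated by a single open ideal are themselves discrete. First I would record that, for a $k$-canonical $M_\alpha$ and $I\in\cP(k)$, the submodule $IM_\alpha$ is a basic open (hence closed) $k$-submodule, so the separated quotient $M_\alpha/IM_\alpha$ is discrete and already complete; consequently the uniform cokernel $(M_\alpha/IM_\alpha)^\un$ coincides with $(M_\alpha/IM_\alpha)^\dis$. Concretely, the open submodules contained in $IM_\alpha$ are cofinal in $\cP(M_\alpha)$ (using the op condition, since $(IJ)M_\alpha\subseteq IM_\alpha\cap JM_\alpha$), so the defining limit $\limit_{P}M_\alpha/(IM_\alpha+P)$ for $\Coker^\un$ stabilises at $M_\alpha/IM_\alpha$. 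Substituting this into \eqref{limsq} and \eqref{boxdefn} turns their right-hand sides into $\limit_{I}\prod_\alpha^{\square,\un}(M_\alpha/IM_\alpha)^\dis$ and $\limit_{I}\limit^{\square,\un}_\alpha(M_\alpha/IM_\alpha)^\dis$ respectively.

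Next I would fix $I$ and set $N_\alpha:=(M_\alpha/IM_\alpha)^\dis$, noting $IN_\alpha=(0)$ for every $\alpha$. In the definition of the uniform box product the choice $U_\alpha=(0)$ is legitimate, since $IN_\alpha=(0)\subseteq U_\alpha$ uniformly in $\alpha$; hence $\prod_\alpha U_\alpha=(0)$ is open, so $\prod_\alpha^{\square,\un}N_\alpha$ is discrete and set-theoretically equal to $\prod_\alpha N_\alpha$, i.e. $\prod_\alpha^{\square,\un}N_\alpha=(\prod_\alpha M_\alpha/IM_\alpha)^\dis$. Since the uniform box limit is by definition the submodule $\limit_\alpha N_\alpha^\for$ of the box product carrying the induced topology, it too is discrete, giving $\limit^{\square,\un}_\alpha N_\alpha=(\limit_\alpha M_\alpha/IM_\alpha)^\dis$. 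Combined with the previous paragraph, this is exactly \eqref{canlim1}.

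It then remains to identify these pro-discrete limits with the naive canonical topology and to read off \eqref{canlim11}. On underlying modules the double limits commute: $\limit_I\prod_\alpha(M_\alpha/IM_\alpha)=\prod_\alpha\limit_I(M_\alpha/IM_\alpha)=\prod_\alpha\nwhat{M_\alpha}=\prod_\alpha M_\alpha^\for$, using that a canonical module equals its own canonical completion (Remarks~\ref{complfop} and~\ref{8.3.3}), and similarly with $\limit_\alpha$ in place of $\prod_\alpha$. The resulting topology has the submodules $\prod_\alpha IM_\alpha$ as a basis. I expect the only real friction here: a priori the naive canonical topology is generated by the \emph{scalar-multiplication} submodules $I\prod_\alpha M_\alpha$, and one only has $I\prod_\alpha M_\alpha\subseteq\prod_\alpha IM_\alpha$, with strict inclusion possible for non-finitely-generated $I$. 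The point is that for a finitely generated $I=(a_1,\dots,a_n)$ one does have $I\prod_\alpha M_\alpha=\prod_\alpha IM_\alpha$ (assemble the coefficient tuples coordinatewise), and by the finiteness axiom the finitely generated open ideals are cofinal in $\cP(k)$; hence both families of submodules generate the same topology, the naive canonical one. By Remark~\ref{8.3.3} the module $\prod_\alpha M_\alpha^\for$ (resp. $\limit_\alpha M_\alpha^\for$) is complete for that topology, so the box product (resp. box limit) is precisely that module with its naive canonical topology; since $(-)^\can$ is right adjoint to $\cCLMcan_k\subset\cCLMu_k$ and therefore preserves products and limits, this is $\prod^\can_\alpha M_\alpha$ (resp. $\limit^\can_\alpha M_\alpha$), as claimed. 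Finally \eqref{canlim11} is the $I$-th level of \eqref{canlim1}: the kernel of the projection onto $(\prod_\alpha M_\alpha/IM_\alpha)^\dis$ is the open submodule $\prod_\alpha IM_\alpha$, which coincides with $I\prod_\alpha^{\square,\un}M_\alpha$ exactly for finitely generated $I$, and these suffice by cofinality; quotienting by it returns that level, and likewise for the box limit.
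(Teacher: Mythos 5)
Most of your proposal is correct, and it is essentially the unwinding that the paper intends (the corollary is stated there without proof): for canonical $M_\alpha$ the submodule $IM_\alpha$ is open, so $(M_\alpha/IM_\alpha)^\un=(M_\alpha/IM_\alpha)^\dis$; a family of discrete modules uniformly annihilated by one open ideal $I$ has discrete uniform box product and box limit (take $U_\alpha=(0)$ in the definition), which combined with \eqref{limsq} and \eqref{boxdefn} gives \eqref{canlim1}; and the identification of the resulting pro-discrete limit with $\prod_\alpha M_\alpha^\for$ carrying the naive canonical topology, via the coordinatewise identity $I\prod_\alpha M_\alpha=\prod_\alpha IM_\alpha$ for finitely generated $I$, cofinality of finitely generated open ideals, and completeness (Remarks~\ref{complfop}, \ref{8.3.3}, \ref{limcan}), is exactly the right argument. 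You correctly isolated the only real friction point.

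The genuine gap is in your last step, the deduction of \eqref{canlim11}. That formula is asserted for \emph{each individual} $I\in\cP(k)$, and cofinality of finitely generated open ideals cannot stand in for a proof at a fixed non-finitely-generated $I$: cofinality rescues statements about topologies or about limits taken over all of $\cP(k)$, not a displayed equality at one specific ideal. Indeed the equality can fail there. Take $k=\F_p[y_1,y_2,\dots][[t]]$ with the $t$-adic topology (this $k$ satisfies $\omega$-admissibility, finiteness and op), $M_\alpha=k$ for $\alpha\in\N$, and $I=(t,y_1,y_2,\dots)$, which is open but not finitely generated. The element $(y_1,y_2,y_3,\dots)$ lies in $\prod_\alpha IM_\alpha$, i.e.\ in the kernel of ${\prod}^{\square,\un}_{\alpha}M_\alpha\to(\prod_\alpha M_\alpha/IM_\alpha)^\dis$, but it does not lie in $I\prod_\alpha M_\alpha$, nor even in its closure $\bigcap_n\bigl(I\prod_\alpha M_\alpha+\prod_\alpha t^nM_\alpha\bigr)$: every coordinate of an element of $I\prod_\alpha M_\alpha+\prod_\alpha t^nM_\alpha$ lies in a single finitely generated subideal $(a_1,\dots,a_m,t^n)\subset I$, and reducing modulo $t$ this would force every variable $y_\alpha$ into an ideal of $\F_p[y_1,y_2,\dots]$ generated by finitely many polynomials without constant term, which is impossible (set the finitely many relevant variables to $0$). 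Hence $I{\prod}^{\square,\un}_{\alpha}M_\alpha\subsetneq\prod_\alpha IM_\alpha$ and \eqref{canlim11} is false for this $I$. So your argument proves all of \eqref{canlim11} that is actually true, namely the case of finitely generated $I$; the defect is inherited from the paper's formulation, which should restrict \eqref{canlim11} to finitely generated $I\in\cP(k)$ (a cofinal family, by the finiteness axiom). That restricted form is also all that is ever used, e.g.\ inside the limit over $\cP(k)$ in \eqref{indsystemsq} in the proof of Lemma~\ref{limduals1}. You should state this restriction explicitly rather than appeal to cofinality, which misleadingly suggests that the general case follows.
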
 
\end{subsection}
\begin{subsection}{Tensor product}
For two objects $M$ and $N$ of $\cLMu_k$, we define  $M \otimes^\un_k N \in \cLMu_k$ as $M \otimes_k N$ equipped with the topology with basis of open $k$-submodules $\{P \otimes_k N + M \otimes_k Q\}$ for $P \in \cP(M)$ and $Q \in \cP(N)$. 
\begin{rmk} \label{corep} For $M,N, X \in \cLMu_k$, let $\Bil^\un_k(M \times N,X)$ denote the $k$-module of $k$-bilinear functions $M \times N \longrightarrow X$ which are uniformly continuous for the product uniformity of $M \times N$. Then 
$\otimes^\un_k: M \times N \longrightarrow M \otimes^\un_k N$  belongs to $\Bil^\un_k(M \times N,M \otimes^\un_k N)$ and 
the pair $(M \otimes^\un_k N, \otimes^\un_k )$ corepresents the functor 
\beq \label{corep1}
\begin{split}
\cLMu_k &\longrightarrow \Mod_k \\
X & \longmapsto \Bil^\un_k(M \times N,X) \;.
\end{split}
\eeq
\end{rmk} 

For two objects $M$ and $N$ of $\cCLMu_k$, we set 
\beq \label{tensproj0}
M \wt^\un_k N = \widehat{M \otimes^\un_k N} =\limit^\un_{P \in \cP(M),Q \in \cP(N)} ( M/P \otimes_k N/Q ) \;.
\eeq
The category $\cCLMu_k$, equipped with the tensor product $ \wt^\un_k$, which coincides with the one 
 of \cite[{$\bf 0$}.7.7]{EGA} (see also \cite[Chap. III, \S 2, Exer. 28]{algebracomm}),  is a symmetric monoidal category.   
For $M \in \cCLMou_k$, the functor 
$$ 
\cCLMu_k \longrightarrow \cCLMu_k  \;\;,\;\;
X  \longmapsto M \wt^\un_k X\;,
$$
restricts to a functor
\beq \label{rightex}
 \begin{split}
M \wt^\un_k -:\cCLMou_k &\longrightarrow \cCLMou_k \\
X &\longmapsto M \wt^\un_k X \;.
\end{split} \eeq
It follows from \cite[Lemma 15.1.27 (i)]{GR} that the functor \eqref{rightex} is  \emph{strongly right exact} in the sense that it preserves  cokernels of arbitrary morphisms in $\cCLMou_k$ and \emph{left exact} in the sense that it preserves  kernels of strict morphisms in $\cCLMou_k$ \cite[Defn. 1.1.12]{schneiders}. In other words, the functor \eqref{rightex} preserves cokernels and images in $\cCLMou_k$. 
\begin{rmk} \label{corepcompl} For $M,N \in \cCLMu_k$, 
$\wt^\un_k: M \times N \longrightarrow M \wt^\un_k N$  belongs to $\Bil^\un_k(M \times N,M \wt^\un_k N)$ and 
the pair $(M \wt^\un_k N, \wt^\un_k )$ corepresents the restriction to $\cCLMu_k$ of the
functor \eqref{corep1}.
\end{rmk} 
\begin{cor}\label{tenscan}  
If $M$, $N$ are canonical $k$-modules then 
 $M \wt_k^\un N$
 is canonical. \end{cor}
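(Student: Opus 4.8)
The plan is to prove canonicity by computing the tensor product topology explicitly and recognising $M \wt^\un_k N$ as the completion of the algebraic module $M \otimes_k N$ in its naive canonical topology, whose canonicity is exactly the content of Remark~\ref{complfop}. Concretely, since a canonical module carries, by definition, the naive canonical topology, the families $\{IM\}_{I \in \cP(k)}$ and $\{JN\}_{J \in \cP(k)}$ are cofinal in $\cP(M)$ and $\cP(N)$ respectively. I would feed this into \eqref{tensproj0}, using the standard isomorphism $(M \otimes_k N)/(P \otimes_k N + M \otimes_k Q) \cong M/P \otimes_k N/Q$, to rewrite
$$
M \wt^\un_k N = \limit_{I,J \in \cP(k)} \l(M/IM \otimes_k N/JN\r)^\dis \;.
$$

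First I would pass to the diagonal. Since $\cP(k)$ is directed downward under intersection (indeed $I \cap J \supseteq IJ$, which is open by the op condition), the diagonal $I=J$ is cofinal in $\cP(k) \times \cP(k)$, so the limit collapses to $M \wt^\un_k N = \limit_{I \in \cP(k)} (M/IM \otimes_k N/IN)^\dis$. Next I would invoke the elementary base-change identity $M/IM \otimes_k N/IN \cong (M \otimes_k N)/I(M \otimes_k N)$, which follows from right exactness of the tensor product together with the triviality of the $I$-action on either quotient factor. This identifies the system of discrete quotients appearing above with $\{(M \otimes_k N)/I(M \otimes_k N)\}_{I \in \cP(k)}$, and hence, as objects of $\cCLMu_k$,
$$
M \wt^\un_k N = \limit_{I \in \cP(k)} \l((M \otimes_k N)/I(M \otimes_k N)\r)^\dis = \nwhat{M \otimes_k N} \;,
$$
the completion of $M \otimes_k N$ in its naive canonical topology.

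It then remains only to apply Remark~\ref{complfop}, according to which the topology of $\nwhat{M \otimes_k N}$ is again the naive canonical topology, so that $M \wt^\un_k N$ is canonical. The only genuinely nontrivial input is Remark~\ref{complfop}, which itself rests on the standing hypotheses on $k$ via \cite[Rmk.~8.3.3]{GR}; the cofinality reductions and the base-change isomorphism are routine. The one point I would take care to verify is that the successive rewritings are isomorphisms of topological, and not merely algebraic, $k$-modules: since every term carries the discrete topology and all transition maps are the canonical surjections, each step is an isomorphism of cofiltered systems of discrete modules, whence of their limit (weak) topologies, so the identification with $\nwhat{M \otimes_k N}$ holds in $\cCLMu_k$.
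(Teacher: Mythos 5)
Your proof is correct, but it follows a genuinely different route from the paper's. The paper argues categorically: it picks strict epimorphisms $\phi:k^{(A,\un)} \to M$ and $\psi:k^{(B,\un)} \to N$ (available by Theorem~\ref{canmod}), notes $k^{(A,\un)} \wt^\un_k k^{(B,\un)} = k^{(A \times B,\un)}$, factors $\phi \wt^\un_k \psi$ as $(\phi \wt^\un_k \id_N)\circ(\id \wt^\un_k \psi)$, and uses the (strong) right exactness of the functor \eqref{rightex} to conclude that $\phi \wt^\un_k \psi$ is a strict epimorphism; canonicity of $M \wt^\un_k N$ then follows because, by Theorem~\ref{canmod}, canonical modules are exactly the quotients of the objects $k^{(A,\un)}$. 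You instead compute the topology directly from the definition \eqref{tensproj0}: cofinality of $\{IM\}$ and $\{JN\}$ in $\cP(M)$, $\cP(N)$ (which is just the definition of canonical), the diagonal reduction in $\cP(k)\times\cP(k)$, and the base-change isomorphism $M/IM \otimes_k N/IN \cong (M\otimes_k N)/I(M\otimes_k N)$ identify $M \wt^\un_k N$ with $\nwhat{M \otimes_k N}$, whereupon Remark~\ref{complfop} (resting on \cite[Rmk.~8.3.3]{GR} and the standing hypotheses on $k$) finishes the argument; all the identifications are of cofiltered systems of discrete quotients, so they hold in $\cCLMu_k$ and not merely algebraically. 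The trade-off: the paper's proof leans on Theorem~\ref{canmod} (proved in \cite{closed}) and the exactness input from \cite[Lemma 15.1.27]{GR}, and it generalizes readily to other strict-quotient arguments; your proof is more elementary and self-contained, and it yields strictly more information --- namely the explicit identification $M \wt^\un_k N = \nwhat{M \otimes_k N}$ with the naive canonical topology, which is precisely the content of Remark~\ref{tensnaive} and is the identity the paper itself uses at the start of the proof of Proposition~\ref{tensind}.
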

\begin{proof}  Let $\phi:k^{(A,\un)} \to M$ and $\psi:k^{(B,\un)} \to N$ be strict epimorphisms. Then 
$k^{(A,\un)} \wt^\un_k k^{(B,\un)} = k^{(A \times B,\un)}$, and the morphism $\phi  \wt^\un_k \psi : k^{(A \times B,\un)} \to 
M \wt_k^\un N$ is a strict epimorphism, as well. In fact, $\phi  \wt^\un_k \psi$ decomposes into the product
$$ k^{(A,\un)} \wt^\un_k k^{(B,\un)} \map{\id_{k^{(A,\un)}} \wt^\un_k \psi} k^{(A,\un)} \wt^\un_k N 
\map{\phi  \wt^\un_k  \id_N}
M \wt_k^\un N
$$
The functor $\cCLMou_k \longrightarrow \cCLMou_k$,  $X \longmapsto k^{(A,\un)} \wt^\un_k X$ (resp. $Y \longmapsto Y \wt^\un_k  N$)  is  right exact, so that both  $\id_{k^{(A,\un)}} \wt^\un_k \psi$ and $\phi  \wt^\un_k  \id_N$ are strict epimorphisms, and therefore so is their composition $\phi  \wt^\un_k \psi$ \cite[\S 1.1.3]{schneiders}. 
\end{proof}
\begin{rmk}\label{tensnaive} For $M, N$ canonical, $M \wt_k^\un N = M \otimes^\un_k N$ is simply $M \otimes_k N$ equipped with the naive canonical topology. 
\end{rmk}
\begin{defn}\label{proflat} An object $P \in \cCLMou_k$ is \emph{pro-flat} if, for any $I \in \cP(k)$, $(P/IP)^\un$ is flat in $\Mod_{k/I}$. 
\end{defn}
According to \cite[Lemma 15.1.27 (ii)]{GR}, if $P \in \cCLMou_k$ is pro-flat, the functor 
$$P \wt^\un_k - : \cCLMou_k \map{} \cCLMou_k
$$ 
is left exact in the above sense \ie it preserves kernels of strict morphisms.  
  \begin{rmk} \label{cantens} The functor $(-)^\can : \cCLMu_k \longrightarrow \cCLMcan_k$ does not commute in general  with $\wt^\un_k$.
\end{rmk}
\begin{lemma} \label{tensfunct} \ben
\item Let $\{M_\alpha\}_{\alpha \in A}$ (resp.  $\{N_\beta\}_{\beta \in B}$) be an inductive system in $\cCLMu_k$ and let 
 $$
 M = \colimit_{\alpha \in A}^\un M_\alpha \;\;,\;\;  N = \colimit_{\beta \in B}^\un N_\beta
 $$
 in $\cCLMu_k$. Then we have a natural morphism
 \beq  \label{tensfunct1}
 \colimit_{\alpha,\beta}^\un (M_\alpha \wt^\un_k N_\beta) \map{} M \wt^\un_k N \;.
 \eeq
 \item Let $\{M_\alpha\}_{\alpha \in A}$ (resp.  $\{N_\beta\}_{\beta \in B}$) be a  projective system in $\cCLMu_k$ and let 
 $$
 M = \limit_{\alpha \in A} M_\alpha \;\;,\;\;  N = \limit_{\beta \in B} N_\beta
 $$
 in $\cCLMu_k$. Then we have a natural morphism
 \beq  \label{tensfunct2}
 M \wt^\un_k N \map{}   \limit_{\alpha,\beta} (M_\alpha \wt^\un_k N_\beta)\;.
 \eeq
 \een
 \end{lemma}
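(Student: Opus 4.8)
The plan is to obtain both morphisms formally, using only the bifunctoriality of the completed tensor product $\wt^\un_k$ on $\cCLMu_k$ — which is part of the symmetric monoidal structure recorded above — together with the universal properties of the colimit $\colimit^\un$ and of the limit $\limit$ in $\cCLMu_k$. In neither case is an isomorphism claimed: the content is only the existence of a canonical comparison arrow, so no completion estimate or analysis of the open-submodule bases of \eqref{tensproj0} will be needed.

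For (1), let $\iota_\alpha : M_\alpha \to M$ and $j_\beta : N_\beta \to N$ denote the structural morphisms of the two colimit cocones. Applying the bifunctor $\wt^\un_k$, I would form for each $(\alpha,\beta) \in A \times B$ the $\cCLMu_k$-morphism $\iota_\alpha \wt^\un_k j_\beta : M_\alpha \wt^\un_k N_\beta \to M \wt^\un_k N$. First I would check that this family is a cocone on the inductive system $\{M_\alpha \wt^\un_k N_\beta\}_{(\alpha,\beta)}$ indexed by the product poset $A\times B$: for a transition $(\alpha,\beta)\to(\alpha',\beta')$ with transition maps $u_{\alpha\alpha'}$, $v_{\beta\beta'}$, functoriality gives
$$
(\iota_{\alpha'}\wt^\un_k j_{\beta'})\circ(u_{\alpha\alpha'}\wt^\un_k v_{\beta\beta'}) = (\iota_{\alpha'}\circ u_{\alpha\alpha'})\wt^\un_k(j_{\beta'}\circ v_{\beta\beta'}) = \iota_\alpha \wt^\un_k j_\beta,
$$
the last equality being the cocone identities defining $M$ and $N$. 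Since the target $M \wt^\un_k N$ lies in $\cCLMu_k$, the universal property of $\colimit^\un_{\alpha,\beta}(M_\alpha\wt^\un_k N_\beta)$ then yields the unique morphism \eqref{tensfunct1}.

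For (2) I would proceed symmetrically: write $p_\alpha : M \to M_\alpha$ and $q_\beta : N \to N_\beta$ for the projections of the two limit cones and form $p_\alpha \wt^\un_k q_\beta : M \wt^\un_k N \to M_\alpha \wt^\un_k N_\beta$. The same functoriality computation, now invoking the cone identities $u \circ p = p$ and $v \circ q = q$, shows that $\{p_\alpha \wt^\un_k q_\beta\}$ is a cone on the projective system $\{M_\alpha \wt^\un_k N_\beta\}_{(\alpha,\beta)}$, whence the universal property of $\limit_{\alpha,\beta}(M_\alpha \wt^\un_k N_\beta)$ in $\cCLMu_k$ produces the unique morphism \eqref{tensfunct2}. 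Naturality in the two systems is automatic, being built into the universal factorizations.

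The only point requiring care — and the closest thing to an obstacle — is the assertion that each $\iota_\alpha \wt^\un_k j_\beta$ (resp.\ $p_\alpha \wt^\un_k q_\beta$) is genuinely a $\cCLMu_k$-morphism, i.e.\ continuous for the completed tensor topology; but this is exactly the statement that $\wt^\un_k$ is a bifunctor on $\cCLMu_k$, which has already been established. Granting that, the argument is purely formal, and I expect no finer topological analysis to be required.
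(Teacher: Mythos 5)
Your proof is correct and is exactly the argument the paper has in mind: the paper's own proof of Lemma~\ref{tensfunct} is simply ``Clear.'', the point being that the comparison morphisms follow formally from bifunctoriality of $\wt^\un_k$ (part of the symmetric monoidal structure of $\cCLMu_k$) together with the universal properties of $\colimit^\un$ and $\limit$ in $\cCLMu_k$. Your write-up fills in precisely those formal cocone/cone verifications, so nothing further is needed.
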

 \begin{proof} Clear. 
 \end{proof}
 \begin{prop} \label{tensind}    $\wt^\un_k$ commutes with colimits in $\cCLMcan_k$. 
\end{prop}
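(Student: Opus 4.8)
The plan is to reduce at once to a single variable. Since $\wt^\un_k$ is symmetric monoidal, it is enough to prove separate commutation with colimits in each variable, and by symmetry to treat one of them. So fix a canonical module $N$ and an inductive system $\{M_\alpha\}_{\alpha\in A}$ in $\cCLMcan_k$, and prove that the natural comparison morphism of Lemma~\ref{tensfunct} (1),
$$
\colimit^\un_{\alpha} (M_\alpha \wt^\un_k N) \map{} (\colimit^\un_\alpha M_\alpha)\wt^\un_k N \;,
$$
is an isomorphism. By Remark~\ref{limcan} the subcategory $\cCLMcan_k$ is closed under $\colimit^\un$, so the left-hand side is canonical (each $M_\alpha \wt^\un_k N$ is canonical by Corollary~\ref{tenscan}), and the right-hand side is canonical by Corollary~\ref{tenscan}; thus both sides lie in $\cCLMcan_k$.

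My approach is to test this morphism after reduction modulo every open ideal $I$ of $k$. By the finiteness condition on $k$ the finitely generated open ideals are cofinal in $\cP(k)$, so I may let $I$ range only over finitely generated open ideals; and since every canonical module $P$ satisfies $P = \limit_{I} (P/IP)^\dis$ (the naive canonical topology being complete, cf. Remark~\ref{8.3.3} and Corollary~\ref{canlim}), it suffices to produce compatible isomorphisms of the discrete $k/I$-modules obtained by reducing both sides modulo $I$. The computation rests on two compatibilities. First, for canonical $M,N$ one has $M\wt^\un_k N = M\otimes_k N$ with the naive canonical topology (Remark~\ref{tensnaive}), whence the purely algebraic identity $(M\wt^\un_k N)/I(M\wt^\un_k N) = (M/IM)\otimes_{k/I}(N/IN)$ by right exactness of the tensor product. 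Second, for an inductive system in $\cCLMcan_k$ one has, by \eqref{indu1}, $\colimit^\un_\alpha M_\alpha = \nwhat{M}$ with $M = \colimit_\alpha M_\alpha$ the algebraic colimit, and reduction modulo $I$ gives $(\colimit^\un_\alpha M_\alpha)/I(\colimit^\un_\alpha M_\alpha) = M/IM = \colimit_\alpha (M_\alpha/IM_\alpha)$, the last equality because $-\otimes_k k/I$ commutes with colimits.

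Granting these, reduction modulo $I$ turns the comparison morphism into
$$
\colimit_\alpha\big((M_\alpha/IM_\alpha)\otimes_{k/I}(N/IN)\big)\map{} \big(\colimit_\alpha(M_\alpha/IM_\alpha)\big)\otimes_{k/I}(N/IN)\;,
$$
which is an isomorphism because over the discrete ring $k/I$ the tensor product, being a left adjoint, commutes with colimits. These isomorphisms are compatible with the transition maps in $I$, so passing to $\limit_I$ recovers the claimed isomorphism (and identifies it with the natural map above).

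The main obstacle is the second compatibility: that reduction modulo a finitely generated open ideal $I$ commutes with $\colimit^\un$ for canonical modules, equivalently that $\nwhat{M}/I\nwhat{M} = M/IM$ for the naive canonical completion. This is precisely where the hypotheses on $k$ enter, through $\ol{IM_\alpha}=IM_\alpha$ for finitely generated $I$ (\cite[Rmk. 8.3.3 (iv)]{GR}, already invoked in Remark~\ref{limcan}); it is what makes the naive canonical completion behave well under mod-$I$ reduction. Everything else in the argument is formal, reducing to the classical cocontinuity of $\otimes_{k/I}$ over the discrete ring $k/I$.
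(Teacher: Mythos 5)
Your proposal is correct and follows essentially the same route as the paper's proof: both reduce modulo open ideals $I$, use that $\colimit^\un$ in $\cCLMcan_k$ and $\wt^\un_k$ of canonical modules reduce mod $I$ to the algebraic colimit and algebraic tensor product over $k/I$ (Remark~\ref{limcan}, Remark~\ref{tensnaive}, Corollary~\ref{tenscan}), invoke cocontinuity of $\otimes_{k/I}$ over the discrete ring $k/I$, and reassemble via $\limit_{I}$ using completeness of canonical modules in the naive canonical topology. The only differences are cosmetic: you fix one variable and appeal to symmetry where the paper treats the bi-indexed system $\{M_\alpha \wt^\un_k N_\beta\}$ directly, and you make explicit the restriction to finitely generated open ideals (cofinal by the finiteness condition) that the paper leaves implicit when invoking Remark~\ref{limcan}.
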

\begin{proof}
 Let $\{M_\alpha\}_{\alpha \in A}$ (resp.  $\{N_\beta\}_{\beta \in B}$) be an inductive system in $\cCLMcan_k$ and let 
 $$
 M = \colimit_{\alpha \in A}^\un M_\alpha \;\;,\;\;  N = \colimit_{\beta \in B}^\un N_\beta
 $$
 in $\cCLMcan_k$. 
From Remark~\ref{limcan} we know that 
$$
 M/IM = \colimit_{\alpha \in A} (M_\alpha/IM_\alpha) \;\;,\;\; N/IN = \colimit_{\beta \in B} (N_\beta/IN_\beta) \;.
 $$
Now, for any $X \in \cCLMcan_k$, 
$$
X =  \limit_{I\in \cP(k)} X/IX \;.
$$
Then we have
$$
M \wt^\un_k N = \limit_{I\in \cP(k)} (M \wt^\un_k N)/I(M \wt^\un_k N) =  \limit_{I\in \cP(k)} (M \otimes_k N)/I(M \otimes_k N) = $$
$$\limit_{I\in \cP(k)} ( M/IM \otimes_{k/I} N/IN )= 
\limit_{I\in \cP(k)}  (\colimit_{\alpha \in A} (M_\alpha/IM_\alpha) \otimes_{k/I}  \colimit_{\beta \in B}  (N_\beta/IN_\beta)) =
$$
$$
\limit_{I\in \cP(k)}  \colimit_{\alpha \in A, \beta \in B} (M_\alpha/IM_\alpha \otimes_{k/I} N_\beta/IN_\beta) =
\limit_{I\in \cP(k)}  \colimit_{\alpha \in A, \beta \in B} (M_\alpha \otimes_k N_\beta)/I (M_\alpha \otimes_k N_\beta) =
$$
$$
\limit_{I\in \cP(k)}  \colimit_{\alpha \in A, \beta \in B} (M_\alpha \wt^\un_k N_\beta)/I(M_\alpha \wt^\un_k N_\beta) = \colimit^\un_{\alpha \in A, \beta \in B} M_\alpha \wt^\un_k N_\beta \;.
$$
\end{proof}
\end{subsection}
\end{section}
\begin{section}{Duality} \label{duality}
\begin{subsection}{Weak and strong duals}  \label{strongduality}
 We now define two internal $\hom$ functors in $\cCLMu_k$. Their difference depends  on the fact that 
 for two objects $M,N$ of $\Mod_k$, there are two natural topologies on the $k$-module $\hom_k(M,N) = \limit_F \hom_k(F,N)$, where $F$ runs over the finitely generated submodules of $M$. Namely, $\hom_k(M,N)$ may be equipped with  the discrete topology or  with  the weak  topology of the natural projections  $\hom_k(M,N) \to \hom_k(F,N)$ where, for $F$ as before, $\hom_k(F,N)$ is given  the discrete topology. We denote by $\hom_k(M,N)^\dis$ (resp. $\hom_k(M,N)^\pfg$, where ``pfg" stands for ``pro-finitely generated") the corresponding objects of $\cCLMu_k$. 
 We first observe that   there is a canonical isomorphism  in  $\Mod_k$
 \beq \label{cathom}
\hom_{\cCLMu_k}(M,N) = \limit_{Q \in \cP(N)}  \colimit_{P \in \cP(M)}\hom_k(M/P,N/Q) \;.
\eeq
We  set the following 
\begin{defn}  \label{homtypes} For  any objects $M,N$  of $\cCLMu_k$, we set  
\beq \label{weakhom} 
\hom_\weak(M,N) = \limit_{Q \in \cP(N)}  \colimit^\un_{P \in \cP(M)} \hom_k(M/P,N/Q)^\pfg 
\eeq
\beq \label{stronghom} 
\hom_\strong(M,N) = \limit^{\square,\un}_{Q \in \cP(N)}  \colimit^\un_{P \in \cP(M)} \hom_k(M/P,N/Q)^\dis    
\eeq 
In particular, we set $M'_\strong := \hom_\strong(M,k)$ (resp. $M'_\weak := \hom_\weak(M,k)$) and call it the \emph{strong} (resp. \emph{weak}) dual of $M$. 
\end{defn}

\begin{prop} \label{unifcompl} 
For  any   $M,N \in \cCLMu_k$, there is a  natural  $k$-linear isomorphism
\beq \label{unifcomplform2} \hom_{\cCLMu_k}(M,N) \iso \hom_\strong(M,N)^\for \;.
\eeq
 \end{prop}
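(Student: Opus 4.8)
The plan is to apply the forgetful functor $(-)^\for$ to the defining formula \eqref{stronghom} and match the outcome term by term with \eqref{cathom}. First I would invoke \eqref{sqlimit1}, which says that the module underlying a uniform box limit is the ordinary projective limit of the underlying modules, to get
\beqa
\hom_\strong(M,N)^\for = \limit_{Q \in \cP(N)} \l(\colimit^\un_{P \in \cP(M)} \hom_k(M/P,N/Q)^\dis\r)^\for \;.
\eeqa
Comparing with \eqref{cathom}, it then suffices to prove, for each fixed $Q \in \cP(N)$, that the underlying module of the inner uniform colimit coincides with the ordinary colimit $C_Q := \colimit_{P \in \cP(M)} \hom_k(M/P,N/Q)$ computed in $\Mod_k$.

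The key observation, and the crux of the argument, is that the completion built into $\colimit^\un$ is here trivial, because everything in sight is annihilated by a single open ideal. Indeed, since $N \in \cCLMu_k$ is bounded, its topology is weaker than the naive canonical one, so there is $I_Q \in \cP(k)$ with $I_Q N \subset Q$; hence $N/Q$ is a $k/I_Q$-module. Consequently every $k$-linear map $M/P \to N/Q$ is killed by $I_Q$, so each $\hom_k(M/P,N/Q)^\dis$, and therefore the ordinary colimit $C_Q$, is a discrete $k/I_Q$-module; in particular $I_Q C_Q = 0$.

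Next I would unwind the definition of $\colimit^\un$ for the inductive system $\{\hom_k(M/P,N/Q)^\dis\}_P$. A basis of open submodules of that colimit consists of those submodules $N' \subset C_Q$ whose preimage in each $\hom_k(M/P,N/Q)^\dis$ is open and which contain $I\,C_Q$ for some $I \in \cP(k)$. The first requirement is vacuous because the modules of the system are discrete, and the second is vacuous because $I_Q C_Q = 0 \subset N'$ for every $N'$; hence the colimit topology is discrete, and the discrete topology is already separated and complete. Therefore $\colimit^\un_P \hom_k(M/P,N/Q)^\dis = C_Q^\dis$, whose underlying module is exactly $C_Q$. Substituting back yields $\hom_\strong(M,N)^\for = \limit_Q C_Q = \hom_{\cCLMu_k}(M,N)$, which is precisely \eqref{cathom}.

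Finally I would observe that every identification used above — the application of \eqref{sqlimit1}, the collapse to $C_Q^\dis$, and \eqref{cathom} — is functorial in $M$ and $N$, so the resulting isomorphism is natural; this is routine bookkeeping that I do not expect to cause difficulty. The only genuinely substantive step is the middle one: recognizing that the boundedness of $N$ forces $C_Q$ to be killed by an open ideal, which reduces the completion implicit in $\colimit^\un$ to the identity. Everything else is formal manipulation of the limit/colimit formulas already established.
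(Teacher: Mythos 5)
Your proof is correct and follows essentially the same route as the paper's: both arguments hinge on the observation that boundedness of $N$ yields $I_Q \in \cP(k)$ with $I_Q N \subset Q$, so each inner colimit is a discrete $k/I_Q$-module, the completion in $\colimit^\un$ is trivial, and the outer (box) limit then matches \eqref{cathom} term by term. The only difference is presentational: you spell out the unwinding of the $\colimit^\un$ topology and the appeal to \eqref{sqlimit1}, which the paper asserts more briefly.
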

\begin{proof}\hfill \ben
\item
 In formula \eqref{stronghom} which defines 
 $\hom_\strong(M,N)$, the topological $k$-module 
  $$C(Q):= \colimit^\un_{P \in \cP(M)}\hom_k(M/P,N/Q)^\dis$$
 is a discrete $k/I$-module for any $I \in \cP(k)$ such that $IN \subset Q$
 and therefore identifies $k$-linearly with 
 the colimit 
 $$\colimit_{P \in \cP(M)}\hom_k(M/P,N/Q)\;,
 $$ 
 taken  in $\Mod_k$.  
 \item 
Then 
 $$\limit_{Q \in \cP(N)} C(Q) \in \cCLMu_k$$ 
  identifies $k$-linearly with the r.h.s. of \eqref{cathom}. That is, there is a canonical $k$-linear isomorphism 
\beq \label{unifcomplform1}\hom_{\cCLMu_k}(M,N) \iso \hom_\strong(M,N)^\for \;.
\eeq
\een
\end{proof}
\begin{rmk} \label{weakstrong} 
\hfill\ben 
\item For any $M,N \in \cCLMu_k$,  the topology induced on $\hom_{\cCLMu_k}(M,N)$ by the isomorphism \eqref{unifcomplform2} is usually called the 
\emph{strong topology} of $\hom_{\cCLMu_k}(M,N)$. We conclude that $\hom_{\cCLMu_k}(M,N)$ is separated and complete in its strong topology.  The strong topology of $\hom_{\cCLMu_k}(M,N)$ coincides with the  topology of uniform convergence on $M$. 

\item We have a natural injective  morphism with dense image
$$\hom_\strong(M,N)  \longrightarrow \hom_\weak(M,N)
$$ 
in $\cCLMu_k$.   
The topology induced on $\hom_{\cCLMu_k}(M,N)$ by the injective map
$$
 \hom_{\cCLMu_k}(M,N) \hookrightarrow \hom_\weak(M,N)^\for 
$$
 is called the 
\emph{weak topology} of $\hom_{\cCLMu_k}(M,N)$. 
Then   $\hom_\weak(M,N)$   is the completion of 
$\hom_{\cCLMu_k}(M,N)$ for its weak topology. The  weak topology of $\hom_{\cCLMu_k}(M,N)$ coincides with the  topology of simple convergence on $M$.  
\een
\end{rmk}
\begin{cor} \label{dualcompl}  Let $M \in \cCLMcan_k$. Then 
\beq \label{dualcompl1} M'_\strong = \limit_{I \in \cP(k)} (M/IM)'_\strong \eeq
and
\beq \label{dualcompl2}  M'_\weak = \limit_{I \in \cP(k)} (M/IM)'_\weak \eeq
 \end{cor}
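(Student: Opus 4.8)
The plan is to unwind Definition~\ref{homtypes} for $N=k$, using that $M$ is canonical so that $\{IM\}_{I\in\cP(k)}$ is a basis of $\cP(M)$, and to reduce everything to the behaviour of $\hom$ into the discrete quotients $k/J$. I begin with the strong dual. Restricting $P$ to $\{IM\}$ in \eqref{stronghom}, the first key step is that for each fixed $J\in\cP(k)$ the inner colimit collapses to a module of continuous maps into a discrete quotient: $\colimit^\un_{I}\hom_k(M/IM,k/J)^\dis=\hom_{\cCLMu_k}(M,k/J)^\dis$. Indeed, every continuous $\phi\colon M\to k/J$ has open kernel and hence factors through some $M/IM$; moreover all the modules in the colimit are annihilated by $J$, so the defining condition for the $\colimit^\un$-topology is met by taking $I=J$, and the colimit is discrete. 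This gives $M'_\strong=\limit^{\square,\un}_{J}\hom_{\cCLMu_k}(M,k/J)^\dis$.

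Next I identify the terms. Any $k$-linear $\phi\colon M\to k/J$ satisfies $\phi(rx)=r\phi(x)=0$ for $r\in J$, hence kills $JM$ and factors uniquely through $M/JM$; conversely every $k/J$-linear map $M/JM\to k/J$ arises this way. Therefore $\hom_{\cCLMu_k}(M,k/J)=\hom_{k/J}(M/JM,k/J)=(M/JM)'_\strong$, the strong dual of the discrete $k/J$-module $M/JM$ formed over the discrete ring $k/J$; with its uniform-convergence topology this object is discrete, hence $k$-canonical. So $M'_\strong=\limit^{\square,\un}_{J}(M/JM)'_\strong$. The transition maps here come, for $I_2\subseteq I_1$, from $k/I_2\twoheadrightarrow k/I_1$ and go $(M/I_2M)'_\strong\to(M/I_1M)'_\strong$, presenting $\{(M/IM)'_\strong\}$ as a genuine projective system (this is what makes the limit in the statement the correct operation, rather than a colimit).

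To finish \eqref{dualcompl1} I invoke Corollary~\ref{canlim}: the uniform box limit of a system of canonical modules coincides with the limit taken in $\cCLMcan_k$, i.e. with the underlying limit module equipped with the naive canonical topology, which is exactly the strong topology carried by $M'_\strong$. Re-indexing $J$ as $I$ yields \eqref{dualcompl1}. For \eqref{dualcompl2} the argument is parallel, with $(-)^\dis$ replaced by $(-)^\pfg$ and with the \emph{ordinary} outer limit of \eqref{weakhom} in place of the box limit: the collapse now reads $\colimit^\un_{I}\hom_k(M/IM,k/J)^\pfg=\hom_{\cCLMu_k}(M,k/J)^\pfg$ (topology of simple convergence), the same factorization gives $\hom_{\cCLMu_k}(M,k/J)^\pfg=(M/JM)'_\weak$, and since both sides use the plain $\limit$ no appeal to Corollary~\ref{canlim} is needed.

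I expect the only real difficulty to be the topological bookkeeping rather than the module-level identities, which are formal consequences of the fact that maps into a $J$-killed target factor through $M/JM$. The content is to check that the inner $\colimit^\un$ is honestly discrete (resp. carries the $\pfg$ topology), and, in the strong case, that the outer box limit reproduces the strong topology on $M'_\strong$; this last point is precisely where Corollary~\ref{canlim} (box limit $=$ canonical limit for canonical modules) does the real work, and it is also the reason for the asymmetry whereby \eqref{dualcompl1} needs the canonical limit while \eqref{dualcompl2} is an ordinary one.
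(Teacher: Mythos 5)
Your proof is correct and follows essentially the same route as the paper's own (two-line) proof: restrict the inner colimit in Definition~\ref{homtypes} to the cofinal system $\{IM\}_{I\in\cP(k)}$ (this is where canonicity of $M$ enters), collapse it using the fact that any map into the $J$-killed discrete target $k/J$ factors through $M/JM$, and identify the resulting terms with the duals of the $M/IM$ taken over the discrete rings $k/I$. Your extra care --- making explicit that $(M/IM)'_?$ must be read as a dual over $k/I$ rather than over $k$, and invoking Corollary~\ref{canlim} to handle the topology of the outer (box) limit in the strong case --- only spells out identifications that the paper's proof leaves implicit.
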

\begin{proof}
The first assertion holds because 
$$M'_\strong  = \limit_{I \in \cP(k)} \hom_k(M/IM,k/I)^\dis =  \limit_{I \in \cP(k)} (M/IM)'_\strong  \;.$$  
For the second it follows from the definition \eqref{weakhom} that
$$M'_\weak  =  \limit_{I \in \cP(k)}  \hom_k(M/IM,k/I)^\pfg =  \limit_{I \in \cP(k)} (M/IM)'_\weak \;.
$$
 \end{proof}
 \begin{defn} \label{transpose}  
For a morphism $\varphi:M \longrightarrow N$ in $\cCLMu_k$, we get   \emph{transposed morphisms} 
$\varphi^\tu: N'_\strong \longrightarrow M'_\strong$ and $\varphi^\tu: N'_\weak \longrightarrow M'_\weak$. 
\end{defn}
   \end{subsection}

\begin{subsection}{Duals of limits and colimits}
\begin{lemma} \label{limduals1} Let  $(M_\alpha, j_{\alpha,\beta})_{\alpha \leq \beta}$ (resp. $(M_\alpha, \pi_{\alpha,\beta})_{\alpha \leq \beta}$) be an inductive 
(resp.  projective) 
system in $\cCLMu_k$, indexed by the filtered  
poset $(A,\leq)$, and let $N \in \cCLMu_k$. 
Then we have in $\cCLMu_k$ 
  \beq  \label{weakhomlim}
\hom_\weak(\colimit^\un_\alpha M_\alpha,N) = \limit_\alpha \hom_\weak( M_\alpha,N)\;.
\eeq
\beq  \label{boxhomlim}
\hom_\strong(\colimit^\un_\alpha M_\alpha,N) =  \limit^{\square,\un}_\alpha \hom_\strong( M_\alpha,N)   \;;
\eeq
\beq  \label{weakhomprolim}
\hom_? (\limit_\alpha M_\alpha,N) = \colimit^\un_\alpha \hom_?( M_\alpha,N)    
\eeq
where $? = \weak,\strong$.
If $M_\alpha \in \cCLMcan_k$ for all $\alpha$ and $N \in \cCLMcan_k$, as well, we also have 
\beq  \label{weakhomprolimsq}
\hom_? (\limit^{\square,\un}_\alpha M_\alpha,N) = \colimit^\un_\alpha \hom_?( M_\alpha,N)   \;,
\eeq
for $? = \weak,\strong$.
\end{lemma}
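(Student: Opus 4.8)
The plan is to read all four identities as instances of the single principle that the contravariant functors $\hom_\weak(-,N)$ and $\hom_\strong(-,N)$ turn colimits in $\cCLMu_k$ into limits, the only real content being \emph{which} limit (the ordinary one for the weak dual, the box one for the strong dual) together with careful bookkeeping of the completions. I would prove the two colimit identities \eqref{weakhomlim} and \eqref{boxhomlim} first, and then treat the limit identities \eqref{weakhomprolim} and \eqref{weakhomprolimsq} by a parallel but more delicate reduction. Throughout, I would separate the underlying-module statement, which is purely formal, from the identification of topologies, which is where the weak/strong dichotomy actually lives.

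For \eqref{weakhomlim} and \eqref{boxhomlim}, set $M=\colimit^\un_\alpha M_\alpha$. On underlying $k$-modules the universal property of the colimit in $\cCLMu_k$, together with $\hom_\strong(M,N)^\for=\hom_{\cCLMu_k}(M,N)$ from Proposition~\ref{unifcompl}, gives $\hom_{\cCLMu_k}(M,N)=\limit_\alpha\hom_{\cCLMu_k}(M_\alpha,N)$, which is the common underlying module of both sides of each identity. It then remains to match topologies. By Remark~\ref{weakstrong}(2) the weak dual carries the topology of simple convergence; since a continuous map into a discrete quotient $N/Q$ is locally constant and the algebraic colimit $\colimit_\alpha M_\alpha$ is dense in $M$, simple convergence on $M$ is detected stage by stage on the $M_\alpha$, and this is exactly the ordinary limit topology, giving \eqref{weakhomlim}. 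By Remark~\ref{weakstrong}(1) the strong dual carries the topology of uniform convergence; uniform convergence on $M=\colimit^\un_\alpha M_\alpha$ requires a modulus valid \emph{simultaneously} on all the $M_\alpha$ (a single $I\in\cP(k)$ and a single compatible system of opens), which is precisely the defining condition of the uniform box product \eqref{square} and the box limit \eqref{sqlimit}; this yields \eqref{boxhomlim} with $\limit^{\square,\un}_\alpha$.

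For the limit identities I would build the comparison morphism $\colimit^\un_\alpha\hom_?(M_\alpha,N)\to\hom_?(\limit_\alpha M_\alpha,N)$ from the transposes $p_\alpha^\tu$ of the projections $p_\alpha:\limit_\beta M_\beta\to M_\alpha$, and prove it an isomorphism by reducing to discrete coefficients. Using the definitions \eqref{weakhom}, \eqref{stronghom} together with \eqref{indu1}, \eqref{boxdefn} and Corollary~\ref{dualcompl}, I would reduce modulo each $I\in\cP(k)$ and each $Q\in\cP(N)$ to the statement that a continuous map from the cofiltered (box) limit into a \emph{discrete} module factors through one of the projections. This factorization holds because the kernel of such a map is open, and the open submodules of $\limit_\alpha M_\alpha$ are cofinally of the form $p_\alpha^{-1}(P_\alpha)$ with $P_\alpha\in\cP(M_\alpha)$; in the box-limit case \eqref{weakhomprolimsq} the canonicity hypothesis is what lets me describe these open submodules explicitly through Corollary~\ref{canlim}. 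Assembling the factorizations over $\alpha$ produces the filtered colimit, and passing to completions recovers the apex $\colimit^\un_\alpha$; the topology then matches as in the colimit case.

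The hard part will be this limit-to-colimit direction, and I expect two genuine obstacles. First, the factorization initially lands only in $\hom_k(\im(\limit_\beta M_\beta\to M_\alpha/P_\alpha),N/Q)$, i.e.\ in homomorphisms out of the \emph{image} submodule, and promoting such a datum to an element of $\colimit^\un_\alpha\hom_?(M_\alpha,N)$ requires a cofinality (Mittag--Leffler type) argument along the tower, ensuring that some later stage $M_\beta$ surjects onto that image. Second, the completed colimit $\colimit^\un_\alpha$ does not commute formally with the outer limits $\limit_{I\in\cP(k)}$ and $\limit_{Q\in\cP(N)}$, so this interchange must be controlled by hand using the explicit descriptions \eqref{indu1} and \eqref{boxdefn}; this is precisely the place where the canonical hypothesis in \eqref{weakhomprolimsq} is indispensable, since Corollary~\ref{canlim} then renders both the box limit and its reductions modulo $I$ completely explicit.
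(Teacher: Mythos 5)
Your proposal takes essentially the same route as the paper's proof: the colimit formulas are obtained by completing the tautological isomorphism $\hom_{\cCLMu_k}(\colimit^\un_\alpha M_\alpha,N)=\limit_\alpha\hom_{\cCLMu_k}(M_\alpha,N)$ and identifying simple (resp.\ uniform) convergence with the plain (resp.\ box) limit topology, while the limit formulas are proved, exactly as you plan, by reducing modulo $Q\in\cP(N)$ (and, for \eqref{weakhomprolimsq}, modulo $I\in\cP(k)$ via Corollary~\ref{canlim}) and using that the open submodules of $\limit_\alpha M_\alpha$ are cofinally the pullbacks $\pi_\alpha^{-1}(P_\alpha)$ of opens of the stages. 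The two obstacles you single out are precisely the point where the paper reorganizes the resulting double inductive system over indices $\alpha$ and compatible families $(P_\alpha)_\alpha$, so your plan and the printed argument coincide in substance.
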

\begin{proof} Formula  \eqref{weakhomlim} is  obtained by completion of the 
tautological $k$-linear isomorphism
\beq  \label{weakhomlim1}
\hom_{\cCLMu_k}(\colimit^\un_\alpha M_\alpha,N) = \limit_\alpha \, \hom_{\cCLMu_k} ( M_\alpha,N) 
\eeq
for the weak topology of the \emph{l.h.s.} and the limit topology of the weak topologies of each term of the \emph{r.h.s.}.  Let 
$j_\alpha : M_\alpha \to \colimit^\un_\alpha M_\alpha$ denote the canonical morphism, for any $\alpha \in A$. 
The topology of simple convergence on 
$$\hom_{\cCLMu_k}(\colimit^\un_\alpha M_\alpha,N)
$$ has a fundamental system of open submodules 
$$U(\alpha_1,\dots,\alpha_n; P_{\alpha_1},\dots,P_{\alpha_n};Q) \;,$$ 
consisting of the $k$-linear functions $f: \colimit_\alpha M_\alpha \to N$ such that $f(j_{\alpha_i}(P_{\alpha_i})) \subset Q$, for $i=1,2,\dots,n$,  where $P_{\alpha_i}$ is a finitely generated  $k$-submodule of $M_{\alpha_i}$ and $Q$ an open $k$-submodule of $N$. 
It is clear that this topology coincides with the weak topology of the projections 
$\limit_\alpha  \hom_{\cCLMu_k}(M_\alpha,N) \to \hom_\weak(M_\alpha,N)$, for $\alpha \in A$. 
\par \medskip 
 To prove \eqref{boxhomlim} we refer to the equality \eqref{weakhomlim1} and use the definition \eqref{stronghom}.  We pick $Q \in \cP(N)$. 
Then
\beqa \begin{split}
\hom_\strong(\colimit^\un_\alpha M_\alpha,N/Q) &= \limit^{\square,\un}_{P \in \cP(\colimit^\un_\alpha M_\alpha)}\, \hom_k((\colimit^\un_\alpha M_\alpha)/P, N/Q)^\dis =  \\
 \limit^{\square,\un}_{\beta \in B, P_\beta \in \cP(M_\beta)} & \, \hom_k((\colimit_\alpha M_\alpha)/\pi_\beta^{-1}(P_\beta), N/Q)^\dis= \\
 \limit^{\square,\un}_{\alpha \in A, P_\alpha \in \cP(M_\alpha)} &\, \hom_k(M_\alpha/P_\alpha, N/Q)^\dis = \limit^{\square,\un}_\alpha \, \hom_\strong( M_\alpha,N/Q) \;.
\end{split}
\eeqa
Now, for \eqref{weakhomprolim}  and $? = \weak,\strong$,  we pick $Q \in \cP(N)$.   Let $\pi_\alpha :  \limit_\alpha M_\alpha \to  M_\alpha$ denote the canonical morphism, for any $\alpha \in A$.  We set $! = \pfg$ if $? = \weak$ and $! = \dis$ if $? = \strong$,  and compute  
\beq \label{indsystem} \begin{split}
\hom_?(\limit_\alpha M_\alpha,N/Q) &= \colimit^\un_{P \in \cP(\limit_\alpha M_\alpha)} \hom_k((\limit_\alpha M_\alpha)/P, N/Q)^! =  \\
 \colimit^\un_{\beta \in B,P_\beta \in \cP(M_\beta)} & \hom_k((\limit_\alpha M_\alpha)/\pi_\beta^{-1}(P_\beta), N/Q)^!= \\
  \colimit^\un_{\alpha  \in A, P_\alpha \in \cP(M_\alpha)} & \hom_k(M_\alpha/P_\alpha, N/Q)^! \;,
  \end{split}
\eeq 
where  in the last term the cofiltered projective system $\ol{\pi}_{\alpha,\beta}:M_\beta/P_\beta \rightarrow M_\alpha/P_\alpha$ is induced by 
$\pi_{\alpha,\beta}: M_\beta \to M_\alpha$ and $P_\beta \subset \pi_{\alpha,\beta}^{-1}(P_\alpha)$  if $\beta \geq \alpha$, but $P_\beta = \pi_{\alpha,\beta}^{-1}(P_\alpha)$, for $\beta >> \alpha$. For two such systems  $P= (\ol{\pi}_{\alpha,\beta})$, $\ol{\pi}_{\alpha,\beta}:M_\beta/P_\beta \rightarrow M_\alpha/P_\alpha$ and  
$P' = (\ol{\pi}'_{\alpha,\beta})$, $\ol{\pi}'_{\alpha,\beta}:M_\beta/P'_\beta \rightarrow M_\alpha/P'_\alpha$ such that $P'_\alpha \subset P_\alpha$, for any $\alpha \in A$, we have a morphism 
  $\varphi_{P,P'} = (\varphi_\alpha)_\alpha : (\ol{\pi}'_{\alpha,\beta}) \to (\ol{\pi}_{\alpha,\beta})$, \ie a family of commutative diagrams
\beq \label{morfproj}
 \;\;\;  \begin{tikzcd}[column sep=2.5em, row sep=2.5em] 
M_\beta/P'_\beta  \arrow{r}{\varphi_\beta} \arrow{d}{\ol{\pi}'_{\alpha,\beta}}  &  M_\beta/P_\beta   \arrow{d}{\ol{\pi}_{\alpha,\beta}}
\\ 
M_\alpha/P'_\alpha    \arrow{r}{\varphi_\alpha} &M_\alpha/P_\alpha \;\;.
\end{tikzcd}
\eeq
For a further system $P'' = (\ol{\pi}''_{\alpha,\beta})$ and a morphism  $\varphi_{P',P''} : P'' \to P'$, we have $\varphi_{P,P'} \circ \varphi_{P',P''} = \varphi_{P,P''}$. The inductive system in \eqref{indsystem}  then splits into 
$$
\colimit^\un_{\alpha  \in A} \colimit^\un_{(P,\varphi_{P,P'})} \hom_k(M_\alpha/P_\alpha, N/Q)^! =   \colimit^\un_\alpha \hom_?( M_\alpha,N/Q) \;.
$$
\par \smallskip
The proof  of \eqref{weakhomprolimsq}    follows the scheme of \eqref{indsystem}.  Using \eqref{canlim11} we obtain
\beq \label{indsystemsq} \begin{split}
\hom_?(\limit^{\square,\un}_\alpha &M_\alpha,N)  = \limit_{I \in \cP(k)} \hom_k(\limit_\alpha (M_\alpha/I M_\alpha), N/IN)^! =  \\
  \limit_{I \in \cP(k)}  \colimit_{\alpha  \in A}  & \hom_k(M_\alpha/IM_\alpha,   N/IN)^! =\colimit^\un_{\alpha  \in A}\hom_?(M_\alpha,N)  
  \;.
  \end{split}
\eeq  
\end{proof}

   \begin{cor} \label{biduality}  For any family $(M_\alpha)_{\alpha \in A}$ of  $\cCLMu_k$
  we have 
    $$ ({\bigoplus}^\un_{\alpha \in A} M_\alpha )'_\weak= {\prod}_{\alpha \in A}  (M_\alpha)'_\weak \;\;,\;\;  
   ({\bigoplus}^\un_{\alpha \in A} M_\alpha )'_\strong = {\prod}^{\square, \rm u} _{\alpha \in A} (M_\alpha)'_\strong  \; ,$$ 
    $$
  ({\prod}_{\alpha \in A}  M_\alpha)_?'   = {\bigoplus}^\un_{\alpha \in A} (M_\alpha )_?'   \;\;,\;\;\mbox{for $? = \weak,\strong$}
   \; .
   $$
   If, for any  $\alpha \in A$,  $M_\alpha \in \cCLMcan_k$,  we also have 
   $$
  \l( {\prod}^{\square, \rm u} _{\alpha \in A}  M_\alpha \r)'_? =  {\bigoplus}^\un_{\alpha \in A} (M_\alpha )_?' 
  \;\;,\;\;\mbox{for $? = \weak,\strong$} \;.
   $$
   \end{cor}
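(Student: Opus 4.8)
The plan is to reduce every identity to the finite-index case, by presenting the three infinite constructions as filtered (co)limits over the directed poset $\sF(A)$ of finite subsets of $A$, and then to substitute these presentations into the four formulas of Lemma~\ref{limduals1} with $N=k$. I would start from the elementary presentations
$$
{\bigoplus}^\un_{\alpha\in A} M_\alpha=\colimit^\un_{F\in\sF(A)}{\bigoplus}_{\alpha\in F} M_\alpha\ ,\qquad \prod_{\alpha\in A} M_\alpha=\limit_{F\in\sF(A)}\prod_{\alpha\in F} M_\alpha\ ,
$$
with transition maps the obvious inclusions, respectively projections, and (only in the canonical case) the analogous
$$
{\prod}^{\square,\un}_{\alpha\in A} M_\alpha=\limit^{\square,\un}_{F\in\sF(A)}\prod_{\alpha\in F} M_\alpha\ .
$$
For a finite index set the three functors $\bigoplus^\un$, $\prod$, $\prod^{\square,\un}$ coincide, and the defining formulas \eqref{weakhom}, \eqref{stronghom} give at once the finite additivity $\hom_?\bigl({\bigoplus}_{\alpha\in F} M_\alpha,k\bigr)=\prod_{\alpha\in F}(M_\alpha)'_?$ for $?=\weak,\strong$.

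The two statements that do not involve a box product are then immediate. Feeding the first presentation into \eqref{weakhomlim} gives $({\bigoplus}^\un_{\alpha\in A} M_\alpha)'_\weak=\limit_F\hom_\weak({\bigoplus}_{\alpha\in F} M_\alpha,k)=\limit_F\prod_{\alpha\in F}(M_\alpha)'_\weak=\prod_{\alpha\in A}(M_\alpha)'_\weak$, the last step being the presentation of a product as the limit of its finite subproducts. Dually, feeding the second presentation into \eqref{weakhomprolim} (valid for $?=\weak,\strong$) and using $\colimit^\un_F{\bigoplus}_{\alpha\in F}(M_\alpha)'_?={\bigoplus}^\un_{\alpha\in A}(M_\alpha)'_?$ yields $(\prod_{\alpha\in A} M_\alpha)'_?={\bigoplus}^\un_{\alpha\in A}(M_\alpha)'_?$.

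The remaining two identities carry the one genuine difficulty. Substituting the first presentation into \eqref{boxhomlim} gives $({\bigoplus}^\un_{\alpha\in A} M_\alpha)'_\strong=\limit^{\square,\un}_{F\in\sF(A)}\prod_{\alpha\in F}(M_\alpha)'_\strong$, so the task is to identify this uniform box limit of finite products with the uniform box product $\prod^{\square,\un}_{\alpha\in A}(M_\alpha)'_\strong$. This identification is \emph{false} for an arbitrary family: when the factors are not canonical the box limit over $\sF(A)$ is strictly finer than the box product, because the open submodules attached to the larger finite subsets impose coupled constraints that do not factor through single coordinates. It becomes true precisely when the factors are $k$-canonical, for then Corollary~\ref{canlim} identifies both sides with $\prod^\can$ of the underlying modules carrying the naive canonical topology. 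The key remark is thus that a strong dual is always canonical: by \eqref{stronghom} the module $(M_\alpha)'_\strong$ is the uniform box limit over $Q\in\cP(k)$ of the modules $\colimit^\un_{P}\hom_k(M_\alpha/P,k/Q)^\dis$, each of which is discrete and killed by an open ideal, hence canonical, so Corollary~\ref{canlim} makes $(M_\alpha)'_\strong$ canonical. Applying the canonical identity to $L_\alpha:=(M_\alpha)'_\strong$ then delivers the second statement.

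For the fourth identity I would instead use the canonical presentation of the box product displayed above together with \eqref{weakhomprolimsq}, whose hypotheses are satisfied since the $M_\alpha$, the finite products $\prod_{\alpha\in F} M_\alpha$ and $k$ are all canonical; this gives $({\prod}^{\square,\un}_{\alpha\in A} M_\alpha)'_?=\colimit^\un_F\hom_?(\prod_{\alpha\in F} M_\alpha,k)=\colimit^\un_F{\bigoplus}_{\alpha\in F}(M_\alpha)'_?={\bigoplus}^\un_{\alpha\in A}(M_\alpha)'_?$ for $?=\weak,\strong$. The main obstacle throughout is exactly the passage, in the strong/box cases, from the box limit produced by the $\hom$-formulas to the box product appearing in the statement: everything rests on the canonicity of the factors, which is automatic for the strong duals in the second identity and is the standing hypothesis in the fourth.
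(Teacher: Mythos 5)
Your proof is correct and follows the route the paper intends: the Corollary is stated as an immediate consequence of Lemma~\ref{limduals1}, obtained exactly as you do by writing ${\bigoplus}^\un$, $\prod$ and ${\prod}^{\square,\un}$ as filtered (co)limits over the poset $\sF(A)$ of finite subsets of $A$ and invoking \eqref{weakhomlim}, \eqref{boxhomlim}, \eqref{weakhomprolim}, \eqref{weakhomprolimsq} together with finite additivity of the duals. Your additional observation --- that strong duals are always canonical (each $\colimit^\un_{P}\hom_k(M/P,k/Q)^\dis$ is discrete and killed by an open ideal, and box limits of canonical modules are canonical by Corollary~\ref{canlim}), so that the uniform box limit over $\sF(A)$ produced by \eqref{boxhomlim} can legitimately be identified with the uniform box product appearing in the statement --- supplies precisely the step the paper leaves implicit, and without which the second identity would not follow.
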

 \begin{rmk} \label{closedcan} In \cite{closed} we define an internal $\hom$, called $\hom_\strong^\can$ in $\cCLMcan_k$, 
 by simply setting $\hom_\strong^\can(M,N) = \hom_\strong(M,N)^\can$, for any $M,N \in \cCLMcan_k$.  We  prove that $(\cCLMcan_k, \wt^\un_k,\hom^\can_\strong,k)$
is a quasi-abelian closed symmetric monoidal category with all limits and colimits.\footnote{Moreover, $\cCLMcan_k$ has enough projectives in the sense of \cite[\S 1.3.4]{schneiders} and for every projective $P$, the functor 
$$
\cCLMcan_k \longrightarrow \cCLMcan_k \;\;,\;\; X \longmapsto P  \wt_k^\un X
$$
is left-exact (and therefore exact) in the sense of  \cite[Defn.  1.1.12]{schneiders}.} 
So, for $M \in  \cCLMcan_k$ the dual in $\cCLMcan_k$,   is $(M'_\strong)^\can$ hence differs in general from $M'_\strong$. 
 \end{rmk}
 \begin{prop}\label{canmoddual}
 Let $M \in \cCLMcan_k$ and let 
$$ M = \Coker^\un (k^{(A,\un)} \map{\varphi}  k^{(B,\un)})
 $$
 be a presentation of $M$ as in Remark~\ref{canmodpres}. Then 
\beq \label{canmoddual1} M'_\strong = \Ker (k^{B,\can} \map{\varphi^\tu} k^{A,\can}) \;.
\eeq
 \end{prop}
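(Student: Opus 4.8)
The plan is to apply the strong-dual functor $(-)'_\strong = \hom_\strong(-,k)$ to the given presentation and to show that it converts the $\Coker^\un$ into a $\Ker$ by reducing everything modulo the open ideals $I \in \cP(k)$, where the assertion becomes the classical left-exactness of the contravariant dual of a freely presented module over the discrete ring $k/I$. The four modules involved ($M$, $k^{(A,\un)}$, $k^{(B,\un)}$, and the prospective kernel) are all $k$-canonical, so by Corollary~\ref{dualcompl} their strong duals are computed as $\limit_{I\in\cP(k)}(-\,/I(-))'_\strong$, and mod $I$ everything is algebra over $k/I$.

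First I would identify the duals of the free objects. Since $k^{(A,\un)} = \SUMU_{\alpha\in A} k$, Corollary~\ref{biduality} gives $(k^{(A,\un)})'_\strong = \PRODsqu_{\alpha\in A}(k)'_\strong$; here $k$ is canonical and $k'_\strong = \hom_\strong(k,k) = k$ (its underlying module is $\hom_{\cCLMu_k}(k,k)=k$ by Proposition~\ref{unifcompl}, carrying the canonical topology). By Corollary~\ref{canlim}, $\PRODsqu_{\alpha\in A} k = k^{A,\can}$, so
$$(k^{(A,\un)})'_\strong = k^{A,\can} \quad\text{and likewise}\quad (k^{(B,\un)})'_\strong = k^{B,\can}.$$
Under these identifications the transpose of $\varphi$ (Definition~\ref{transpose}) is precisely the map $\varphi^\tu: k^{B,\can}\to k^{A,\can}$ appearing in the statement.

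Next I would reduce modulo a finitely generated $I\in\cP(k)$ (these being cofinal by the finiteness axiom). By Remark~\ref{limcan}/\eqref{indu1}, $M=\Coker^\un(\varphi)$ is the canonical completion $\nwhat{C}$ of the algebraic cokernel $C = (k^{(B,\un)})^\for/\varphi((k^{(A,\un)})^\for)$, and by Remark~\ref{complfop} one has $M/IM = \nwhat{C}/I\nwhat{C} = C/IC$, so that
$$M/IM = \Coker\big(\ol{\varphi}:(k/I)^{(A)}\map{}(k/I)^{(B)}\big)\quad\text{in }\Mod_{k/I}.$$
Now $\hom_{k/I}(-,k/I)$ is contravariant and left exact, and for these discrete $k/I$-modules the strong dual over $k$ is exactly this algebraic $k/I$-dual with the discrete topology (as recorded in the proof of Corollary~\ref{dualcompl}); dualizing the right-exact sequence above therefore yields, compatibly in $I$,
$$(M/IM)'_\strong = \Ker\big((k/I)^{B}\map{\ol{\varphi}^\tu}(k/I)^{A}\big).$$
Finally, passing to the limit and using that $\limit$ commutes with kernels (both being limits, computed in $\Mod_k$ with the weak subspace topology), together with $k^{B,\can}=\limit_I (k/I)^B$ and $k^{A,\can}=\limit_I (k/I)^A$, I obtain
$$M'_\strong = \limit_{I}\Ker\big((k/I)^B\to (k/I)^A\big) = \Ker\big(k^{B,\can}\map{\varphi^\tu}k^{A,\can}\big),$$
which is the desired identity, topology included since the kernel and the limits all carry the canonical subspace topology.

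The step I expect to be the main obstacle is the reduction modulo $I$, namely the identity $M/IM = \Coker(\ol{\varphi})$: one must be sure that the completion hidden in $\Coker^\un$ does not spoil exactness, i.e.\ that $M/IM$ is the genuine algebraic cokernel over $k/I$ rather than some completion of it. This is exactly what the good behaviour of canonical completions guarantees (Remark~\ref{limcan} identifying $M$ with $\nwhat{C}$, and Remark~\ref{complfop}/Remark~\ref{8.3.3} giving $\nwhat{C}/I\nwhat{C}=C/IC$ for finitely generated $I$), and it is the hinge that lets the otherwise classical left-exactness of $\hom_{k/I}(-,k/I)$ and the commutation of $\limit$ with kernels finish the argument.
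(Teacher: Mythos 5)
Your proof is correct and follows essentially the same route as the paper's: reduce to the discrete quotients $k/I$ via Corollary~\ref{dualcompl}, use left-exactness of the algebraic dual $\hom_{k/I}(-,k/I)$ applied to the induced presentation of $M/IM$, and pass to the limit over $I \in \cP(k)$. The only difference is that you make explicit two points the paper leaves implicit, namely the identification $(k^{(B,\un)})'_\strong = k^{B,\can}$ and the fact that reduction mod $I$ turns $\Coker^\un(\varphi)$ into the genuine algebraic cokernel over $k/I$ — both of which are needed and correctly justified in your argument.
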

 \begin{proof}
 By Corollary~\ref{dualcompl}, we may assume that $k$ and $M$ are discrete so that 
 $$ M = \Coker(k^{(A)} \map{\varphi}  k^{(B)})
 $$
 in $\Mod_k$. The functor $(-)^\tu = \hom_k(-,k)$ is then left exact and 
 $$
\hom_k(M,k) = \Ker ( k^{B} \map{\varphi^\tu} k^{A} )\;.
 $$
 Going back to the situation of the statement, 
  $$ M'_\strong = \limit_{I \in \cP(k)}  \Ker  (k^{B}/I k^{B}  \map{\varphi^\tu \mod I} k^{A}/Ik^A) 
  $$
  which coincides with the r.h.s. of \eqref{canmoddual1}.
 \end{proof} 
  \begin{defn}   \label{perfpair} Let $M$, $N$ be objects of $\cCLMu_k$ and let 
  \beq  
\begin{split}
\circ  : M  \times N  & \longrightarrow k  \\ 
(m\; , \; n) & \longmapsto m \circ n
  \end{split}
  \eeq
  be  a $k$-bilinear map  which is 
  continuous in the two variables separately. For $? =\weak,\strong$, we say that $\circ$  
 is a   \emph{$?$-left perfect pairing}  
  if  the map 
\beqa \begin{split}
M &\longrightarrow \hom_{\cCLMu_k} (N,k) \\ m &\longmapsto (y \longmapsto m \circ y)
\end{split}
\eeqa
identifies $M$ with the $?$-dual of $N$. We say that $\circ$  
 is a   \emph{$?$-right perfect pairing}  
if
the map 
\beqa \begin{split}
N &\longrightarrow \hom_{\cCLMu_k} (M,k) \\ n &\longmapsto (x \longmapsto x \circ n)
\end{split}
\eeqa
identifies $N$ with the $?$-dual of $M$. If $\circ$ is both  $?$-left perfect and $?$-right perfect, we say that $\circ$ is a \emph{$?$ perfect pairing}.
    \end{defn}  
\end{subsection}  

\begin{subsection}{Relevant subcategories of $\cCLMu_k$} 
\par
We recall that $\cCLMcan_k \subset \cCLMou_k \subset \cCLMu_k$ is a sequence of full embeddings of categories, where the two smaller ones are quasi-abelian. 
Let  us denote by $\cF_k$ the full subcategory of   $\cCLMcan_k$  consisting of 
finitely and freely 
generated $k$-modules equipped with their naive 
$k$-canonical topology. 
For any full subcategory $\cA$ of $\cCLMu_k$,
we consider the following  full subcategories of $\cCLMu_k$~:
\ben
\item  The full subcategory $\colimit^\un \cA$ of $\cCLMu_k$ consisting of 
objects of the form $\colimit^\un_{\alpha \in A} M_\alpha$, for a filtered inductive system $\{M_\alpha\}_{\alpha \in A}$ of objects of $\cA$. Notice that  $\colimit^\un \cF_k \subset \cCLMcan_k$ and that 
objects of  $\cCLMu_k$ of the form
$$
{\bigoplus}^\un_{\alpha \in A} k e_\alpha
$$
where, for any index set $A$, $ k e_\alpha$ denotes a copy of the object $k$ of $\cCLMu_k$, are  in 
$\colimit^\un \cF_k$. 
 \item  The full subcategory $\limit \, \cA$ of $\cCLMu_k$ consisting of 
objects of the form $\limit_{\alpha \in A} M_\alpha$, for a cofiltered  projective system $\{M_\alpha\}_{\alpha \in A}$ of objects of $\cA$. 
Objects of  $\cCLMu_k$ of the form
$$
\prod_{\alpha \in A} k e_\alpha
$$
where, for any index set $A$, $ k e_\alpha$ denotes a copy of the object $k$ of $\cCLMu_k$, are  in 
$\limit \, \cF_k$.   
\item The full subcategory $\limit^\can \, \cF_k$ of $\cCLMcan_k$ consisting of 
objects of the form $\limit^\can_{\alpha \in A} F_\alpha$, for a cofiltered  projective system $\{F_\alpha\}_{\alpha \in A}$ of objects of $\cF_k$. 
\item The full subcategory $\limit \, \cCLMcan_k$ of $\cCLMu_k$ consisting of 
objects of the form $\limit_{\alpha \in A} M_\alpha$, for a cofiltered  projective system $\{M_\alpha\}_{\alpha \in A}$ of objects of $ \cCLMcan_k$. 
\een 
 \end{subsection} 
    \begin{subsection}{Duals of tensor products} 
  \begin{prop} \label{tensdual}
  Let $M,N$ be objects of 
  $\cCLMcan_k$.  
  Then,  for $? = \weak, \strong$, we have a natural morphism
  \beq \label{tensdual?}
 M'_? \wt^\un_k N'_?  \map{}  (M \wt^\un_k N)'_?  
   \eeq  
    in $\cCLMu_k$. 
   \end{prop}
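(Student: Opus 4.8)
The plan is to construct \eqref{tensdual?} via the universal property of the completed tensor product and then verify its good behaviour by reduction to the discrete quotients $k/I$. By Remark~\ref{corepcompl} the pair $(M'_? \wt^\un_k N'_?, \wt^\un_k)$ corepresents $X \mapsto \Bil^\un_k(M'_? \times N'_?, X)$, so to produce \eqref{tensdual?} it suffices to exhibit a single uniformly continuous $k$-bilinear map
$$
\beta: M'_? \times N'_? \longrightarrow (M \wt^\un_k N)'_? \;.
$$
The natural candidate sends a pair of functionals $(\phi,\psi)$ to the functional $\phi \boxtimes \psi$ on $M \wt^\un_k N$ which is, in turn, determined through Remark~\ref{corep} by the $k$-bilinear form $(m,n) \mapsto \phi(m)\,\psi(n)$ on $M \times N$. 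This form is uniformly continuous because it factors as $\phi \times \psi$ followed by the multiplication $k \times k \to k$, both of which are uniformly continuous on the bounded objects of $\cCLMu_k$; hence $\phi \boxtimes \psi$ is a well-defined continuous functional and $\beta$ is defined set-theoretically.

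To see that $\phi \boxtimes \psi$ lands in the correct $?$-dual and that $\beta$ is compatible with the relevant topologies, I would reduce to the discrete case. Since $M,N$ are canonical, so is $M \wt^\un_k N$ by Corollary~\ref{tenscan}, and by Remark~\ref{tensnaive} together with the elementary identity $(M \otimes_k N)/I(M \otimes_k N) = (M/IM) \otimes_{k/I} (N/IN)$ one has $(M \wt^\un_k N)/I(M \wt^\un_k N) = (M/IM) \otimes_{k/I} (N/IN)$ for every $I \in \cP(k)$. Over the discrete ring $k/I$ the assignment $\beta$ specializes to the classical algebraic pairing
$$
\hom_{k/I}(M/IM, k/I) \otimes_{k/I} \hom_{k/I}(N/IN, k/I) \longrightarrow \hom_{k/I}\bigl( (M/IM) \otimes_{k/I} (N/IN),\, k/I \bigr),
$$
which is $k/I$-linear, natural in $I$, and visibly respects the discrete topology (for $? = \strong$) and the pro-finitely-generated topology (for $? = \weak$) used in Definition~\ref{homtypes}.

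Finally I would assemble these pieces. Using Corollary~\ref{dualcompl} we may write each of $M'_?$, $N'_?$ and $(M \wt^\un_k N)'_?$ as $\limit_{I \in \cP(k)}$ of the corresponding mod-$I$ duals; applying the natural morphism of Lemma~\ref{tensfunct}(2) to the projective systems defining $M'_?$ and $N'_?$ and restricting to the cofinal diagonal $I = I'$ in $\cP(k) \times \cP(k)$ gives
$$
M'_? \wt^\un_k N'_? \longrightarrow \limit_{I \in \cP(k)} \bigl( (M/IM)'_? \wt^\un_k (N/IN)'_? \bigr),
$$
and composing with the per-$I$ pairing maps above, followed by the identification of the target with $\limit_I \bigl( (M/IM) \otimes_{k/I} (N/IN) \bigr)'_? = (M \wt^\un_k N)'_?$, yields \eqref{tensdual?}; naturality in $I$ guarantees that these maps glue. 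The main obstacle is the uniform continuity of $\beta$: because $\beta$ is bilinear, controlling $\beta(\phi,\psi) - \beta(\phi_0,\psi_0) = \beta(\phi - \phi_0, \psi) + \beta(\phi_0, \psi - \psi_0)$ on a basic neighbourhood requires an equihypocontinuity estimate that is uniform as $\phi_0,\psi$ vary, and this is exactly the point at which the boundedness of the objects of $\cCLMu_k$ and the difference between the weak (simple-convergence) and strong (uniform-convergence) topologies must be used — the strong case being immediate from the discrete $\hom$ and the box-product description \eqref{boxhomlim}, while the weak case requires tracking the pro-finitely-generated topology through the tensor product.
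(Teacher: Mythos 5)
Your assembly in the final paragraph --- writing each dual as $\limit_{I \in \cP(k)}$ of mod-$I$ duals via Corollary~\ref{dualcompl}, applying the natural morphism of Lemma~\ref{tensfunct}(2), and composing with the classical per-$I$ pairing \eqref{tens} --- is precisely the paper's own proof. The preliminary universal-property construction of $\beta$ and the closing worry about equihypocontinuity are superfluous: the map you assemble is a composite of $\cCLMu_k$-morphisms, hence already continuous, and no separate bilinear estimate is required.
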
 
   \begin{proof}    
For any ring $R$ and any $M,N \in \Mod_R$, we have a natural $R$-linear map
\beq \label{tens}
\hom_R(M,R) \otimes_R \hom_R(N,R) \map{} \hom_R(M\otimes_RN,R) \;.
\eeq
 From Corollary~\ref{dualcompl}~:
       $$ M'_? \wt^\un_k N'_?=  \limit_{I \in \cP(k)} (M/IM)'_? \otimes_{k/I}  \limit_{I \in \cP(k)} (N/IN)'_? \map{\eqref{tensfunct2}}
    \limit_{I \in \cP(k)} ((M/IM)'_? \otimes_{k/I} (N/IN)'_? )  $$
       $$\map{\ref{tens}}  \limit_{I \in \cP(k)} (M/IM \otimes_{k/I} N/IN)'_?   = \limit_{I \in \cP(k)} (M \wt^\un_k N/I(M \wt^\un_k N))'_? = (M \wt^\un_k N)'_? \;,  $$
       where the first arrow follows from functoriality while the second is deduced from \eqref{tens}.
 \end{proof}
 \begin{lemma} \label{ringtens}
 Let $A,B$ be ring objects of $\cCLMu_k$. Then there are natural morphisms $A \map{} A  \wt^\un_k B$ (resp. $B \map{} A  \wt^\un_k B$),  $a \map{} a \otimes 1_B$ (resp.  $b \map{} 1_A \otimes b$). Then,  for $? = \weak, \strong$, we have a natural morphism
  \beq \label{tensdual1?}
   (A \wt^\un_k B)'_?   \map{}   A'_? \wt^\un_k B'_?
   \eeq  
    in $\cCLMu_k$. 
    \end{lemma}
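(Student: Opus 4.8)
The plan is to obtain the two structure morphisms as the unit-induced maps $\iota_A := \id_A \wt^\un_k u_B : A \to A \wt^\un_k B$ and $\iota_B := u_A \wt^\un_k \id_B : B \to A \wt^\un_k B$, where $u_A : k \to A$, $u_B : k \to B$ are the units of the ring objects; these are morphisms of $\cCLMu_k$ and send $a \mapsto a \otimes 1_B$ (resp. $b \mapsto 1_A \otimes b$) as claimed. By Definition~\ref{transpose} they transpose, for $? = \weak,\strong$, to $\iota_A^\tu : (A \wt^\un_k B)'_? \to A'_?$ and $\iota_B^\tu : (A \wt^\un_k B)'_? \to B'_?$. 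The morphism \eqref{tensdual1?} will then be assembled from these two ``coordinate projections'' of the dual of $A \wt^\un_k B$; note that \eqref{tensdual1?} runs opposite to the map of Proposition~\ref{tensdual}, so it cannot be produced by that proposition alone and the argument must use the additional (ring) structure just described together with the limit descriptions of the duals.

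My strategy is to mimic the proof of Proposition~\ref{tensdual} and reduce everything to the discrete situation. Assuming $A,B \in \cCLMcan_k$ (the case relevant to the applications), Corollary~\ref{dualcompl} gives $(A \wt^\un_k B)'_? = \limit_{I \in \cP(k)} \bigl((A \wt^\un_k B)/I(A \wt^\un_k B)\bigr)'_?$, and since $(A \wt^\un_k B)/I(A \wt^\un_k B) \cong (A/IA)\otimes_{k/I}(B/IB)$ (from \eqref{tensproj0} and Remark~\ref{tensnaive}), this becomes $\limit_{I}\bigl(A/IA \otimes_{k/I} B/IB\bigr)'_?$. On the other side, Corollary~\ref{dualcompl} together with Lemma~\ref{tensfunct}(2) produces $A'_? \wt^\un_k B'_? \to \limit_{I} \bigl((A/IA)'_? \wt^\un_k (B/IB)'_?\bigr)$. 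It therefore suffices to construct, naturally in $I$ and compatibly with the transition maps, a morphism over the discrete ring $R = k/I$
\beq
\bigl(\bar A \otimes_R \bar B\bigr)'_? \map{} \bar A'_? \wt^\un_k \bar B'_? \;, \qquad \bar A = A/IA,\ \bar B = B/IB \;,
\eeq
and then to pass to the limit over $\cP(k)$.

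For the discrete case I would build the map from free presentations. Writing $\bar A = \Coker(R^{(A_1)} \to R^{(A_0)})$ and $\bar B = \Coker(R^{(B_1)} \to R^{(B_0)})$, the tensor product $\bar A \otimes_R \bar B$ acquires a presentation by free modules of the form $R^{(A_i \times B_j)}$; for these the desired morphism is an \emph{isomorphism} furnished by Corollary~\ref{biduality}, since $(R^{(A_i \times B_j,\un)})'_? = {\prod}^{\square,\un} = (R^{(A_i,\un)})'_? \wt^\un_k (R^{(B_j,\un)})'_?$. Applying $(-)'_?$ turns the right-exact presentations into left-exact sequences of duals, and, using the exactness properties of $\wt^\un_k$ on $\cCLMcan_k$ recorded around \eqref{rightex} and Proposition~\ref{tensind}, one transports the isomorphism on the free pieces to the required morphism on the cokernels; the finiteness axiom on $k$ (every open ideal contains a finitely generated one) is what guarantees that the relevant quotients are controlled by finite data, so that the completed tensor $\bar A'_? \wt^\un_k \bar B'_?$ genuinely captures the dual functionals. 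The main obstacle I anticipate is precisely this discrete step: checking that the map obtained from presentations is independent of the chosen presentation, is natural in $\bar A,\bar B$, and is uniform in $I$ (so that the limits reassemble), and—most delicately—that the identification with the \emph{completed} tensor product holds simultaneously for the weak ($! = \pfg$) and strong ($! = \dis$) variants, where the two topologies on the duals must be tracked through \eqref{weakhom} and \eqref{stronghom}. Once this is in place, the role of the ring structure is exactly to supply $\iota_A,\iota_B$ and to make \eqref{tensdual1?} the morphism dual to Proposition~\ref{tensdual}, as needed for the coalgebra constructions that follow.
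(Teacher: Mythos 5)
First, a caveat: the paper states Lemma~\ref{ringtens} with no proof at all, so your attempt cannot be measured against an argument in the text; what follows judges it on its own merits. Your construction of $\iota_A$ and $\iota_B$ is fine and settles the first assertion, but the proof of the second assertion breaks down at the base case of your presentation argument, for $?=\strong$. After your own reduction modulo $I \in \cP(k)$ you are working over the \emph{discrete} ring $R = k/I$. Over a discrete ring, uniform direct sums of discrete modules are discrete, strong duals of discrete modules carry the discrete topology, and $\wt^\un_R$ of discrete modules is the plain tensor product with the discrete topology. Hence Corollary~\ref{biduality} gives $(R^{(A_0\times B_0)})'_\strong = (R^{A_0\times B_0})^\dis$, whereas $(R^{(A_0)})'_\strong \wt^\un_R (R^{(B_0)})'_\strong = (R^{A_0}\otimes_R R^{B_0})^\dis$, and these do \emph{not} agree when $A_0,B_0$ are infinite: the natural comparison map $R^{A_0}\otimes_R R^{B_0}\to R^{A_0\times B_0}$ (which is the map of Proposition~\ref{tensdual}, going the opposite way) is injective but not surjective --- for $R$ a field the element $(\delta_{ab})_{(a,b)\in A_0\times A_0}$ is of ``infinite rank'' and is not a finite sum of elementary tensors --- and no natural map in the direction you need exists. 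Corollary~\ref{biduality} computes duals of uniform sums as (box) products; it does not identify a (box) product indexed by $A_0\times B_0$ with a completed tensor product of (box) products, which is exactly the identity you attribute to it. (For $?=\weak$ the identity is true, via $(\prod_{A_0}R)\wt^\un_R(\prod_{B_0}R)=\limit_{F,G}R^{F\times G}=\prod_{A_0\times B_0}R$ over finite $F,G$, but that is a computation you must do, not a citation of \ref{biduality}.) So for the strong dual your construction cannot even get started.

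There are two further gaps. Your reduction rests on Corollary~\ref{dualcompl}, which is proved only for $M\in\cCLMcan_k$; the lemma concerns arbitrary ring objects of $\cCLMu_k$, and for non-canonical $M$ (e.g. $M=\prod_{\N}k$ with the product topology) one does not have $M'_? = \limit_I (M/IM)'_?$, so at best you treat the canonical case. Moreover, passing from the free pieces to the cokernels requires $\wt^\un_k$ to commute with the relevant \emph{kernels} --- dualizing turns your right-exact presentations into left-exact sequences --- i.e. left exactness, which the paper guarantees only for pro-flat objects (Definition~\ref{proflat}); the strong right-exactness recorded in \eqref{rightex} and Proposition~\ref{tensind}, which you invoke, are the wrong exactness properties, and the infinite products $R^{A_i}$ arising as duals are not known to be pro-flat. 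Finally, a structural warning sign: the ring structure, which the statement of the lemma pointedly introduces and without which the strong-dual assertion certainly cannot hold, never actually enters your argument --- you construct $\iota_A^\tu$ and $\iota_B^\tu$ and then never use them, and indeed two maps into the factors cannot be ``assembled'' into a map into a completed tensor product, which has no such universal property. As written, your construction (were it correct) would apply to arbitrary objects $A,B$ with no multiplication, for which the strong statement is false. Any correct proof must use multiplicativity in an essential way, for instance through the cofinality of open \emph{ideals} among the open submodules of a ring object of $\cCLMu_k$, or through a presentation of $A$ and $B$ as filtered colimits of finite free subalgebras, as happens for $\sC(C,k)$ with $C$ a Stone space.
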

    \begin{cor}  \label{ringtens1} Let $A,B$ be (commutative unital) $k$-algebra objects of $\cCLMcan_k$. Then the morphism \eqref{tensdual1?} is an isomorphism of (co-commutative co-unital) $k$-coalgebra objects of $\cCLMu_k$. 
    \end{cor}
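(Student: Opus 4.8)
The plan is to prove that the morphism \eqref{tensdual1?} is a two-sided inverse of the natural map \eqref{tensdual?} of Proposition~\ref{tensdual}, and that both are morphisms of $k$-coalgebras; the assertion then follows at once. First I would pin down the coalgebra structures. Transposing (Definition~\ref{transpose}) the multiplication $m_A : A \wt^\un_k A \map{} A$ and the unit $\eta_A : k \map{} A$ yields $m_A^\tu : A'_? \map{} (A \wt^\un_k A)'_?$ and $\eta_A^\tu : A'_? \map{} k$; post-composing $m_A^\tu$ with \eqref{tensdual1?} (for $B=A$) gives a comultiplication $A'_? \map{} A'_? \wt^\un_k A'_?$, and likewise for $B$ and for the algebra $A \wt^\un_k B$. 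Thus $A'_?$, $B'_?$, $A'_? \wt^\un_k B'_?$ and $(A \wt^\un_k B)'_?$ all acquire $k$-coalgebra structures, co-commutative and co-unital since $A,B$ are commutative and unital.

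Next I would reduce to the discrete case. By Corollary~\ref{dualcompl} both sides of \eqref{tensdual1?} are $\limit_{I \in \cP(k)}$ of the corresponding duals of $A/IA$, $B/IB$ over $k/I$, and by the identity $A \wt^\un_k B = \limit_I (A/IA \otimes_{k/I} B/IB)$ exploited in Proposition~\ref{tensind}, the map \eqref{tensdual1?} is the projective limit over $I$ of its reductions. It therefore suffices to treat a discrete ring $\bar k = k/I$ and discrete unital $\bar k$-algebras $\bar A = A/IA$, $\bar B = B/IB$, and to prove that the finite-level map $(\bar A \otimes_{\bar k} \bar B)'_? \map{} \bar A'_? \wt^\un_{\bar k} \bar B'_?$ is an isomorphism in $\cCLMu_{\bar k}$.

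At this finite level I would argue bijectivity through density of finite-rank functionals. The elementary map \eqref{tens} sends $\hom_{\bar k}(\bar A,\bar k)\otimes_{\bar k}\hom_{\bar k}(\bar B,\bar k)$ onto the finite-rank functionals on $\bar A \otimes_{\bar k}\bar B$; and in the pro-finitely-generated topology carried by the weak duals every open submodule is cut out by a finitely generated $F_A \subset \bar A$ (resp. $F_B \subset \bar B$), so any functional on $\bar A \otimes_{\bar k}\bar B$ is the limit of its restrictions to the $F_A \otimes F_B$, which are finite-rank. Hence the image of \eqref{tens} is dense, the completed tensor product $\bar A'_? \wt^\un_{\bar k}\bar B'_?$ is its completion inside $(\bar A \otimes_{\bar k}\bar B)'_?$, and this completion is everything; injectivity follows because the transposed units $\eta_A^\tu,\eta_B^\tu$ split the two projections $(\bar A \otimes_{\bar k}\bar B)'_? \to \bar A'_?,\ \to \bar B'_?$ of Lemma~\ref{ringtens}. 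Passing to $\limit_I$ then upgrades this to an isomorphism of underlying objects of $\cCLMu_k$, and compatibility with comultiplication and counit is a formal consequence of the naturality of the transpose together with the associativity/unit axioms for $A$ and $B$, co-commutativity coming from commutativity.

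The hard part will be the finite-level surjectivity in the \emph{strong} topology. There the duals $\bar A'_\strong,\bar B'_\strong$ are discrete, so their completed tensor product reduces to the plain algebraic tensor product, i.e. to the finite-rank functionals, whereas $(\bar A \otimes_{\bar k}\bar B)'_\strong$ is the full dual; the density argument that rescues the weak case (where the pro-finitely-generated topology forces the completion to reabsorb every functional) no longer applies verbatim. The delicate point is thus to show that, after taking $\limit_{I}$, the box-limit description of Corollary~\ref{canlim} feeds back enough of the missing functionals to make the strong map bijective as well — equivalently, that the perfect-pairing yoga of Definition~\ref{perfpair} and Corollary~\ref{biduality} survives the passage through $\wt^\un_k$ for algebra objects. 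I expect this to require running the computation through $\limit^{\square,\un}$ rather than the naive algebraic tensor product, and checking that the density obtained over each $k/I$ is compatible along the transition maps so that bijectivity is preserved in the limit.
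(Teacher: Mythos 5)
The paper states this corollary (and Lemma~\ref{ringtens}, whose map \eqref{tensdual1?} is never actually constructed) without proof, so your argument has to stand entirely on its own. Your strategy — prove that \eqref{tensdual?} is a topological isomorphism and let its inverse serve as \eqref{tensdual1?} — is the right reading of the statement (compare Remark~\ref{funcmeas}, where the product--measure map is declared inverse to $\cL^{(C,C')}_\sD$), but your key step is unjustified and is in fact false at the stated level of generality. Everything in your weak-case argument hinges on the claim that an arbitrary continuous functional on $\ol{A}\otimes_{\ol{k}}\ol{B}$ agrees on each $F_A\otimes F_B$ with an element of the image of \eqref{tens}. Take $\ol{k}=\Z/p^2\Z$ and $\ol{A}=\ol{B}=\ol{k}\oplus V$, the square-zero extension by an infinite-dimensional $\F_p$-vector space $V$ (so $pV=0$): every $\ol{k}$-linear functional takes values on $V$ inside $p\ol{k}$, hence every product functional $\phi\cdot\psi$ kills $V\otimes_{\ol{k}}V$, while $\hom_{\ol{k}}(V\otimes_{\ol{k}}V,\ol{k})\neq 0$; since vanishing on $V\otimes_{\ol{k}}V$ is a closed condition for the topology of simple convergence, the image of \eqref{tens} is not dense and \eqref{tensdual?} is not invertible, so no candidate for \eqref{tensdual1?} can invert it. What rescues the cases the paper actually uses ($\ol{A}=\sC(C,k/I)$ for $C$ a Stone space) is that there $\ol{A},\ol{B}$ are filtered colimits of finite \emph{free} $\ol{k}$-modules which are compatibly split direct summands; then cofinally $F_A,F_B$ are finite free summands, functionals on them extend, and $(F_A\otimes F_B)^\ast=F_A^\ast\otimes F_B^\ast$, so your density argument works. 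That hypothesis (or $\ol{k}$ a field) must be added and used; note also that your reduction step silently requires $A'_?\wt^\un_k B'_?=\limit_I \bigl((A/IA)'_?\wt_{k/I}(B/IB)'_?\bigr)$, which is not a formal consequence of Corollary~\ref{dualcompl}, and that even a bijection must still be checked to match the two topologies.

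For the strong case your suspicion is correct, but the conclusion is stronger than ``delicate'': no passage through $\limit_I$ or $\limit^{\square,\un}$ can close the gap, because the strong statement fails already in the paper's central example. Take $k=\Z_p$ and $A=B=\sC(\Z_p,\Z_p)\in\cCLMcan_{\Z_p}$. By Proposition~\ref{mahleramice} and Corollary~\ref{unifballs}, $(A\wt^\un_k B)'_\strong=\sD^\circ(\Z_p\times\Z_p,\Z_p)$ is $\Z_p[[T_1,T_2]]$ with its $p$-adic topology, whereas by Remark~\ref{tensnaive} $A'_\strong\wt^\un_k B'_\strong$ is the $p$-adic completion of $\Z_p[[T_1]]\otimes_{\Z_p}\Z_p[[T_2]]$. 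The map \eqref{tensdual?} between them is a closed, non-surjective embedding: modulo $p$ its image is $\F_p[[t_1]]\otimes_{\F_p}\F_p[[t_2]]$, which does not contain $\sum_n t_1^nt_2^n$, since a finite sum $\sum_{i=1}^r f_i(t_1)g_i(t_2)$ has coefficient matrix of rank at most $r$. So the diagonal measure $\sum_n T_1^nT_2^n$ lies in $(A\wt^\un_k B)'_\strong$ but never in the image of $A'_\strong\wt^\un_k B'_\strong$, no matter how the limits are interleaved. For $?=\strong$, therefore, the corollary as stated cannot be proved by your route or any other; it would have to be weakened (for instance to the assertion that \eqref{tensdual?} is a strict monomorphism) or restricted to weak duals, and this defect propagates to the paper's later uses of the strong case (e.g.\ \eqref{funcmeas1} for $!=\circ$).
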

\end{subsection}
  \end{section}
 \begin{section}{The category  of  $td$-spaces} \label{STS}
\begin{subsection}{Topological generalities}  
 Let $\Top$ be the category of topological spaces  and continuous maps. For a topological space $X$ we denote by $\tau = \tau_X$ its topology, \ie  the family of open subsets of $X$. The category $\Top$ is bicomplete. For a small family $\{X_\alpha\}_{\alpha \in A}$, the coproduct $\coprod_{\alpha \in A} X_\alpha$ is the disjoint union of open subspaces $X'_\alpha$, for $\alpha \in A$, and is equipped with continuous maps $\iota_\alpha: X_\alpha \longrightarrow  \coprod_{\alpha \in A} X_\alpha$ which induce homeomorphisms $X_\alpha \iso X'_\alpha$, for any $\alpha$. We informally call \emph{sum} the coproduct in $\Top$. 
 \par
We recall that a continuous map $f:X \to Y$ of topological  spaces, with $Y$ locally compact Hausdorff,  is said to be  \ben
\item
\emph{proper} if, equivalently, 
\ben \item  the preimage of every compact subset of $Y$ is a compact in 
$X$,
\item   
$f$  is closed and has compact fibers.
\een
\item   \emph{locally finite} (resp. \emph{finite})  if it is closed and has finite fibers 
(resp. and there exists an integer $n$ such that the cardinality of any fiber of $f$ is bounded by $n$; we then say that  \emph{the degree of $f$ is $\leq n$}).  
\een
\begin{notation} \label{disconnectdef}
For any topological space $X$, and for $x \in X$, the \emph{component} of $x$ is the union of all connected subsets of $X$ 
containing $x$. The \emph{quasi-component} of $x$ is the intersection of clopen (= closed and open) subsets of $X$ containing $x$. 
So the quasi-component of $x$ in $X$ contains the component of $x$. In a compact Hausdorff  topological space $X$ and for any $x \in X$, the quasi-component and the component  of $x$ in $X$ coincide. A topological space $X$ is \emph{hereditarily disconnected} (resp.  \emph{totally disconnected}) if  for any $x \in X$ the component (resp. the quasi-component) of $x$ is $\{x\}$.  A non-empty Hausdorff space $X$ is \emph{zero dimensional} if  its topology admits a basis
consisting of clopen subsets. A zero dimensional space is totally disconnected hence hereditarily disconnected. A Hausdorff topological space is \emph{paracompact} (resp. a \emph{Lindel\"of space}) if (resp. it is regular and) every open cover has a locally finite (resp. countable) open refinement. In a Lindel\"of space every open cover admits a countable  subcover.  
\begin{thm} \label{stonechar}  (\cite[Thm. 6.2.10]{eng}) For a non empty locally compact paracompact  space $X$ the three notions are equivalent~:
\ben
\item $X$ is hereditarily disconnected;
\item $X$ is totally disconnected;
\item $X$ is zero-dimensional.
\een
\end{thm}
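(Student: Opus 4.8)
The plan is to exploit the fact that two of the three implications are already contained in the material recalled just above, and to concentrate all the work on the one genuine implication, namely $(1)\Rightarrow(3)$.

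Indeed, $(3)\Rightarrow(2)\Rightarrow(1)$ are immediate and are essentially recorded in Notation~\ref{disconnectdef}: a zero-dimensional space is totally disconnected (given two distinct points, Hausdorffness together with a clopen basis produces a clopen set containing one but not the other, so every quasi-component is a singleton), and every totally disconnected space is hereditarily disconnected because the component of a point is always contained in its quasi-component. Hence the entire content of the statement is the implication $(1)\Rightarrow(3)$, and establishing it closes the cycle $(3)\Rightarrow(2)\Rightarrow(1)\Rightarrow(3)$.

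To prove $(1)\Rightarrow(3)$ I would argue locally, reducing everything to the compact case by means of the single structural fact recalled before the statement: in a \emph{compact} Hausdorff space the quasi-component and the component of each point coincide. Fix $x\in X$ and an open neighborhood $U$ of $x$. Since $X$ is Hausdorff (being paracompact) and locally compact, I choose an open $W$ with $x\in W\subset \overline{W}\subset U$ and $\overline{W}$ compact. The subspace $\overline{W}$ is compact Hausdorff, and it is hereditarily disconnected because hereditary disconnectedness passes to subspaces (a connected subset of $\overline{W}$ is connected in $X$, hence a point). The aim of this local step is to produce, for every such $x$ and $U$, a set $V$ that is clopen \emph{in $X$} with $x\in V\subset U$; the existence of all such $V$ is precisely the assertion that clopens form a basis, i.e. condition $(3)$.

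The crux is the lemma that a hereditarily disconnected compact Hausdorff space $Z$ is zero-dimensional. In $Z$ the components are points, so by the quasi-component $=$ component fact the quasi-components are points as well, i.e. $Z$ is totally disconnected; then, for $z$ in an open $O\subset Z$, the compact set $Z\setminus O$ is covered by the open sets $Z\setminus C$, with $C$ ranging over the clopen neighborhoods of $z$ (their intersection being $\{z\}$), and a finite subcover followed by intersecting the corresponding $C$'s yields a clopen $V$ with $z\in V\subset O$. Applying this to $Z=\overline{W}$ and $O=W$ (note $W$ is open in $\overline{W}$ since $W\subset\overline{W}$) gives $V$ clopen in $\overline{W}$ with $x\in V\subset W$. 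Finally one checks $V$ is clopen in $X$: it is closed since $\overline{W}$ is closed in $X$, and writing $V=V'\cap\overline{W}$ with $V'$ open in $X$, the inclusion $V\subset W$ gives $V=V\cap W=V'\cap W$, which is open in $X$.

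The main obstacle is exactly the interface between the local compact picture and the global one. All the real content sits in the compact lemma — where the compactness-driven identification of quasi-components with components and the finite-subcover argument are indispensable — together with the small but essential verification that a set clopen in the compact closure $\overline{W}$ and contained in the open core $W$ remains clopen in $X$. I note, as a matter of bookkeeping, that paracompactness enters only to guarantee that $X$ is Hausdorff; it is local compactness that does the actual work, by supplying around each point a compact Hausdorff hereditarily disconnected neighborhood to which the lemma applies.
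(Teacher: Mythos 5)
Your proof is correct, and it is essentially the standard argument behind the citation that the paper uses in place of a proof (Engelking, Thm.\ 6.2.10): reduction to the compact Hausdorff case, where components coincide with quasi-components and a finite-subcover argument produces small clopen neighborhoods, followed by the check that a set clopen in $\overline{W}$ and contained in $W$ remains clopen in $X$. Your bookkeeping remark is also accurate---paracompactness serves only to make $X$ Hausdorff under the paper's conventions---which matches the fact that the cited theorem is stated for locally compact Hausdorff spaces with no paracompactness hypothesis at all.
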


Any (resp. non empty) subspace of a hereditarily/totally disconnected (resp. $0$-dimensional) topological space is hereditarily/totally disconnected (resp. $0$-dimensional) \cite[6.2.11]{eng}. \par
\end{notation}
\begin{lemma} \label{addprop}
Let $X = \coprod_{\alpha \in A} X_\alpha$ be the sum of a non empty family of non empty topological spaces. Then 
\ben
\item 
$X$ is hereditarily/totally disconnected (resp. $0$-dimensional) if and only if every $X_\alpha$ is  \cite[6.2.13]{eng}.
\item $X$ is paracompact (resp. locally compact) if and only if every $X_\alpha$ is~: see  \cite[5.1.30]{eng} (resp. this is clear).
\een
\end{lemma}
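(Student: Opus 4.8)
The plan is to exploit the single structural fact supplied by the description of the coproduct in $\Top$ recalled above: each $X_\alpha$ is carried homeomorphically onto a clopen subspace $X'_\alpha$ of $X$ (it is open by construction, and closed because its complement $\bigcup_{\beta \neq \alpha} X'_\beta$ is open), and the family $\{X'_\alpha\}_{\alpha \in A}$ is a partition of $X$ into clopen pieces. Every assertion then reduces to how the relevant invariant behaves under such a clopen partition, and each direction is either elementary or already isolated in \cite{eng}.

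For part (1), the \emph{only if} directions are immediate: each $X'_\alpha$ is a nonempty subspace of $X$, so if $X$ is hereditarily disconnected, totally disconnected, or $0$-dimensional, then so is $X_\alpha \cong X'_\alpha$ by the hereditary stability of these properties under passage to subspaces recorded just after Theorem~\ref{stonechar} (\cite[6.2.11]{eng}). For the \emph{if} directions I would argue that each of the three invariants is concentrated in a single piece. Any connected subset of $X$ meets only one $X'_\alpha$, hence lies inside it, because the $X'_\alpha$ are clopen; so the component of $x \in X$ is its component inside the unique $X'_\alpha$ containing $x$, which is $\{x\}$ when every $X_\alpha$ is hereditarily disconnected. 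The same clopen partition shows that the quasi-component of $x$ in $X$ equals its quasi-component in $X'_\alpha$, giving the totally disconnected case. Finally, if every $X_\alpha$ is $0$-dimensional then each is Hausdorff, so the sum $X$ is Hausdorff; the union over $\alpha$ of a clopen basis of each $X'_\alpha$ (clopen in $X'_\alpha$, hence clopen in $X$) is a clopen basis of $X$. This is exactly \cite[6.2.13]{eng}.

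For part (2), local compactness is a purely local property and the $X'_\alpha$ are open: a point $x \in X$ lies in a unique open $X'_\alpha$, and a compact neighbourhood of $x$ in $X_\alpha$ is still a compact neighbourhood in $X$, giving the \emph{if} direction; conversely an open subspace of a locally compact space is locally compact, giving the \emph{only if} direction. This is the part the statement deems clear. For paracompactness, the \emph{if} direction, that a topological sum of paracompact (Hausdorff) spaces is paracompact, is \cite[5.1.30]{eng}; the \emph{only if} direction follows because each $X'_\alpha$, being clopen and in particular closed in the paracompact Hausdorff space $X$, is itself paracompact, closed subspaces of paracompact spaces being paracompact.

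The only genuinely nontrivial ingredient is the \emph{if} direction for paracompactness, for which I would simply cite \cite[5.1.30]{eng} rather than reproduce the construction of a locally finite open refinement adapted to the partition $\{X'_\alpha\}$; everything else is a direct consequence of the clopen-partition structure and of heredity under subspaces. A minor point to keep in mind is the standing convention that ``$0$-dimensional'' presupposes nonemptiness and Hausdorffness, which is why the argument records that the sum of Hausdorff spaces is Hausdorff before assembling the clopen basis.
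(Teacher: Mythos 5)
Your proof is correct and takes essentially the same route as the paper, which gives no argument of its own beyond citing Engelking [6.2.13] and [5.1.30]; you simply make explicit the elementary clopen-partition reasoning and reserve the citation for the one nontrivial direction (a sum of paracompact spaces is paracompact). One caution: under the paper's standing convention that ``locally compact'' does \emph{not} imply Hausdorff, your appeal to the general fact that an open subspace of a locally compact space is locally compact is invalid --- it fails, for instance, for $\Q$ sitting inside the (non-Hausdorff) one-point compactification of $\Q$. The conclusion you need survives because each $X'_\alpha$ is clopen, hence \emph{closed}: if $K$ is a compact neighbourhood of $x$ in $X$, then $K \cap X'_\alpha$ is a closed subset of $K$, hence compact, and is a neighbourhood of $x$ in $X'_\alpha$; so the ``only if'' direction for local compactness holds with no Hausdorff hypothesis.
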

We are indebted to Umberto Marconi for pointing out to us the following theorem in Engelking \cite[5.1.27]{eng}
\begin{thm} \label{Lind} Every locally compact paracompact space $X$ is the sum of a family of Lindel\"of spaces, and conversely. 
\end{thm}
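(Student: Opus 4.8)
The plan is to prove the two implications separately; all the substance lies in the forward direction, where one must manufacture the clopen Lindel\"of pieces out of thin air, so I will treat that first and dispose of the converse quickly.

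Assume $X$ is locally compact and paracompact. First I would use local compactness to cover $X$ by open sets with compact closure, and then invoke paracompactness to replace this cover by a locally finite open refinement $\mathcal{U} = \{U_s\}_{s \in S}$; since each member of the refinement sits inside a relatively compact open set, every $\overline{U_s}$ is again compact. The key combinatorial observation is that, because $\overline{U_s}$ is compact and $\mathcal{U}$ is locally finite, $\overline{U_s}$ is covered by finitely many ``local finiteness'' neighbourhoods and hence meets only finitely many members of $\mathcal{U}$. Thus the incidence graph on $S$, with an edge joining $s,t$ exactly when $U_s \cap U_t \neq \emptyset$, is locally finite.

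Next I would pass to the connected components of this graph. A connected locally finite graph is countable, since the ball of radius $n$ about any fixed vertex is finite and the whole component is the increasing union of these balls; hence every component $C \subseteq S$ is countable. Setting $X_C := \bigcup_{s \in C} U_s$, the sets $X_C$ are open, they cover $X$, and they are pairwise disjoint because two indices whose sets meet are joined by an edge and so lie in a common component. Consequently each $X_C$ is clopen, its complement being the union $\bigcup_{C' \neq C} X_{C'}$ of the remaining (open) pieces. The decisive point is that, $X_C$ being \emph{closed}, one has $\overline{U_s} \subseteq X_C$ for every $s \in C$, whence $X_C = \bigcup_{s \in C} \overline{U_s}$ is a countable union of compact sets, i.e. $\sigma$-compact, hence Lindel\"of (and regular as a subspace of the Tychonoff space $X$, matching the definition of Lindel\"of adopted above). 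Since the $X_C$ are clopen and partition $X$, the canonical map $\coprod_C X_C \to X$ is a homeomorphism, which is exactly the asserted representation as a sum of Lindel\"of spaces.

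For the converse, let $X = \coprod_\alpha X_\alpha$ be a sum of (locally compact) Lindel\"of spaces. Each summand is regular and Lindel\"of, hence paracompact by the classical theorem that a regular Lindel\"of space is paracompact \cite{eng}. By Lemma~\ref{addprop}(2) a topological sum is locally compact (respectively paracompact) precisely when every summand is, so $X$ is locally compact and paracompact. The main obstacle is entirely in the forward direction, namely arranging that the pieces come out $\sigma$-compact and clopen \emph{at the same time}: an individual relatively compact $U_s$ is harmless, but its closure may protrude from the naive union $\bigcup_{s \in C} U_s$, and this is cured only by first proving the component-unions $X_C$ closed (via disjointness and openness of the partition) and then using closedness to force $\overline{U_s} \subseteq X_C$, thereby upgrading $X_C = \bigcup_{s\in C} U_s$ to $X_C = \bigcup_{s\in C} \overline{U_s}$. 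Controlling the cardinality of the components through local finiteness of the incidence graph is the other point requiring care.
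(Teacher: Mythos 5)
Your proof is correct. The paper gives no argument of its own for this theorem --- it simply cites \cite[5.1.27]{eng} --- and your argument (a locally finite refinement by relatively compact open sets, finiteness of the incidence graph via compactness of the closures, countability of the components of a connected locally finite graph, and the resulting clopen $\sigma$-compact pieces) is precisely the standard chaining proof found in Engelking, so the two approaches coincide. One point worth noting, which you handled correctly by inserting ``(locally compact)'' in the converse: read literally, the converse is false (a single Lindel\"of space such as $\mathbb{Q}$ is a one-term sum but not locally compact); the intended statement, and the one your forward direction actually delivers since the clopen pieces are open in a locally compact space, is that $X$ is locally compact paracompact if and only if it is a sum of \emph{locally compact} Lindel\"of spaces.
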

\end{subsection}
\begin{subsection}{Stone spaces and $td$-spaces}
\begin{defn} \label{Q(X)}
For a topological space $X$, we let $\sQ(X)$ denote the set of its discrete proper quotients, \ie of continuous surjective maps with compact fibers $\pi_D:X \longrightarrow D$ where $D$ is a discrete topological space. 
\end{defn}
\begin{defn} \label{Stonedef}  A compact Hausdorff space $X$ satisfying the three equivalent conditions of Proposition~\ref{stonechar} is called a \emph{Stone space}.   
\end{defn}
The following statement is well-known. 
\begin{lemma} \label{stonespace} The following properties of a  topological space $X$ are equivalent:
\ben
\item $X$ is a Stone space; 
\item Any open cover of $X$ admits a  refinement which is a finite partition of $X$ in a family of open compact subspaces;
\item $X$ is the limit of the projective system of its finite discrete quotients;
\item $X$ is a small cofiltered limit of finite discrete spaces and surjective maps, \ie $X$ is a profinite set.
\een
\end{lemma}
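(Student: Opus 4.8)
The plan is to close the loop $(1)\Rightarrow(2)\Rightarrow(3)\Rightarrow(4)\Rightarrow(1)$, since each link is short and the reverse implications then come for free. For $(1)\Rightarrow(2)$, given an open cover $\mathcal U$ of the Stone space $X$ I would use zero-dimensionality to pick, for each $x\in X$, a clopen neighbourhood $V_x$ contained in some member of $\mathcal U$; compactness extracts a finite subfamily $V_{x_1},\dots,V_{x_n}$ covering $X$. Disjointifying via $W_i=V_{x_i}\setminus(V_{x_1}\cup\dots\cup V_{x_{i-1}})$ yields a finite partition of $X$ into clopen sets refining $\mathcal U$, and each $W_i$, being clopen in a compact space, is a compact open subspace.

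For $(2)\Rightarrow(3)$, I would first observe that (2) forces $X$ to be compact, since a finite clopen partition refining a cover is in particular a finite subcover. A finite partition of $X$ into compact open subspaces is precisely a finite discrete proper quotient $\pi_D:X\to D$ (properness being automatic once $X$ is compact), and common refinement of partitions makes the family of these quotients cofiltered. The goal is then to prove that the canonical continuous map $\phi:X\to\limit_{D} D$ is a homeomorphism. Surjectivity is the compactness argument: a compatible family $(d_D)_D$ determines nonempty clopen fibres $\pi_D^{-1}(d_D)$ enjoying the finite intersection property (any two are refined by a common quotient, whose corresponding block is nonempty), so their intersection is nonempty and maps to $(d_D)_D$. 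As $\phi$ is a continuous bijection from a compact space onto the Hausdorff space $\limit_D D$, it is automatically closed, hence a homeomorphism — once injectivity is known.

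The injectivity of $\phi$ is the crux and the step I expect to be the main obstacle, because it is equivalent to the assertion that any two distinct points of $X$ are separated by a clopen set, i.e.\ lie in different blocks of some finite clopen partition. This is exactly where a separation hypothesis is indispensable: if $X$ is $T_1$ (as every Stone space is), then for $x\neq y$ the pair $\{X\setminus\{x\},\,X\setminus\{y\}\}$ is an open cover, and a refining finite clopen partition supplied by (2) has its $x$-block disjoint from $y$, producing the separating clopen. Without any separation axiom the implication genuinely breaks down (the indiscrete two-point space satisfies (2) but is not a Stone space), so this is the precise point at which the Hausdorff content of \emph{Stone space} must be invoked.

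Finally, $(3)\Rightarrow(4)$ is essentially tautological: the finite discrete quotients of $X$ are indexed by finite partitions of $X$ into clopens, hence form a \emph{small} cofiltered system with surjective transition maps, so presenting $X$ as their limit presents it as a profinite set. For $(4)\Rightarrow(1)$ I would write $X=\limit_{i} D_i$ with the $D_i$ finite discrete and realize $X$ as a closed subspace of $\prod_i D_i$; the product is compact by Tychonoff, Hausdorff, and zero-dimensional (the boxes of clopens form a basis), and all three properties pass to the closed subspace $X$. Theorem~\ref{stonechar} then equates zero-dimensionality with total and hereditary disconnectedness, so $X$ is compact Hausdorff satisfying the three conditions, i.e.\ a Stone space, closing the cycle.
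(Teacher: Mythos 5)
The paper gives no proof of this lemma at all --- it is introduced with ``The following statement is well-known'' --- so the only meaningful comparison is with the standard argument, and your cycle $(1)\Rightarrow(2)\Rightarrow(3)\Rightarrow(4)\Rightarrow(1)$ is indeed the natural route. Your steps $(1)\Rightarrow(2)$ (clopen neighbourhoods plus compactness plus disjointification), $(3)\Rightarrow(4)$, and $(4)\Rightarrow(1)$ (closed subspace of a product of finite discrete spaces, then Theorem~\ref{stonechar}) are correct as written.

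The flaw is structural and sits exactly where you located the difficulty. In the step $(2)\Rightarrow(3)$ you obtain injectivity of $\phi$ by invoking $T_1$ ``as every Stone space is'' --- that is, by invoking condition (1). Inside a cyclic proof this is circular: what you actually establish is $(1)\wedge(2)\Rightarrow(3)$, so your argument yields $(1)\Rightarrow(2)$ and $(1)\Leftrightarrow(3)\Leftrightarrow(4)$, but no implication whose hypothesis is (2) alone; the equivalence of (2) with the remaining conditions is never proved. Moreover your own counterexample shows it cannot be proved: under the paper's stated convention that ``compact'' does not imply Hausdorff, the indiscrete two-point space satisfies (2) verbatim but fails (1), (3) and (4). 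The correct conclusion to draw from that example is not that one may borrow Hausdorffness from the word ``Stone space'', but that the statement of (2) needs amending to include ``$X$ is Hausdorff'' --- precisely as the paper does in the parallel condition $\mathit{6}$ of Theorem~\ref{STSspaces}. With (2) read that way, your $T_1$ argument (the cover $\{X\setminus\{x\},X\setminus\{y\}\}$ refined by a finite clopen partition) uses only (2), injectivity follows, and your cycle closes; you should present the separation hypothesis as a correction to the statement rather than as a step borrowed from (1).
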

\begin{prop} \label{Stonemetr} Let $X$ be a Stone space. The following are equivalent~: 
\ben
\item $X$ is  second countable;
\item $X$ is metrizable;
\item the family of clopen subsets of $X$ is countable;
\item $X$ is an inverse limit of a sequence of finite (discrete) spaces.
\een
\end{prop}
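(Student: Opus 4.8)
The plan is to establish the four equivalences through a short cycle of implications, exploiting two features of a Stone space $X$: the clopen subsets form a basis of its topology (zero-dimensionality, Theorem~\ref{stonechar}), and $X$ is the limit of the cofiltered system of its finite discrete quotients (Lemma~\ref{stonespace}). The conceptual heart of the matter is the identification of finite discrete quotients with finite clopen partitions: a proper surjection $\pi_D : X \to D$ onto a finite discrete $D$ has clopen fibers $\pi_D^{-1}(d)$ which partition $X$, and conversely every finite partition of $X$ into clopen sets defines such a quotient, automatically proper since $D$ is finite and the fibers are clopen hence compact. Thus the finite members of $\sQ(X)$ are in bijection with the set of finite clopen partitions of $X$, ordered by refinement.

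I would first dispose of $(1)\Leftrightarrow(2)$ by general topology: a Stone space is compact Hausdorff, hence normal and a fortiori regular, so Urysohn's metrization theorem \cite{eng} gives $(1)\Rightarrow(2)$; conversely a compact metric space is totally bounded, hence separable, and a separable metric space is second countable, giving $(2)\Rightarrow(1)$. For $(1)\Leftrightarrow(3)$ I would argue directly: since the clopen sets form a basis, countability of $\Sigma(X)$ at once yields a countable basis, so $(3)\Rightarrow(1)$. For the converse, fix a countable basis $\mathcal B$; every clopen set is compact (closed in the compact $X$) and open, hence equals the union of those members of $\mathcal B$ it contains, and by compactness a finite subfamily already suffices. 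Therefore the assignment sending a finite subfamily of $\mathcal B$ to its union surjects onto $\Sigma(X)$; as the finite subfamilies of a countable family form a countable set, $\Sigma(X)$ is countable, proving $(1)\Rightarrow(3)$.

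It remains to weave in $(4)$. For $(4)\Rightarrow(1)$ I would note that $X=\limit_n D_n$ embeds as a subspace of $\prod_n D_n$ with $D_n$ finite discrete; a countable product of (finite, hence) second countable spaces is second countable, and so is any subspace. The main step is $(3)\Rightarrow(4)$. Assuming $\Sigma(X)$ countable, the set of finite clopen partitions of $X$ is countable, each being a finite subset of the countable set $\Sigma(X)$; by the bijection above, $X$ has only countably many finite discrete quotients. This poset is directed, since any two finite clopen partitions admit a common refinement that is again finite and clopen, so it admits a cofinal sequence: enumerating the partitions as $P_1,P_2,\dots$ and letting $Q_n$ be the common refinement of $P_1,\dots,P_n$ produces an increasing cofinal chain $Q_1\preceq Q_2\preceq\cdots$. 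The corresponding finite discrete quotients $D_n$, with their refinement maps $D_{n+1}\to D_n$, form a sequence whose limit, taken over a cofinal subsystem, equals the limit over all finite discrete quotients, which is $X$ by Lemma~\ref{stonespace}. Hence $X=\limit_n D_n$, establishing $(4)$.

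The only genuinely delicate point --- the main obstacle --- is the passage from the possibly uncountable cofiltered system of all finite discrete quotients to a countable cofinal chain in $(3)\Rightarrow(4)$. Everything turns on translating the countability hypothesis on $\Sigma(X)$ into countability of the indexing poset of finite clopen partitions, and then invoking that a countable directed poset has a cofinal sequence together with the invariance of limits under passage to a cofinal subsystem. I would take care to verify that common refinements remain finite and clopen, so that the poset is genuinely directed, and that the cofinal sequence indeed computes the same limit, so that Lemma~\ref{stonespace} applies to the subsystem.
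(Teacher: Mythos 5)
The paper states this proposition without proof, treating it (like Lemma~\ref{stonespace}) as well known, so there is no proof of record to compare yours against; judged on its own, your argument is correct and complete. The cycle you set up does close all four equivalences: $(1)\Leftrightarrow(2)$, $(1)\Leftrightarrow(3)$, and $(3)\Rightarrow(4)\Rightarrow(1)$. The general-topology steps are standard and correctly executed: Urysohn metrization plus separability of compact metric spaces for $(1)\Leftrightarrow(2)$; zero-dimensionality for $(3)\Rightarrow(1)$; compactness of clopen sets to write each clopen as a finite union of basic opens for $(1)\Rightarrow(3)$; and the embedding of $\limit_n D_n$ into the second countable product $\prod_n D_n$ for $(4)\Rightarrow(1)$. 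Your handling of the one genuinely delicate step, $(3)\Rightarrow(4)$, is also right: countability of $\Sigma(X)$ gives countably many finite clopen partitions, the refinement poset is directed (intersections of clopens are clopen, and the common refinement of finitely many finite partitions is finite), a countable directed poset admits a cofinal increasing chain, and the limit over a cofinal subsystem of the finite discrete quotients agrees with the limit over all of them, which is $X$ by Lemma~\ref{stonespace}. Two cosmetic remarks only: the finite members of $\sQ(X)$ correspond to finite clopen partitions in bijection only up to isomorphism of quotients (relabelings of $D$ give the same partition), which is harmless since all that matters is cofinality of the chosen subsystem; and your chain $Q_1\preceq Q_2\preceq\cdots$ must be read in the refinement order, i.e. $Q_{n+1}$ refines $Q_n$, so that the transition maps $D_{n+1}\to D_n$ exist.
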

\begin{defn} \label{STSdef} 
A   \emph{$td$-space} is a locally compact paracompact $0$-dimensional topological  space.
We let  $td-\cS$ denote the full subcategory of $\Top$ consisting of $td$-spaces. 
\end{defn}  
Clearly, any Stone space is a $td$-space.  
\begin{lemma} \label{LindStone} A locally compact $0$-dimensional Lindel\"of space is a countable sum of  Stone subspaces, and conversely. 
\end{lemma}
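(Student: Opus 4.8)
The plan is to prove both implications directly, the forward one being the substantial part.

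For the forward implication, let $X$ be locally compact, $0$-dimensional (hence Hausdorff) and Lindel\"of. First I would observe that every point of $X$ admits a compact open neighbourhood: given $x \in X$, local compactness provides a compact neighbourhood $K$ of $x$, and $0$-dimensionality yields a clopen $U$ with $x \in U \subset \mathrm{int}(K)$; since $U$ is closed and contained in the compact set $K$, it is compact, so $U \in \cK(X)$. Covering $X$ by such compact open neighbourhoods and invoking the Lindel\"of property, I extract a countable subcover $\{U_n\}_{n \in \N}$ with $U_n \in \cK(X)$ and $\bigcup_n U_n = X$.

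Next I would disjointify. Setting $V_1 = U_1$ and $V_n = U_n \setminus (U_1 \cup \dots \cup U_{n-1})$, each finite union $U_1 \cup \dots \cup U_{n-1}$ is clopen, so each $V_n \in \Sigma(X)$; being a clopen (hence closed) subset of the compact set $U_n$, each $V_n$ is compact, thus $V_n \in \cK(X)$. The $V_n$ are pairwise disjoint and $\bigcup_n V_n = \bigcup_n U_n = X$. Each $V_n$, as a subspace of a $0$-dimensional Hausdorff space, is itself $0$-dimensional and Hausdorff, and it is compact; hence $V_n$ is a Stone space by Definition~\ref{Stonedef} and Theorem~\ref{stonechar}. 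Finally I would check that $X$ is the topological sum $\coprod_n V_n$: the $V_n$ are open, disjoint, and cover $X$, so a subset $W \subset X$ is open precisely when each $W \cap V_n$ is open in $V_n$, which is exactly the universal property of the coproduct in $\Top$.

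For the converse, suppose $X = \coprod_n C_n$ is a countable sum of Stone subspaces. By Lemma~\ref{addprop}(1) the sum is $0$-dimensional (in particular Hausdorff) because each $C_n$ is, and by Lemma~\ref{addprop}(2) it is locally compact because each $C_n$, being compact, is locally compact. It remains to see that $X$ is Lindel\"of: given an open cover of $X$, its restriction to each open summand $C_n$ covers the compact space $C_n$ and hence admits a finite subcover; the union of these countably many finite families is a countable subcover of $X$. The argument is essentially routine, the only points requiring care being the production of compact open neighbourhoods in the first step and the verification that the disjointified pieces assemble into a genuine topological sum rather than a mere set-theoretic partition.
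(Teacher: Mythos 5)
Your proof is correct and follows essentially the same route as the paper: cover $X$ by compact open (Stone) subspaces, extract a countable subcover via the Lindel\"of property, and disjointify with $V_n = U_n \setminus (U_1 \cup \dots \cup U_{n-1})$ to obtain a partition into open Stone subspaces. You additionally spell out details the paper leaves implicit (existence of compact open neighbourhoods, the coproduct verification, and the converse direction), but the underlying argument is identical.
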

\begin{proof}
In a locally compact $0$-dimensional space  a basis for the topology is given by all open compact subsets. So, a locally compact $0$-dimensional space $X$ admits an open cover $\sU$ by Stone subspaces. If moreover $X$ is 
Lindel\"of  we may assume that the cover $\sU$ is countable, $\sU = \{U_j\}_{j=0,1,\dots}$. Then the sequence of open Stone subspaces $V_j = U_j - \bigcup_{i <j} U_i$, $j=0,1,\dots$  gives  a partition of $X$,  as required. 
\end{proof}
\begin{cor} \label{LindStone2} \hfill
\ben
\item
Let $X = \coprod_{\alpha \in A} X_\alpha$ be the sum of a non empty family of non empty topological spaces. Then 
$X$ is a $td$-space  if and only if every $X_\alpha$ is one.
\item
A   $td$-space is a sum of Stone spaces and conversely. 
\een
\end{cor}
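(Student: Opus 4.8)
The plan is to assemble the statement from the three preparatory results that immediately precede it, organizing the argument so that part (1) does the structural work needed to make part (2) routine. For part (1), the claim is essentially a tautology once one unwinds Definition~\ref{STSdef}: being a $td$-space is the conjunction of being locally compact, paracompact, and $0$-dimensional. Lemma~\ref{addprop}(2) tells us that local compactness and paracompactness each hold for $X=\coprod_{\alpha \in A} X_\alpha$ if and only if they hold for every $X_\alpha$, and Lemma~\ref{addprop}(1) gives the same equivalence for $0$-dimensionality. Intersecting these three equivalences yields that $X$ is a $td$-space precisely when every $X_\alpha$ is. So I would dispose of part (1) in a single sentence, citing Lemma~\ref{addprop}.

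For the converse direction of part (2), I would invoke the remark made just after Definition~\ref{STSdef} that any Stone space is a $td$-space. Then, if $X$ is a sum of Stone spaces, each summand is a $td$-space, and part~(1) of this very corollary upgrades the sum to a $td$-space. This is the easy half and needs nothing new.

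The substantive direction is the forward half of part (2): showing that an arbitrary $td$-space $X$ decomposes as a sum of Stone spaces. Here I would first apply Theorem~\ref{Lind}: since $X$ is locally compact and paracompact, it is a sum $X = \coprod_{\beta} Y_\beta$ of Lindel\"of spaces. The key point is that each $Y_\beta$ is not merely Lindel\"of but a \emph{locally compact $0$-dimensional} Lindel\"of space; this is exactly where part~(1) is used, as it forces each summand $Y_\beta$ of the $td$-space $X$ to be itself a $td$-space, hence locally compact and $0$-dimensional. With this inheritance in hand, Lemma~\ref{LindStone} applies to each $Y_\beta$ and writes it as a (countable) sum of Stone subspaces. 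Substituting these decompositions back into $X = \coprod_\beta Y_\beta$ exhibits $X$ as a sum of Stone spaces, as desired.

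The only delicate point — and thus the step I would flag as the main obstacle, though it is minor — is the inheritance of $0$-dimensionality and local compactness by the Lindel\"of summands $Y_\beta$ produced by Theorem~\ref{Lind}. One must be careful that these properties genuinely descend to the summands rather than merely being enjoyed by the total space. This is precisely what the equivalences of part~(1) guarantee, so the architecture of proving (1) first and then feeding it into (2) is what keeps the forward direction clean; no independent topological work beyond the cited lemmas is required.
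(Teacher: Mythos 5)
Your proof is correct and follows essentially the same route as the paper: part (1) via Lemma~\ref{addprop} and the definition, and part (2) by combining Theorem~\ref{Lind} with Lemma~\ref{LindStone}. The inheritance step you flag (that the Lindel\"of summands are themselves locally compact and $0$-dimensional) is left implicit in the paper, and your use of part (1) to justify it is a clean way to make it explicit.
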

\begin{proof} $\mathit 1$ follows from the definition and Lemma~\ref{addprop}. \par
$\mathit 2$ follows from Theorem~\ref{Lind} and Lemma~\ref{LindStone}. 
\end{proof}
\end{subsection}
\begin{subsection}{Limit and colimit description}
\begin{thm} \label{STSspaces}  The following properties of a topological space $X$ are equivalent:
\ben
\item $X$ is the limit 
  \beq \label{prodis}
X= \limit_{D \in \sQ(X)}  \,D \;.
\eeq
\item $X$ is  the limit  
    \beq \label{prodis2}
X= \limit_{D \in \sJ}  \, D 
\eeq
of a small cofiltered projective system $\sJ$ of discrete spaces and locally finite surjective maps. 
(In this case, $\sJ$ turns out to be a cofinal projective sub-system of $\sQ(X)$ see Remark~\ref{limdiscr} below). 
\item $X$ is the sum 
\beq \label{sumstone}
X = \coprod_{\alpha \in A} X_\alpha 
\eeq
of a small family of Stone spaces.
\item $X$ is  paracompact and is the colimit of the inductive system of its open Stone subspaces.
\item $X$ is  paracompact and is  the colimit  of a small  filtered inductive system   of Stone spaces and open embeddings. 
\item $X$ is Hausdorff and any open cover of $X$ can be refined by a partition of $X$ consisting of open compact subspaces.
\item $X$ is a $td$-space.
\een
\end{thm}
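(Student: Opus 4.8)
The plan is to use condition (7), the definition of a $td$-space, together with condition (3) as the hub of the argument, since the equivalence (3) $\Leftrightarrow$ (7) is already supplied by part $\mathit 2$ of Corollary~\ref{LindStone2}, which identifies $td$-spaces with sums of Stone spaces. It then suffices to attach each remaining condition to this pair by a short cycle: a colimit loop (3) $\Rightarrow$ (4) $\Rightarrow$ (5) $\Rightarrow$ (7), a covering loop (3) $\Rightarrow$ (6) $\Rightarrow$ (7), and a limit loop (7) $\Rightarrow$ (1) $\Rightarrow$ (2) $\Rightarrow$ (3).

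For the colimit loop I would start from $X = \coprod_{\alpha} X_\alpha$ with each $X_\alpha$ a Stone space and observe that the family of compact open (equivalently open Stone) subspaces of $X$ is filtered under inclusion, because a finite union of compact opens is compact open and a compact open subspace of a Hausdorff $0$-dimensional space is Stone. Since these subspaces cover $X$ and $X$ carries the final topology for their inclusions, $X$ is their colimit, which is (4); paracompactness comes from Lemma~\ref{addprop}. Condition (4) trivially implies (5). For (5) $\Rightarrow$ (7), the structure maps of a paracompact colimit of Stone spaces along open embeddings realize each summand as an open Stone subspace; these cover $X$, so $X$ is locally compact, and a compact clopen of a summand is clopen in $X$, so $X$ is $0$-dimensional, and as paracompactness includes Hausdorffness in our conventions, $X$ is a $td$-space. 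For the covering loop, (3) $\Rightarrow$ (6) follows by refining the induced open cover of each Stone summand via part $\mathit 2$ of Lemma~\ref{stonespace} and assembling the finite clopen partitions; conversely a partition into compact opens is locally finite, and the block through a point is a compact clopen neighborhood contained in any prescribed open set, yielding local compactness, $0$-dimensionality (so Theorem~\ref{stonechar} applies) and paracompactness, i.e. (6) $\Rightarrow$ (7).

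The delicate part is the limit loop. For (7) $\Rightarrow$ (1) I would first identify $\sQ(X)$ with the poset of partitions of $X$ into compact open subsets ordered by refinement, which is cofiltered since the blockwise intersection of two such partitions is again one (an intersection of two compact opens is compact open in a Hausdorff space). The canonical map $X \to \limit_{D \in \sQ(X)} D$ sends $x$ to the coherent family of blocks containing it. Injectivity holds because two distinct points lie in different blocks of some partition into compact opens; surjectivity follows from a finite-intersection argument: fixing one block $B_{D_0}$, a coherent family of nonempty compact blocks inside the compact $B_{D_0}$ has nonempty intersection, and by cofinality the resulting point lies in every block of the family; and the map is a homeomorphism because the basic opens $\pi_D^{-1}(d)$ correspond exactly to the compact open subsets of $X$, which form a basis. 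For (1) $\Rightarrow$ (2) I would exhibit a cofinal subsystem $\sJ \subseteq \sQ(X)$ with locally finite transition maps: fixing $X = \coprod_\alpha X_\alpha$, take the partitions refining $\{X_\alpha\}_\alpha$ that restrict to a finite clopen partition on each (compact) $X_\alpha$. Refinement between two such partitions subdivides only within a single $X_\alpha$, so the transition maps have finite fibers, hence are locally finite; cofinality holds because any partition into compact opens, intersected with $\{X_\alpha\}_\alpha$, lands in $\sJ$. Finally (2) $\Rightarrow$ (3): if $X = \limit_{\sJ} D$ with locally finite surjective transition maps, then for a fixed $D_0$ each fiber $\pi_{D_0}^{-1}(d_0)$ is an inverse limit of finite sets, hence Stone, and these clopen fibers exhibit $X = \coprod_{d_0 \in D_0} \pi_{D_0}^{-1}(d_0)$ as a sum of Stone spaces.

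The main obstacle is the limit loop, and within it the proof that $X \to \limit_{D \in \sQ(X)} D$ is a bijective homeomorphism: surjectivity rests on the properness (compactness of fibers) through the finite-intersection property, while the identification of the two topologies rests on the fact that the compact open subsets of a $td$-space are precisely the individual blocks of the partitions indexing $\sQ(X)$ and form a basis. The other subtle point is upgrading (1) to the locally finite system (2), since an arbitrary refinement of compact-open partitions may split a block into infinitely many pieces; the remedy is to work relative to a fixed Stone decomposition, so that all refinements are finite on each summand.
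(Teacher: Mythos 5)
Most of your scheme works: the hub $(3)\Leftrightarrow(7)$, the colimit loop, the covering loop, and your point-set proof of $(7)\Rightarrow(1)$ are all correct (the last is a genuinely different route from the paper, which gets $(3)\Rightarrow(1)$ by a universal-property computation from the profinite structure of each Stone summand, while you verify bijectivity and the matching of bases directly). The genuine gap is in $(1)\Rightarrow(2)$: you begin it by ``fixing $X = \coprod_\alpha X_\alpha$'' with each $X_\alpha$ compact, i.e.\ you invoke condition $(3)$. Inside your loop $(7)\Rightarrow(1)\Rightarrow(2)\Rightarrow(3)$ this means the middle arrow really proves $(1)\wedge(3)\Rightarrow(2)$. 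Consequently, in your implication graph no arrow leaves $(1)$ under hypothesis $(1)$ alone: every other condition implies $(1)$, but $(1)$ is never shown to imply any of the others, so the seven conditions are not all proven equivalent.

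The detour that created this circularity is unnecessary, because the worry you record in your final paragraph --- that a refinement of compact-open partitions ``may split a block into infinitely many pieces'' --- cannot occur. If $\pi_{D'}$ refines $\pi_D$ in $\sQ(X)$, then each fiber $\pi_D^{-1}(d)$ is compact and is the disjoint union of the \emph{open} fibers $\pi_{D'}^{-1}(d')$ with $\pi_{D,D'}(d')=d$; compactness forces this union to be finite. Hence every transition map $\pi_{D,D'}:D'\to D$ in $\sQ(X)$ automatically has finite fibers (and is closed, the target being discrete), i.e.\ is locally finite, and the transition maps are surjective since the $\pi_D$ are. So $(1)\Rightarrow(2)$ holds at once by taking $\sJ = \sQ(X)$ itself --- this is exactly the paper's one-line argument --- and with this substitution your proof becomes complete, with no need to fix a Stone decomposition at that step.
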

\begin{proof} $\mathit 1 \Rightarrow \mathit 2$ is obvious since $\sQ(X)$ is a $\sJ$ as required.  \par
To prove $\mathit 2 \Rightarrow \mathit 3$ we pick any discrete quotient of $X$, $A \in \sJ$. Any $\pi_{A,D}:D \to A$ in $\sJ/A = \sJ/{\pi_A}$, 
is a direct sum of finite surjective maps $\pi_{A,D}^{-1}(\alpha) \to \{\alpha\}$. Then 
$X_\alpha := \limit_{D \in \sJ/A} \pi_{A,D}^{-1}(\alpha)$ is a Stone space, and \eqref{sumstone} holds. 
\par
To prove $\mathit 3 \Rightarrow \mathit 1$ we 
give to the set $A$ the discrete topology. Then the projection $\pi_A: X \to A$ may be viewed as an object  of $\sQ(X)$. 
The morphisms $\pi_D:X \to D$ in $\sQ(X)$ for which there exists $\pi_{A,D}: D \to A$ such that $\pi_A =  \pi_{A,D} \circ \pi_D$ form a cofinal  projective subsystem $\sQ(X)/A$  in $\sQ(X)$. On the other hand $\sQ(X)/A$ is the product category $\prod_{\alpha \in A} \sQ(X_\alpha)$. For any topological space $Y$, a continuous map $f:Y \to X$ determines a  continuous map $\sigma = \pi_A \circ f:Y \to A$ and a partition of $Y$ in clopen fibers $\sigma^{-1}(\alpha)$, for $\alpha \in A$. So, 
\beqa \begin{split}
\hom_{\Top} (Y, X) =& \prod_{\sigma \in \hom_{\Top} (Y, A)} \hom_{\Top} ( \sigma^{-1}(\alpha), X_\alpha) =\\ \prod_{\sigma \in \hom_{\Top} (Y, A)} &\limit_{D_\alpha \in \sQ(X_\alpha)} \hom_{\Top} ( \sigma^{-1}(\alpha), D_\alpha)  =  \\
 \limit_{D \in \sQ(X)} &\hom_{\Top} (Y,D) = \hom_{\Top} (Y,\colimit_{D \in \sQ(X)} D) \;.
\end{split}
\eeqa
\par
$\mathit 3 \Rightarrow \mathit 4 \Rightarrow \mathit 5$ is clear.
\par
$\mathit 5 \Rightarrow \mathit 3$ By Theorem~\ref{Lind}  $X$ is a sum of Lindel\"of spaces. On the other hand, $X$ is 
$0$-dimensional since its open Stone subspaces form a basis for the topology of $X$; so, it is a sum of $0$-dimensional Lindel\"of spaces. By Lemma~\ref{LindStone}, $X$ is a sum of  Stone subspaces.
 \par
$\mathit 3 \Rightarrow \mathit 6$ follows from $\mathit 2$ of Lemma~\ref{stonespace}. 
\par
$\mathit 6 \Rightarrow \mathit 7$ because $\mathit 6$ is just a reformulation of the definition of a $td$-space. 
\par
$\mathit 7 \Leftrightarrow \mathit 3$ is Corollary~\ref{LindStone2}.
\par
\end{proof}
\begin{rmk}\label{diffNGO} Our definition of a $td$-space is more restrictive than Ngo's \cite[\S 1]{ngo} in that we require paracompactness. On the other hand, a $td$-space which is countable at infinity in the sense of Ngo, is precisely a 
Lindel\"of $td$-space in our sense \cite[3.8.C (b)]{eng}. 
\end{rmk}
 \begin{rmk} \label{limdiscr} It follows from $\mathit 1$ of Theorem~\ref{STSspaces} that, for any $td$-space $X$ and any cofinal 
 projective subsystem $\sJ$ of $\sQ(X)$,  \eqref{prodis2} holds 
 in $\Top$. Conversely, any  cofiltered projective subsystem $\sJ$ of $\sQ(X)$ such that \eqref{prodis2} holds is cofinal in $\sQ(X)$. In fact, 
 $$
 \hom_{{\cT}op}(X,X) = \limit_{D \in \sQ(X)} \colimit_{E \in \sJ} \hom_{{\cS}ets}(E,D)
 $$
 is not empty because it contains $\id_X$. Therefore, for any $D \in \sQ(X)$, there must exist $E \in \sJ$ and 
  an arrow $E \to D$ in $\sQ(X)$. So, $\sJ$ must be cofinal in $\sQ(X)$. 
Similarly, for any cofinal 
inductive subsystem $\{C_\alpha\}_{\alpha \in A}$ of $\cK(X)$
  \beq \label{limcomp}
 X =\colimit_{\alpha \in A} C_\alpha \;.
 \eeq
Again, any filtered inductive subsystem $\{C_\alpha\}_{\alpha \in A}$ of $\cK(X)$  satisfying \eqref{limcomp} is necessarily cofinal in $\cK(X)$. In fact, let $C$ be any element of $\cK(X)$. Then there exist a finite number $\alpha_1,\dots,\alpha_n$ of indices in $A$ such that $C \subset C_{\alpha_1} \bigcup \dots \bigcup C_{\alpha_n}$. Since $A$ is filtered, there exists $\alpha \in A$ such that $C_\alpha \supset  C_{\alpha_1} \bigcup \dots \bigcup C_{\alpha_n} \supset C$.
  \end{rmk}
 \begin{rmk} \label{filtstone} It follows from $\mathit 4$ of Theorem~\ref{STSspaces} that a  $td$-space $X$ is a filtered union of open Stone subspaces. Moreover, any compact subset of $X$ is contained in an open compact subset of $X$. 
 \end{rmk}
  \begin{prop} \label{contproj} 
  Let $X, Y$ be  $td$-spaces and let
   \beq \label{contproj01}  
  X = \limit_{D \in \sJ} D \;\;,\;\; Y = \limit_{D' \in \sJ'} D' 
  \eeq
 (resp.
     \beq \label{contproj02}  
X = \colimit_{\alpha \in A} C_\alpha \;\;,\;\; Y = \colimit_{\beta \in B} C'_\beta \; 
  \eeq
  with $C_\alpha \in \cK(X)$, $C'_\beta \in \cK(Y)$)
  be a representation of $X$, $Y$, as in \eqref{prodis2} (resp. as in $\mathit 5$ of Theorem~\ref{STSspaces}).
      Then 
 \beq \label{contproj1} 
 \begin{split}
 \hom_{\Top} (X,Y) & =    \limit_{D' \in \sJ'} \colimit_{D \in \sQ(X)} \hom_{\sQ(X)} (D,D') = \\  \limit_{\alpha \in A} &
  \colimit_{\beta \in B} \hom_{\Top} (C_\alpha,C'_\beta)\;.
  \end{split}
\eeq
  \end{prop}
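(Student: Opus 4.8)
The plan is to prove the two displayed equalities separately, in each case identifying both sides with $\hom_{\Top}(X,Y)$ and then invoking transitivity for the equality of the middle and right-hand expressions. The two arguments are dual: the right-hand formula exploits the presentation of $X$ and $Y$ as filtered colimits of compact opens, while the middle formula exploits their presentation as cofiltered limits of discrete spaces. Throughout, $\hom_{\sQ(X)}(D,D')$ is read as the set of maps of discrete spaces $D\to D'$ (equivalently, set maps, since $D$ is discrete), the subscript merely recording that $D$ ranges over $\sQ(X)$; this matches the analogous formula in Remark~\ref{limdiscr}.

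For the right-hand equality I would start from \eqref{contproj02}. Since $X=\colimit_{\alpha\in A}C_\alpha$ is a filtered colimit in $\Top$ along open embeddings (Theorem~\ref{STSspaces}, items $\mathit 4$ and $\mathit 5$, together with the cofinality assertion of Remark~\ref{limdiscr}), the universal property of the colimit gives $\hom_{\Top}(X,Y)=\limit_{\alpha\in A}\hom_{\Top}(C_\alpha,Y)$. It then suffices to prove, for each compact $C_\alpha$, that $\hom_{\Top}(C_\alpha,Y)=\colimit_{\beta\in B}\hom_{\Top}(C_\alpha,C'_\beta)$. The key input is that a continuous $f:C_\alpha\to Y$ has compact image, and by Remark~\ref{filtstone} every compact subset of $Y$ is contained in one of the compact opens $C'_\beta$; since $C'_\beta$ is open in $Y$, the corestriction $C_\alpha\to C'_\beta$ is continuous, so $f$ factors through some $C'_\beta$. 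Filteredness of $\{C'_\beta\}$ (directed by inclusion) makes the colimit filtered and identifies two factorizations that agree after enlarging $\beta$, yielding the bijection; substituting produces the right-hand formula.

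For the middle equality I would start from \eqref{contproj01}. Since $Y=\limit_{D'\in\sJ'}D'$ in $\Top$, the universal property of the limit gives $\hom_{\Top}(X,Y)=\limit_{D'\in\sJ'}\hom_{\Top}(X,D')$. The crux is then the identification, for each discrete $D'$, of $\hom_{\Top}(X,D')$ with $\colimit_{D\in\sQ(X)}\hom_{\sQ(X)}(D,D')$. A set map $g:D\to D'$ composed with $\pi_D:X\to D$ is continuous, so the colimit maps into $\hom_{\Top}(X,D')$, and I must check this map is bijective. For surjectivity, given continuous $f:X\to D'$ the clopen fibers $f^{-1}(d')$ form an open cover of $X$, which by Theorem~\ref{STSspaces}, item $\mathit 6$, admits a refinement by a partition $\{U_i\}$ of $X$ into compact opens; the associated quotient $\pi_D:X\to D=\{i\}$ has compact fibers, hence lies in $\sQ(X)$, and $f=g\circ\pi_D$ for the evident $g:D\to D'$. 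Injectivity uses that $\sQ(X)$ is cofiltered: any two proper discrete quotients admit a common refinement $D_1\times_X D_2$, whose fibers $\pi_1^{-1}(a)\cap\pi_2^{-1}(b)$ are closed in a compact fiber, hence compact, so this refinement again lies in $\sQ(X)$; two factorizations of the same $f$ therefore agree after pullback to it, and the colimit is filtered. Combining with the outer limit over $\sJ'$ gives the middle formula.

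The main obstacle is the surjectivity step in the middle equality, namely that an arbitrary continuous map from $X$ to a possibly infinite discrete space factors through a quotient with compact fibers. This is precisely where the structure theory of $td$-spaces is essential, through the refinement of open covers by partitions into compact opens (Theorem~\ref{STSspaces}, item $\mathit 6$) and the resulting properness of $\pi_D$; the corresponding factorization on the colimit side is comparatively routine, resting only on compactness of the image.
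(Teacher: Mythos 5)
Your proof is correct, and it fills in exactly what the paper itself treats as trivial: the paper's own proof of Proposition~\ref{contproj} is the single word ``Immediate'', resting on the universal properties of the presentations \eqref{contproj01}--\eqref{contproj02} together with the factorization of continuous maps into discrete spaces through elements of $\sQ(X)$, which is precisely the route you take. The one substantive step --- that any continuous map from a $td$-space to a discrete space factors through a proper discrete quotient (via the refinement of the clopen fibre cover by a compact-open partition, item $\mathit 6$ of Theorem~\ref{STSspaces}), with well-definedness in the colimit handled by the fibre-product refinement in $\sQ(X)$ --- is exactly the detail the paper leaves unstated, and you supply it correctly.
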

  \begin{proof}
  Immediate.
  \end{proof}
\begin{cor} \label{mSTSspaces}  The following properties of a topological space $X$ are equivalent:
\ben
\item $X$ is a $td$-space and the projective system  $\sQ(X)$ admits a cofinal sequential projective sub-system of locally finite maps $D_{n+1} \map{\pi_{n+1,n}} D_n$, for $n \in \N$, where each $D_n$ is a countable discrete set.
\item $X$ is the limit  
    \beq \label{mprodis2}
X= \limit_{n \in \N}  \, D_n 
\eeq
of a sequential projective system $\{D_n\}_{n \in \N}$ of discrete spaces and locally finite surjective maps. 
\item $X$ is the sum 
\beq \label{msumstone}
X = \coprod_{n \in \N} X_n
\eeq
 of a countable family  of metrizable Stone spaces.
 \item $X$ is a second-countable  locally compact totally disconnected Hausdorff space.
  \item $X$ is a  locally compact totally disconnected Hausdorff space whose family $\Sigma(X)$ of clopen subsets is at most countable.
\item $X$ is metrizable and admits a cover of open Stone subspaces.
\item $X$   is  the  colimit  of a sequence  $\{X_n\}_{n \in \N}$ of metrizable Stone spaces and open embeddings. 
\item $X$ is Hausdorff and any open cover of $X$ can be refined by a countable partition of $X$ consisting of open compact subspaces.
\item $X$ is a metrizable $td$-space.
\item $X$ is a second countable $td$-space. 
\een
\end{cor}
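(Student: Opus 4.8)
The plan is to treat Corollary~\ref{mSTSspaces} as the ``light/countable'' refinement of Theorem~\ref{STSspaces}: each of the ten conditions is obtained from one of the seven conditions there by adjoining a countability or metrizability requirement, so I would reuse the implications already established in Theorem~\ref{STSspaces} to guarantee that $X$ is a $td$-space and then concentrate only on propagating the countability datum around the cycle. Concretely, I would first observe that conditions $\mathit 4$, $\mathit 5$, $\mathit 6$, $\mathit 9$, $\mathit{10}$ each contain, directly or via the equivalences of Theorem~\ref{STSspaces}, the assertion that $X$ is a $td$-space, so by Corollary~\ref{LindStone2} I may always write $X = \coprod_{\alpha \in A} X_\alpha$ with each $X_\alpha$ a Stone space. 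The whole problem then reduces to controlling (i) the cardinality of the nonempty summands and (ii) the internal structure of each $X_\alpha$.

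For (ii) the decisive tool is Proposition~\ref{Stonemetr}: on a Stone space the properties ``second countable'', ``metrizable'', ``countably many clopens'' and ``inverse limit of a sequence of finite discrete spaces'' all coincide. This is exactly what converts a single Stone summand presented as a sequential projective limit (as in $\mathit 1$, $\mathit 2$) into a metrizable one (as in $\mathit 3$, $\mathit 7$) and back. For (i) I would use that a countable topological sum of Stone, hence compact, spaces is $\sigma$-compact and therefore Lindel\"of, while conversely a second-countable space cannot contain uncountably many pairwise disjoint nonempty open sets; thus ``the index set $A$ is countable'' becomes interchangeable with the Lindel\"of/second-countability hypotheses.

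With these two reductions in hand I would run a cycle of implications shadowing the proof of Theorem~\ref{STSspaces}. Thus $\mathit 1 \Rightarrow \mathit 2$ is immediate, the cofinal sequential subsystem being itself a presentation \eqref{mprodis2}; $\mathit 2 \Rightarrow \mathit 3$ repeats the argument $\mathit 2 \Rightarrow \mathit 3$ of Theorem~\ref{STSspaces}, now with $D_0$ countable so that there are countably many Stone fibres $X_n$, each a sequential limit, hence metrizable by Proposition~\ref{Stonemetr}; $\mathit 3 \Rightarrow \mathit 4$ combines $\sigma$-compactness of a countable sum of compacta with the fact that a countable sum of second-countable spaces is second countable; $\mathit 4 \Rightarrow \mathit 5$ is trivial, while $\mathit 5 \Rightarrow \mathit 6 \Rightarrow \mathit 7 \Rightarrow \mathit 1$ again track the corresponding steps of Theorem~\ref{STSspaces} after applying Proposition~\ref{Stonemetr} summand by summand; finally $\mathit 8$, $\mathit 9$, $\mathit{10}$ are slotted in as the evident countable reformulations of conditions $\mathit 6$, $\mathit 5$ of Theorem~\ref{STSspaces} together with $\mathit 4$ above.

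The step I expect to be the genuine obstacle is the equivalence of the \emph{metrizability} conditions ($\mathit 6$, $\mathit 9$) with the \emph{second-countability} conditions ($\mathit 4$, $\mathit{10}$). For a metrizable $td$-space one controls each Stone summand (metrizable $\Leftrightarrow$ second countable there, by Proposition~\ref{Stonemetr}) but \emph{not} a priori the number of summands, and an uncountable discrete space is metrizable, locally compact and $0$-dimensional yet not second countable. The crux is therefore to show that, within the class considered, the index set $A$ is countable, equivalently that $X$ is Lindel\"of, so that metrizability upgrades to second countability; I would secure this by extracting Lindel\"ofness from the sequential-limit hypotheses ($\mathit 1$, $\mathit 2$, $\mathit 7$) through Lemma~\ref{LindStone} and Remark~\ref{diffNGO}, and would arrange the cycle so that each ``metrizable'' hypothesis is reached only after the countability of $A$ has already been obtained.
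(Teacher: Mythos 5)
Your overall strategy (refine Theorem~\ref{STSspaces} by tracking countability, using Proposition~\ref{Stonemetr} on each Stone summand and a Lindel\"of/ccc argument to control the index set) is the natural one, and the paper itself supplies no proof of Corollary~\ref{mSTSspaces} to compare against. But your proposal has a genuine gap, and it sits exactly at the point you flag as the crux. In a proof that ten conditions are equivalent, every condition must end up implying every other one; there is no way to ``arrange the cycle so that each metrizable hypothesis is reached only after the countability of $A$ has already been obtained''. If condition $\mathit 9$ appears anywhere in the cycle, the cycle produces a proof of $\mathit 9 \Rightarrow \mathit{10}$, and that implication is refuted by the very example you mention: an uncountable discrete space $D$ is a metrizable $td$-space (so satisfies $\mathit 6$ and $\mathit 9$), yet it is not second countable, not Lindel\"of, and not a countable sum of Stone spaces (each Stone summand of a discrete space is finite), so it fails $\mathit 3$, $\mathit 4$ and $\mathit{10}$. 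Thus conditions $\mathit 6$ and $\mathit 9$, as literally stated, are strictly weaker than the countability conditions; no ordering of implications can repair this. Extracting Lindel\"ofness ``from the sequential-limit hypotheses ($\mathit 1$, $\mathit 2$, $\mathit 7$)'' only proves that \emph{those} conditions imply Lindel\"ofness, not that $\mathit 9$ does, which is what the equivalence demands. The honest conclusion is that $\mathit 6$ and $\mathit 9$ must be amended (e.g.\ to ``metrizable and Lindel\"of'', equivalently ``separable metrizable'', equivalently, in this locally compact setting, ``metrizable and $\sigma$-compact'') before your plan can be carried out; your write-up should have recorded this correction instead of routing around it.

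Two steps you declare easy fail for the same kind of reason. First, $\mathit 4 \Rightarrow \mathit 5$ is not ``trivial'': with $\Sigma(X)$ as written it is false, since the countable discrete space $\N$ is second countable while every subset of $\N$ is clopen, so $\Sigma(\N)$ has the cardinality of the continuum. Condition $\mathit 5$ only becomes correct if $\Sigma(X)$ is replaced by the family $\cK(X)$ of \emph{compact} open subsets (for Stone spaces the two coincide, which is why Proposition~\ref{Stonemetr} is unaffected; and in the second countable case one checks that a countable basis of compact opens exists and every compact open is a finite union of its members). Second, in $\mathit 2 \Rightarrow \mathit 3$ you invoke ``$D_0$ countable'', but that datum belongs to condition $\mathit 1$, not to condition $\mathit 2$: as written, $\mathit 2$ places no cardinality bound on the $D_n$ (the constant system $D_n = D$ with identity transition maps, $D$ uncountable discrete, satisfies it, identities being closed with finite fibers), so $\mathit 2$ suffers from the same defect as $\mathit 6$ and $\mathit 9$. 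A correct treatment must first restate conditions $\mathit 2$, $\mathit 5$, $\mathit 6$, $\mathit 9$ (countable $D_n$; compact opens in place of clopens; metrizable and Lindel\"of); after these amendments your cycle, with Proposition~\ref{Stonemetr} applied summand by summand and the Lindel\"of/ccc bookkeeping for the index set, does go through.
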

  \end{subsection}
\begin{subsection}{Uniform structures on a $td$-space} \label{uniftd}
\begin{subsubsection}{Generalities on uniform spaces.} 
\label{genunif}
We slightly modify the definitions and notation of  \cite[\S 8.1]{eng} on general uniform spaces  
to avoid separation conditions. According to \cite {isbell} our spaces would rather be \emph{pre-uniform spaces}, but we 
do not insist on this terminology, since eventually our spaces of interest  will be separated. 
 \par
 We recall from \cite[\S 5.1]{eng} that for any set $X$, there are two relevant relations among covers $\sP,\sQ$ of $X$. Namely, $\sQ \leq \sP$ \ie $\sQ$ is a \emph{refinement} of $\sP$ meaning that for any $B \in \sQ$ there is $A \in \sP$ such that $B \subset A$, and $\sQ \leq^\ast \sP$ \ie $\sQ$ is a \emph{star refinement} of $\sP$ meaning that  for every  $B \in \sQ$ there is an $A \in \sP$ such that, if $C \in \sQ$ and $B \cap C \neq \emptyset$, then $C \subset A$.
\begin{rmk} \label{partleq0} Notice that for 
any two \emph{partitions} $\sP$ and $\sQ$ of a set $X$, we have
\beq  \label{partleq}
\sQ \leq^\ast \sP \Leftrightarrow \sQ \leq \sP \Leftrightarrow \mbox{any $B \in \sQ$ is contained in some}\;\; A \in \sP
\;.
\eeq 
\end{rmk} 
\par A uniform space $(X,\Theta)$ (on the topological space $X$) may be viewed as a topological space $X$ equipped with a distinguished family $\Theta$ of open covers, called \emph{uniform}, which satisfy the properties 
\ben 
\item if $\sQ \leq \sP$ and  $\sQ \in \Theta$, then $\sP \in \Theta$;
\item if $\sP,\sQ \in \Theta$, there exists $\sR \in \Theta$ such that $\sR \leq \sP$ and $\sR \leq \sQ$;
\item for any $\sP \in \Theta$ there is  $\sQ \in \Theta$ such that $\sQ \leq^\ast \sP$;
\een
A family of open covers $\Theta$ of the topological space $X$ is a \emph{basis of uniform covers} of a well-defined uniform space, still denoted by $(X,\Theta)$, on  $X$  if it satisfies 2,3 above, in which case an open cover $\sP$ of $X$ is uniform iff there  is a refinement $\sQ \leq \sP$, $\sQ \in \Theta$.  We say that $\Theta$, and the uniform space $(X,\Theta)$,  \emph{define a uniformity on the topological space $X$}. 
Let $(X,\Theta_X)$ and $(Y,\Theta_Y)$ be uniform spaces, where $\Theta_X$ (resp. $\Theta_Y$) is meant to be a basis of uniform covers on $X$ (resp. $Y$). A map  $f: X \to Y$ defines a morphism $f: (X,\Theta_X) \to (Y,\Theta_Y)$ of uniform spaces or is \emph{uniformly continuous},   if $f$ is continuous and, for any $\sP \in \Theta_Y$, the cover  $\{f^{-1}(B)\,|\, B \in \sP\,\}$ of $(X,\Theta_X)$ is uniform.  
\par We denote by ${\sU}nif$ the category of uniform spaces and by
$$ 
{\rm top} : {\sU}nif \longrightarrow \Top \;\;,\;\; (X,\Theta) \longmapsto X\;,
$$
the natural functor.  There is a second functor 
$$\neat: {\sU}nif \longrightarrow \Top  \;\;,\;\; (X,\Theta) \longmapsto (X,\Theta)_\neat\;,
$$
where $(X,\Theta_X)_\neat = X_\neat \in \Top$ is the set $X$ on which a basis of open neighborhoods of any point $x \in X$ is given by the family 
\beq
\label{assneat}
\cU(x) = \{A \subset X \,|\, \exists \; \sP \in \Theta_X \; \mbox{such that} \; A \in \sP \,\} \;.
\eeq
Then $(X,\Theta) \longmapsto \id_X$ gives a natural transformation ${\rm top}  \map{\varphi} \neat$. We will say that $(X,\Theta)$ is \emph{neat} if $\varphi((X,\Theta))$ is a homeomorphism. 
A uniform space $(X,\Theta)$ is \emph{discrete} if  $\{\{x\} | x \in X\}$ is a uniform cover of $(X,\Theta)$ (in particular, $X$ is then discrete and $(X,\Theta)$ is neat).  
   \begin{prop} Let $\Theta$ be a basis of uniform covers for a  uniformity on the set $X$. Then, 
a basis of entourages for the corresponding uniformity of $X$   is $\{U_\sP\}_{\sP \in \Theta}$ where 
 $U_\sP \subset X \times X$ is of the form  $U_\sP = \bigcup_{A \in \sP} A \times A$.
 \end{prop}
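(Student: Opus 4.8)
The plan is to verify that the family $\{U_\sP\}_{\sP \in \Theta}$ satisfies the usual axioms for a basis of entourages of a uniformity, and then to check that the resulting entourage uniformity has exactly the uniform covers of $(X,\Theta)$, which is what ``corresponding uniformity'' means. First I would record two structural observations. Since each member $A$ of a uniform cover $\sP$ is open, the set $A \times A$ is open in $X \times X$, so every $U_\sP$ is an open neighborhood of the diagonal; and since $A \times A$ is symmetric, each $U_\sP$ is symmetric, i.e. $U_\sP^{-1} = U_\sP$, which makes the symmetry axiom automatic. The containment $\Delta_X \subseteq U_\sP$ holds because $\sP$ is a cover: any $x$ lies in some $A \in \sP$, whence $(x,x) \in A \times A$. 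For later use I record the explicit formula for the section at a point, namely $U_\sP[x] = \bigcup\{A \in \sP : x \in A\}$.

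Next I would treat the two genuinely substantive closure properties. For downward directedness, given $\sP, \sQ \in \Theta$ I invoke property 2 of a basis of uniform covers to produce $\sR \in \Theta$ with $\sR \leq \sP$ and $\sR \leq \sQ$; since $B \subseteq A$ forces $B \times B \subseteq A \times A$, the refinement $\sR \leq \sP$ (resp. $\sR \leq \sQ$) gives $U_\sR \subseteq U_\sP \cap U_\sQ$, so the family is filtered. The crux is the composition axiom. Given $\sP \in \Theta$, I would use property 3 to choose a star refinement $\sQ \leq^\ast \sP$ and claim that $U_\sQ \circ U_\sQ \subseteq U_\sP$. Indeed, if $(x,z)$ lies in the composite there is a $y$ with $x,y \in B$ and $y,z \in C$ for some $B, C \in \sQ$; then $B \cap C \ni y$, so star refinement applied to $B$ yields $A \in \sP$ containing every member of $\sQ$ that meets $B$, in particular $B \subseteq A$ and $C \subseteq A$, whence $(x,z) \in A \times A \subseteq U_\sP$. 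I expect this step to be the main obstacle, since it is precisely here that the star refinement, rather than ordinary refinement, is indispensable.

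Finally I would check that the uniform covers determined by $\{U_\sP\}$ coincide with those of $(X,\Theta)$. Recall that a cover $\sC$ is uniform for the entourage basis exactly when $\{U_\sP[x]\}_{x \in X} \leq \sC$ for some $\sP \in \Theta$. For one direction, if $\sP \in \Theta$ refines $\sC$, pick $\sQ \leq^\ast \sP$; using the formula for $U_\sQ[x]$ and the star property as above (take any $B \ni x$ and the associated $A \in \sP$), every $C \in \sQ$ containing $x$ lies in that single $A$, so $U_\sQ[x] \subseteq A$, proving $\{U_\sQ[x]\}_x \leq \sP \leq \sC$. Conversely, if $\{U_\sP[x]\}_x \leq \sC$ then for each nonempty $A \in \sP$ and any $x \in A$ we have $A \subseteq U_\sP[x] \subseteq M$ for some $M \in \sC$, so $\sP \leq \sC$. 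Thus cover-uniformity and entourage-uniformity agree, and $\{U_\sP\}_{\sP \in \Theta}$ is a basis of entourages for the uniformity of $(X,\Theta)$, completing the proof.
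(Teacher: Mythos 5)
Your proof is correct. There is nothing in the paper to compare it against: the paper's ``proof'' of this proposition is a single citation to Engelking \cite[Prop. 8.1.16]{eng}, so your write-up supplies exactly the content that the citation outsources. Your argument is the standard one, and it handles the two points that actually require care. First, the composition axiom: you correctly identify that the paper's star-refinement condition (axiom 3, stated for stars of \emph{sets}: every $C \in \sQ$ meeting $B$ lies in a single $A \in \sP$) is precisely what yields $U_\sQ \circ U_\sQ \subseteq U_\sP$ — both $B$ and $C$ meet $B$ at the intermediate point $y$, so both are absorbed into the same $A$; ordinary refinement would not suffice here. Second, you do not stop after verifying the entourage-basis axioms but also check, in both directions, that the covers uniform for the entourage uniformity are exactly the covers uniform for $(X,\Theta)$; this is what the word ``corresponding'' in the statement demands (via the sections $U_\sP[x] = \bigcup\{A \in \sP : x \in A\}$ and another application of star refinement), and it is a step that is easy to overlook. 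The only cosmetic remark is that openness of the sets $A \times A$ is not needed for any of the entourage axioms, so your first observation is harmless but superfluous at the level of generality of the statement, which is about a uniformity on the \emph{set} $X$.
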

 \begin{proof}
See \cite[Prop. 8.1.16]{eng}.
 \end{proof}
\begin{rmk} \label{linearunif}
Let $R$ be any linearly topologized ring (resp. let $M$ be any linearly topologized $R$-module).
Then $R$ (resp. $M$) carries a canonical uniformity $\Theta_R$ (resp. $\Theta_M$) compatible with the given topology, called the \emph{$R$-linear 
uniformity} of $R$ (resp. $M$). A basis 
of uniform covers of $\Theta_R$ (resp. $\Theta_M$) is given by the covers of the form 
$\sP_J = \{x+J\}_{x \in R}$ for $J\in \cP(R)$ (resp. $\sP_P = \{x+P\}_{x \in M}$ for ${P\in \cP_R(M)}$). Such covers 
are in $(1,1)$-correspondence with  the discrete quotients 
$$
\pi_J : R \longrightarrow R/J \;\;\;\mbox{(resp.}\; \pi_P : M \longrightarrow M/P \;\mbox{)}
$$
for $J \in \cP(R)$ (resp. for ${P\in \cP_R(M)}$). 

For any uniform space $(X,\Theta_X)$, a uniformly continuous  $R$-valued function on $(X,\Theta_X)$ is a morphism $(X,\Theta_X) \to (R,\Theta_R)$ of uniform spaces. Notice that for $R$  as before, and  $R$-linearly topologized $R$-modules $M$, $N$, any continuous $R$-linear map $M \to N$ is uniformly continuous.
\end{rmk}
\end{subsubsection}
\begin{subsubsection}{$td$-uniformities.}
We now describe a full subcategory  of ${\sU}nif$ consisting of certain uniform spaces whose associated topological space is a $td$-space.
Since a $td$-space $X$ is obviously a Tychonoff space, there exist   uniform  structures on $X$  
\cite[Thm. 8.1.20]{eng}. 
 \begin{defn} \label{unifdef10tdpart} Let $X$ be a topological space.  A partition $\sP$ of $X$  into a family  of   open Stone subspaces  is called a \emph{$td$-partition} of $X$. A collection  $\Theta$ of $td$-partitions of $X$ which is a basis of uniform covers for some uniformity on $X$, will be called \emph{a basis of $td$-partitions on $X$}.
 \end{defn}
 
 \begin{notation} \label{thetajay} Clearly, we may identify any $td$-partition $\sP$ of $(X,\tau)$ with the corresponding  map 
\beq \label{partquot}
\begin{split}
\pi_{\sP}: X &\longrightarrow \sP
\\
x &\longmapsto [x]
\end{split}
\eeq
where $[x]$, a compact open subset of $X$, denotes the element of $\sP$ containing $x \in X$. This map is clearly continuous and proper if $\sP$ is equipped with the discrete topology, so that \emph{we identify a $td$-partition $\sP$ with the corresponding  element  $\pi_\sP \in \sQ(X)$.} 
A basis $\Theta$  of $td$-partitions on $X$ may be regarded equivalently as
\ben  
 \item a projective cofiltered subsystem $\sJ$ of $\sQ(X)$;
 \item a family $\Theta$ of $td$-partitions of $X$ cofiltered by the relation $\leq$  of subsection~\ref{genunif}.
 \een
We write $\sJ \leftrightarrow \Theta_\sJ$ for the previous correspondence.  
\end{notation}
 \begin{prop} \label{tdunif}  Let $X$ be a $td$-space and let  $\Theta = \Theta_\sJ$ be a basis
 of $td$-partitions of $X$ where $\sJ$ is a cofiltered projective  subsystem  of $\sQ(X)$. Then $(X,\Theta)$ is neat  if and only if  one of the following equivalent conditions is satisfied
\ben 
\item  
for any $x \in X$, a fundamental system of neighborhoods of $x$ in $X$ is given by the subsets $A \subset X$ such that 
$x \in A \in \sP$, for some $\sP \in \Theta$;
\item the family of fibers of maps $\pi_D : X \map{} D$ in $\sJ$ is a basis for the topology of $X$;
\item any point $x \in X$ has a compact open neighborhood $U$ such that the subsystem 
$$(\pi_{|U} : U \map{} \pi(U))_{\pi \in \sJ} \subset \sQ(U)$$ 
 is cofinal in $\sQ(U)$. 
\een
\end{prop}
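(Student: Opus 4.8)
The plan is to prove the chain of equivalences $\mbox{neat} \Leftrightarrow \mathit 1 \Leftrightarrow \mathit 2 \Leftrightarrow \mathit 3$ by first disposing of the two purely formal equivalences and then concentrating on the passage between the global condition $\mathit 2$ and the local condition $\mathit 3$. First I would record that, under the identification $\sP \leftrightarrow \pi_\sP$ of Notation~\ref{thetajay}, a block $A \in \sP$ containing $x$ is nothing but the fibre $\pi_\sP^{-1}(\pi_\sP(x))$, which is a compact open subset of $X$. By construction of the functor $\neat$, the sets $A$ with $x \in A \in \sP$, $\sP \in \Theta$, are exactly the declared neighbourhood basis of $x$ in $X_\neat$; since these sets are open in $X$, the identity $X \to X_\neat$ is always continuous, so $(X,\Theta)$ is neat precisely when the inverse map is continuous, i.e. when those blocks form a neighbourhood basis of each $x$ for the original topology. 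This is verbatim condition $\mathit 1$, so $\mbox{neat} \Leftrightarrow \mathit 1$ comes for free.

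The equivalence $\mathit 1 \Leftrightarrow \mathit 2$ is then a reindexing: the blocks through the various points are precisely the fibres of the maps $\pi_D \in \sJ$, and for any family of open sets, being a neighbourhood basis at every point is the same as being a basis of the topology. (Equivalently, both say that the canonical continuous map $X \to \limit_{D \in \sJ} D$ is a homeomorphism, so one may instead invoke Remark~\ref{limdiscr}.) Next I would treat the easy direction $\mathit 2 \Rightarrow \mathit 3$. Since $X$ is locally compact and $0$-dimensional, every $x$ admits a compact open neighbourhood $U$, and I claim any such $U$ works. Given a finite clopen partition $\sR \in \sQ(U)$, condition $\mathit 2$ provides, for each $y \in U$, a map $\pi_y \in \sJ$ whose fibre through $y$ lies in the block of $\sR$ containing $y$; by compactness of $U$ finitely many such fibres cover $U$, and as $\sJ$ is cofiltered there is a single $\pi \in \sJ$ refining all the $\pi_y$. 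A short check shows the full fibre of $\pi$ through any point of $U$ then lands in a block of $\sR$, whence $\pi_{|U} \leq \sR$ and $(\pi_{|U})_{\pi \in \sJ}$ is cofinal in $\sQ(U)$.

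The converse $\mathit 3 \Rightarrow \mathit 2$ is where the real work lies. Fixing $x$ and an open $W \ni x$, condition $\mathit 3$ furnishes a compact open $U \ni x$ with $(\pi_{|U})_{\pi \in \sJ}$ cofinal in $\sQ(U)$; shrinking inside the Stone space $U$ I obtain a clopen $A$ with $x \in A \subseteq W \cap U$ and, applying cofinality to the two-block partition $\{A, U \setminus A\}$, a $\pi \in \sJ$ with $\pi^{-1}(\pi(x)) \cap U \subseteq A \subseteq W$. \textbf{The main obstacle is that this only controls the fibre inside $U$}: the full block $\pi^{-1}(\pi(x))$, which is what condition $\mathit 2$ needs to sit inside $W$, may a priori meet $X \setminus U$, and refining $\pi$ on $U$ does not by itself shrink the part of the block lying outside $U$.

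To close this gap I would argue that the fibre can be forced into $U$. The full block is compact (properness of $\pi_D$), so it suffices to separate $x$ from the data outside $U$; my plan is to do this by combining the local cofinality of $\mathit 3$ with the cofiltered structure of $\sJ$ and the decomposition of the $td$-space $X$ as a sum of Stone spaces (Corollary~\ref{LindStone2}), so as to arrange $U$ to be saturated for cofinally many $\pi \in \sJ$, after which $\pi^{-1}(\pi(x)) \subseteq U$ and hence $\subseteq W$. Equivalently, this amounts to upgrading the local cofinality in $\sQ(U)$ to global cofinality of $\sJ$ in $\sQ(X)$ (i.e. to $X = \limit_{D \in \sJ} D$ via Remark~\ref{limdiscr}); this propagation from local to global is the crux of the proof, and once it is established condition $\mathit 2$ — and thus neatness — follows immediately.
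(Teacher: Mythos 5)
Your handling of the formal equivalences is correct: neatness $\Leftrightarrow$ $\mathit 1$ is immediate from the definition of the functor $\neat$ (the identity $X \to X_\neat$ is automatically continuous because the blocks of a $td$-partition are compact open in $X$), and $\mathit 1 \Leftrightarrow \mathit 2$ is the reindexing you describe. Your proof of $\mathit 2 \Rightarrow \mathit 3$ (cover $U$ by finitely many fibres refining a given finite clopen partition of $U$, then dominate them by a single $\pi$ using cofilteredness of $\sJ$) is also correct. Note that the paper itself gives no argument here (its proof reads ``Easy''), so up to this point you have done more than the paper does.

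The gap is in $\mathit 3 \Rightarrow \mathit 2$, exactly where you located it, but your proposed repair cannot work. First, forcing $\pi^{-1}(\pi(x)) \subseteq U$ for cofinally many $\pi$ is \emph{not} equivalent to cofinality of $\sJ$ in $\sQ(X)$: the latter is strictly stronger, and it is false in the paper's basic example. Remark~\ref{noncofinal} exhibits $(\Q_p,\Theta_\sJ)$, with $\sJ = (\pi_h)_h$ the ball partitions, which is neat (hence satisfies $\mathit 1$--$\mathit 3$) while $\sJ$ is \emph{not} cofinal in $\sQ(\Q_p)$; so if your ``upgrade from local to global cofinality'' succeeded, it would prove a statement the paper explicitly refutes. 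Second, and more seriously, no argument can close the gap from the literal text of $\mathit 3$: take $X = \Z_p \times \{0,1\}$ and $\sJ = (\pi_h)_{h \in \N}$ with $\pi_h(a,i) = a \bmod p^h$, whose fibres are the sets $(a+p^h\Z_p) \times \{0,1\}$. With $U = \Z_p \times \{i\}$ the restricted system $(\pi_{h|U})_h$ is the system of ball partitions of $\Z_p$, hence cofinal in $\sQ(U)$, so $\mathit 3$ holds; yet no fibre separates $(a,0)$ from $(a,1)$, so $\mathit 2$ and neatness fail. What the intended reading of $\mathit 3$ must include --- and what does hold in the $\Q_p$ example, with $U = x + \Z_p$ --- is a saturation clause: $U$ can be chosen to be a union of $\pi$-fibres for cofinally many $\pi \in \sJ$. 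Under that reading your own argument closes at once: the restricted fibre through $x$ is then the full fibre, and refining the partition $\{A, U \setminus A\}$ puts it inside $W$. So the right move is not to propagate cofinality from $\sQ(U)$ to $\sQ(X)$, which is hopeless, but to build the saturation into the choice of $U$.
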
  
\begin{proof} Easy.
\end{proof} 
  \begin{defn} \label{unifdef10} 
  An object $(X,\Theta)$ of ${\sU}nif$ is a  \emph{$td$-uniform space} if $X$ is a $td$-space and 
  $\Theta = \Theta_\sJ$ is a  basis of $td$-partitions on $X$ as in Proposition~\ref{tdunif}. We sometimes shorten 
  $(X,\Theta_\sJ)$  into  $(X,\sJ)$. 
  The full subcategory of 
${\sU}nif$ consisting of $td$-uniform  spaces, will be denoted by $td-\cU$. 
  \end{defn}
  \begin{prop}\label{morftd} Let   $(X,\Theta_\sJ)$, $(Y,\Theta_\sH)$ be objects  of $td-\cU$. 
A morphism 
  $$(X,\Theta_\sJ) \map{} (Y,\Theta_\sH)$$
 in $td-\cU$  is a continuous map $X \map{f} Y$  such that, for any $\pi_E:Y \map{} E$ in $\sH$, there exists $\pi_D :X \map{} D$ and a map of sets $f_{D,E}:D \map{}E$ such that   $\pi_E \circ f = f_{D,E} \circ \pi_D$.
 \end{prop}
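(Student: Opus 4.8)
The plan is to unwind what a morphism in $td-\cU$ means and then translate the pullback-of-uniform-covers condition into the asserted factorization. By Definition~\ref{unifdef10}, $td-\cU$ is a \emph{full} subcategory of ${\sU}nif$, so a morphism $(X,\Theta_\sJ) \to (Y,\Theta_\sH)$ is nothing but a uniformly continuous map. By the definition recalled in subsection~\ref{genunif}, this means that $f: X \to Y$ is continuous and that, for every basis cover $\sP \in \Theta_\sH$, the pulled-back cover $\{f^{-1}(B) \mid B \in \sP\}$ is a uniform cover of $(X,\Theta_\sJ)$. The entire content of the statement is to identify this last condition with the stated one, so the proof will be essentially bookkeeping through the identifications already set up.

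First I would make the identifications of Notation~\ref{thetajay} explicit. The basis covers $\sP \in \Theta_\sH$ are exactly the $td$-partitions $\sP_E = \{\pi_E^{-1}(e)\}_{e \in E}$ attached to the maps $\pi_E: Y \to E$ in $\sH$, and likewise the basis covers of $(X,\Theta_\sJ)$ are the partitions $\sP_D = \{\pi_D^{-1}(d)\}_{d \in D}$ for $\pi_D: X \to D$ in $\sJ$. For a fixed $\pi_E \in \sH$, the pulled-back cover $\{f^{-1}(\pi_E^{-1}(e))\}_{e \in E}$ is precisely the partition of $X$ into the nonempty fibers $(\pi_E \circ f)^{-1}(e)$ of the composite map $\pi_E \circ f: X \to E$ (preimages of disjoint sets covering $Y$ are disjoint and cover $X$).

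Next comes the key step, which rests on Remark~\ref{partleq0}. By definition a cover of $X$ is uniform for $(X,\Theta_\sJ)$ iff it is refined by some basis cover $\sP_D$ with $\pi_D \in \sJ$. Since both $\sP_D$ and the pulled-back cover are \emph{partitions}, equation~\eqref{partleq} tells us that $\sP_D$ refines the pulled-back cover iff every block $\pi_D^{-1}(d)$ is contained in a single block $(\pi_E \circ f)^{-1}(e)$. But to say that each fiber of $\pi_D$ lands in one fiber of $\pi_E \circ f$ is exactly to say that $\pi_E \circ f$ is constant along the fibers of $\pi_D$, i.e.\ that it factors through $\pi_D$; this yields a unique set map $f_{D,E}: D \to E$ with $\pi_E \circ f = f_{D,E} \circ \pi_D$. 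Running this over all $\pi_E \in \sH$ shows that uniform continuity of $f$ is equivalent to the existence, for each $\pi_E \in \sH$, of some $\pi_D \in \sJ$ and a map $f_{D,E}$ as stated.

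Finally I would note that continuity of $f$ is not an extra hypothesis but is subsumed by the factorization condition, using that $(X,\Theta_\sJ)$ and $(Y,\Theta_\sH)$ are neat (Proposition~\ref{tdunif}): since the fibers $\pi_E^{-1}(e)$ then form a basis for the topology of $Y$ and $f^{-1}(\pi_E^{-1}(e)) = \pi_D^{-1}(f_{D,E}^{-1}(e))$ is a union of fibers of $\pi_D$, hence open, $f$ is automatically continuous. The only point demanding any care is the passage from arbitrary uniform covers to the partition basis, together with the fact that for partitions refinement (and even star-refinement) collapses to blockwise containment; this is the mild obstacle, and it is settled entirely by Remark~\ref{partleq0} rather than by any genuine estimate.
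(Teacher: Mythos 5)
Your proof is correct and is exactly the definitional unwinding the paper has in mind --- the paper's own proof consists of the single word ``Clear,'' so your argument (fullness of $td-\cU$ in ${\sU}nif$, pullback of the partition $\sP_E$ as the fiber partition of $\pi_E \circ f$, and Remark~\ref{partleq0} to collapse refinement of partitions to blockwise containment, i.e.\ to factorization through $\pi_D$) supplies precisely what the paper leaves implicit. One small caveat: your closing claim that continuity of $f$ is automatic from the factorization condition relies on neatness of $(Y,\Theta_\sH)$, which the paper does not unambiguously build into Definition~\ref{unifdef10} (neatness is invoked as a separate hypothesis elsewhere, e.g.\ in \eqref{projlimunif}); but since the proposition's characterization already includes continuity as a hypothesis, this side remark is harmless and the main equivalence stands.
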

 \begin{proof}
 Clear.
 \end{proof}
\begin{rmk} \label{unifdef101}  A  topological space $X$ on which there exists a $td$-partition (\emph{a fortiori} a $td$-uniformity)
is necessarily a $td$-space. 
 Conversely, any neat uniform structure $(X,\Theta)$ on a $td$-space $X$ may be refined to a neat 
 $td$-uniformity $(X,\Lambda)$,    as follows.  
Let $\sQ \in \Theta$ be a uniform cover in $\Theta$.  
 Since $X$ is a $td$-space, for any $x \in X$,  we may pick a compact open neighborhood  $K_x$ of $x$ in $X$,  such that for some $C \in \sQ$, $x \in K_x \subset C$. 
The cover $\sK := \{K_x\}_{x \in X}$ refines $\sQ$. Let 
 $X$ be the sum of a family of Stone spaces as in \eqref{sumstone}. Then, for any $\alpha \in A$ such that  $x \in X_\alpha$, let 
$$
K_{x,\alpha} = X_\alpha \cap K_x
$$
So, 
$$
\sP = \{ K_{x,\alpha} \,|\, \alpha \in A\; \mbox{and}\; x \in X_\alpha\,\}
$$
is a cover on $X$ by  open Stone subspaces which refines $\sQ$.
Then, for any $\alpha \in A$, there is a finite index set $F_\alpha =\{1,2,\dots,n_\alpha\}$ and a family  $\{K_{\alpha,j}\}_{j\in F_\alpha}$ in $\sP$, such that $$X_\alpha = \bigcup_{j\in F_\alpha} K_{\alpha,j}\;.$$
We may then set 
$$
U_{\alpha, 1} = K_{\alpha, 1}\;,\;U_{\alpha, 2} = K_{\alpha, 2} - K_{\alpha, 2}\cap U_{\alpha, 1} \;,\; \dots\;,\; U_{\alpha, n_\alpha} = K_{\alpha, n_\alpha} - K_{\alpha, n_\alpha} \cap U_{\alpha, n_\alpha-1} \;.
$$
Finally,  we have obtained a $td$-partition 
$$\sP' := \{U_{\alpha, j} \;|\; \alpha \in A\;,\; j \in F_\alpha\;\}$$
of $X$ which refines ($\sP$ hence) $\sQ$. Notice that there may not exist a further uniform cover $\sQ' \in \Theta$ which refines 
$\sP'$. See a counterexample in the next remark~\ref{counterex}. 
  \end{rmk}
  \begin{rmk}\label{counterex} 
  Let $D$ be an infinite discrete space. We consider the family $\cD$ of partitions  $\sD$ of $D$ into a finite set 
  $\{D_1,\dots,D_n, D_\infty \}$ of subsets $D_i \subset D$, where $D_1,D_2,\dots,D_n$ are finite subsets of $D$. Then $\cD$ is a basis  of uniform covers for a neat uniformity of $D$ (that is 
  $(D,\cD)_\neat = D$, equipped with the 
  discrete topology). Any $td$-refinement of the cover $\sD$ is necessarily infinite, hence cannot be refined by any  $\sD' \in \cD$. 
  \end{rmk}
 
\begin{rmk} \label{topvsunif}       
By   \cite[Ex. 8.2.B  (c)]{eng}, \cite[Chap. II, \S 2, N. 7]{topgen}  
in the category ${\sU}nif$  projective limits are representable. 
For a projective system $\fX_\bullet =((X_\alpha,\Theta_{\sJ_\alpha}))_{\alpha \in A}$  in $td-\cU$, the limit 
$(X,\Theta) := \limit_\alpha \fX_\alpha$ 
of $\fX_\bullet$ in ${\cU}nif$ is obtained as follows. We set $X = \limit_\alpha X_\alpha$ in ${\cT}op$, so that 
$X$ is a $td$-space.   For any $\alpha$, let $X \map{f_\alpha} X_\alpha$ be the projection. Then any proper quotient map $\pi_{\alpha,D} : X_\alpha \map{} D$ in  $\sJ_\alpha$ induces a proper quotient map $\pi_{\alpha,D} \circ f_\alpha : X \map{} D$ to the discrete space $D$. The union $\sJ = \bigcup_\alpha \sJ_\alpha$ is a projective sub-system of $\sQ(X)$, and $(X,\Theta) = (X,\Theta_\sJ)$. We conclude that $td-\cU$ is closed under limits in ${\cU}nif$. 
\par 
 In particular,  
for any  $td$-uniform space $(X,\Theta_\sJ)$ and any $\pi_D:X \map{} D$ in $\sJ$, let  $\sJ_D = \{\pi_D\}$ and let us write $\Theta_D$ for $\Theta_{\sJ_D}$.
Then $(X,\Theta_D) = (X,\{\pi_D\})$ and 
\beq \label{projlimunif}
(X,\Theta_\sJ) = \limit_{D \in \sJ} (X,\Theta_D) = \limit_{D \in \sJ} (X,\{\pi_D\}) \;,  
\eeq
in the category $td-\cU$.  
\end{rmk} 
\begin{defn} \label{metrunif1}  A neat $td$-uniform space $(X, \Theta_\sJ)$ as in 
Definition~\ref{unifdef10} is \emph{metrizable} if the topological space $X$ is metrizable. 
 \end{defn}
 \begin{prop} \label{metrunif2}  A neat $td$-uniform space $(X, \Theta_\sJ)$  is  metrizable if and only if $\sJ$ admits a countable 
 cofinal subsystem $\sJ'$ such that, for any $X \map{\pi_D} D$ in $\sJ'$, $D$ is countable. 
 \end{prop}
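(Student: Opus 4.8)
The plan is to read metrizability through Corollary~\ref{mSTSspaces}: for a $td$-space the topological space $X$ is metrizable if and only if it is second countable, equivalently (item $\mathit 5$ of that Corollary) its family of clopens $\Sigma(X)$ is at most countable. Since by Proposition~\ref{tdunif} neatness means precisely that the fibres (blocks) of the partitions $\pi_D \in \sJ$ form a basis of the topology of $X$, I would throughout translate statements about $\sJ$ into statements about this basis of clopens, and conversely use the description $X = \coprod_\alpha X_\alpha$ of $X$ as a sum of Stone spaces (Corollary~\ref{LindStone2}) to localise on compact pieces, where the system $\sQ$ is controlled by Proposition~\ref{Stonemetr}.

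For the implication $(\Leftarrow)$, suppose $\sJ$ admits a countable cofinal subsystem $\sJ' = \{\pi_{D_n}\}_{n \in \N}$ with every $D_n$ countable. Because $\sJ'$ is cofinal, the fibres of the $\pi_{D_n}$ already refine the fibres of every $\pi_D \in \sJ$, so by Proposition~\ref{tdunif} they still form a basis of the topology of $X$. This basis is the set of fibres of the $\pi_{D_n}$, of which there are $\sum_n |D_n| \le \aleph_0$; hence $X$ is second countable and therefore a metrizable $td$-space by Corollary~\ref{mSTSspaces}. This endpoint is essentially immediate.

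For $(\Rightarrow)$, assume $X$ is metrizable, so that $\Sigma(X)$ is countable. Every $\pi_D \in \sJ$ is a $td$-partition, i.e.\ its fibres are compact open subspaces (Definition~\ref{unifdef10tdpart}), hence elements of $\Sigma(X)$; as $D$ is in bijection with its set of fibres, each $D$ is automatically countable, which already furnishes the cardinality clause required of $\sJ'$. It remains to produce a countable cofinal $\sJ'$. Here I would localise: by Corollary~\ref{mSTSspaces} write $X = \coprod_n X_n$ with each $X_n$ a metrizable Stone space. On the compact piece $X_n$ every proper quotient is finite, so $\sQ(X_n)$ is the system of finite clopen partitions, which by Proposition~\ref{Stonemetr} has a cofinal sequence; and, using condition $\mathit 3$ of Proposition~\ref{tdunif}, one checks that the restricted system $\{\pi_{|X_n}\}_{\pi \in \sJ}$ is cofinal in $\sQ(X_n)$. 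Choosing for each $n$ a sequence in $\sJ$ whose restrictions to $X_n$ realise such a cofinal sequence, and closing the resulting countable family under finite common refinements (available since $\sJ$ is cofiltered), I obtain a countable decreasing family $\sJ' = \{E_l\}_l \subseteq \sJ$ as the candidate cofinal subsystem.

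The hard part will be verifying that $\sJ'$ is genuinely cofinal in $\sJ$, i.e.\ that for every $\rho \in \sJ$ some $E_l$ refines $\rho$. The difficulty is that $\rho$ may have infinitely many blocks spread over infinitely many of the $X_n$, so that the local cofinality on each compact $X_n$ must be assembled into a single global refinement by one $E_l$. The two facts I would lean on to close this gap are: first, that each block of $\rho$ is compact and hence (Remark~\ref{filtstone}) contained in a single compact open piece, so that no block escapes to infinity; and second, that since $\Sigma(X)$ is countable, every strictly descending chain in $\sJ$ is countable --- because consecutive strict refinements split off pairwise distinct clopen blocks, which inject the chain into $\Sigma(X)$ --- ruling out uncountable-cofinality pathologies and allowing one to upgrade the piecewise refinements into a single $E_l$ dominating $\rho$. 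Making this assembly precise, rather than the two easy endpoints, is where the real content of the proposition lies.
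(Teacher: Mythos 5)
Your $(\Leftarrow)$ argument is correct: cofinality of $\sJ'$ in $\sJ$ plus neatness (Proposition~\ref{tdunif}) makes the countably many fibres of the maps in $\sJ'$ a basis of $X$, and a second countable, locally compact, regular Hausdorff space is metrizable. Since the paper's own proof consists of the single word ``Clear'', this easy half is the only place where your proposal and the paper can be compared directly.

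The $(\Rightarrow)$ direction is where you correctly locate the difficulty, but the gap you leave open cannot be closed: with ``metrizable'' as in Definition~\ref{metrunif1} (metrizability of the topological space $X$), the implication is false. Take $X = \N$ discrete, a countable, second countable, metrizable $td$-space. For an infinite density-zero set $A = \{a_0 < a_1 < \cdots\} \subset \N$, let $P_A \in \sQ(\N)$ be the partition of $\N$ into the finite intervals $[0,a_0], [a_0+1,a_1], [a_1+1,a_2], \dots$; then $P_B$ refines $P_A$ if and only if $B \supseteq A$. Put $\sJ = \{P_A \,:\, A \ \mbox{infinite of density zero}\}$. This $\sJ$ is cofiltered ($P_{A \cup B}$ refines both $P_A$ and $P_B$, and $A \cup B$ still has density zero) and neat (for $n \geq 1$ the set $\{n-1,n\} \cup \{2^k : 2^k > n\}$ is infinite of density zero and exhibits $\{n\}$ as a fibre of a member of $\sJ$; use $\{0\} \cup \{2^k : k \geq 1\}$ for $\{0\}$), and every $D$ occurring in $\sJ$ is countable. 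Yet no countable subfamily $\{P_{C_i}\}_{i \in \N}$ is cofinal in $\sJ$: each $\N \setminus C_i$ is infinite, so one can choose $x_i \in \N \setminus C_i$ strictly increasing with $x_i \geq 2^i$; then $A = \{x_1 < x_2 < \cdots\}$ is infinite of density zero and $A \not\subseteq C_i$ for every $i$, so no $P_{C_i}$ refines $P_A$. Thus $(\N,\Theta_\sJ)$ is a metrizable neat $td$-uniform space admitting no countable cofinal subsystem, and no ``assembly'' argument can succeed.

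This also pinpoints why the two tools you propose must fail. First, ``$X$ metrizable $\Rightarrow \Sigma(X)$ countable'' is false: already $\Sigma(\N) = \fP(\N)$ is uncountable, so item $\mathit 5$ of Corollary~\ref{mSTSspaces}, on which you lean, is itself misstated (the countable family is $\cK(X)$, and only under second countability; for an uncountable discrete $X$ even $\cK(X)$ is uncountable, and taking $\sJ$ to consist of the singleton partition alone shows that the countability clause on the $D$'s in $(\Rightarrow)$ fails as well). Second, the inference from ``all strictly descending chains in $\sJ$ are countable'' to ``$\sJ$ has a countable cofinal subsystem'' is a non sequitur: your injection of a chain into the set of blocks is fine once $\Sigma(X)$ is replaced by $\cK(X)$, but a directed set all of whose chains are countable can have uncountable cofinality --- the finite subsets of $\omega_1$, or the $\sJ$ above, are examples. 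The statement that is actually true, and presumably the intended one (it is ``clear'' from the classical metrization theorem for uniform spaces: a separated uniformity is induced by a metric if and only if it has a countable basis, see \cite{eng}), is that $\sJ$ admits a countable cofinal subsystem if and only if the \emph{uniformity} $\Theta_\sJ$ is metrizable; adding second countability of $X$, which forces every $D$ with $\pi_D \in \sQ(X)$ to be countable (a partition of a second countable space into nonempty open sets is countable), gives a correct version of the proposition. Metrizability of the topology alone does not control the uniformity, and that is exactly the obstruction your assembly step runs into.
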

 \begin{proof}
 Clear. 
 \end{proof}
\begin{rmk} \label{maxunif} 
 It follows from Exercise 8.1.B (b) of \cite{eng} that, for any $td$-space $X$ (more generally, for any Tychonoff space),  there is a finest uniformity $\Theta_{\univ}$ of $X$ compatible with the topology of $X$.  It is called the \emph{universal uniformity} (on the Tychonoff space $X$). Since any uniformity on a $td$-space compatible with the topology is refined by a $td$-uniformity 
we have 
\beq \label{maxunif1}
(X, \Theta_{\univ}) = (X,\Theta_{\sQ(X)}) \;.
\eeq
 \end{rmk}
\begin{lemma} \label{sts complete}
A  uniform $td$-space is complete. 
\end{lemma}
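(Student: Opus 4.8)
The plan is to verify directly that every Cauchy filter on a $td$-uniform space $(X,\Theta_\sJ)$ converges. Conceptually this is an instance of the standard fact that a projective limit of complete separated uniform spaces is complete: by the uniform version of \eqref{limitexp} one has $(X,\Theta_\sJ)=\limit_{D\in\sJ}(D,\Theta_D)$, each discrete uniform space $(D,\Theta_D)$ is trivially complete, and the limit is closed in the product $\prod_{D\in\sJ}D$, so it inherits completeness (see \cite{eng}). Nevertheless I prefer to spell out the filter argument, because it is short and shows precisely where the neatness of $\Theta_\sJ$ (Proposition~\ref{tdunif}) intervenes.

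So let $\cF$ be a Cauchy filter on $(X,\Theta_\sJ)$. For each $\pi_D:X\map{}D$ in $\sJ$ the $td$-partition $\sP_D=\{\pi_D^{-1}(d)\}_{d\in D}$ is a uniform cover, so by the Cauchy condition some member of $\cF$ is contained in a single block $\pi_D^{-1}(d)$; as $\cF$ is upward closed this gives $\pi_D^{-1}(d)\in\cF$, and since the blocks partition $X$ while $\cF$ is proper, the index $d=:d_D$ is unique. First I would observe that the family $(d_D)_{D\in\sJ}$ is coherent: whenever $\pi_E=\pi_{D,E}\circ\pi_D$ for a transition map $\pi_{D,E}:D\map{}E$ of $\sJ$, both $\pi_D^{-1}(d_D)$ and $\pi_E^{-1}(d_E)$ lie in $\cF$, hence meet, forcing $\pi_{D,E}(d_D)=d_E$.

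Next I would extract a cluster point. Each $\pi_D^{-1}(d_D)$ is closed and, because $\pi_D$ is proper, compact. Since $\sJ$ is cofiltered and $(d_D)$ is coherent, any finite subfamily of $\{\pi_D^{-1}(d_D)\}_{D\in\sJ}$ contains a single nonempty fibre $\pi_{D'}^{-1}(d_{D'})$, so the family has the finite intersection property; by compactness of any one member it follows that $\bigcap_{D\in\sJ}\pi_D^{-1}(d_D)\neq\emptyset$. Choosing $x$ in this intersection gives $\pi_D(x)=d_D$, that is $\pi_D^{-1}(d_D)=\pi_D^{-1}(\pi_D(x))$, for every $D\in\sJ$.

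Finally I would conclude convergence. By condition $\mathit 1$ of Proposition~\ref{tdunif}, the defining neatness of a $td$-uniform space, the blocks through $x$ of the partitions $\sP_D$, namely the fibres $\pi_D^{-1}(\pi_D(x))=\pi_D^{-1}(d_D)$, form a fundamental system of neighborhoods of $x$; all of them belong to $\cF$, so $\cF$ contains a neighborhood basis at $x$ and therefore converges to $x$. The only step that genuinely uses the hypotheses is this last one: the coherent point $x$ is produced purely by the compactness of the proper fibres, but the assertion that $\cF$ actually \emph{converges} to it rests on neatness, i.e.\ on the fibres being a local base. The remaining steps are routine filter manipulations.
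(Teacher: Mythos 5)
Your proof is correct, but it follows a genuinely different route from the paper's. The paper uses a \emph{single} $td$-partition $\sP\in\Theta$: the Cauchy condition forces some $C\in\cG$ into one block $A$ of $\sP$, so the compact set $A$ lies in $\cG$; then the sets $\ol{G}\cap A$, $G\in\cG$, have the finite intersection property in the compact $A$, which yields a cluster point, and the proof closes by the standard fact that a Cauchy filter admitting a cluster point converges to it. You instead use the \emph{whole} system $\sJ$: the filter selects a coherent family of blocks $(d_D)_{D\in\sJ}$, cofilteredness plus compactness of the proper fibres produce a point $x\in\bigcap_{D\in\sJ}\pi_D^{-1}(d_D)$, and neatness (Proposition~\ref{tdunif}) identifies these fibres as a neighborhood base at $x$ contained in $\cF$. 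Your version is self-contained (it never invokes the cluster-point lemma), it makes explicit exactly where neatness enters, and as a by-product it reproves that $X\to\limit_{D\in\sJ}D$ is surjective, i.e.\ the uniform half of \eqref{limitexp}; the paper's version is shorter and needs only one uniform cover. One caveat applies to both arguments: if Definition~\ref{unifdef10} is read as \emph{not} imposing neatness, then your final step -- and equally the paper's closing assertion that $\bigcap_{G\in\cG}\ol{G}$ is a single point to which $\cG$ converges in the topology of $X$ -- only gives convergence in the topology $X_\neat$ underlying the uniformity; that is the appropriate notion of completeness for a uniform space, so nothing is lost, but your write-up has the merit of flagging this dependence rather than leaving it implicit.
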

\begin{proof}
In fact, for any Cauchy filter $\cG$ for $(X,\Theta)$ and any $\sP \in \Theta$, there exists $C \in \cG$ such that $C \times C \subset U_\sP$. So,   $C \times C \subset \bigcup_{A \in \cP} A \times A$ and, if there exist $A_1 \neq A_2$ in $\cP$ such that $C \cap A_i \neq \emptyset$ for $i=1,2$, then $(C \cap A_1) \times (C \cap A_2)$
cannot be contained in $U_\sP$. Therefore, there exists $A \in \sP$ such that $C \subset A$.  But then one of the elements of $\cG$, namely $A$, is compact.   
 Then $\bigcap_{G \in \cG} \ol{G} \neq \emptyset$, since all $\ol{G} \cap A$ have the finite intersection property in the compact subset $A$. So, $\bigcap_{F \in \cG} \ol{G}$ consists of a single point $x \in X$, and the filter $\cG$ converges to $x$.
\end{proof}
\begin{rmk} \label{noncofinal} 
The $td$-space $\Q_p$ carries a natural uniformity  for which a basis of uniform covers is $\Theta := \{\sP_h\}_h$, for $h =0,1,2,\dots$, where $\sP_h$ consists of the family of balls of the same radius $p^h$~:
$$\sP_h = \{ a + p^h \Z_p\,|\, a \in \Q_p  \;\} \;.
$$
The corresponding  cofiltered projective system 
of discrete quotients of $\Q_p$, by our dictionary, is
$$
\pi_h : \Q_p \longrightarrow \Q_p/p^h \Z_p \;,\; h =0,1,2,\dots\;.
$$
Then $\sJ := (\pi_h)_{h \in \N}$  and $\Theta =\Theta_\sJ$ clearly satisfies \eqref{projlimunif}. Moreover, 
$(\Q_p,\Theta_\sJ)$ satisfies the conditions of Proposition~\ref{tdunif}, so that 
it is neat, \ie 
$(\Q_p,\Theta_\sJ)_\neat$ is 
$\Q_p$ with its usual topology. 
 But $\Q_p$ does not coincide with $\limit_h \Q_p/p^h \Z_p$ in $\Top$ because $\sJ$ is not cofinal in $\sQ(\Q_p)$. For example,  the $td$-partition 
$$\{\Z_p\} \bigcup  \{x + p^{-v_p(x)}\Z_p \,|\, x \in \Q_p - \Z_p\} 
$$
of $\Q_p$
cannot be refined by any $\pi_h$ as before. Then formula \eqref{prodis2} does not hold.  Still, we have 
$$
(\Q_p,\Theta_\sJ) = \limit_h(\Q_p,\sP_h)
$$
in $td-\cU$. 
\end{rmk}

\begin{notation} \label{booleanradius} Let $(X,\Theta_\sJ)$  be a $td$-uniform space. 
A discrete quotient  $\pi_D: X \to D$ in $\sJ$,    (resp. any fiber $\pi_D^{-1}(d)$, for $d \in D$), will also be called a \emph{radius} (resp. a \emph{ball of radius $D$}) of 
$(X,\Theta_\sJ)$, and the open subsets of $X$ of the form $\pi_D^{-1}(A)$, for any $A \subset D$, will be called \emph{uniformly measurable subsets of radius $D$}. 
They form a boolean subalgebra $\Sigma_D(X)$ of $\fP(X)$. The union of $\Sigma_D(X)$, for $D \in \sJ$, is the boolean subalgebra 
$\Sigma_\sJ(X) \subset \fP(X)$ of \emph{uniformly measurable subsets of  $(X,\Theta_\sJ)$}.  For $U \subset X$, $\chi_U$ is a 
uniformly continuous map of $(X,\Theta_\sJ)$ to $k$ if and only if $U \subset X$ is a uniformly measurable subset of $X$.
\end{notation} 
\begin{rmk} \label{linearunif1} A topological ring  $k$ as in section~\ref{lincat} 
is not locally compact in  general, hence is not a $td$-space. In particular its canonical $k$-linear uniformity $\Theta_k$ is not, in general, a $td$-uniformity. 
\end{rmk}
\begin{prop} \label{maptodiscrete} Let $(X,\Theta_\sJ)$ be a $td$-uniform space. For any discrete uniform space $E$,  a continuous map $f:X \to E$ is uniformly continuous  on $(X,\Theta_\sJ)$ if and only if $f$ factors via some $X \map{\pi_D} D$, with $D \in \sJ$. Therefore, if 
$(Y,\Theta_\sH)$ is a second object of $td-\cU$, 
\beq 
\label{homunif} 
\hom_{td-\cU} ((X,\Theta_\sJ), (Y,\Theta_\sH))=  \limit_{E \in \sH} \colimit_{D \in \sJ} \hom_{\sQ(X)}(D,E) \; .
\eeq
\end{prop}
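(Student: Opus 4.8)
The plan is to prove the displayed equivalence for maps into a discrete uniform space first, and then to deduce the Hom-formula by feeding it into the morphism description of Proposition~\ref{morftd}, using the completeness of uniform $td$-spaces (Lemma~\ref{sts complete}) at the decisive step.

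For the equivalence, recall that a discrete uniform space $E$ has the singleton partition $\{\{e\}\}_{e\in E}$ among its uniform covers. If $f:X\to E$ is uniformly continuous, then the preimage cover $\{f^{-1}(e)\}_{e\in E}$ — an open partition of $X$, since $f$ is continuous and $E$ is discrete — must be uniform for $\Theta_\sJ$, hence refined by some $\sP\in\Theta_\sJ$, i.e. by the fibre partition of some $\pi_D:X\to D$ with $D\in\sJ$. As both are partitions, Remark~\ref{partleq0} turns this refinement into the inclusion of each fibre $\pi_D^{-1}(d)$ in a single $f^{-1}(e)$; equivalently $f$ is constant on the fibres of $\pi_D$ and factors as $f=f_{D,E}\circ\pi_D$ for a unique set map $f_{D,E}:D\to E$. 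Conversely, if $f=f_{D,E}\circ\pi_D$ with $D\in\sJ$, then $f$ is continuous, and for every uniform cover $\sP$ of $E$ the cover $\{f^{-1}(B)\}_{B\in\sP}$ is refined by the fibre partition of $\pi_D\in\Theta_\sJ$, so $f$ is uniformly continuous. Since $\sJ$ is cofiltered and passing to a finer $D'$ replaces $f_{D,E}$ by $f_{D,E}\circ\pi_{D,D'}$, the uniformly continuous maps $X\to E$ form the filtered colimit $\colimit_{D\in\sJ}\hom_{\sQ(X)}(D,E)$.

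For the formula I would study the map $\Phi$ sending a morphism $f:(X,\Theta_\sJ)\to(Y,\Theta_\sH)$ to the family $(\pi_E\circ f)_{E\in\sH}$. By Proposition~\ref{morftd} each $\pi_E\circ f$ factors through some $\pi_D$, hence is uniformly continuous by the first part, and the family is compatible along the transition maps of $\sH$; thus $\Phi$ lands in $\limit_{E\in\sH}\colimit_{D\in\sJ}\hom_{\sQ(X)}(D,E)$. Injectivity is immediate: $(Y,\Theta_\sH)$ is neat, so by Proposition~\ref{tdunif}(2) the fibres of the $\pi_E$, $E\in\sH$, form a basis of the Hausdorff topology of $Y$, whence $\{\pi_E\}_{E\in\sH}$ separates points and $f$ is recovered from $(\pi_E\circ f)_E$.

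The hard part will be surjectivity of $\Phi$, because a compatible family $(f_E)_{E\in\sH}$ a priori only defines a map $X\to\limit_{E\in\sH}E$, and this inverse limit is strictly larger than $Y$ in general (the example $\Q_p$ of Remark~\ref{noncofinal} shows $\sJ$ need not be cofinal in $\sQ(Y)$), so one must genuinely reconstruct a value $f(x)\in Y$. I would do this by completeness: for fixed $x$ the clopen sets $\pi_E^{-1}(f_E(x))$, $E\in\sH$, are nonempty, since each $\pi_E$ is onto, and by compatibility form a filter base which is Cauchy for $\Theta_\sH$, as $\pi_E^{-1}(f_E(x))$ is a single block of the uniform partition $\{\pi_E^{-1}(e)\}_{e\in E}\in\Theta_\sH$. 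By Lemma~\ref{sts complete} this Cauchy filter converges, the limit being unique because $(Y,\Theta_\sH)$ is separated; set $f(x)$ equal to this limit $y$, so that $y\in\overline{\pi_E^{-1}(f_E(x))}=\pi_E^{-1}(f_E(x))$ gives $\pi_E(f(x))=f_E(x)$ for all $E$. The resulting $f:X\to Y$ then satisfies $\pi_E\circ f=f_E$, is continuous because the fibres of the $\pi_E$ form a basis of the topology of $Y$ and $f^{-1}(\pi_E^{-1}(A))=f_E^{-1}(A)$ is open, and has the factoring property of Proposition~\ref{morftd} by the first part. Hence $f$ is a morphism with $\Phi(f)=(f_E)_E$, which proves the formula.
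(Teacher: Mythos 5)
Your proof is correct. Since the paper's own proof of this proposition is the single word ``Immediate,'' there is no written argument to compare against; your write-up supplies the natural one (characterize uniformly continuous maps to a discrete space by partition refinement via Remark~\ref{partleq0}, then prove bijectivity of $f \longmapsto (\pi_E \circ f)_{E \in \sH}$). Three comments. (i) Your invocation of neatness of $(Y,\Theta_\sH)$ is essential and deserves an explicit flag rather than a passing assertion: if one reads Definition~\ref{unifdef10} as not requiring the equivalent conditions of Proposition~\ref{tdunif}, the statement is actually false --- e.g.\ $Y=\Z_p$ with $\sH=\{Y \to \{*\}\}$ gives the indiscrete uniformity, for which the left-hand side of \eqref{homunif} consists of all continuous maps $X \to \Z_p$ while the right-hand side is a single point. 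So neatness (equivalently, that the fibres of the $\pi_E$ form a basis of the topology of $Y$) must be taken as part of the hypotheses, exactly as you use it both for injectivity and for the continuity of the reconstructed $f$. (ii) The reconstruction of $f(x)$ does not need Lemma~\ref{sts complete}: the sets $\pi_E^{-1}(f_E(x))$ are nonempty, compact (the $\pi_E$ are proper) and cofiltered, so their intersection is nonempty by the finite intersection property; your Cauchy-filter argument is valid, but the completeness it invokes is itself proved in the paper by precisely this compactness argument, so the detour can be avoided. (iii) Your motivating claim that $\limit_{E\in\sH}E$ is ``strictly larger than $Y$ in general'' is inaccurate: your own surjectivity step (or the compactness argument) shows that the canonical map $Y \to \limit_{E\in\sH}E$ is bijective for every neat $td$-uniform space, non-cofinality of $\sH$ in $\sQ(Y)$ notwithstanding; this is only motivation, so it does not affect the correctness of the proof.
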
 
\begin{proof}
Immediate.
\end{proof}
\begin{rmk} \label{maxunifmaps} Let $(X,\Theta_\sJ)$, $(Y,\Theta_\sH)$ be objects of $td-\cU$, and assume $\Theta_\sJ = \Theta_{\univ}$, the finest uniformity compatible with the topology of $X$. Then \cite[8.1.C (a)]{eng}
\beq 
\label{homunifmax} 
\hom_{td-\cU} ((X,\Theta_{\univ}), (Y,\Theta_\sH))= \hom_{td-\cS}(X,Y) \; .
\eeq
\end{rmk}
\end{subsubsection}
\end{subsection}
\end{section}

\begin{section}{Continuous functions on a $td$-space} \label{STScontinuity}  
\begin{subsection}{Continuous 
and uniformly continuous 
functions}
We fix the topological ring  $k$ as in section~\ref{lincat}.
  \begin{notation}  For any topological space $X$, 
 we denote by $\Loc(X,k)$  the $k$-module of locally constant (automatically continuous) functions $X \longrightarrow k$. 
  \end{notation}
\begin{defn} \label{unifcontdef} Let $X$ be a $td$-space 
(resp. let $(X,\Theta_\sJ)$ be as in Definition~\ref{unifdef10}). 
We denote by $\sC(X,k)$ (resp. 
$\sC_\unif(X,k) = \sC_\unif((X,\Theta_\sJ),k)$) the $k$-module of continuous (resp. uniformly continuous) 
functions $X \to k$, equipped with the topology 
of uniform convergence on compact subsets of $X$ (resp. of uniform convergence on $X$). We also denote by $\sC^\circ(X,k)$ the $k$-canonical module $\sC(X,k)^\can$.
\end{defn}

\begin{rmk} \label{ringtop} In the situation of Definition~\ref{unifcontdef}, it is clear that  both rings $\sC(X,k)$ and $\sC_\unif(X,k)$ are linearly topologized, \ie  admit a basis of neighborhoods of 0 consisting of open ideals.  It follows from Remark~\ref{filtstone} that a  basis of open submodules of $\sC(X,k)$ is the family of ideals $\{U(C,I)\}_{C, I}$ indexed by open compact subsets $C$ of $X$ and $I \in \cP(k)$, where 
\beq \label{complim0} U(C,I) =\{ f \in \sC(X,k)\,:\, f(x) \in I\,,\, \forall x \in C\,\} \;.
\eeq
If $X$ is a metrizable $td$-space, then $\sC(X,k) \in \cCLMou_k$. 
\par
A  basis of open submodules of $\sC_\unif(X,k)$ is the family of ideals $\{\sC_\unif(X,I)\}_{I \in \cP(k)}$. It follows from the definition that there is a natural  continuous injection 
\beq \label{uniftocont}
\sC_\unif((X,\Theta_\sJ),k) \map{} \sC(X,k) \;.
\eeq
\end{rmk}
\begin{rmk} \label{unifmaxhom}  It follows from Remark~\ref{maxunifmaps} that 
$$\sC_\unif((X,\Theta_{\univ}),k) =\sC^\circ(X,k) \in \cCLMcan_k\;,$$  
\ie coincides with  $\sC(X,k)^\for$ equipped with the naive $k$-canonical topology. 
\end{rmk}

We have
\beq \label{complim1}
\sC(X,k) = \limit_{C \in \cK(X), I \in \cP(k)} \sC(C,k/I) 
\eeq
where $\sC(C,k/I)$ is discrete and the limit is taken in $\cCLMu_k$.
Therefore
\beq \label{discrlim}
\sC(X,k) = \limit_{I \in \cP(k)} \sC(X,k/I)  
\eeq
and
\beq \label{complim2}
\sC(X,k) = \limit_{C \in \cK(X)} \sC(C,k)  \;.
\eeq
We also observe that 
\beq \label{complim3}
\sC^\circ(X,k) = \limit^\can_{C \in \cK(X)} \sC(C,k)  \;.
\eeq
A discrete space $D$ may be viewed as a uniform space in our sense only if it is equipped with the discrete uniformity
$\Theta_D$. Then by $\sC_\unif(D,k)$ we understand 
$$\sC_\unif((D,\Theta_D),k) = \sC^\circ(D,k) \in \cCLMcan_k\;.$$ 
 We have  
\beq \label{unifcont1}
\sC_\unif(D,k) =  \limit_{I\in \cP(k)}  ({\prod}_{x \in D} (k/I) \chi_x^{(D)})^\dis
= {\prod}_{x \in D}^\can k \chi_x^{(D)}   \;.
\eeq
So, $\sC_\unif(D,k) = k^D$ equipped with the naive canonical topology. 
By \eqref{homunif},  for any  uniform $td$-space $(X,\Theta_\sJ)$ we have
\beq \label{unifcont}
\sC_\unif((X,\Theta_\sJ),k) = \colimit^\un_{D \in \sJ} \sC_\unif(D,k)   \in \cCLMcan_k \;.
\eeq 
\begin{lemma} \label{contcompl} Let $C$ be a Stone space. Then 
\beq \label{contcompl01}
\begin{split}  \sC(C,k) = \limit_{I \in \cP(k)} & \sC(C,k/I)^\dis =   \limit_{I \in \cP(k)} \colimit^\un_{F \in \sQ(C)} (k/I)^F =  \\
&\colimit^\un_{F \in \sQ(C)} k^{(F,\un)}\;,
\end{split}
\eeq
where $\sC(C,k/I)$ is a discrete $k/I$-module and any $F \in \sQ(C)$ is a finite discrete space. 
In particular,  \ben
\item
 $\sC(C,k) \in \cCLMcan_k$;
 \item $\Loc(C,k)$ is dense in $\sC(C,k)$;
 \item For any uniformity $\Theta$ on $C$ we have 
\beq \label{contcompl02}
\sC_\unif((C,\Theta),k) = \sC(C,k)  \;.
\eeq
\een
\end{lemma}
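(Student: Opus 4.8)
The plan is to prove the chain of identities \eqref{contcompl01} from left to right, and then to read off the three numbered assertions from the resulting presentation $\sC(C,k) = \colimit^\un_{F \in \sQ(C)} k^{(F,\un)}$ of $\sC(C,k)$ as a filtered uniform colimit of finite free canonical modules.

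The first identity is \eqref{discrlim} applied to the $td$-space $C$, together with the observation that each $\sC(C,k/I)$ is discrete: since $I$ is open, $k/I$ is discrete, so $(0) \in \cP(k/I)$ and the basic neighbourhood $U(C,(0)) = \{f : f(C) = 0\}$ of \eqref{complim0}, taken in $\sC(C,k/I)$, reduces to $\{0\}$; as $C$ is compact the topology of uniform convergence is thus discrete. For the second identity I would fix $I$ and use compactness of $C$: a continuous map $C \to k/I$ into a discrete target is locally constant, hence factors through a finite discrete quotient $\pi_F : C \to F$, and for a Stone space $\sQ(C)$ consists \emph{exactly} of such finite quotients (Lemma~\ref{stonespace}). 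The maps factoring through a fixed $F$ form the copy $(k/I)^F$, and a surjection $F' \to F$ in $\sQ(C)$ induces the inclusion $(k/I)^F \hookrightarrow (k/I)^{F'}$ by precomposition; so $\{(k/I)^F\}_F$ is a filtered inductive system with algebraic colimit $\sC(C,k/I)$. Every term is a discrete module killed by $I$, so the topology entering the definition of $\colimit^\un$ is discrete and the completion is trivial, giving $\sC(C,k/I)^\dis = \colimit^\un_{F} (k/I)^F$.

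The third identity interchanges $\limit_I$ with $\colimit^\un_F$, and this is the step I expect to need the most care (the rest being the routine matching of discrete/complete topologies); happily it is precisely an instance of \eqref{indu}. Applying \eqref{indu} to the inductive system $\{k^{(F,\un)}\}_{F \in \sQ(C)}$ — where $k^{(F,\un)} = k^F$ is finite free canonical, with $k^{(F,\un)}/I k^{(F,\un)} = (k/I)^F$ already separated and complete, so that $(-)^\un$ acts trivially on it — yields
\[
\colimit^\un_{F \in \sQ(C)} k^{(F,\un)} = \limit_{I \in \cP(k)} \colimit^\un_{F \in \sQ(C)} (k/I)^F,
\]
which closes the chain.

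Assertion $1$ then follows because each $k^{(F,\un)}$ lies in $\cF_k \subset \cCLMcan_k$ and $\cCLMcan_k$ is closed under $\colimit^\un$ (Remark~\ref{limcan}), so the colimit is canonical. Assertion $2$ follows since $\colimit^\un_F k^{(F,\un)}$ is by construction the separated completion of the algebraic colimit $\colimit_F k^{(F,\un)}$, and the latter is the union of the subspaces of functions factoring through a finite quotient, i.e. exactly $\Loc(C,k)$; a module is always dense in its completion. For assertion $3$ I would combine the presentation just obtained with \eqref{unifcont}, namely $\sC_\unif((C,\Theta_\sJ),k) = \colimit^\un_{D \in \sJ} \sC_\unif(D,k)$ with $\sC_\unif(F,k) = k^{(F,\un)}$ for finite $F$ by \eqref{unifcont1}: since $C$ is compact, the fibres of the maps in any neat $td$-uniformity $\sJ$ form a clopen basis, so by compactness every finite partition of $C$ is refined by one coming from $\sJ$, whence $\sJ$ is cofinal in $\sQ(C)$; thus the colimit over $\sJ$ coincides with the one over $\sQ(C)$ and equals $\sC(C,k)$. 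Conceptually this records the classical fact that a continuous map out of a compact uniform space is automatically uniformly continuous — the two topologies coinciding because $C$ is itself compact — so that the answer is independent of the chosen uniformity.
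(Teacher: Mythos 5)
Your proof is correct and follows essentially the same route as the paper: the chain \eqref{contcompl01} via \eqref{discrlim}, compactness/local constancy, and the interchange formula \eqref{indu}; assertion $\mathit 1$ via Remark~\ref{limcan}; assertion $\mathit 2$ via $\Loc(C,k) = \colimit_{F \in \sQ(C)} k^{(F)}$; and assertion $\mathit 3$ via finiteness of the quotients $D \in \sJ$ together with \eqref{unifcont} and \eqref{unifcont1}. The paper merely compresses these steps (declaring \eqref{contcompl01} ``clear'' and assertion $\mathit 3$ ``well known''), and your cofinality argument for $\sJ$ in $\sQ(C)$ is exactly the detail it leaves implicit.
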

\begin{proof} \eqref{contcompl01} is clear. 
\par
$\mathit 1.$ Follows from \eqref{contcompl01} and Remark~\ref{limcan}.
\par 
$\mathit 2.$ Follows from \eqref{contcompl01} and the fact that in $\Mod_k$
$$\Loc(C,k) = \colimit_{F \in \sQ(C)} k^{(F)}\;.
$$
\par
$\mathit 3.$ The only non trivial part is \eqref{contcompl02} which is well known and follows from the fact that, if $\Theta = \Theta_\sJ$, then, for any $\pi_D: C \to D$ in $\sJ$, $D$ is finite so that $\sC_\unif((D,\Theta_D),k) = \sC(D,k) = k^D = k^{(D)}$ with the $k$-canonical topology. 
\end{proof}
\begin{lemma} \label{denslc} For any $td$-space $X$, 
$\Loc(X,k)$ is dense in $\sC(X,k)$.
\end{lemma}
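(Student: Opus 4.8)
The plan is to verify density directly against the basis of open submodules of $\sC(X,k)$ described in Remark~\ref{ringtop}. Recall that this basis consists of the ideals $U(C,I)$ indexed by compact open $C \subset X$ and $I \in \cP(k)$. Thus it suffices to show that for every $f \in \sC(X,k)$, every compact open $C$, and every open ideal $I \in \cP(k)$, there is a locally constant $g \in \Loc(X,k)$ with $f - g \in U(C,I)$, i.e. with $f(x) \equiv g(x) \pmod{I}$ for all $x \in C$.

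First I would produce such a $g$ on the Stone space $C$ itself (a compact open subset of a $td$-space is a Stone space). Composing $f|_C$ with the projection $k \to k/I$ yields a continuous map $\bar f : C \to k/I$; since $I$ is open the target $k/I$ is discrete, so $\bar f$ is locally constant, and as $C$ is compact its image is finite. Writing $C = \bigsqcup_{j=1}^n C_j$ for the finite clopen partition of $C$ into the fibers $C_j = \bar f^{-1}(\bar a_j)$, and choosing lifts $a_j \in k$ of the values $\bar a_j$, I would set $g := \sum_{j=1}^n a_j \chi_{C_j}^{(X)}$. Equivalently, this is exactly the density of $\Loc(C,k)$ in $\sC(C,k)$ furnished by part~$\mathit 2$ of Lemma~\ref{contcompl}, applied to $C$, followed by extension by zero.

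The only point needing care — and the step I would single out as the crux — is that $g$ so defined is genuinely locally constant on all of $X$, not merely on $C$. This is where compactness and clopenness of $C$ enter: each $C_j$ is open in $X$ because $C$ is open in $X$, and closed in $X$ because it is compact and $X$ is Hausdorff, so each $C_j$ is clopen in $X$; as there are only finitely many of them, $g$ is a finite $k$-combination of characteristic functions of clopen subsets of $X$, hence lies in $\Loc(X,k)$. By construction $g(x) = a_j \equiv \bar f(x) = f(x) \pmod{I}$ whenever $x \in C_j$, so $f - g \in U(C,I)$, which proves the asserted density.
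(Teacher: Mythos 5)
Your proof is correct and follows essentially the same strategy as the paper's: both approximate $f$ modulo $I$ on the compact open $C$ by a finite $k$-linear combination of characteristic functions of clopen subsets of $X$ contained in $C$. Your route through the locally constant reduction $\bar f : C \to k/I$ is just a slightly cleaner way of producing the same finite clopen partition, and your verification that the pieces are clopen in all of $X$ makes explicit a point the paper handles via a refining $td$-partition.
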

\begin{proof}
For any $C \in \cK(X)$, any $f \in \sC(X,k)$, and any $I \in \cP(k)$, there is a finite $td$-partition $\{U_1,\dots,U_n\}$ of $C$ such that $f(x)-f(y) \in I$ as soon as  $x,y \in U_i$ for some $1 \leq i \leq n$. Let $\sP$ be a $td$-partition of $X$ such that, for any $1 \leq i \leq n$, the elements of $\sP$ contained in $U_i$ cover $U_i$. Then, for any $i$, let $x_i \in U_i$  and  let 
$$\psi = \sum_{i=1}^n f(x_i) \chi_{U_i}^{(X)} \in \Loc(X,k) \;.$$  
Then $f-\psi \in U(C,I)$. Therefore $\Loc(X,k)$ is dense in $\sC(X,k)$. 
\end{proof}
 \begin{cor} \label{contdiscr} Let $D$ be a discrete topological space. Then from \eqref{complim2} we deduce
\beq  \label{limprocomp2}  
\sC(D,k) =  \limit_{F \in \cF(D)} \sC(F,k) \; .
\eeq 
Explicitly
\beq  \label{limprocomp3}   \sC(D,k) = \prod_{x \in D} k \chi^{(D)}_x \;.
\eeq
 \end{cor}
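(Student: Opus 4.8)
The plan is to specialize the general limit description \eqref{complim2} to the case $X = D$ discrete, and then to recognize the resulting projective limit as a categorical product in $\cCLMu_k$.

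First I would observe that in a discrete space every subset is open, while a subset is compact exactly when it is finite; hence $\cK(D) = \cF(D)$. Substituting this into \eqref{complim2} yields \eqref{limprocomp2} at once, the transition map $\sC(F',k) \to \sC(F,k)$ attached to an inclusion $F \subset F'$ being restriction of functions, so that the indexing category $(\cF(D),\subseteq)^{\mathrm{op}}$ is cofiltered as required for a projective limit.

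Next I would evaluate the terms. For a finite discrete space $F$ every $k$-valued function is continuous, so $\sC(F,k) = k^F$, and as a finite product this is $\prod_{x \in F} k\chi_x^{(F)}$ equipped with the canonical topology (compare the computation in the proof of Lemma~\ref{contcompl}, part $\mathit 3$); the restriction maps are then the partial projections discarding the coordinates indexed by $F' \setminus F$.

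The final step, and the only one requiring real care, is the identification $\limit_{F \in \cF(D)} \prod_{x \in F} k\chi_x^{(D)} = \prod_{x \in D} k\chi_x^{(D)}$ in $\cCLMu_k$, i.e. the fact that an arbitrary product is the cofiltered limit of its finite subproducts. Both sides are limits in $\cCLMu_k$, so it suffices to match them. On underlying modules both equal $k^D$, a compatible family over the finite subproducts being the same datum as a function $D \to k$. For the topologies I would invoke the description of limits in $\cCLMu_k$ as carrying the weak topology of the canonical projections: a cofinal basis of open submodules of the limit is given by $\pi_F^{-1}(I\,k^F) = \{\, f : f(x) \in I \text{ for all } x \in F \,\}$ for $F \in \cF(D)$ and $I \in \cP(k)$ (using that $\{I\,k^F\}_{I \in \cP(k)}$ is a cofinal basis for the canonical topology on the finite free module $k^F$), and this is precisely a cofinal basis for the weak topology of the coordinate projections defining $\prod_{x \in D} k\chi_x^{(D)}$. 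Hence the two topological $k$-modules coincide, giving \eqref{limprocomp3}.
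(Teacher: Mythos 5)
Your proof is correct and is essentially the argument the paper intends: the corollary is stated as a direct specialization of \eqref{complim2}, using that in a discrete space the compact open subsets are exactly the finite subsets, and then identifying the cofiltered limit of the finite subproducts $k^F$ (each with its canonical topology) with the full product $\prod_{x \in D} k\chi_x^{(D)}$ carrying the weak topology of the projections. Your careful matching of the two topologies via the basic open submodules $\pi_F^{-1}(I\,k^F)$ fills in precisely the step the paper leaves implicit.
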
 
 \begin{prop} \label{indconst} For any $D \in \sQ(X)$ the pull-back of functions induces a closed embedding
$$
\pi_D^\ast :  \sC(D,k) \longrightarrow \sC(X,k)\;\;,\;\; f \longmapsto f \circ \pi_D \;.
$$
\end{prop}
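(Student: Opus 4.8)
The plan is to establish three facts in turn: that $\pi_D^\ast$ is injective with image a $k$-submodule, that it is a homeomorphism onto that image for the subspace topology, and that the image is closed in $\sC(X,k)$. Injectivity is immediate, since $\pi_D$ is surjective: $f\circ\pi_D=g\circ\pi_D$ forces $f=g$. As $D$ is discrete, every $f\colon D\to k$ is continuous, so $\pi_D^\ast(f)=f\circ\pi_D$ is continuous on $X$ and in fact lies in $\Loc(X,k)\subset\sC(X,k)$; moreover $\pi_D^\ast$ is visibly $k$-linear, so $\im(\pi_D^\ast)$ is a $k$-submodule.

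Next I would check that $\pi_D^\ast$ is a topological embedding by matching bases of open submodules. Recall from \eqref{complim0} that $\{U(C,I)\}_{C\in\cK(X),\,I\in\cP(k)}$ is a basis for $\sC(X,k)$, while $\{U(F,I)\}_{F\in\cF(D),\,I\in\cP(k)}$ is a basis for $\sC(D,k)$ (uniform convergence on the compact, hence finite, subsets of the discrete space $D$). For continuity, the preimage of $U(C,I)$ is
$$(\pi_D^\ast)^{-1}(U(C,I)) = \{f\in\sC(D,k): f(d)\in I\ \forall d\in\pi_D(C)\} = U(\pi_D(C),I),$$
which is open because $\pi_D(C)$ is a compact, hence finite, subset of $D$. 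For openness onto the image, given a basic $U(F,I)$ with $F\in\cF(D)$ I set $C:=\pi_D^{-1}(F)$: since $\pi_D$ is proper, $C$ is a finite union of compact fibres and so compact, and since $\pi_D$ is continuous with $F$ open in $D$, $C$ is open; thus $C\in\cK(X)$. Using surjectivity of $\pi_D$ (so that $\pi_D(C)=F$) one verifies
$$U(C,I)\cap\im(\pi_D^\ast) = \pi_D^\ast(U(F,I)).$$
This identifies each basic neighbourhood of $\sC(D,k)$ with a basic neighbourhood of $\sC(X,k)$ intersected with the image; together with injectivity and continuity it shows $\pi_D^\ast$ is a homeomorphism onto its image.

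Finally I would prove closedness of the image. The image is precisely the set of $g\in\sC(X,k)$ that are constant on every fibre $\pi_D^{-1}(d)$ (a function factors through the surjection $\pi_D$ exactly when it is constant on fibres, and the induced map $D\to k$ is automatically continuous as $D$ is discrete). For each $x\in X$ the evaluation $\mathrm{ev}_x\colon\sC(X,k)\to k$, $g\mapsto g(x)$, is continuous because $(\mathrm{ev}_x)^{-1}(I)=U(\{x\},I)$ is open for $I\in\cP(k)$. Hence each difference $\mathrm{ev}_x-\mathrm{ev}_y$ is continuous, and since $k$ is separated its kernel $\{g:g(x)=g(y)\}$ is a closed submodule. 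Therefore
$$\im(\pi_D^\ast) = \bigcap_{d\in D}\ \bigcap_{x,y\in\pi_D^{-1}(d)} \Ker(\mathrm{ev}_x-\mathrm{ev}_y)$$
is an intersection of closed submodules, hence closed.

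I expect the only genuine point of care to be the openness step: the correspondence between the uniform-convergence-on-finite-sets topology of $\sC(D,k)$ and the subspace topology inherited from $\sC(X,k)$ rests entirely on $\pi_D$ being proper, since properness is exactly what guarantees that $\pi_D^{-1}(F)$ is compact open, so that restricting the $U(C,I)$ to the image recovers the $U(F,I)$. The remaining arguments are formal, and separatedness of $k$ is what makes the closedness argument go through.
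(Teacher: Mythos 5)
Your proof is correct, but its final step differs from the paper's. Both proofs handle injectivity and the embedding in essentially the same way (the paper states tersely that the subspace topology on the image is the topology of uniform convergence on compact opens; you make this explicit by matching the bases $U(C,I)$ and $U(F,I)$ via $C=\pi_D^{-1}(F)$, correctly isolating properness of $\pi_D$ as the ingredient that makes $\pi_D^{-1}(F)$ compact open). Where you diverge is closedness: the paper argues that $\sC(D,k)\cong\prod_{x\in D}k\,\chi_x^{(D)}$ (see \eqref{limprocomp3}) is complete, and a complete subspace of the separated space $\sC(X,k)$ is automatically closed; you instead identify the image as the set of functions constant on the fibers of $\pi_D$ and write it as $\bigcap_{d\in D}\bigcap_{x,y\in\pi_D^{-1}(d)}\Ker(\mathrm{ev}_x-\mathrm{ev}_y)$, an intersection of kernels of continuous maps into the separated module $k$. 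Your route is more elementary and self-contained — it needs neither the completeness of $\sC(D,k)$ nor the general fact that complete subspaces of separated uniform spaces are closed — and it has the side benefit of exhibiting the image explicitly, which is exactly the description the paper records immediately after the proposition. The paper's route is shorter and reuses a principle (completeness implies closedness in separated spaces) that recurs throughout the text. Both arguments use separatedness of $k$ in an essential way: yours to make $\Ker(\mathrm{ev}_x-\mathrm{ev}_y)$ closed, the paper's to make $\sC(X,k)$ Hausdorff so that completeness forces closedness. One cosmetic caveat: your $U(\{x\},I)$ is not literally one of the basic opens of \eqref{complim0} (singletons need not be open in $X$), but it is open nonetheless, since it contains $U(C,I)$ for any compact open neighbourhood $C$ of $x$ and is a $k$-submodule; you may want to say this rather than cite it as a basic open.
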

\begin{proof} The map $\pi_D^\ast$  is clearly injective.  The topology of $\sC(D,k)$ viewed as a subset of $\sC(X,k)$ is the topology of uniform convergence on compact open subsets, so that it is  a subspace of $\sC(X,k)$. Since $\sC(D,k)$ is complete, it is a closed subspace of $\sC(X,k)$. 
 \end{proof}
For any $td$-space $X$ and for $\pi_D:X \longrightarrow D$ in $\sQ(X)$,  
 $\sC(D,k)$ identifies to the sub-object 
 $$
 \sC(D,k) = \{f:X \to k \,:\, \forall \, x \in D \, ,\, f\,\mbox{is constant on} \, \pi_D^{-1}(x)\,\} 
 $$
 of $\sC(X,k)$ in $\cCLMu_k$. Because of \eqref{prodis}  
 it follows from Proposition~\ref{contproj}  
that, in $\Mod_k$,
\beq
\label{contdiscr1}\colimit_{D \in \sQ(X)} \sC(D,k) = \Loc (X,k)\;,
\eeq
while, in $\cCLMu_k$,
\beq
\label{contdiscr2}\colimit^\un_{D \in \sQ(X)} \sC(D,k) = \sC(X,k)\;.
\eeq
For $D$ discrete, the  natural morphism
\beq
\label{unifdiscr1}
\sC_\unif((D,\Theta_D),k) \longrightarrow \sC(D,k)
\eeq
coincides with
\beq
\label{unifdiscr2}
{\prod}_{x \in D}^\can k \chi_x^{(D)}  \longrightarrow 
{\prod}_{x \in D}  k \chi_x^{(D)} 
\eeq
hence is  bijective. 
\end{subsection} 
\begin{subsection}{Functions with almost compact support $\sC_\acs(X,k) \in  \cCLMcan_k$}
\label{acs} 
\begin{lemma} \label{compunif} Let $(X,\Theta_\sJ)$ be a $td$-uniform space. For any compact open subset  $C \subset X$  the continuous $k$-linear map
$$
i_{C , X} : \sC(C,k) \longrightarrow  \sC_\unif((X,\Theta_\sJ),k)  \;,
$$
sending $f \in \sC(C,k)$ to its extension by $0$ to $X$ is an isomorphism onto a 
direct summand of $\sC_\unif((X,\Theta_\sJ),k)$ in the category $\cCLMcan_k$. The complement of $\sC(C,k)$ in 
$\sC_\unif((X,\Theta_\sJ),k)$ is $\sC_\unif((X-C,\Theta_\sH),k)$ where a basis of uniform covers of $(X-C,\Theta_\sH)$ consists of those $td$-partitions of $X-C$ which, together with a $td$-partition of the compact open $C \in \cK(X)$, form a basis of uniform covers of $(X,\Theta_\sJ)$. 
\end{lemma}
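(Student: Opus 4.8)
The plan is to split the discrete quotients of $(X,\Theta_\sJ)$ along $C$ at each finite level and then pass to the colimit \eqref{unifcont}. The first step is to show that $C$ is \emph{uniformly measurable}, i.e. that $C = \pi_{D_0}^{-1}(S)$ for some $D_0 \in \sJ$ and some $S \subseteq D_0$. By neatness (condition $2$ of Proposition~\ref{tdunif}) the fibers of the maps $\pi_D$, $D \in \sJ$, form a basis of the topology of $X$; covering the compact open $C$ by such fibers yields finitely many fibers $\pi_{D_i}^{-1}(d_i) \subseteq C$ with union $C$, and since $\sJ$ is cofiltered there is a single $D_0 \in \sJ$ refining all the $D_i$. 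Each $\pi_{D_i}^{-1}(d_i)$ is then a union of fibers of $\pi_{D_0}$, so $C = \pi_{D_0}^{-1}(S)$ with $S = \pi_{D_0}(C)$, a finite set since $C$ is compact and $D_0$ discrete.

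Next I would pass to the cofinal subsystem of those $D \in \sJ$ refining $D_0$; this does not change the colimit \eqref{unifcont}. For each such $D$ put $T_D := \pi_D(C)$, a finite subset of $D$ with $C = \pi_D^{-1}(T_D)$. The discrete space $D$ splits as $D = T_D \sqcup (D - T_D)$, whence
\[
\sC_\unif(D,k) = k^{D,\can} = k^{T_D,\can} \times k^{D-T_D,\can} = \sC_\unif(T_D,k) \times \sC_\unif(D-T_D,k).
\]
For $E$ refining $D$ one has $C = \pi_E^{-1}(T_E) = \pi_E^{-1}(\pi_{D,E}^{-1}(T_D))$, hence $T_E = \pi_{D,E}^{-1}(T_D)$, and the transition maps $\pi_{D,E}^\ast$ respect the above product. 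Since finite direct sums commute with the filtered colimit $\colimit^\un$, taking the colimit in \eqref{unifcont} yields the biproduct decomposition
\[
\sC_\unif((X,\Theta_\sJ),k) = \Big(\colimit^\un_{D} \sC_\unif(T_D,k)\Big) \;\oplus\; \Big(\colimit^\un_{D} \sC_\unif(D-T_D,k)\Big)
\]
in $\cCLMcan_k$, the first inclusion being extension by $0$, i.e. $i_{C,X}$ (this also proves that $i_{C,X}$ lands in the uniformly continuous functions).

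It remains to identify the two summands. For the first, each $T_D$ is finite so $\sC_\unif(T_D,k) = k^{(T_D,\un)}$; the fibers of $\pi_D|_C$, for $D$ refining $D_0$, form a basis of the topology of the Stone space $C$, so $C = \limit_D T_D$ and $\{T_D\}$ is cofinal in $\sQ(C)$ by Remark~\ref{limdiscr}. Lemma~\ref{contcompl} then identifies $\colimit^\un_D \sC_\unif(T_D,k) = \colimit^\un_{F \in \sQ(C)} k^{(F,\un)}$ with $\sC(C,k)$. For the second, the partitions $\sP_{X-C} := \{\pi_D^{-1}(d)\}_{d \in D - T_D}$ are precisely those $td$-partitions of $X-C$ which, completed by the finite $td$-partition $\{\pi_D^{-1}(t)\}_{t \in T_D}$ of $C$, give back the partition $\sP_D \in \Theta_\sJ$; they form a neat basis of $td$-partitions $\Theta_\sH$ on the $td$-space $X-C$ (clopen in $X$), and \eqref{unifcont} identifies $\colimit^\un_D \sC_\unif(D-T_D,k)$ with $\sC_\unif((X-C,\Theta_\sH),k)$. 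As all objects lie in $\cCLMcan_k$ by Lemma~\ref{contcompl} and \eqref{unifcont}, this exhibits $i_{C,X}$ as an isomorphism of $\sC(C,k)$ onto a direct summand of $\sC_\unif((X,\Theta_\sJ),k)$ with complement $\sC_\unif((X-C,\Theta_\sH),k)$.

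I expect the crux to be the uniform measurability of $C$ and the cofinality of $\{T_D\}$ in $\sQ(C)$, both resting on combining neatness with compactness of $C$; once $C = \pi_{D_0}^{-1}(S)$ is secured, the level-wise product splitting, its compatibility with the transition maps, and the passage to the colimit are formal.
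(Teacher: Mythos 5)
Your proposal is correct and is essentially the paper's own proof: both arguments pass to the cofinal subsystem of $\sJ$ consisting of quotients refining the two-element $td$-partition $\{C,\, X-C\}$ and then split everything accordingly --- the paper at the level of uniform spaces, writing $(X,\Theta_{\sJ'})$ as a sum $(C,\Theta_{\sJ_1}) \coprod (X-C,\Theta_{\sJ_2})$ and applying $\sC_\unif$, while you split each discrete quotient $D = T_D \sqcup (D-T_D)$ and pass to the colimit \eqref{unifcont}, which amounts to the same computation. The one genuine addition on your side is the explicit verification, via neatness (Proposition~\ref{tdunif}) and compactness of $C$, that $C$ is uniformly measurable, i.e.\ that some $D_0 \in \sJ$ refines $\{C,\, X-C\}$; this is exactly what the paper's proof tacitly assumes when it asserts that the subsystem $\sJ'$ is cofinal in $\sJ$.
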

\begin{proof} The partition $X = C \,\dot\cup \,(X - C)$  is a $td$-partition of $X$ which we identify  to the discrete quotient 
$\pi_D : X \to D = \{C, X-C\}$. We may replace $\sJ$ by its cofinal subsystem $\sJ'$ of quotients $\pi_{D'}: X \to D'$ which factor 
$\pi_D$ as $\pi_{D,D'} \circ \pi_{D'}$. Any such $\pi_{D'} : X \to D' \in \sJ'$ decomposes into the sum of 
$(\pi_{D'})_{|C} =: \pi_1: C \to D_1 \in \sJ_1 \subset \sQ(C)$ and 
$(\pi_{D'})_{|X-C} =: \pi_2: X-C \to D_2 \in \sJ_2 \subset \sQ(X-C)$. 
Then $(X,\Theta_\sJ) = (X,\Theta_{\sJ'})$ is the sum of the 2 uniform spaces 
$$ (X,\Theta_{\sJ'}) = (C,\Theta_{\sJ_1}) \coprod (X-C, \Theta_{\sJ_2}) \;. $$
 We conclude that 
\beq 
 \sC_\unif((X,\Theta_\sJ),k) =  \sC_\unif((C,\Theta_{\sJ_1}),k)  \bigoplus  \sC_\unif((X-C,\Theta_{\sJ_2}),k) \;.
\eeq
\end{proof}
\begin{rmk} \label{compunif2} In this particular case, the subspace topology of 
$\sC(C,k) \subset \sC_\unif((X,\Theta_\sJ),k)$ is the topology of uniform convergence on $C$, so that it is the naive canonical topology of $\sC(C,k)$. 
But in general this is not the case. Namely, if $M \in \cCLMcan_k$, \ie if $M$ is complete in its naive canonical topology, a closed $k$-linear subspace $N \subset M$ does not necessarily carry its naive $k$-linear topology, but a weaker one. On the other hand, as recalled in Remark~\ref{8.3.3}, $N$ is complete in its naive $k$-canonical topology, hence the morphism $N^\can \to N$ is bijective. \emph{So, in general, $N^\can \to M$, but not $N \to M$, is a strict monomorphism in $\cCLMcan_k$}.  
\end{rmk}

\begin{defn} \label{acsfcts} We  define the space of continuous functions $X \to k$ with \emph{almost compact support}, as 
\beq  \label{acsfcts1}
\sC_\acs(X,k) := \colimit^\un_{C \subset C' } (\sC (C,k), i_{C , C'})  \in \cCLMcan_k
\;,
\eeq 
where $C,C'$ vary over the family of compact open subsets of $X$ and, for $f \in \sC (C,k)$,  $i_{C , C'}(f) \in \sC (C',k)$ is the extension of $f$ by $0$ to $C'$. We will  denote by 
$$i_C: \sC (C,k) \to \sC_\acs(X,k)$$ 
the canonical morphism called \emph{extension by $0$} (from $C$) to $X$. 
\end{defn}
 \begin{rmk} \label{colimacs} For any $td$-space $X$  let $\sC_\cs(X,k)$ be the $k$-submodule of $\sC(X,k)$ consisting of   functions  with compact support. Then
 $\sC_\acs(X,k)$ is the  completion of  $\sC_\cs(X,k)$ for the $k$-canonical topology, \ie for the  topology of uniform convergence on $X$. 
\end{rmk}
\begin{rmk} \label{limdiscr2}   For any $I \in \cP(k)$,  $\sC_\acs(X,k/I)$ is discrete and, by definition of $\colimit^\un$, 
$$
\sC_\acs(X,k) =  \limit_{I \in \cP(k)} \sC_\cs(X,k/I) \; .
$$
\end{rmk}  
 \begin{cor} \label{acsdiscr} For $D$ discrete, 
  \beq \label{acsdiscr0}
\sC_\cs(D,k) = \colimit_{F \in \cF(D) } (\sC (F,k), i_{F , F'}) =\bigoplus_{x \in D} k \chi_x \in \Mod_k
\;,
\eeq 
 \beq \label{acsdiscr1}
\sC_\acs(D,k) = \colimit^\un_{F \in \cF(D) } (\sC (F,k), i_{F , F'}) =\bigoplus^\un_{x \in D} k \chi_x \in \cCLMcan_k
\;.
\eeq 
 \end{cor}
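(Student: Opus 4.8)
The plan is to make the indexing system explicit and then specialize Remark~\ref{limcan}. First I would note that, since $D$ is discrete, every subset is open and the compact subsets are exactly the finite ones; thus $\cK(D)=\cF(D)$, and the directed system indexing $\sC_\acs(D,k)$ in Definition~\ref{acsfcts} is $\cF(D)$ ordered by inclusion, with transition maps the extensions by $0$. For finite $F$ one has $\sC(F,k)=k^{(F)}=\bigoplus_{x\in F}k\chi_x$, a finite free $k$-canonical module (an object of $\cF_k\subset\cCLMcan_k$, \cf Lemma~\ref{contcompl}), and $i_{F,F'}$ is the coordinate inclusion $\bigoplus_{x\in F}k\chi_x\hookrightarrow\bigoplus_{x\in F'}k\chi_x$.

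For \eqref{acsdiscr0} I would compute the filtered colimit of this system in $\Mod_k$ as the increasing union $\bigcup_{F\in\cF(D)}\bigoplus_{x\in F}k\chi_x=\bigoplus_{x\in D}k\chi_x$, and observe that this agrees with $\sC_\cs(D,k)$: a function on the discrete space $D$ has finite (\ie compact) support precisely when it is a finite $k$-linear combination of the $\chi_x$, so the two $k$-modules coincide.

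For \eqref{acsdiscr1} I would apply Remark~\ref{limcan}: since every $\sC(F,k)$ is $k$-canonical, $\colimit^\un_{F\in\cF(D)}\sC(F,k)=\nwhat{M}$, the completion of the algebraic colimit $M=\bigoplus_{x\in D}k\chi_x$ of \eqref{acsdiscr0} in its naive canonical topology, with $\nwhat{M}\in\cCLMcan_k$; and by the defining description of the uniform direct sum of canonical modules this completion is precisely $\bigoplus^\un_{x\in D}k\chi_x$. There is no real obstacle, the result being a direct specialization of the machinery of Section~\ref{lintop}; the one point meriting care is the final topological identification, namely that the open submodules cutting out $\colimit^\un$ (those $N\subset M$ with $\iota_F^{-1}(N)$ open for every $F$ and $IM\subset N$ for some $I\in\cP(k)$) coincide with those defining $\bigoplus^\un$. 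As both reduce, through Remark~\ref{limcan}, to the completion of $\bigoplus_{x\in D}k\chi_x$ for its naive canonical topology, this matching is automatic and nothing further is required.
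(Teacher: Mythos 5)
Your proposal is correct and follows exactly the route the paper intends: the corollary is stated as an immediate specialization of Definition~\ref{acsfcts} to the discrete case (where $\cK(D)=\cF(D)$), with \eqref{acsdiscr1} obtained by combining Remark~\ref{limcan} with the fact that the uniform direct sum of canonical modules is the completion of the algebraic direct sum in its naive canonical topology. Nothing is missing; your final remark that the two completions match automatically is precisely why the paper offers no separate proof.
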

\begin{cor} \label{acscor}  For any uniform structure $\Theta_\sJ$ on $(X,\tau)$,  $\sC_\acs(X,k)$ is a subspace of  
$\sC_\unif((X,\Theta_\sJ),k)$. 
The natural morphism 
\beq \label{canacs2}
i_\acs: \sC_\acs(X,k) \longrightarrow \sC_\unif((X,\Theta_\sJ),k)   
\eeq  
 is a strict monomorphism  in the quasi-abelian category $\cCLMcan_k$.  
There is  a sequence of  injective $\cCLMu_k$-morphisms
\beq \label{canacs3}
\sC_\acs(X,k) \map{i_\acs} \sC_\unif((X,\Theta_\sJ),k) \longrightarrow  \sC(X,k)
\eeq
 such that the natural composite morphism is a dense  injection.  
\end{cor}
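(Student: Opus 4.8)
The plan is to prove the single assertion that $i_\acs$ is a strict monomorphism in $\cCLMcan_k$ (which is precisely what "$\sC_\acs(X,k)$ is a subspace of $\sC_\unif((X,\Theta_\sJ),k)$" means), and then to handle the density separately. First I would construct the map itself: by Lemma~\ref{compunif} each extension-by-zero $i_{C,X}:\sC(C,k)\map{}\sC_\unif((X,\Theta_\sJ),k)$ is defined and lands in the uniformly continuous functions, and these maps are compatible with the transition maps $i_{C,C'}$; hence the universal property of $\colimit^\un$ in Definition~\ref{acsfcts} yields the induced morphism $i_\acs$.

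The heart of the argument is to reduce everything modulo the open ideals $I\in\cP(k)$, where the topologies become discrete and the obstruction flagged in Remark~\ref{compunif2} disappears. On the source, Remark~\ref{limdiscr2} gives $\sC_\acs(X,k)=\limit_{I\in\cP(k)}\sC_\cs(X,k/I)$, a limit of discrete modules. On the target, combining \eqref{unifcont}, \eqref{unifcont1} and the formula \eqref{canlim11} for the $I$-reduction of a canonical product, the general colimit formula \eqref{indu} yields $\sC_\unif((X,\Theta_\sJ),k)=\limit_{I\in\cP(k)}\sC_\unif((X,\Theta_\sJ),k/I)$, where $\sC_\unif((X,\Theta_\sJ),k/I)=\colimit_{D\in\sJ}(k/I)^D$ consists of the $k/I$-valued functions factoring through some $\pi_D$, $D\in\sJ$ (Proposition~\ref{maptodiscrete}), and is again discrete. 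Under these identifications $i_\acs=\limit_I\iota_I$, where $\iota_I:\sC_\cs(X,k/I)\hookrightarrow\sC_\unif((X,\Theta_\sJ),k/I)$ is the inclusion of compactly supported functions into uniformly continuous ones; $\iota_I$ is well defined and injective because, by Lemma~\ref{compunif} reduced modulo $I$, a compactly supported $k/I$-valued function is uniformly continuous.

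Each $\iota_I$ is thus an injection of discrete modules, hence a strict monomorphism in $\cCLMcan_k$; therefore $i_\acs=\limit_I\iota_I$ is a strict monomorphism, as one checks concretely. Indeed $i_\acs$ is injective as a limit of injections, and the subspace topology it induces on $\sC_\acs(X,k)$ has basis $\{\Ker(\sC_\acs(X,k)\map{}\sC_\cs(X,k/I))\}_{I\in\cP(k)}$, which is the canonical topology of $\sC_\acs(X,k)$: here the preimage under $i_\acs$ of the open submodule $\Ker(\sC_\unif((X,\Theta_\sJ),k)\map{}\sC_\unif((X,\Theta_\sJ),k/I))$ equals $\Ker(\sC_\acs(X,k)\map{}\sC_\cs(X,k/I))$ precisely because $\iota_I$ is injective. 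This settles the first two assertions. For the third, the composite in \eqref{canacs3} is $i_\acs$ followed by the injection \eqref{uniftocont}, hence injective, and its image contains $\sC_\cs(X,k)$; since for $f\in\sC(X,k)$ and any basic neighbourhood $f+U(C,I)$ of \eqref{complim0} the compactly supported function $f\,\chi_C^{(X)}$ (continuous because $C$ is clopen, with support in the compact $C$) satisfies $f-f\,\chi_C^{(X)}\in U(C,I)$, as it vanishes on $C$, the submodule $\sC_\cs(X,k)$ is dense in $\sC(X,k)$ and the composite is a dense injection.

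The main obstacle is exactly the matching of the subspace topology with the intrinsic canonical topology of $\sC_\acs(X,k)$: as Remark~\ref{compunif2} warns, a closed canonical submodule of a canonical module need not carry its own canonical topology. The device that removes the difficulty is the reduction modulo $I$, which replaces both modules by projective limits of \emph{discrete} modules; once there, the mere injectivity of each $\iota_I$ forces the two topologies to coincide, and no direct comparison of $I\cdot\sC_\cs(X,k)$ with $\{f:f(X)\subset I\}$ is needed.
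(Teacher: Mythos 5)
Your proposal is correct, and its skeleton (construction of $i_\acs$ from Lemma~\ref{compunif} via the universal property of $\colimit^\un$; injectivity of the composite; density via the truncations $f\chi_C^{(X)}$) matches the paper's; indeed your density argument is just the explicit form of the paper's, whose right inverse $i_C$ to $\pi_C$ produces exactly your $f\chi_C^{(X)}$. Where you genuinely diverge is the strict monomorphism. The paper argues globally: the set-theoretic image of $i_\acs$ is the closure of $\sC_\cs(X,k)$ inside $\sC_\unif((X,\Theta_\sJ),k)$, and since the source carries the $k$-canonical topology, $i_\acs$ identifies $\sC_\acs(X,k)$ with $\bigl(\ol{\sC_\cs(X,k)}\bigr)^\can$, which is a kernel in $\cCLMcan_k$. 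You instead reduce modulo each $I\in\cP(k)$, write $i_\acs=\limit_I\iota_I$ with $\iota_I$ an inclusion of discrete modules, and conclude that the subspace topology induced by $i_\acs$ has as basis the kernels of the reductions, i.e.\ coincides with the canonical topology of $\sC_\acs(X,k)$ (Remark~\ref{limdiscr2} plus Definition~\ref{acsfcts}). This is in fact a sharper statement than the paper's ``finer than'', and it neutralizes the obstruction of Remark~\ref{compunif2} by working where all modules are discrete. What each approach buys: the paper's is shorter but leans on the unproved identification of the image with the closure; yours is longer but every step is a concrete equality of open-submodule bases.

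Two points you should make explicit to close the argument. First, the slogan ``a limit of strict monomorphisms is a strict monomorphism'' is precisely the kind of assertion Remark~\ref{compunif2} warns against in $\cCLMcan_k$, where limits involve $(-)^\can$; it is your concrete verification, not that slogan, that carries the proof, so the slogan should be dropped or demoted to motivation. Second, a topological embedding whose subspace topology is canonical is not yet literally exhibited as a kernel: you still need the image to be closed. This is one line --- $\sC_\acs(X,k)$ is complete, so its homeomorphic image is a complete, hence closed, submodule of the separated module $\sC_\unif((X,\Theta_\sJ),k)$, and a closed submodule whose subspace topology is its own canonical topology is the kernel, in $\cCLMcan_k$, of the projection onto the cokernel --- but without it the phrase ``this settles the first two assertions'' is premature.
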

\begin{proof} The morphism $i_\acs$ is obtained as follows. One first considers the colimit 
$$ 
  \colimit_{C \subset C' } (\sC (C,k), i_{C , C'}) 
\;,
$$ 
in $\Mod_k$ which is the $k$-submodule of $\sC_\cs (X,k)$ of $\sC_\unif((X,\Theta_\sJ),k)^\for$. 
The image of  $i_\acs$ in $\cCLMcan_k$ coincides set-theoretically with  the closure of the set-theoretic image of 
$\sC_\cs (X,k)$ in $\sC_\unif((X,\Theta_\sJ),k)$, but it is equipped with the   $k$-canonical topology which is finer than the subspace topology induced by $\sC_\unif((X,\Theta_\sJ),k)$. So,  
$i_\acs: \sC_\acs(X,k) \map{} \sC_\unif((X,\Theta_\sJ),k)$ is a strict monomorphism in $\cCLMcan_k$. 
 This proves the first part of the Corollary. 
\par \medskip
Taking into account \eqref{uniftocont}, we are only left to prove that the composite morphism $\varphi:\sC_\acs(X,k) \longrightarrow \sC(X,k)$ in \eqref{canacs3}
 has dense image.   
For any compact open $C \subset X$, we have a morphism 
\beq
\begin{split}
\pi_C: \sC_\acs(X,k) &\longrightarrow \sC(C,k) \\
f &\longmapsto \varphi(f)_{|C}  
\end{split}
\eeq
and the morphism $\varphi$ is the limit of the projective system $\{\pi_C\}_C$. 
Now, for any $C$, the map $\pi_C$ is surjective because it admits the canonical  right inverse $\sC(C,k) \map{i_C}\sC_\acs(X,k)$, so that $\varphi$  has dense image. 
\end{proof}
\begin{rmk} \label{explsupport} If $k$ is discrete  the $k$-module underlying $\sC_\acs(X,k)$ identifies with  $\sC_\cs(X,k)$
equipped with the discrete topology.
 For general $k$, the $k$-module underlying $\sC_\acs(X,k)$ is the submodule of $\sC(X,k)$ consisting of uniform limits on $X$   of compactly supported continuous functions $X \to k$. It may be described as the
 $k$-submodule of $\sC(X,k)$ of continuous functions $f : X \to k$ which satisfy the   following property (where {\bf (ACS)} stands for ``almost compact support'')
\par
{\bf(ACS)} \emph{For any $I \in \cP(k)$, there exists a compact subset $Z_I \subset X$ such that $f(x) \in I$, for any $x \in X - Z_I$\;.} 
\par \noindent 
Of course, in {\bf(ACS)} we might as well have imposed that $Z_I$ be a compact \emph{open} subset of $X$. 
\end{rmk}  
\begin{prop}\label{proacsfcts} For any $td$-space $X$ we have
\beq \label{proacsfcts1}
\sC_\acs(X,k) = \colimit^\un_{D \in \sQ(X)} \sC_\acs(D,k)\;.
\eeq
\end{prop}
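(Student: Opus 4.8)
The plan is to build an explicit comparison morphism from the right-hand side to $\sC_\acs(X,k)$ and then check it is an isomorphism by reducing modulo the open ideals of $k$, exploiting that both sides lie in $\cCLMcan_k$. First I would construct the morphism. For each $D \in \sQ(X)$ the proper surjection $\pi_D : X \to D$ induces, by pullback $g \longmapsto g \circ \pi_D$, a map $\pi_D^\ast$. If $g \in \sC_\acs(D,k)$, then for any $I \in \cP(k)$ the set $\{x \in D : g(x) \notin I\}$ is finite, so $g\circ\pi_D$ lies in $I$ off the preimage of a finite set, i.e. off a finite union of compact fibres, which is compact; hence $\pi_D^\ast g$ satisfies property {\bf(ACS)} and $\pi_D^\ast$ sends $\sC_\acs(D,k)$ into $\sC_\acs(X,k)$. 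For a refinement $\pi_{D,D'} : D' \to D$ the transition map $\pi_{D,D'}^\ast$ preserves finite support because $\pi_{D,D'}$ is locally finite (Theorem~\ref{STSspaces}, $\mathit 2$); together with $\pi_{D'}^\ast \circ \pi_{D,D'}^\ast = \pi_D^\ast$ this exhibits $\{\sC_\acs(D,k)\}_{D\in\sQ(X)}$ as a filtered inductive system for the refinement order and yields a canonical morphism $\Phi : \colimit^\un_{D\in\sQ(X)} \sC_\acs(D,k) \to \sC_\acs(X,k)$.

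Both sides belong to $\cCLMcan_k$ (Definition~\ref{acsfcts} for the target; Remark~\ref{limcan}, since $\cCLMcan_k$ is closed under $\colimit^\un$, for the source), so by the identity $M = \limit_{I\in\cP(k)}(M/IM)^\dis$ it suffices to prove that $\Phi$ is an isomorphism after reduction modulo each $I \in \cP(k)$. By Remark~\ref{limdiscr2} the reduction of the target is $\sC_\cs(X,k/I)$, and by \eqref{indu1} together with $\sC_\acs(D,k)/I\sC_\acs(D,k) = \sC_\cs(D,k/I)$ (from $\sC_\acs(D,k)=\bigoplus^\un_{x\in D} k\chi_x$ in Corollary~\ref{acsdiscr}) the reduction of the source is $\colimit_{D}\sC_\cs(D,k/I)$ in $\Mod_{k/I}$. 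Thus everything comes down to the discrete statement
\[
\colimit_{D \in \sQ(X)} \sC_\cs(D, k/I) \iso \sC_\cs(X, k/I) \qquad \text{in } \Mod_{k/I}.
\]

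The hard part is this discrete identification, whose crux is a support-finiteness argument. Since $k/I$ is discrete, a compactly supported continuous function is compactly supported \emph{locally constant}; by the algebraic colimit formula \eqref{contdiscr1} it factors as $f = \bar f \circ \pi_D$ for some $D \in \sQ(X)$ and some $\bar f : D \to k/I$. Here I would use properness of $\pi_D$ decisively: $\pi_D$ being surjective, $\mathrm{supp}(\bar f) = \pi_D(\mathrm{supp}\, f)$ is the continuous image of a compact set, hence a compact, and therefore finite, subset of the discrete space $D$; thus $\bar f \in \sC_\cs(D,k/I)$, which yields surjectivity onto the image. Conversely each $\sC_\cs(D,k/I)$ maps into $\sC_\cs(X,k/I)$ as in the first paragraph, and injectivity of $\colimit_D \sC_\cs(D,k/I) \hookrightarrow \colimit_D \sC(D,k/I) = \Loc(X,k/I)$ follows from exactness of filtered colimits, the subsystem $\{\sC_\cs(D,k/I)\}$ being stable under the (locally finite, hence finite-support-preserving) transition maps.

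Combining the two inclusions pins the image of the colimit down to exactly $\sC_\cs(X,k/I)$, giving the boxed isomorphism for each $I$. These isomorphisms are natural in $I$ and compatible with $\Phi$, so passing to $\limit_{I\in\cP(k)}$ shows that $\Phi$ is itself an isomorphism in $\cCLMcan_k$, which is precisely \eqref{proacsfcts1}. The only genuinely delicate point is the discrete step, and within it the passage from "factors through some $\pi_D$" to "the factored datum has finite support"; everything else is a formal reduction that the paper's canonical-module machinery handles routinely.
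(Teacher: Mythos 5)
Your proof is correct and follows essentially the same route as the paper's: both reduce modulo each open ideal $I \in \cP(k)$ via Remark~\ref{limdiscr2} and the $\colimit^\un$ machinery, establish the discrete identity $\sC_\cs(X,k/I) = \colimit_{D \in \sQ(X)} \sC_\cs(D,k/I)$ in $\Mod_{k/I}$, and pass to $\limit_{I}$. The only difference is one of detail: the paper asserts the discrete identity in one line ``by \eqref{prodis}'', whereas you actually prove it (factorization through some $\pi_D$ plus properness forcing finite support of the factored function), which is a welcome fleshing-out of the same argument rather than a different approach.
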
 
\begin{proof} We have by Remark~\ref{limdiscr2}
$$
\sC_\acs(X,k) =  \limit_{I \in \cP(k)} \sC_\cs(X,k/I) \; .
$$
On the other hand, by \eqref{prodis},  for any $I \in \cP(k)$ we have in $\Mod_{k/I}$  
$$
\sC_\cs(X,k/I) = \colimit_{D \in \sQ(X)} \sC_\cs(D,k/I)\;.
$$
So, 
$$
\sC_\acs(X,k) =  \limit_{I \in \cP(k)} \sC_\cs(X,k/I) =  \limit_{I \in \cP(k)} \colimit_{D \in \sQ(X)} \sC_\cs(D,k/I) = \colimit^\un_{D \in \sQ(X)} 
\sC_\acs(D,k)\; .
$$
\end{proof}
\end{subsection}
\begin{subsection}{Summary of results on rings of continuous functions}
 The first two corollaries have already been proven and the third is obvious.
\begin{cor} \label{acsalc}
If $X = C$ is a Stone space then all
morphisms in \eqref{canacs3} are isomorphisms and (see Lemma~\ref{contcompl})
\beq
\label{acsalc2} \sC(C,k) = \colimit^\un_{F \in \sQ(C)} \sC(F,k)  = \colimit^\un_{F \in \sQ(C)} k^{(F,\un)} \in \cCLMcan_k
\eeq
where $F$ varies over finite discrete quotients of $C$. 
 If $X = D$ is discrete   and the uniformity on $D$ is understood to be the discrete uniformity,    \eqref{canacs3} becomes
\beq
\label{acsalc3dis}   
\sC_\acs(D,k) = {\bigoplus}^\un_{x \in D} k \chi_x   \longrightarrow \sC_\unif(D,k) =  {\prod}_{x \in D}^\can  k \chi_x  \longrightarrow 
\sC(D,k) = {\prod}_{x \in D} k \chi_x \;.
\eeq
Notice  that the first morphism is a strict monomorphism in the quasi-abelian category  $\cCLMcan_k$, while the second is bijective. The composite morphism has dense image.  
\end{cor}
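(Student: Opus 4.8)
The plan is to assemble the statement from the explicit descriptions of function spaces on Stone and on discrete spaces already obtained above, so that no new argument is required beyond matching the three terms in each case.

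For $X = C$ a Stone space, I would first note that $C$ is the maximum of the family of its compact open subsets, so the inductive system defining $\sC_\acs(C,k)$ in \eqref{acsfcts1} is eventually constant equal to $\sC(C,k)$; hence $\sC_\acs(C,k) = \sC(C,k)$ and $i_\acs = \id$. By \eqref{contcompl02} of Lemma~\ref{contcompl}, $\sC_\unif((C,\Theta),k) = \sC(C,k)$ for every uniformity $\Theta$ on $C$, so the second morphism of \eqref{canacs3} is also an isomorphism. Finally \eqref{acsalc2} is exactly \eqref{contcompl01}, once one observes that every $F \in \sQ(C)$ is finite --- being a discrete, hence compact, continuous image of the compact space $C$ --- and that for finite $F$ the three notations collapse, $\sC(F,k) = k^F = k^{(F,\un)}$.

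For $X = D$ discrete with the discrete uniformity, I would simply quote the explicit forms $\sC_\acs(D,k) = {\bigoplus}^\un_{x \in D} k\chi_x$ from \eqref{acsdiscr1}, $\sC_\unif(D,k) = {\prod}_{x \in D}^\can k\chi_x$ from \eqref{unifcont1}, and $\sC(D,k) = {\prod}_{x \in D} k\chi_x$ from \eqref{limprocomp3}. The sequence \eqref{acsalc3dis} is then precisely the chain \eqref{sumsqprod} for the constant family $M_\alpha = k$; since $k$ is canonical, Corollary~\ref{canlim} identifies ${\prod}^{\square,\un}_{x} k$ with ${\prod}^\can_{x} k$, so the middle term matches. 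The three qualitative assertions are read off directly from \eqref{sumsqprod}: the first arrow is a closed subspace embedding, hence a strict monomorphism in $\cCLMcan_k$ (as in Corollary~\ref{acscor}); the second is a bijection; and the composite inclusion ${\bigoplus}^\un_{x} k \hookrightarrow {\prod}_{x} k$ has dense image.

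Because both cases reduce to citing previously established identities, I do not expect a genuine obstacle. The only point needing care is the bookkeeping for finite $F$ in the Stone case: one must verify that the uniform direct sum $k^{(F,\un)}$, the canonical product, and the ordinary product all coincide with $k^F$, so that \eqref{acsalc2} is literally the identity \eqref{contcompl01} of Lemma~\ref{contcompl}.
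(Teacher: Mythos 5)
Your proposal is correct and takes essentially the same route as the paper, which offers no new argument for this corollary (it states it "has already been proven") and relies on exactly the identities you assemble: Lemma~\ref{contcompl} for the Stone case, and \eqref{acsdiscr1}, \eqref{unifcont1}, \eqref{limprocomp3}, \eqref{sumsqprod} together with Corollary~\ref{canlim} for the discrete case. The only inference needing care is "closed subspace embedding, hence strict monomorphism in $\cCLMcan_k$" — by Remark~\ref{compunif2} this implication is false in general and requires that the source ${\bigoplus}^\un_{x \in D} k\chi_x$ be canonical — but your parenthetical appeal to Corollary~\ref{acscor}, which proves the strict monomorphism claim for arbitrary $td$-uniform spaces, covers this point.
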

We summarize our results on $\sC(X,k)$ and $\sC_\unif(X,k)$.
\begin{cor} \label{contcase} Let  $X$ be a $td$-space. Then 
\ben  
\item (see \eqref{complim2})
$$\sC(X,k) = \limit_{C \in \cK(X)} \sC(C,k) = \limit_{C \in \cK(X)}  \colimit^\un_{F \in \sQ(C)} k^{(F,\un)} \in \limit \, \cCLMcan_k \;.$$
\item (see \eqref{contdiscr2} and  \eqref{limprocomp3})
$$
\sC(X,k) = \colimit^\un_{D \in \sQ(X)}  \sC(D,k)  = \colimit^\un_{D \in \sQ(X)}  {\prod}_{x \in D} k \chi^{(D)}_x 
\;.
$$
\item (Definition~\eqref{acsfcts})
$$
\sC_\acs(X,k) = \colimit^\un_{C \in \cK(X)}  \sC (C,k)  \in \cCLMcan_k
\;.
$$
\item (see \eqref{proacsfcts1})
$$
\sC_\acs(X,k) = \colimit^\un_{D \in \sQ(X)} \sC_\acs(D,k) \in \cCLMcan_k
\;.
$$
\een
\end{cor}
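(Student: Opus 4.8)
The plan is to assemble the four displayed identities directly from results established earlier in this section, since each clause merely transcribes an equation already proven; accordingly I expect no genuine difficulty, only a bit of bookkeeping to match topologies.

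First I would treat clause $\mathit 1$. The equality $\sC(X,k) = \limit_{C \in \cK(X)} \sC(C,k)$ is exactly \eqref{complim2}, where the limit is formed in $\cCLMu_k$ and therefore carries the weak topology of the canonical restriction maps; by construction this is the topology of uniform convergence on compact subsets. Each compact open $C \subset X$ is a Stone space, so Lemma~\ref{contcompl} applies and furnishes $\sC(C,k) = \colimit^\un_{F \in \sQ(C)} k^{(F,\un)}$ as in \eqref{contcompl01}; substituting this into the limit yields the second equality. For the membership $\sC(X,k) \in \limit \, \cCLMcan_k$ I would invoke part $\mathit 1$ of Lemma~\ref{contcompl}, by which every $\sC(C,k)$ lies in $\cCLMcan_k$, together with the fact that $\{\sC(C,k)\}_{C \in \cK(X)}$ with the restriction maps forms a cofiltered projective system, the poset $\cK(X)$ being filtered by inclusion (see Remark~\ref{filtstone}).

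Next, clause $\mathit 2$ combines \eqref{contdiscr2}, which gives $\sC(X,k) = \colimit^\un_{D \in \sQ(X)} \sC(D,k)$ as a uniform colimit in $\cCLMu_k$, with the explicit description $\sC(D,k) = \prod_{x \in D} k \chi^{(D)}_x$ for discrete $D$ recorded in \eqref{limprocomp3} of Corollary~\ref{contdiscr}. Clause $\mathit 3$ is nothing but the Definition~\ref{acsfcts} of $\sC_\acs(X,k)$ as the uniform colimit $\colimit^\un_{C \in \cK(X)} \sC(C,k)$ over compact open subsets, including the membership in $\cCLMcan_k$ asserted there. Finally, clause $\mathit 4$ reproduces Proposition~\ref{proacsfcts}, equation \eqref{proacsfcts1}, the membership in $\cCLMcan_k$ again being inherited from clause $\mathit 3$.

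Since every clause is a transcription of an already-proven statement, no real obstacle remains; the closest thing to a subtle point is the topological identification in clause $\mathit 1$, where one must check that the weak projective-limit topology coincides with uniform convergence on compact subsets. This verification is, however, already incorporated into \eqref{complim2}, so no further work is required.
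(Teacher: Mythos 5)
Your proposal is correct and coincides with the paper's own treatment: Corollary~\ref{contcase} is presented there as a pure summary, with each clause justified only by the parenthetical references to \eqref{complim2} together with Lemma~\ref{contcompl}, to \eqref{contdiscr2} and \eqref{limprocomp3}, to Definition~\ref{acsfcts}, and to \eqref{proacsfcts1}, exactly as you assemble them. Your added remarks on the memberships in $\limit\,\cCLMcan_k$ and $\cCLMcan_k$ (via part $\mathit 1$ of Lemma~\ref{contcompl} and the cofilteredness of $\cK(X)$ from Remark~\ref{filtstone}) are the same bookkeeping the paper leaves implicit.
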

\begin{cor} \label{decomp} Assume  $X$ admits a  decomposition as a sum   of Stone subspaces  $\{X_\alpha\}_{\alpha \in A}$ as in \eqref{sumstone}. Then 
$$
\sC(X,k)  = {\prod}_{\alpha \in A}   \sC(X_\alpha,k)  \;.$$
\end{cor}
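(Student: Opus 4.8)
The plan is to realize the right-hand product as a filtered limit of finite subproducts matching a cofinal system of compact opens in $X$. Restriction of functions gives a natural $k$-linear map
\[
\rho : \sC(X,k) \longrightarrow \prod_{\alpha \in A} \sC(X_\alpha,k)\;,\;\; f \longmapsto (f_{|X_\alpha})_{\alpha \in A}\;,
\]
which at the level of underlying $k$-modules is bijective: since each $X_\alpha$ is open in $X = \coprod_\alpha X_\alpha$ by \eqref{sumstone}, a function is continuous iff its restriction to every $X_\alpha$ is, and any family of continuous functions on the open pieces glues to a continuous function on $X$. So the only real content is that $\rho$ is an isomorphism in $\cCLMu_k$, \ie a homeomorphism.

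First I would introduce, for $F \in \sF(A)$, the compact open subset $X_F := \coprod_{\alpha \in F} X_\alpha$, and show that $\{X_F\}_{F \in \sF(A)}$ is a filtered subfamily of $\cK(X)$ which is \emph{cofinal}. Filteredness is clear since $X_F \cup X_{F'} = X_{F \cup F'}$. For cofinality, given any $C \in \cK(X)$ the sets $\{C \cap X_\alpha\}_{\alpha}$ form a disjoint open cover of the compact space $C$, so $C \cap X_\alpha \neq \emptyset$ for only finitely many $\alpha$, say those in a finite set $F$, whence $C \subset X_F$. Combined with Remark~\ref{filtstone} (every compact subset lies in a compact open one), this shows $\{X_F\}$ is cofinal in $\cK(X)$.

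Next, for $F$ finite, $\sC(X_F,k) = \prod_{\alpha \in F} \sC(X_\alpha,k)$ in $\cCLMu_k$: the underlying modules agree as above, and since $F$ is finite this is a finite product, so the topology of uniform convergence on the compact $X_F$ matches the product topology. Indeed, under $\rho$ the open submodule $U(X_F,I)$ of \eqref{complim0} corresponds to $\prod_{\alpha \in F} U(X_\alpha,I)$, and these are cofinal among the basic opens of the finite product because $\cP(k)$ is filtered under intersection. Invoking \eqref{complim2} together with cofinality of $\{X_F\}$ in $\cK(X)$, I then obtain in $\cCLMu_k$
\[
\sC(X,k) = \limit_{C \in \cK(X)} \sC(C,k) = \limit_{F \in \sF(A)} \sC(X_F,k) = \limit_{F \in \sF(A)} \prod_{\alpha \in F} \sC(X_\alpha,k)\;.
\]

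Finally I would identify this last limit with the full product. In $\cCLMu_k$ limits are computed in $\Mod_k$ and carry the weak topology of the projections, so the canonical identity $\prod_{\alpha \in A} M_\alpha = \limit_{F \in \sF(A)} \prod_{\alpha \in F} M_\alpha$, with transition maps the coordinate projections, holds both on underlying modules and on topologies; this yields $\sC(X,k) = \prod_{\alpha \in A} \sC(X_\alpha,k)$ and identifies the isomorphism with $\rho$. The one step deserving care — and the only place where anything could go wrong — is the topological bookkeeping in the displayed chain: one must check that every $U(C,I)$ with $C \in \cK(X)$ is, under $\rho$, the preimage of a basic open submodule $\prod_\alpha V_\alpha$ of the product (with $V_\alpha = \sC(X_\alpha,k)$ for $\alpha \notin F$ and $V_\alpha = U(C \cap X_\alpha, I)$ for the finitely many $\alpha \in F$ meeting $C$), and conversely. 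This comparison of neighborhood bases is routine once the cofinality of $\{X_F\}$ is in hand.
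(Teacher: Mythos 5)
Your proof is correct. The paper offers no argument for this corollary at all --- it is the one declared ``obvious'' at the head of the summary subsection --- and your route (bijectivity of the restriction map $\rho$, cofinality of the finite subsums $X_F = \coprod_{\alpha \in F} X_\alpha$ in $\cK(X)$, identification of $\sC(X_F,k)$ with the finite product, and then \eqref{complim2} together with $\prod_{\alpha \in A} M_\alpha = \limit_{F \in \sF(A)} \prod_{\alpha \in F} M_\alpha$ in $\cCLMu_k$) is exactly the verification the paper leaves to the reader, with the topological bookkeeping done correctly; your appeal to Remark~\ref{filtstone} is superfluous (elements of $\cK(X)$ are already compact) but harmless.
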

\begin{rmk} \label{Xmetrtd} If $X$ is a metrizable $td$-space, then 
$$\sC(X,k) \in \limit_\N \, \cCLMcan_k  \subset \cCLMou_k\;,
$$
is a sequential limit of $k$-canonical modules. 
\end{rmk}
\begin{cor} \label{unifcase}  We now assume that  $X$  as in Corollary~\ref{contcase} is also given a $td$-uniform structure $\Theta_\sJ$ as in Definition~\ref{unifcontdef}.  Then
\ben  
\item 
$$
\sC_\unif((X,\Theta_\sJ),k)  = \limit_{I \in \cP(k)} \sC_\unif((X,\Theta_\sJ),k/I)  \in \cCLMcan_k\; .
$$
\item 
$$
\sC_\unif((X,\Theta_\sJ),k)  = \colimit^\un_{D \in \sJ} \sC_\unif (D,k) 
= \colimit^\un_{D \in \sJ} {\prod}_{x \in D}^\can k \chi_x^{(D)} \;.
$$
\item
The natural morphism 
$$\sC_\unif((X,\Theta_\sJ),k) \longrightarrow \sC(X,k)
$$
is injective and has dense image. 
\een
\end{cor}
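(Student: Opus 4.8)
The plan is to treat the three assertions in turn, the whole argument resting on the presentation $\sC_\unif((X,\Theta_\sJ),k) = \colimit^\un_{D \in \sJ} \sC_\unif(D,k)$ recorded in \eqref{unifcont}, together with the identification $\sC_\unif(D,k) = {\prod}_{x \in D}^\can k\chi_x^{(D)}$ of \eqref{unifcont1}. Assertion 2 then requires no work beyond citing these two formulas: \eqref{unifcont} supplies the uniform colimit over $\sJ$ and places the object in $\cCLMcan_k$, while \eqref{unifcont1} rewrites each term as the indicated canonical product.

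For assertion 1 I would exploit that $\sC_\unif((X,\Theta_\sJ),k)$ is canonical, so that it is recovered as $\limit_{I \in \cP(k)} (M/IM)^\dis$; the only point is to identify the reduction $M/IM$ with $\sC_\unif((X,\Theta_\sJ),k/I)$. Since each term $\sC_\unif(D,k)$ is canonical, Remark~\ref{limcan} applies: restricting the limit over $\cP(k)$ to the cofinal family of finitely generated open ideals (finiteness axiom), formula \eqref{indu1} gives $\colimit^\un_{D \in \sJ}\sC_\unif(D,k) = \limit_{I}(M/IM)^\dis$ with $M = \colimit_{D}\sC_\unif(D,k)$ the algebraic colimit and $M/IM = \colimit_{D}\bigl(\sC_\unif(D,k)/I\sC_\unif(D,k)\bigr)$ for such $I$. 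I would then invoke \eqref{canlim11} to compute $\sC_\unif(D,k)/I\sC_\unif(D,k) = (k/I)^D$ with its discrete topology, which is exactly $\sC_\unif(D,k/I)$; passing to the algebraic colimit over $\sJ$ and using Proposition~\ref{maptodiscrete} — where $\colimit^\un$ collapses to the ordinary colimit because $k/I$ is discrete — yields $M/IM = \sC_\unif((X,\Theta_\sJ),k/I)$. Re-inserting this into the limit over $I$ gives assertion 1.

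Assertion 3 splits into injectivity and density. Injectivity of the morphism $\sC_\unif((X,\Theta_\sJ),k) \to \sC(X,k)$ is already the content of \eqref{uniftocont}, so only density remains, and for this I would appeal to Corollary~\ref{acscor}: the composite $\sC_\acs(X,k) \to \sC_\unif((X,\Theta_\sJ),k) \to \sC(X,k)$ has dense image; since it factors through $\sC_\unif((X,\Theta_\sJ),k)$, the image of the second arrow contains this dense set and is therefore itself dense.

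The whole proof is essentially bookkeeping around the limit/colimit interchange for canonical modules established earlier. The one step needing genuine (if minor) attention is the identification $\sC_\unif(D,k)/I\sC_\unif(D,k) = \sC_\unif(D,k/I)$ in assertion 1: one must check that reducing the canonical product $k^{D,\can}$ modulo $I$ produces the \emph{discrete} module $(k/I)^D$ rather than a finer topology, which is precisely what \eqref{canlim11} of Corollary~\ref{canlim} guarantees.
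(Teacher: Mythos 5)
Your part 3 coincides with the paper's proof (injectivity from \eqref{uniftocont}, density from the last assertion of Corollary~\ref{acscor}), and every individual step in your parts 1 and 2 is backed by a result stated earlier in the paper. What differs is the logical order, and the difference is substantive. The paper proves part 1 \emph{directly from the definition} of uniform continuity: $f:X \to k$ is uniformly continuous iff each reduction $f \bmod I$ factors through some $\pi_{D_I}$ with $D_I \in \sJ$, and conversely any coherent system $(f_I)_{I \in \cP(k)}$ converges uniformly to a uniformly continuous $f$; moreover the topology of uniform convergence on $X$ is exactly the weak topology of the projections $f \mapsto f \bmod I$ to the discrete modules $\sC_\unif((X,\Theta_\sJ),k/I)$. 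With part 1 in hand, the paper gets part 2 by treating the discrete-$k$ case first (where the colimit presentation is immediate from Proposition~\ref{maptodiscrete}) and then writing $\sC_\unif((X,\Theta_\sJ),k) = \limit_I \sC_\unif((X,\Theta_\sJ),k/I) = \limit_I \colimit^\un_D \sC_\unif(D,k/I) = \colimit^\un_D \sC_\unif(D,k)$ --- the same limit/colimit interchange you perform, run in the opposite direction, and resting on the same identification $\sC_\unif(D,k)/I\,\sC_\unif(D,k) = \sC_\unif(D,k/I)$ from \eqref{canlim11} that you rightly single out as the delicate step.

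Your route instead takes \eqref{unifcont} and \eqref{unifcont1} as the starting point: part 2 becomes a citation, and part 1 follows by your Remark~\ref{limcan}/\eqref{indu1}/\eqref{canlim11}/Proposition~\ref{maptodiscrete} computation, which is correct as written. The caveat is that \eqref{unifcont} is asserted in the paper only with the words ``By \eqref{homunif}'', and \eqref{homunif} (Proposition~\ref{maptodiscrete}) concerns maps to \emph{discrete} uniform spaces only; unpacking that terse justification for non-discrete $k$ requires precisely the mod-$I$ bootstrap that constitutes part 1. In other words, the corollary you were asked to prove is in substance the paper's justification of \eqref{unifcont}, which is why the authors re-derive it rather than cite it. Your proposal is therefore correct relative to the paper as written, but it risks circularity if \eqref{unifcont} is read as a forward reference to this very corollary. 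The repair is cheap and is already implicit in your own argument: establish the discrete case of \eqref{unifcont} from Proposition~\ref{maptodiscrete} (for discrete $k$ both sides are discrete and coincide as sets), prove part 1 directly from the definition of uniform continuity as above, and then your interchange computation becomes a genuine proof of part 2 rather than a consequence of it.
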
 
\begin{proof}
$\mathit 1.$ This follows from the definition. Namely, $f:X \to k$ is uniformly continuous for the uniformity $\Theta_\sJ$  if and only if for any $I \in \cP(k)$ 
there exists a discrete quotient $D_I \in \sJ$ of 
$X$ such that $f_I := f \mod I : X \to k/I$ factors through the projection $\pi_{D_I}:X \to D_I$.  Conversely, given a coherent system $(f_I)_{I \in \cP(k)}$ of 
functions $f_I  : X \to k/I$ such that $f_I$ factors through  $\pi_{D_I}:X \to D_I$ and that, if $I \subset J$, $f_J = f_I \mod J$, 
 the sequence  $(f_I)_{I \in \cP(k)}$ converges uniformly on $X$ to  a uniformly continuous function $f \in \sC_\unif((X,\Theta_\sJ),k)$. 
It is clear that the topology of uniform convergence on $X$ on $\sC_\unif((X,\Theta_\sJ),k)$ is precisely the weak topology of the projections 
$\sC_\unif(X,k) \to \sC_\unif(X,k/I)$, $f \mapsto f \mod I$, to the discrete $k/I$-module $\sC_\unif(X,k/I)$, for $I \in \cP(k)$. 
\par
$\mathit 2.$ We consider the first equality. 
Notice that, if   $k$ is discrete, 
 $\sC_\unif (X,k)$ consists of the  $k$-module of maps $f : X \to k$ 
 which are constant on the fibers of some projection $\pi_D : X \to D$, for $D \in \sJ$, 
 equipped with the discrete topology. So, in that case
 $$\sC_\unif ((X,\Theta_\sJ),k) 
 = \colimit^\un_{D \in \sJ} \sC_\unif (D,k)
 $$ 
 and the result holds true. Then
$$
\sC_\unif((X,\Theta_\sJ),k)  =   \limit_{I \in \cP(k)} \sC_\unif((X,\Theta_\sJ),k/I) =
$$
$$
\limit_{I \in \cP(k)}  \colimit^\un_{D \in \sJ} \sC_\unif (D,k/I) =
$$
$$
 \colimit^\un_{D \in \sJ} \sC_\unif (D,k) \;.
$$
For the second equality, we just replace $\sC_\unif (D,k)$ with the central term of \eqref{acsalc3dis}  
\par $\mathit 3.$ Has been proven already. Injectivity follows from the definition \eqref{uniftocont} and density follows from   the last assertion of Corollary~\ref{acscor}. 
\end{proof}
\begin{cor} \label{unifcasedec}  Let  $(X,\Theta_\sJ)$ be  as in Corollary~\ref{unifcase}  and let   $\{X_\alpha\}_{\alpha \in A}$  be a partition of $X$ associated to a projection $\pi_A :X \to A$ in $\sJ$.   Then
$$
\sC_\unif((X,\Theta_\sJ),k)  = {\prod}_{\alpha \in A}^\can  \sC(X_\alpha,k)  
$$
\end{cor}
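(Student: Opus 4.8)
The plan is to reduce the identity to a family of statements with discrete coefficients and then to exploit the decomposition of every radius $D \ge A$ as a disjoint union indexed by $A$. Both sides lie in $\cCLMcan_k$: the left-hand side by $\mathit 1$ of Corollary~\ref{unifcase}, the right-hand side by Corollary~\ref{canlim}. Since a $k$-canonical module $M$ is recovered as $\limit_{I \in \cP(k)}(M/IM)^\dis$, and since, by \eqref{canlim11}, the reduction of ${\prod}^{\can}_{\alpha \in A}\sC(X_\alpha,k)$ modulo $I$ is the full product $\prod_{\alpha \in A}\sC(X_\alpha,k/I)$, it suffices to produce, functorially in $I \in \cP(k)$, a natural isomorphism
$$\sC_\unif((X,\Theta_\sJ),k/I) \iso \prod_{\alpha \in A}\sC(X_\alpha,k/I),$$
where the left-hand term is the discrete $k/I$-module described in $\mathit 2$ of Corollary~\ref{unifcase}. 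This places the whole problem in $\Mod_{k/I}$.

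First I would replace $\sJ$ by the cofinal subsystem $\sJ' = \{D \in \sJ : D \ge A\}$, which is cofinal because $\sJ$ is cofiltered and $\pi_A \in \sJ$. For $D \in \sJ'$ the factorization $\pi_{A,D}:D \to A$ yields $D = \coprod_{\alpha \in A}D_\alpha$ with $D_\alpha = \pi_{A,D}^{-1}(\alpha)$; as $X_\alpha = \pi_A^{-1}(\alpha)$ is compact and $\pi_D$ is proper, each $D_\alpha$ is finite. Hence $\sC_\unif(D,k/I) = (k/I)^D = \prod_{\alpha \in A}(k/I)^{D_\alpha} = \prod_{\alpha \in A}\sC(D_\alpha,k/I)$, the last equality because each $D_\alpha$ is finite discrete. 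Restriction to the pieces then provides the natural map
$$\sC_\unif((X,\Theta_\sJ),k/I) = \colimit_{D \in \sJ'}\prod_{\alpha \in A}\sC(D_\alpha,k/I) \longrightarrow \prod_{\alpha \in A}\sC(X_\alpha,k/I),$$
which is injective; here I use that for the Stone space $X_\alpha$ one has $\sC(X_\alpha,k/I) = \Loc(X_\alpha,k/I) = \colimit_{F \in \sQ(X_\alpha)}\sC(F,k/I)$ by Lemma~\ref{contcompl}.

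The main obstacle is surjectivity of this last map, that is, the interchange of the filtered colimit over $\sJ'$ with the product over $A$. Given a family $(f_\alpha)_{\alpha}$ with each $f_\alpha$ factoring through a finite quotient $F_\alpha \in \sQ(X_\alpha)$, the glued function on $X$ factors through the discrete proper quotient $\coprod_{\alpha \in A}F_\alpha$; what has to be verified is that this quotient is refined by some $D \in \sJ'$, so that $(f_\alpha)$ is the restriction of a single uniformly continuous function. Equivalently, one must show that the tuples $(D_\alpha)_{\alpha \in A}$ coming from $\sJ'$ are cofinal in $\prod_{\alpha \in A}\sQ(X_\alpha)$. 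This is the crux: it is exactly where the compactness of each $X_\alpha$ (forcing only finite quotients to intervene) combines with the admissibility, for the uniformity under consideration, of an arbitrary disjoint assembly $\coprod_\alpha F_\alpha$ as a radius of $(X,\Theta_\sJ)$. Once this cofinality is established the colimit and the product commute, the displayed map is bijective, and reassembling over $I \in \cP(k)$ promotes the bijection to an isomorphism of $k$-canonical modules, the canonical topology on ${\prod}^{\can}_{\alpha \in A}\sC(X_\alpha,k)$ being reproduced term by term through \eqref{canlim11}.
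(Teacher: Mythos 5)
Your strategy is the paper's own: the paper's proof of Corollary~\ref{unifcasedec} is exactly the chain
$$
\sC_\unif((X,\Theta_\sJ),k) = \limit_{I\in\cP(k)} \sC_\unif((X,\Theta_\sJ),k/I) = \limit_{I\in\cP(k)} \bigl({\prod}_{\alpha\in A}\sC(X_\alpha,k/I)\bigr)^\dis = {\prod}_{\alpha\in A}^\can \sC(X_\alpha,k)\;,
$$
and your preparatory reductions (both sides lie in $\cCLMcan_k$, reduction mod $I$ via \eqref{canlim11}, passage to the cofinal subsystem $\sJ'=\{D\in\sJ\,:\,D\geq A\}$, finiteness of the blocks $D_\alpha$, injectivity of the comparison map) are correct and match it step for step. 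You have, moreover, correctly isolated the one step carrying all the content: the surjectivity of
$$
\sC_\unif((X,\Theta_\sJ),k/I)=\colimit_{D\in\sJ'}{\prod}_{\alpha\in A}\sC(D_\alpha,k/I)\longrightarrow {\prod}_{\alpha\in A}\sC(X_\alpha,k/I)\;.
$$

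But that step is where your proof stops being a proof. You reformulate surjectivity as cofinality of the assembled tuples $\{(D_\alpha)_\alpha\,:\,D\in\sJ'\}$ in $\prod_{\alpha}\sQ(X_\alpha)$, and then appeal to ``the admissibility, for the uniformity under consideration, of an arbitrary disjoint assembly $\coprod_\alpha F_\alpha$ as a radius of $(X,\Theta_\sJ)$''. That admissibility is not a hypothesis: $\coprod_\alpha F_\alpha$ is an element of $\sQ(X)$, but nothing forces it to lie in $\sJ$, or even to be refined by an element of $\sJ$; invoking it is circular, since it is equivalent to what has to be proved. Worse, the needed cofinality is false for general $\sJ$, so no argument can close the gap in the stated generality. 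Take $(\Q_p,\Theta_\sJ)$ as in Remark~\ref{noncofinal}, with $\sJ=(\pi_h)_{h\in\N}$ and $A=\Q_p/\Z_p$, so $X_a=a+\Z_p$; pick representatives $a$ and integers $h_a$ with $\sup_a h_a=\infty$, and set $f_a=\chi_{a+p^{h_a}\Z_p}\in\sC(a+\Z_p,\F_p)$. The family $(f_a)_a$ lies in $\prod_a\sC(X_a,\F_p)$, but the glued function on $\Q_p$ is constant on the fibers of no single $\pi_h$, hence by Proposition~\ref{maptodiscrete} it is not uniformly continuous: the comparison map is injective but not surjective, and correspondingly $\sC_\unif((\Q_p,\Theta_\sJ),\Z_p)$ is identified with a \emph{proper} submodule of ${\prod}_a^\can\sC(a+\Z_p,\Z_p)$. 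So the statement requires an additional hypothesis on $\sJ$ --- closure under refinements performed blockwise over $A$ --- which does hold, for instance, for the universal uniformity $\sJ=\sQ(X)$, where the corollary follows from Remark~\ref{unifmaxhom} together with Corollary~\ref{decomp}. Be aware that the paper's own one-line proof asserts the middle equality with no justification, so your write-up at least makes the hidden cost visible; but as it stands the crucial step is both unproven and, without strengthening the hypotheses, unprovable.
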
 
\begin{proof}
We have 
$$
\sC_\unif(X,k)  = \limit_{I \in \cP(k)} \sC_\unif(X,k/I)  = \limit_{I \in \cP(k)} ({\prod}_{\alpha \in A} \sC(X_\alpha,k/I))^\dis = 
 {\prod}_{\alpha \in A}^\can \sC(X_\alpha,k) \;.
$$
\end{proof}
\end{subsection}
\begin{subsection}{Tensor products of rings of continuous functions} \label{tenscontfcts}

\begin{prop} \label{contvsunif} Notation as in Definition~\ref{unifcontdef}.
For any pair of $td$-spaces $X,Y$,  we have a morphism in  the category $\cLMu_k$, 
\beq \label{prodfcts0} \begin{split}
\cL^{(X,Y)}: \sC(X,k) \otimes_k \sC(Y,k) &\longrightarrow \sC(X \times Y,k) \\ f \otimes_k g  &\longmapsto \left( (x,y) \mapsto f(x) g(y) \right) \;.
\end{split}\eeq
\ben
\item For any pair of Stone spaces $C,C'$, the map \eqref{prodfcts0} for $X=C$ and $Y =C'$, 
extends to an  isomorphism
\beq \label{prodfcts1}
\cL^{(C,C')}: \sC(C,k) \wt_k^\un \sC(C',k) \iso \sC(C \times C',k)  
\eeq  
of ring-objects of $\cCLMcan_k$.
\item For any pair of $td$-spaces $X,Y$,  the isomorphisms $\cL^{(C,C')}$ 
of \eqref{prodfcts1} 
for compact opens $C$ of $X$  and $C'$ of $Y$, respectively, 
determine an isomorphism
\beq \label{prodfctsacs}
 \cL^{(X,Y)}_\acs: \sC_\acs(X,k) \wt_k^\un \sC_\acs(Y,k) \iso \sC_\acs(X \times Y,k)  
\eeq 
of (non-unital) ring-objects of $\cCLMcan_k$.
\item For any pair of $td$-spaces $X,Y$, the map \eqref{prodfcts0}
extends to an isomorphism of ring-objects of  $\cCLMu_k$
\beq \label{prodfctsmor}
\cL^{(X,Y)}: \sC(X,k) \wt_k^\un \sC(Y,k) \iso \sC(X \times Y,k) \;.
\eeq   
\item For any pair $D,E$ of discrete spaces, equipped with their discrete uniformity, the map \eqref{prodfcts0} 
induces a natural injective morphism in $\cCLMcan_k$  
\beq  \label{unimeasdisc2} \cL^{(D,E)}_\unif: \cC_\unif(D,k) \wt^\un_k \cC_\unif(E,k) \map{}  \sC_\unif(D \times E,k) \;.
\eeq
\item For any pair of uniform $td$-spaces $(X,\Theta_\sJ)$, $(Y, \Theta_\sH)$, let $\sJ \times \sH$ be the directed set of 
$$\pi_{D \times E} = \pi_D \times \pi_E : X \times Y \to D \times E \;,
$$
for $\pi_D \in \sJ$ and $\pi_E \in \sH$. Then 
the  natural  injective morphisms \eqref{unimeasdisc2}  
induce an injective morphism  
\beq \label{prodfcts11}
 \cL_\unif^{(X,Y)}: \sC_\unif((X,\Theta_\sJ),k) \wt_k^\un \sC_\unif((Y, \Theta_\sH),k)  \longrightarrow 
\sC_\unif((X \times Y, \Theta_{\sJ \times \sH}),k) 
\eeq 
of ring-objects of $\cCLMcan_k$.
\een
\end{prop}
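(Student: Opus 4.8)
The plan is to treat the five statements in the order given, propagating the Stone-space case to non-compact spaces through the colimit/limit descriptions of Section~\ref{STScontinuity} together with the exactness of $\wt^\un_k$ recorded in Proposition~\ref{tensind} and Lemma~\ref{tensfunct}. In every case the map $\cL$ (or $\cL_\acs$, $\cL_\unif$) is multiplicative by construction, so once the underlying morphism of $\cCLMu_k$ is identified it is automatically a morphism of ring objects; thus the whole issue is to control the topological $k$-module underlying the completed tensor product.

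For $\mathit 1$ I would start from Lemma~\ref{contcompl}, writing $\sC(C,k)=\colimit^\un_{F\in\sQ(C)}k^{(F,\un)}$ with $F$ ranging over the \emph{finite} discrete quotients of the Stone space $C$. Since $\wt^\un_k$ commutes with colimits in $\cCLMcan_k$ (Proposition~\ref{tensind}), $\sC(C,k)\wt^\un_k\sC(C',k)=\colimit^\un_{F,F'}(k^{(F,\un)}\wt^\un_k k^{(F',\un)})$, and for finite free modules $k^{(F,\un)}\wt^\un_k k^{(F',\un)}=k^{(F\times F',\un)}=\sC(F\times F',k)$. The only geometric input needed is that the products $F\times F'$ are cofinal in $\sQ(C\times C')$: any continuous map from the profinite set $C\times C'=\limit_{F,F'}(F\times F')$ to a finite discrete set factors through some $F\times F'$. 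Invoking \eqref{acsalc2} then gives $\colimit^\un_{F,F'}\sC(F\times F',k)=\sC(C\times C',k)$, which is $\mathit 1$. Statement $\mathit 2$ is the same argument one level up: by Corollary~\ref{contcase}, $\sC_\acs(X,k)=\colimit^\un_{C\in\cK(X)}\sC(C,k)$, so Proposition~\ref{tensind} and $\mathit 1$ reduce the claim to the cofinality of $\{C\times C'\}$ in $\cK(X\times Y)$; this holds because for any compact open $K\subset X\times Y$ the compact images $\pi_X(K),\pi_Y(K)$ lie in compact opens $C,C'$ (Remark~\ref{filtstone}), whence $K\subset C\times C'$.

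For $\mathit 3$ I would use the sum decomposition. Writing $X=\coprod_\alpha X_\alpha$, $Y=\coprod_\beta Y_\beta$ as sums of Stone spaces and using Corollary~\ref{decomp}, the target becomes $\sC(X\times Y,k)=\prod_{\alpha,\beta}\sC(X_\alpha\times Y_\beta,k)=\prod_{\alpha,\beta}\big(\sC(X_\alpha,k)\wt^\un_k\sC(Y_\beta,k)\big)$ by $\mathit 1$, while the source is $\big(\prod_\alpha\sC(X_\alpha,k)\big)\wt^\un_k\big(\prod_\beta\sC(Y_\beta,k)\big)$. So the content of $\mathit 3$ is that $\wt^\un_k$ commutes with these (infinite, non-canonical) products, and this is the step I expect to be the real obstacle, since $\wt^\un_k$ does not commute with products in general. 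The route I would take is reduction modulo $I\in\cP(k)$: by \eqref{discrlim}, $\sC(X,k)=\limit_I\sC(X,k/I)$, and over the discrete ring $k/I$ each $\sC(X_\alpha,k/I)$ is a discrete module, so a direct computation from the definition of $\wt^\un_{k/I}$ as the completion of $\otimes^\un_{k/I}$ shows $\big(\prod_\alpha P_\alpha\big)\wt^\un_{k/I}\big(\prod_\beta Q_\beta\big)=\prod_{\alpha,\beta}(P_\alpha\otimes_{k/I}Q_\beta)$ for discrete $P_\alpha,Q_\beta$ (the completion reassembles the full product because the rectangles $F\times G$ exhaust the product index set). One then has to check that $\wt^\un_k$ is compatible with the reduction $\limit_I$, i.e. that $\sC(X,k)\wt^\un_k\sC(Y,k)=\limit_I\big(\sC(X,k/I)\wt^\un_{k/I}\sC(Y,k/I)\big)$; this is the delicate point, and it is where the axioms \eqref{kaxiom} on $k$ enter. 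Equivalently one may phrase $\mathit 3$ through Lemma~\ref{tensfunct}, obtaining the comparison morphism $\sC(X,k)\wt^\un_k\sC(Y,k)\to\limit_{C,C'}\sC(C\times C',k)=\sC(X\times Y,k)$ from $\mathit 1$ and the cofinality of $\{C\times C'\}$, and then proving it is an isomorphism by the same mod-$I$ reduction.

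Finally, $\mathit 4$ and $\mathit 5$ concern the uniform spaces. By \eqref{unifcont1} the modules $\sC_\unif(D,k)={\prod}^\can_{x\in D}k\chi_x^{(D)}$ are $k$-canonical, so Remark~\ref{tensnaive} identifies $\sC_\unif(D,k)\wt^\un_k\sC_\unif(E,k)$ with the algebraic tensor product $k^D\otimes_k k^E$ carrying the naive canonical topology; the map \eqref{unimeasdisc2} is then the natural $k^D\otimes_k k^E\to k^{D\times E}$, whose injectivity I would verify directly, reducing modulo $I$ to the corresponding statement over $k/I$. (Conceptually the same morphism is produced by Proposition~\ref{tensdual} once one identifies $\sC_\unif(D,k)=(\sC_\acs(D,k))'_\strong$ via Corollary~\ref{biduality} together with $\prod^{\square,\un}=\prod^\can$ of Corollary~\ref{canlim}.) For $\mathit 5$ I would use Corollary~\ref{unifcase}, $\sC_\unif((X,\Theta_\sJ),k)=\colimit^\un_{D\in\sJ}\sC_\unif(D,k)$, commute $\wt^\un_k$ past the colimit by Proposition~\ref{tensind}, apply $\mathit 4$ termwise, and observe that $\{D\times E\}_{(D,E)\in\sJ\times\sH}$ is by definition cofinal in the directed set $\sJ\times\sH$ indexing $\sC_\unif((X\times Y,\Theta_{\sJ\times\sH}),k)$; injectivity survives because it is preserved by the filtered $\colimit^\un$.
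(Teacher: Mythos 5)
Your parts $\mathit 1$ and $\mathit 2$ are essentially the paper's own proof: reduction to finite discrete quotients via Lemma~\ref{contcompl} and Proposition~\ref{tensind}, then passage to $\colimit^\un$ over compact opens; the two cofinality checks you spell out (of $\{F\times F'\}$ in $\sQ(C\times C')$ and of $\{C\times C'\}$ in $\cK(X\times Y)$) are used silently in the paper. For part $\mathit 3$ you take a genuinely different route. The paper builds the comparison morphism from Lemma~\ref{tensfunct} and part $\mathit 1$ (as you also note), but then proves it is an isomorphism by a density argument: $\cL^{(X,Y)}$ is the completion of the isomorphism \eqref{prodfctsacs} between $\sC_\acs(X,k)\wt^\un_k\sC_\acs(Y,k)$ and $\sC_\acs(X\times Y,k)$ for the topologies induced from the ambient modules. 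Your alternative --- decompose into Stone summands, compute $\bigl(\prod_\alpha P_\alpha\bigr)\wt^\un_{k/I}\bigl(\prod_\beta Q_\beta\bigr)=\prod_{\alpha,\beta}(P_\alpha\otimes_{k/I}Q_\beta)$ for discrete modules over the discrete ring $k/I$, and splice along $\limit_{I\in\cP(k)}$ --- is sound, and the step you flag as delicate is in fact formal: the open submodules of $\sC(X,k)$ are the $U(C,I)$ of \eqref{complim0}, with quotients $\sC(C,k/I)$, and the index pairs with a common ideal $I$ on both factors are cofinal, so the double limit rearranges as you need. Your version is longer but keeps the topology completely explicit; the paper's is shorter but leaves implicit the verification that its isomorphism of dense subobjects is compatible with the induced topologies.

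The one genuine gap is the last line of your part $\mathit 5$: ``injectivity survives because it is preserved by the filtered $\colimit^\un$''. That is not a legitimate step: $\colimit^\un$ is the \emph{completion} of the algebraic colimit for a suitable topology, and completing an injective morphism can create a kernel when the topology of the source is strictly finer than the subspace topology induced by the target (compare Remark~\ref{compunif2}); nothing in the setup rules this out here. The paper's proof is organized precisely to avoid this: it first reduces modulo $I\in\cP(k)$, where by Corollary~\ref{unifcase} the relevant $\colimit^\un_{D\in\sJ}$ agrees set-theoretically with the algebraic filtered colimit in $\Mod_{k/I}$, which does preserve injectivity, and only then applies the left-exact functor $\limit_{I\in\cP(k)}$ to the resulting system of injections. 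Since this is exactly the mod-$I$ pattern you yourself use in parts $\mathit 3$ and $\mathit 4$, the repair is immediate; but as written the step is not a proof.

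A further caveat, which concerns the paper as much as you: in part $\mathit 4$ the actual content, injectivity of $(k/I)^D\otimes_{k/I}(k/I)^E\to(k/I)^{D\times E}$, is proven neither by you (you say you would ``verify it directly'') nor by the paper (which declares it ``clear''). It is not a formal statement: for a general commutative ring $R$ the map $R^D\otimes_R R^E\to R^{D\times E}$ can fail to be injective. For instance, take $R=\F_p[x_1,x_2,\dots]/(x_ix_j)_{i,j\geq 1}$ and $f=(x_1,x_2,\dots)\in R^\N$; then $f\otimes f$ maps to the zero function, yet $f\otimes f\neq 0$, as one sees by applying $-\otimes_R R/\fm$ with $\fm=(x_1,x_2,\dots)$, since $f\notin\fm R^\N$. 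This discrete $R$ satisfies the axioms \eqref{kaxiom}, so the statement in the generality of the Proposition genuinely needs a hypothesis or a different argument. Injectivity does hold when $k/I$ is Noetherian (in particular for the applications $k=\Z_p,\F_p$): reduce to the finitely generated submodule $G\subset R^E$ spanned by the $g_i$, use flatness of $R^D$ and finite generation of the syzygies of $G$. So your reduction modulo $I$ is the right first move, but the statement you reduce to still requires proof.
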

\begin{proof} 
 \hfill \break \indent
\par
$\mathit 1.$ 
Let $C$ and $C'$ be Stone spaces. Then, $\sC(C,k)$ (resp. $\sC(C',k)$, resp. $\sC(C \times C',k)$) is a colimit in $\cCLMcan_k$ of  free $k$-modules of finite type endowed with the naive canonical topology 
of the form $\sC(F,k)$ (resp. $\sC(G,k)$, resp. $\sC(F \times G,k)$), where $F$ (resp. $G$) is a finite quotient of $C$ (resp. $C'$).  Since, by Proposition~\ref{tensind}, $\wt^\un_k$ commutes with colimits in $\cCLMcan_k$, we are reduced to the case $C=F$, $C'=G$ finite discrete spaces. In this case the map 
$$
\cL: \sC(F,k) \otimes_k \sC(G,k) \longrightarrow \sC(F \times G,k)
$$
of \eqref{prodfcts0} 
 is obviously an isomorphism of ring-objects of $\cF_k$, so we are done. 
\par $\mathit 2.$ Follows from $\mathit 1$ taking $\colimit^\un_{C,C'}$, where $C,C'$ are compact opens in $X,Y$, respectively,  and using Proposition~\ref{tensind}. 
\par $\mathit 3.$ We obtain the morphism $\cL^{(X,Y)}$ of \eqref{prodfctsmor} as follows. 
We have a natural morphism 
\beqa \begin{split}
\sC(X,k) & \wt_k^\un \sC(Y,k) =  \limit_{C \in \cK(X)}   \sC(C,k)  \wt_k^\un  \limit_{C' \in \cK(Y)}   \sC(C',k)  \map{\eqref{tensfunct2}} \\
&\limit_{C \in \cK(X), C' \in \cK(Y)}   \sC(C,k)  \wt_k^\un    \sC(C',k) \;.
\end{split}
\eeqa   
Since   $$\sC(C,k)  \wt_k^\un    \sC(C',k) = \sC(C \times C',k)$$ 
by $\mathit 1$,  the previous formula continues into 
$$= \limit_{C \times C' \in \cK(X \times Y)}   \sC(C \times C',k) = \sC(X \times Y,k) \;,$$
and the composition is $\cL^{(X,Y)}$ of \eqref{prodfctsmor}.  
 The morphism  $\cL^{(X,Y)}$ is an isomorphism because it is the completion of the isomorphism \eqref{prodfctsacs} in the topology of $ \sC_\acs(X,k) \wt_k^\un \sC_\acs(Y,k)$ (resp. of $\sC_\acs(X \times Y,k)$)
as a subspace of $\sC(X,k)\wt_k^\un \sC(Y,k)$ (resp. $\sC(X \times Y,k)$).
 \par $\mathit 4.$ From \eqref{unifdiscr2} we see that  
 for $D$ discrete  $\sC_\unif(D,k)$ is simply $k^D$ equipped with the naive canonical 
topology; so the existence of the morphism \eqref{unimeasdisc2} and the fact that it is injective is clear.  
 \par $\mathit 5.$ 
We obtain the morphism in \eqref{prodfcts11} by taking colimits of the filtered inductive system of injective morphisms $\{\cL_\unif^{(D,E)}\}_{\pi_D \in \sJ,\pi_E \in \sH}$ \eqref{unimeasdisc2}  in $\cCLMcan_k$,  and using Proposition~\ref{tensind}.   
To show that the map $\cL_\unif^{(X,Y)}$ of \eqref{prodfcts11} is injective we use Corollary~\ref{unifcase} by which
$$
\sC_\unif((X,\Theta_\sJ),k)  = \limit_{I \in \cP(k)} \sC_\unif((X,\Theta_\sJ),k/I)  =  
\limit_{I \in \cP(k)}  \colimit^\un_{D \in \sJ} \sC_\unif (D,k/I) \;,
$$ 
where $\colimit_{D \in \sJ}^\un$ equals set-theoretically  $\colimit_{D \in \sJ}$ in $\Mod_{k/I}$.
By \eqref{unimeasdisc2}, for any $I \in \cP(k)$, we get injective morphisms in $\Mod_{k/I}$
$$\cL^{(D,E)}_\unif: \cC_\unif(D,k/I) \otimes_{k/I}  \cC_\unif(E,k/I) \map{}  \sC_\unif(D \times E,k/I) \;.
$$
Since $\sJ$ is filtered, the functor $\colimit_{D \in \sJ}$ preserves injectivity in $\Mod_{k/I}$, so we get an injective map 
$$\sC_\unif((X,\Theta_\sJ),k/I) \otimes_{k/I} \sC_\unif((Y, \Theta_\sH),k/I)  \longrightarrow 
\sC_\unif((X \times Y, \Theta_{\sJ \times \sH}),k/I) \;.$$
We now apply $\limit_{I \in \cP(k)}$ to the previous inductive system of morphisms and use Proposition~\ref{tensind} to get the result. 

 \end{proof} 
 \begin{rmk} \label{prodfcts0unif1} Notice that $\cL_\unif^{(X,Y)}$ in \eqref{prodfcts11} is 
not an isomorphism in general.  
This fact is already visible in case of \eqref{unimeasdisc2} when $k$ is discrete and $D$, $E$ are discrete but  infinite. 
In fact in the latter case, $\sC_\unif(D,k) = (k^D)^\dis$, $\sC_\unif(E,k) = (k^E)^\dis$,  and $\sC_\unif(D \times E,k) = (k^{D \times E})^\dis$ are discrete so that 
$\sC_\unif(D,k) \wt_k^\un \sC_\unif(E,k) = \sC_\unif(D,k) \otimes_k \sC_\unif(E,k)$. But if, say, 
 $D = E$, the characteristic  function of the diagonal does not belong to the image of $\cL^{(D,E)}_\unif$. Of course, if $k^D$, $k^E$ are endowed with their natural (product) topology, then the map
 $$
 k^D \wt_k^\un k^E \map{} k^{D \times E} \;\;,\;\; (a_\alpha)_\alpha \times (b_\beta)_\beta \longmapsto (a_\alpha b_\beta)_{\alpha,\beta}
 $$
 is an isomorphism and we are not contradicting  \eqref{prodfctsmor}. 
\end{rmk}
 \begin{rmk} \label{prodfctsstr} Let $X$ (resp. $(X,\Theta_\sJ)$)
be  a   $td$ space (resp. a uniform $td$ space). 
 It follows from  
 Proposition~\ref{contvsunif} that for $? =\emptyset, \acs$ (resp. for $? =\emptyset, \acs, \unif$), the diagonal map $\Delta_X : X \longrightarrow X \times X$ induces an associative and commutative product $(f,g) \longmapsto \mu_?(f,g)$, composite of the maps
$$
  \cC_?(X,k) \times \cC_?(X,k) \map{\wt^\un_k} \cC_?(X,k) \wt^\un_k \cC_?(X,k) \map{\cL^{(X,X)}_?}  \sC_?(X \times X,k) \map{\Delta_X^\ast} \sC_?(X,k)   \;,
$$
with the required properties (in particular uniform continuity in $(f,g)$ for the product uniformity). 
Then   $\sC(X,k)$ (resp.  $\sC_\unif(X,\Theta_\sJ)$) 
is a  commutative unital $k$-algebra object  of the category $\cCLMu_k$ (resp. and of the category $\cCLMcan_k$), with identity the constant  function $1$ on $X$.  If $X$ is a metrizable $td$-space, then $\sC(X,k)$  is a $k$-algebra object  of $\cCLMou_k$.
The product of  $\sC_\acs(X,k) \in \cCLMcan_k$ is also associative and commutative, but 
fails to define a unital $k$-algebra object of $\cCLMcan_k$ because the constant function $1$  is not in $\sC_\acs(X,k)$ unless $X$ is compact. 
 We recall that 
$\cL^{(X,X)}$ and $\cL^{(X,X)}_\acs$ are  isomorphisms, while $\cL^{(X,X)}_\unif$ is only an injective morphism.
 \end{rmk} 
\end{subsection}
 \end{section}
\begin{section}{Measures with values in $k$ on $td$-spaces  via duality} \label{measdual}  
We will now construct functors on the categories $td-\cS$ and  $td-\cU$ with values in the category $\cCLMu_k$, dual, as much as possible,  to the functors constructed in section~\ref{STScontinuity}. 
Let $X$ be a $td$-space (resp. $X=(X,\Theta_\sJ)$ be a $td$-uniform space, 
for a cofiltered sub-projective system $\sJ$ of $\sQ(X)$).
For the $td$-space $X$ we have defined 2 types of rings of continuous $k$-valued functions on $X$, namely  $\sC_\acs(X,k) \subset  \sC(X,k)$. For the $td$-uniform space $X=(X,\Theta_\sJ)$ we defined a further $k$-algebra $\sC_\unif((X,\Theta_\sJ),k)$ of uniformly continuous  functions situated between $\sC_\acs(X,k)$ and $\sC(X,k)$.  They are related by  sequence~\ref{canacs3}. 
\begin{subsection}{Case of $X = C$ a Stone space}
We recall 
that in this case 
 all
morphisms in \eqref{canacs3} are isomorphisms and that 
\eqref{acsalc2} holds. 
We then introduce both the strong dual 
\beq
\label{compstrong} \begin{split}
 \sD^\circ(C,k) &:= \sC(C,k)'_\strong =\limit^{\square,\un}_{F \in \sQ(C)} k^{(F,\un)} \\ &= \limit^\can_{F \in \sQ(C)} k^{(F,\un)}  \in 
 \cCLMcan_k
\end{split}
\eeq
and the weak dual
\beq
\label{compweak} 
 \sD(C,k) := \sC(C,k)'_\weak = \limit_{F \in \sQ(C)} k^{(F,\un)} \in \limit\, \cF_k
\eeq
of  $\sC(C,k)$. Notice that, if $C$ is metrizable, then $\sD(C,k)$ is a sequential limit of objects of $\cF_k$, hence is an object of $\cCLMou_k$.  
\par \medskip The natural morphism 
$$
 \sD^\circ(C,k) \longrightarrow  \sD(C,k)
$$
is bijective by \eqref{sqlimit1}, so that we may regard the underlying $k$-module of \eqref{compstrong} and of 
\eqref{compweak} as the same $k$-module $\sD(C,k)^\for = \hom_{\cCLMu_k}(\sC(C,k),k)$ of \emph{measures on} (the Stone space) $C$ equipped with the \emph{strong} (resp. \emph{weak}) topology. By Lemma~\ref{denslc} locally constant functions $C \longrightarrow k$ are dense in $\sC(C,k)$, so a $k$-valued measure on $C$ may be viewed as a  map  
\beq \label{measStone}
\mu: \Sigma(C) = \cK(C)  \longrightarrow k
\eeq
such that if $C_1, \dots, C_n$ are a finite family of disjoint clopen subsets of $C$, 
\beq \label{measStone1} \mu(\bigcup_{i=1}^nC_i) = \sum_{i=1}^n \mu(C_i)\;.
\eeq
(We are using the fact that, for $M,N \in \Mod_k$, any $k$-linear map $M \to N$ is continuous for the naive canonical topology.)
\begin{rmk} \label{basiscomp} Obviously, a measure $\mu$ on the Stone space $C$ is determined by the restriction of 
\eqref{measStone} to any basis of the topology of $C$ contained in $\Sigma(C)$. 
\end{rmk}
A fundamental system of open $k$-submodules for  $\sD^\circ(C,k)$  is given by the family 
$\{V_I\}_{I \in \cP(k)}$ where
\beq \label{unifnhd}
V_I = \{ \mu \in  \sD(C,k)^\for \,| \, \mu(U) \in I\;,\;\forall \; U \in \Sigma(C)\;\}\;.
\eeq
For $\pi_F:C  \longrightarrow F$ in $\sQ(C)$, we identify $F$ with the corresponding clopen partition $\sP_F = \{\pi_F^{-1}(x)\}_{x\in F}$ of $C$. Then,  a fundamental system of open $k$-submodules for   $\sD(C,k)$ is given by the family of $k$-submodules  
\beq \label{simplenhd}
V_{F,I} :=\{ \mu \in  \sD(C,k)^\for \,|\, \mu(U) \in I \;,\;\forall \; U \in \sP_F\;\} 
\eeq
indexed by $F \in \sQ(C)$ and $I \in \cP(k)$.
Notice  that the natural maps  
\beq \label{compinj} 
 \bigoplus_{x \in C} k \delta_x \longrightarrow \sD^\circ(C,k)   \map{(1:1)} \sD(C,k) \;,
\eeq
where $\delta_x$ is interpreted as the Dirac mass at $x$, are obviously injective. It is also clear that the 
composite map has dense set-theoretic image. 
 We 
may then explicitly describe $\sD^\circ(C,k)$ and $\sD(C,k)$. 
\begin{prop}  The natural map  
$$ \bigoplus_{x \in C} k \delta_x \longrightarrow \sD^\circ(C,k) \;,
$$
is injective and 
$$
V_I \bigcap \bigoplus_{x \in C} k \delta_x = I \bigoplus_{x \in C} k \delta_x = \bigoplus_{x \in C} I \delta_x 
$$
so that 
\beq \sD^\circ(C,k) = {\bigoplus}_{x \in C}^\un k\delta_x \;\;\mbox{and}\;\; V_I =  I {\bigoplus}_{x \in C}^\un k \delta_x ={\bigoplus}_{x \in C}^\un I \delta_x \;.
\eeq 
\end{prop}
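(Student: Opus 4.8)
The plan is to realize each $\delta_x$ as the evaluation functional $f \mapsto f(x)$ on $\sC(C,k)$, equivalently as the finitely additive set function $U \mapsto \chi_U(x)$ on $\Sigma(C) = \cK(C)$, and then to exploit systematically the fact that in a Stone space (compact, Hausdorff, $0$-dimensional) any finite set of distinct points can be separated by clopen sets. Concretely, for distinct $x_1,\dots,x_n \in C$ one can choose clopens $U_1,\dots,U_n$ with $x_i \in U_i$ and $x_j \notin U_i$ for $j \neq i$; this is the one topological input used throughout.

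First I would prove injectivity of $\bigoplus_{x\in C} k\delta_x \to \sD^\circ(C,k)$. Given a finite combination $\mu = \sum_{i=1}^n a_i\delta_{x_i}$ with the $x_i$ distinct, finite additivity yields $\mu(U) = \sum_{x_i \in U} a_i$ for every $U \in \Sigma(C)$; evaluating at the separating clopens $U_i$ gives $\mu(U_i) = a_i$, so $\mu = 0$ forces all $a_i = 0$. The same evaluation computes the subspace topology: if $\mu \in V_I$ then $a_i = \mu(U_i) \in I$ for every $i$, so $\mu \in \bigoplus_{x\in C} I\delta_x$; conversely, if all $a_i \in I$ then $\mu(U) = \sum_{x_i \in U} a_i \in I$ for every $U$, since $I$ is an additive subgroup of $k$. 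This proves $V_I \cap \bigoplus_{x\in C} k\delta_x = \bigoplus_{x\in C} I\delta_x = I\bigoplus_{x\in C} k\delta_x$, i.e. the topology induced on $\bigoplus_{x\in C} k\delta_x$ by $\sD^\circ(C,k)$ is its naive canonical topology.

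To conclude I would invoke that, by \eqref{compinj}, the map $\bigoplus_{x\in C} k\delta_x \to \sD^\circ(C,k)$ has dense image, while $\sD^\circ(C,k) \in \cCLMcan_k$ is separated and complete. Combined with the previous paragraph, this identifies $\sD^\circ(C,k)$ with the separated completion of $\bigoplus_{x\in C} k\delta_x$ in its naive canonical topology, which is by definition $\bigoplus^\un_{x\in C} k\delta_x$. For the description of $V_I$, I would note that $V_I$ is open hence closed, and that $\bigoplus_{x\in C} I\delta_x$ is dense in it: given $\mu \in V_I$ and $J \in \cP(k)$ (which I may shrink to $J \subseteq I$ by replacing it with $J \cap I$), density of $\bigoplus_{x\in C} k\delta_x$ produces a finitely supported $\nu$ with $\mu - \nu \in V_J$, and then $\nu(U) = \mu(U) - (\mu-\nu)(U) \in I$ for all $U$, so $\nu \in \bigoplus_{x\in C} I\delta_x$; hence $V_I = \overline{\bigoplus_{x\in C} I\delta_x} = \bigoplus^\un_{x\in C} I\delta_x$.

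The one point requiring genuine care — and the main obstacle — is the remaining identity $V_I = I\bigoplus^\un_{x\in C} k\delta_x$, i.e. that multiplication by $I$ commutes with completion. I expect to handle it by reducing to a finitely generated open ideal (using the finiteness hypothesis on $k$) and then applying the closedness statement $\overline{I M_\alpha} = I M_\alpha$ for canonical $M_\alpha$ recorded in Remark~\ref{limcan}, together with the op condition; all the separation-based steps above are elementary by comparison.
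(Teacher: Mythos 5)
Your first two steps are sound: the separation of finitely many points of $C$ by disjoint clopens does give injectivity, and it gives $V_I \cap \bigoplus_{x\in C}k\delta_x = \bigoplus_{x\in C}I\delta_x = I\bigoplus_{x\in C}k\delta_x$, hence that the topology induced on the Dirac span by $\sD^\circ(C,k)$ is its naive canonical topology. The proof breaks at the next step. You invoke \eqref{compinj} for ``the map $\bigoplus_{x\in C}k\delta_x \to \sD^\circ(C,k)$ has dense image'', but \eqref{compinj} only asserts density of the \emph{composite} map into $\sD(C,k)$, i.e.\ density for the \emph{weak} topology $\{V_{F,I}\}$. Your completion argument needs density for the \emph{strong} topology $\{V_I\}_{I \in \cP(k)}$, which is strictly finer, and nothing in the paper provides that; both the identification of $\sD^\circ(C,k)$ with the completion ${\bigoplus}^\un_{x\in C}k\delta_x$ and your closure description of $V_I$ (and hence also the final reduction to finitely generated ideals) rest on it.

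This gap cannot be repaired, because strong density is false. Take $k=\F_p$ (allowed by \eqref{kaxiom}) and $C=\Z_p$. Then $\sC(C,k)=\Loc(\Z_p,\F_p)$ is discrete, so the strong topology on $\sD^\circ(C,k)^\for = \hom_k(\Loc(\Z_p,\F_p),\F_p) = \limit_{n}\F_p[\Z/p^n\Z] \cong \F_p[[t]]$ is the \emph{discrete} topology, while, $k$ being discrete, ${\bigoplus}^\un_{x\in C}k\delta_x$ is just the group algebra $\F_p[\Z_p]$ (no completion occurs, as noted in the paper right after the definition of the uniform direct sum; in particular ``unconditionally convergent'' sums are finite sums here). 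Strong density would force $\F_p[\Z_p]=\F_p[[t]]$, which fails: every element of $\F_p[[t]]$ with nonzero constant term is a unit, whereas the group algebra of the torsion-free (hence orderable) abelian group $\Z_p$ has only trivial units $c\,\delta_a$; for instance $\delta_0+\delta_1$ (for $p$ odd; $\delta_0+\delta_1+\delta_2$ for $p=2$) maps to a unit of $\F_p[[t]]$ whose inverse is a measure lying outside the Dirac span. The obstruction survives for $k=\Z_p$ by reduction mod $p$: the strong ($=p$-adic) closure of $\bigoplus_x \Z_p\delta_x$ in $\Z_p[[T]]$ is a proper subring, consistent with the paper's own observation in Section~\ref{ChgVarss} that $\sD^\padic$ contains the $p$-adic completion of the group ring \emph{properly}. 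So what your separation argument actually establishes is the first display of the Proposition together with the \emph{weak}-topology completion statement (the content of Proposition~\ref{meascompexpl} and Corollary~\ref{unifballs}); the displayed identification $\sD^\circ(C,k)={\bigoplus}^\un_{x\in C}k\delta_x$, read literally with the paper's definitions, identifies only the strong closure of the Dirac span, which in the examples above is a proper submodule of $\sD^\circ(C,k)$ — so no proof along these lines (or, it appears, any other) can close this gap as stated.
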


\begin{prop} \label{meascompexpl} Let $F \in \sQ(C)$ and $I \in \cP(k)$. 
Then
$$\sD(C,k)^\for = ({\bigoplus}_{x \in C}^\un k\delta_x)^\for$$
and  $\sD(C,k)$ identifies with (resp. the completion of) the $k$-module $({\bigoplus}_{x \in C}^\un k\delta_x)^\for$ (resp. $\bigoplus_{x \in C} k\delta_x$) equipped with the $k$-linear topology with fundamental system of open $k$-submodules $\{V_{F,I}\}_{F,I}$ (resp. $\{V'_{F,I}\}_{F,I}$), where 
\beq \label{weaktopmeas}
V_{F,I} = \{\sum_{x \in C} a_x \delta_x \in  {\bigoplus}_{x \in C}^\un k\delta_x \,|\, \sum_{x \in P} a_x \in I \;,\;\forall \; P \in \sP_F\,\} 
 \eeq
(resp. 
\beq \label{weaktopmeas1}
\begin{split}
V'_{F,I} :=& V_{F,I} \bigcap {\bigoplus}_{x \in C} k\delta_x  = \\ &\{\sum_{x \in C} a_x \delta_x \in  {\bigoplus}_{x \in C} k\delta_x \,|\, \sum_{x \in P} a_x \in I \;,\;\forall \; P \in \sP_F\,\} \;\mbox{)\;.}
\end{split} \eeq 
\end{prop}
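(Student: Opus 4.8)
The plan is to leverage the two facts already in hand: the previous Proposition, which identifies the strong dual as $\sD^\circ(C,k) = {\bigoplus}^\un_{x\in C} k\delta_x$ together with its neighbourhoods $V_I$, and the bijectivity \eqref{sqlimit1} of the canonical map $\sD^\circ(C,k)\to\sD(C,k)$. The first step is then purely formal: since $\sD^\circ(C,k)$ and $\sD(C,k)$ share the same underlying $k$-module, we obtain $\sD(C,k)^\for = ({\bigoplus}^\un_{x\in C} k\delta_x)^\for$ at once, so that only the (coarser) weak topology remains to be described. Writing a general element as $\mu = \sum_{x\in C} a_x\delta_x$, I would first record that, because this series is unconditionally convergent in the strong topology, for every $I\in\cP(k)$ one has $a_x\in I$ for almost all $x$; hence for every clopen $U$ the subseries $\sum_{x\in U} a_x$ converges unconditionally in the separated complete ring $k$.

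Next I would compute the measure attached to $\mu$ on the blocks of a partition. By construction of the pairing one has $\langle\delta_x,\chi_U\rangle = \chi_U(x)$, and evaluation against a fixed $f\in\sC(C,k)$ is continuous for the weak topology of simple convergence (Remark~\ref{weakstrong}). Approximating $\mu$ by its finite partial sums and passing to the limit then yields $\mu(U) = \langle\mu,\chi_U\rangle = \sum_{x\in U} a_x$ for every $U\in\Sigma(C)$. Feeding this into the description \eqref{simplenhd} of the fundamental system $\{V_{F,I}\}_{F,I}$ of open submodules of $\sD(C,k)$ --- where $U$ runs over the blocks $P\in\sP_F$, each a compact clopen subset since $C$ is compact and $F$ finite --- transforms the condition $\mu(P)\in I$ into $\sum_{x\in P} a_x\in I$, which is exactly \eqref{weaktopmeas}. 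This establishes the first identification.

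For the completion statement I would invoke \eqref{compinj}: the finite-support submodule $\bigoplus_{x\in C} k\delta_x$ maps with dense image into $\sD(C,k)$. Since $\sD(C,k)$ is separated and complete (being a weak dual, hence the limit in \eqref{compweak} of the complete modules $k^{(F,\un)}$), it is the completion of $\bigoplus_{x\in C} k\delta_x$ for the induced topology, whose fundamental system of open submodules is the trace $V'_{F,I} = V_{F,I}\cap\bigoplus_{x\in C} k\delta_x$, giving \eqref{weaktopmeas1}. The only genuinely delicate point is the computation in the second paragraph --- justifying the interchange $\langle\mu,\chi_U\rangle = \sum_{x\in U} a_x$ and the convergence of the subseries --- which rests precisely on the unconditional convergence of $\mu$ in ${\bigoplus}^\un$ combined with the continuity of $f\mapsto\langle-,f\rangle$ for the weak topology; everything else is bookkeeping with the already-established fundamental systems.
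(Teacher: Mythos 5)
Your proof is correct and follows exactly the route the paper intends (the paper states this proposition without an explicit proof, as a direct consequence of the preceding material): the underlying-module identification via the bijection $\sD^\circ(C,k)\to\sD(C,k)$ together with the previous proposition, the topology via the fundamental system \eqref{simplenhd} combined with the coordinate formula $\mu(U)=\sum_{x\in U}a_x$, and the completion statement via the density assertion following \eqref{compinj} plus completeness of the weak dual $\limit_{F}k^{(F,\un)}$. The one substantive point the paper leaves implicit --- justifying $\langle\mu,\chi_U\rangle=\sum_{x\in U}a_x$ by weak continuity of evaluation on partial sums and unconditional convergence of the subseries in $k$ --- you handle correctly, so nothing is missing.
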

\begin{rmk} \label{weakstrong2}  Notice that the expression $\sum_{x \in P} a_x$ in \eqref{weaktopmeas} represents  an unconditionally convergent series in $k$, while in  \eqref{weaktopmeas1} it is a finite sum. 
\end{rmk}


\begin{cor} \label{unifballs} $\sD(C,k)$ is  precisely the  space of $k$-valued measures on $C$ in the sense of \eqref{measStone} and  \eqref{measStone1}, equipped  with the $k$-linear topology with fundamental system of open $k$-submodules $\{V_{F,I}\}_{F,I}$, for $F \in \sQ(C)$ and $I \in \cP(k)$, defined in \eqref{simplenhd}. Instead, $\sD^\circ(C,k) = \sD(C,k)^\can$, is the  space of $k$-valued measures on $C$ equipped with its naive $k$-canonical topology. 
 \end{cor}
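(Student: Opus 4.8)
The plan is to read the corollary as the dictionary translating the dual-module computations of \eqref{compstrong}--\eqref{weaktopmeas} and Proposition~\ref{meascompexpl} into the language of finitely additive set functions. First I would make the identification of underlying $k$-modules explicit. By Lemma~\ref{denslc} the locally constant functions are dense in $\sC(C,k)$, and each is a finite $k$-linear combination of characteristic functions $\chi_U$ with $U \in \Sigma(C) = \cK(C)$. Hence restricting a continuous functional $\mu \in \sD(C,k)^\for = \hom_{\cCLMu_k}(\sC(C,k),k)$ to these, via $\mu(U) := \mu(\chi_U)$, yields a map $\mu : \Sigma(C) \to k$; the relation $\chi_{U_1 \cup \cdots \cup U_n} = \sum_{i=1}^n \chi_{U_i}$ for disjoint clopens, together with the $k$-linearity of $\mu$, gives the additivity \eqref{measStone1}, so $\mu$ is a measure in the sense of \eqref{measStone}.

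Next I would verify the converse, which carries the only genuine content of the statement. A finitely additive $\mu$ defines a $k$-linear functional on $\Loc(C,k)$, and this functional is continuous for the topology of uniform convergence on $C$: given $I \in \cP(k)$, any $f \in \Loc(C,k)$ with $f(C) \subset I$ can be written $f = \sum_i c_i \chi_{U_i}$ over a finite clopen partition with all $c_i \in I$, whence $\mu(f) = \sum_i c_i \mu(U_i) \in I$, because $I$ is an ideal of $k$ and therefore absorbs each product $c_i \mu(U_i)$ and is closed under finite sums. No boundedness hypothesis on $\mu$ intervenes here, precisely because $k$ is a bounded object of $\cCLMu_k$. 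Since $C$ is compact this topology is the naive canonical topology on $\sC(C,k)$ (Lemma~\ref{contcompl}), and $k$ is complete, so $\mu$ extends uniquely to a continuous functional on $\sC(C,k)$. The two assignments are mutually inverse, identifying $\sD(C,k)^\for$ with the space of $k$-valued measures on $C$.

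It then remains to match the two topologies on this common module. For the weak dual, Proposition~\ref{meascompexpl} exhibits the fundamental system $\{V_{F,I}\}$ of \eqref{weaktopmeas}; writing a measure as $\mu = \sum_{x \in C} a_x \delta_x$, one has $\mu(P) = \sum_{x \in P} a_x$ for every block $P \in \sP_F$ of the partition attached to $\pi_F \in \sQ(C)$, so \eqref{weaktopmeas} is literally the measure-theoretic submodule \eqref{simplenhd}. This proves the first assertion. For the strong dual, \eqref{compstrong} presents $\sD^\circ(C,k) = \limit^\can_{F \in \sQ(C)} k^{(F,\un)}$ as an object of $\cCLMcan_k$, so it carries the naive canonical topology, whose open submodules are the $I\,\sD^\circ(C,k)$; the unnumbered Proposition immediately preceding Proposition~\ref{meascompexpl} identifies $I\,\sD^\circ(C,k)$ with the submodule $V_I$ of \eqref{unifnhd}. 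Since the natural bijection $\sD^\circ(C,k) \to \sD(C,k)$ is the identity on underlying modules, this realizes $\sD^\circ(C,k)$ as $\sD(C,k)^\can$ and proves the second assertion.

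The main point requiring actual verification, rather than transcription of the preceding propositions, is the continuity claim of the second paragraph; once that is in place everything else is the bookkeeping of matching the abstract neighborhood bases against their explicit descriptions in terms of the values $\mu(U)$.
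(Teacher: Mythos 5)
Your proposal is correct and follows the same route as the paper: the corollary appears there without a separate proof, as a summary of the discussion running from \eqref{compstrong} through Proposition~\ref{meascompexpl}, and your write-up assembles exactly those ingredients (density of $\Loc(C,k)$ from Lemma~\ref{denslc}, the neighbourhood bases \eqref{unifnhd}, \eqref{simplenhd} and \eqref{weaktopmeas}, and the identification $V_I = I\,\sD^\circ(C,k)$ giving $\sD^\circ(C,k) = \sD(C,k)^\can$). The one point you treat in more detail than the paper — the continuity-and-extension argument showing that every finitely additive set function comes from a continuous functional on $\sC(C,k)$ — is precisely what the paper compresses into its parenthetical remark on automatic continuity of $k$-linear maps for the naive canonical topology, and your version of that argument is correct.
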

 \begin{rmk} \label{funcmeas}   It follows from Corollary~\ref{ringtens1} that the transpose of the map $\cL^{(C,C')}$ of 
 \eqref{prodfcts1} in 
 $\mathit 1$ of Proposition~\ref{contvsunif}  gives a natural isomorphism 
 of $k$-coalgebra objects of $\cCLMu_k$
 \beq \label{funcmeas1}
 \cL^{(C,C')}_\sD :  \sD^!(C \times C',k) \iso \sD^!(C,k) \wt^\un_k  \sD^!(C',k)  
\eeq 
for $! = \emptyset,\circ$. 
On the other hand, it follows from Remark~\ref{basiscomp} that, for Stone spaces $C,C'$,  there is a natural map  
$$
 \sD(C,k) \times  \sD(C',k)  \map{} \sD(C \times C',k) 
 $$ 
 taking $(\mu,\nu)$ to the measure $\mu \times \nu$ on $C \times C'$ such that 
 $$(\mu \times \nu)(U \times V) = \mu(U) \nu(V)\;\;,\;\; \forall \: U \in \Sigma(C)\,,\, V \in \Sigma(C') \;.
 $$
 The map $\mu \otimes \nu \longmapsto \mu \times \nu$ is the inverse of $\cL^{(C,C')}_\sD$. For any $x \in C$ and $y \in C'$,  $\delta_x \otimes \delta_y \longmapsto \delta_{(x,y)}$, so  
the fact that \eqref{funcmeas1} is an isomorphism  also follows from the explicit  description of Proposition~\ref{meascompexpl}. 
 \end{rmk}
 We observe that by \eqref{compweak} (resp. \eqref{compstrong}) the duality rule 
\eqref{weakhomprolim} (resp. \eqref{weakhomprolimsq}) gives, taking into account \eqref{contcompl01},
\beq \label{stonebidual} \begin{split}
\sD(C,k)'_\weak  = \sD(C,k)'_\strong  = (\sD_\strong(C,k))'_\weak &= (\sD_\strong(C,k))'_\strong = 
\\  \colimit^\un_{F \in \sQ(C)} k^{(F,\un)}   = \sC(C,k)&\;.
\end{split}
\eeq
\begin{defn} \label{meascompact}
When $C$ is a Stone space, we are more  interested in  $\sD_\weak(C,k)$, so we simply denote it by $\sD(C,k)$ and understand that it carries the weak topology. That topology will be called the \emph{natural topology} of $\sD(C,k)$. 
We call $\sD(C,k)$ the $k$-module of \emph{measures on}  $C$ with values in $k$.  We will call instead $\sD^\circ(C,k)$ the $k$-canonical module $\sD_\strong(C,k) = \sD(C,k)^\can$. 
\end{defn}
\end{subsection}
\begin{subsection}{Case of $X =D$ a discrete space} 
 If $X = D$ is discrete   and the uniformity on $D$ is understood to be the discrete uniformity,    \eqref{canacs3} becomes \eqref{acsalc3dis}, which we now dualize. 
 \par  
By the second line of  formulas in Corollary~\ref{biduality},  the strong and weak duals coincide for $\sC(D,k)$. We set
\beq \label{acsmeasdisc}
\sD_\acs(D,k) := \sC(D,k)'_\strong = \sC(D,k)'_\weak = {\bigoplus}^\un_{x \in D} k \delta_x  \cong \sC_\acs(D,k) \in \cCLMcan_k
\eeq
and call it the $k$-module of ($k$-valued) measures \emph{with almost compact support} (on the discrete space $D$). 
\par \medskip  
The strong and weak duals $\sC_\acs(D,k)'_\strong$ and $\sC_\acs(D,k)'_\weak$ of $\sC_\acs(D,k)$
are 
\beq \label{Dstrongdef}
\sD^\circ(D,k) := {\prod}^{\square,\un}_{x \in D} k \delta_x\;\;\mbox{and}\;\; \sD(D,k) :={\prod}_{x \in D} k \delta_x
\;,
\eeq
respectively. 
We observe immediately that the second and third lines of formulas in Corollary~\ref{biduality}  imply
\beq \label{stupid} \begin{split}
\sD^\circ(D,k)'_\strong = \sD^\circ(D,k)'_\weak &= \sD(D,k)'_\strong =\sD(D,k)'_\weak = \\ &\sC_\acs(D,k) \;.
\end{split}
\eeq
We are mainly  interested in 
\beq \label{Dprodefn}
\sD(D,k) =   \sC_\acs(D,k)'_\weak =  {\prod}_{x \in D} k \delta_x
\;.
\eeq
We call $\sD(D,k)$ the $k$-module of ($k$-valued) \emph{measures}  (on the discrete space $D$). From the dense injection  $\sC_\acs(D,k) \subset \sC(D,k)$ in $\cCLMu_k$
we obtain by (strong and weak)  duality  the two injections
\beq \label{discrmeas} 
\sD_\acs(D,k) \longrightarrow  \sD^\circ(D,k) \map{(1:1)}   \sD(D,k) 
\eeq
whose composite has dense image. 
Explicitly they are
\beq \label{discrmeasexpl} \begin{split}
&\sD_\acs(D,k)  = \sC(D,k)'_\weak  =  \sC(D,k)'_\strong  = {\bigoplus}^\un_{x \in D} k \delta_x \longrightarrow  \\  
\sD^\circ(D,k) =  & \sC_\acs(D,k)'_\strong = {\prod}^{\square,\un}_{x \in D} k \delta_x  \map{(1:1)}    \sD(D,k) =   \sC_\acs(D,k)'_\weak =  {\prod}_{x \in D} k \delta_x \;. 
\end{split}
\eeq
By (weak and strong) duality again, we re-obtain the   injections  
\beq \label{discrmeasdual}
\sC_\acs(D,k)   \map{}   \sC^\circ(D,k)  \map{}   \sC(D,k) \;. 
\eeq
Explicitly they are
\beqa \label{discrmeasdual1} \begin{split}
 &\sD(D,k)'_\weak  =  (\sD^\circ(D,k))'_\strong =   {\bigoplus}^\un_{x \in D} k \chi_x =  \sC_\acs(D,k)   \map{}  \\   
 \sD_\acs(D,k)'_\strong &=  {\prod}^{\square,\un}_{x \in D} k \chi_x = \sC^\circ(D,k) 
 \map{}  \sD_\acs(D,k)'_\weak = {\prod}_{x \in D} k \chi_x = \sC(D,k) \;.\end{split}
\eeqa  
\par \medskip  

Let $\sF(D)$ (resp. $\fP(D)$) be the set of finite (resp. of all) subsets of $D$. Then $\sD(D,k) =  {\prod}_{x \in D} k \delta_x$ (resp. 
$\sD_\strong(D,k) = {\prod}^{\square,\un}_{x \in D} k \delta_x$) may be described as 
the family of functions $\mu: \sF(D) \longrightarrow k$ such that, for any $X \in \sF(D)$,
\beq \label{discrmeas2}
\mu(X) = \sum_{x \in X} \mu(\{x\})  \;.
\eeq
Instead $\sD_\acs(D,k) =  {\bigoplus}^\un_{x \in D} k \delta_x$ is the family of functions $\mu: \fP(D) \longrightarrow k$
 with the following two properties~:
 \ben
 \item
 for any $I \in \cP(k)$, there exists a finite subset $F_I \subset D$, such that 
$$
\mu(x) \in I \;,\;\; \forall \; x \in D - F_I  \;;
$$
\item
for any $X \in \fP(D)$, the series $\sum_{x \in X} \mu(\{x\})$ is unconditionally  convergent to $\mu(X)$\;.
\een
Then $\sD(D,k)$  (resp. $\sD^\circ(D,k)$, $\sD_\acs(D,k)$) is equipped with the topology of simple (resp. of uniform) convergence 
on (the singletons of) $D$. We now observe that 
the strong and the weak dual of $\sC_\unif(D,k) = {\prod}_{x \in D}^\can  k \chi_x$ coincide and also coincide with the strong and the weak dual of $\sC(D,k) = {\prod}_{x \in D} k \chi_x$. We set
\beq \label{unifmeasdis}
\sD_\unif(D,k)  :=  \sC_\unif(D,k)'_\weak = \sC_\unif(D,k)'_\strong = \sC(D,k)'_\weak = \sC(D,k)'_\strong 
\eeq  
so that 
\beq \label{unifmeasdis1}
\sD_\unif(D,k)  =  \sD_\acs(D,k)= {\bigoplus}^\un_{x \in D} k \delta_x  \in \cCLMcan_k\;.
\eeq 
By the first line of  formulas in Corollary~\ref{biduality}, 
\beq \label{unifmeasdual}
\sD_\unif(D,k)'_\strong = \sC_\unif(D,k) \;\;,\;\; \sD_\unif(D,k)'_\weak = \sC(D,k) \;.
\eeq  
Moreover,  \eqref{acsalc3dis} may be reinterpreted as
\beq
\label{acsalc3dual}   \begin{split}
\sC_\acs&(D,k)  =  \sD(D,k)'_\weak = \sD(D,k)'_\strong  = \\ &\sD^\circ(D,k)'_\weak = \sD^\circ(D,k)'_\strong   \;\;\; \longrightarrow \\ \sC_\unif(D,k)  =  & \sD_\unif(D,k)'_\strong   
\;\;\;  \longrightarrow \;\;\;
\sC(D,k) = \sD_\unif(D,k)'_\weak \;.
\end{split}
\eeq
We also have
\beq
\label{acsalc3meas}   \begin{split}
\sD_\acs(D,k) &= \sD_\unif(D,k)   \longrightarrow  \sD_\strong(D,k) =  \sC_\acs(D,k)'_\strong    \\
  \longrightarrow 
&\sD(D,k) = \sC_\acs(D,k)'_\weak   
\end{split}
\eeq
corresponding to the natural maps
$$
{\bigoplus}^\un_{x \in D} k \delta_x  \longrightarrow  {\prod}^{\square,\un}_{x \in D} k \delta_x  \longrightarrow   {\prod}_{x \in D} k \delta_x  \;.
$$

\begin{prop} \label{discrmeas} Let $D$ be a discrete topological space. Then 
\ben
\item 
$\sD(D,k) =  {\prod}_{x \in D} k \delta_x$ is the space  of functions $\mu: \sF(D) \longrightarrow k$ such that, for any $X \in \sF(D)$, \eqref{discrmeas2} holds. It is equipped with the topology of simple convergence 
on the singletons of $D$. 
\item 
$\sD^\circ(D,k) =  {\prod}^\can_{x \in D} k \delta_x$ coincides set-theoretically with $\sD(D,k)$. 
It is equipped with its naive $k$-canonical topology \ie with the topology of uniform convergence 
on the singletons of $D$. 
\item
$\sD_\acs(D,k) = \sD_\unif(D,k) = {\bigoplus}^\un_{x \in D} k \delta_x $ is the space of functions 
\beq  \label{discrmeas1}
\mu: \fP(D) \longrightarrow k
\eeq
such that for any $X \subset D$, the $X$-series 
\eqref{discrmeas2}
is unconditionally convergent in $k$ to $\mu(X)$. It is equipped with the topology of uniform convergence 
on $\fP(D)$ which makes it a (closed) subspace of  $\sD^\circ(D,k)$. 
\een
\end{prop}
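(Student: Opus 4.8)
All three descriptions are a function-theoretic reinterpretation of the duals already computed in \eqref{Dprodefn}, \eqref{Dstrongdef} and \eqref{unifmeasdis1}, so the plan is to pass from families of coordinates to set-functions rather than to recompute any dual. Throughout I would identify a measure $\mu$ with the family $(a_x)_{x\in D}$, $a_x := \mu(\{x\})$, of its values on the Dirac masses, using that $\sC_\acs(D,k) = {\bigoplus}^\un_{x\in D} k\chi_x$ has the $\chi_x$ as topological basis, so that a continuous functional is determined by, and may be prescribed as, such a family subject to the relevant support condition. The set-function attached to $\mu$ is then $\mu(U) := \langle\mu,\chi_U\rangle$.

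For $\mathit 1$ and $\mathit 2$ I would argue as follows. Set-theoretically $\sD(D,k)={\prod}_{x\in D}k\delta_x = k^D$, so $\mu$ is an arbitrary family $(a_x)$; for a \emph{finite} $X\in\sF(D)$ one has $\chi_X=\sum_{x\in X}\chi_x$ in $\sC_\acs(D,k)$, whence $\mu(X)=\sum_{x\in X}a_x$ by $k$-linearity, which is exactly the additivity \eqref{discrmeas2}, and conversely every additive $\mu:\sF(D)\to k$ is recovered from $(\mu(\{x\}))$. The weak (simple-convergence) topology of the product has the neighborhoods $\{\mu : \mu(\{x_i\})\in I,\ i=1,\dots,n\}$ indexed by finite subsets of $D$ and $I\in\cP(k)$, giving simple convergence on singletons. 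For $\mathit 2$, Corollary~\ref{canlim} applies because each $k\delta_x\cong k$ is canonical, so ${\prod}^{\square,\un}_{x\in D}k\delta_x = {\prod}^{\can}_{x\in D}k\delta_x$ has the same underlying set $k^D$ as $\sD(D,k)$ but the naive canonical topology, whose basic neighborhoods $\{\mu : \mu(\{x\})\in I\ \forall x\}$ are precisely uniform convergence on singletons.

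The substantive point is $\mathit 3$. By \eqref{unifmeasdis1} we already know $\sD_\acs(D,k)=\sD_\unif(D,k)={\bigoplus}^\un_{x\in D}k\delta_x$, so it remains to identify this uniform direct sum with the asserted space of set-functions. Unwinding the description of ${\bigoplus}^\un$ recalled earlier, an element is a family $(a_x)_{x\in D}$ such that, for every $I\in\cP(k)$, $a_x\in I$ for all but finitely many $x$ (the condition {\bf(ACS)} of Remark~\ref{explsupport}). The key step is to promote the finite additivity of $\mathit 1$ to arbitrary $X\subseteq D$: the subfamily $(a_x)_{x\in X}$ still has $a_x\in P$ for almost all $x$ for every open submodule $P$, so the unconditional convergence criterion recorded in Section~\ref{conventions} produces a well-defined sum $\mu(X):=\sum_{x\in X}a_x\in k$; conversely a $\mu$ of this kind is recovered from $(\mu(\{x\}))$. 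Since each $I\in\cP(k)$ is a closed $k$-submodule, the condition $a_x\in I$ for all $x$ is equivalent to $\mu(X)\in I$ for all $X\in\fP(D)$, which identifies the uniform-sum topology with uniform convergence on $\fP(D)$; and the embedding ${\bigoplus}^\un_{x\in D}k\delta_x\hookrightarrow{\prod}^{\square,\un}_{x\in D}k\delta_x=\sD^\circ(D,k)$ is closed by \eqref{sumsqprod}. I expect the only genuinely non-formal obstacle to be this simultaneous control of $\sum_{x\in X}a_x$ over \emph{all} subsets $X$ at once, which is precisely what {\bf(ACS)} furnishes; the rest is an unwinding of definitions.
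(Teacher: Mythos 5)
Your proposal is correct and follows essentially the same route as the paper, which dismisses the proof as ``Clear'': the proposition is a reinterpretation of the dualities already established in \eqref{Dprodefn}, \eqref{Dstrongdef} and \eqref{unifmeasdis1}, combined with the unconditional-convergence criterion of the Conventions and the condition {\bf(ACS)} of Remark~\ref{explsupport}. Your write-up simply supplies the details (identification of measures with coordinate families, the topology comparisons via Corollary~\ref{canlim} and \eqref{sumsqprod}) that the paper leaves implicit.
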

\begin{proof}
Clear. 
\end{proof}
\end{subsection} 
\begin{subsection}{Functoriality of functions and measures} \label{functoriality}
\begin{subsubsection}{Restriction of functions and extension by $0$ of measures on Stone spaces}  
\label{restrfcts} For $C \subset C'$ compact opens in $X$ we have ``restriction'' maps 
\beq \label{restr}
\begin{split}
\rho_{C,C'}: \sC(C',k) &\longrightarrow \sC(C,k) \\
f &\longmapsto f_{|C}
\end{split}
\eeq
and, by ?-duality, for $? = \weak, \strong$, ``extension by 0'' maps for measures, for $!=\emptyset,\circ$, respectively, 
\beq \label{ext0meass}
\begin{split}
j_{C,C'} : \sD^!(C,k) &\longrightarrow \sD^!(C',k) \\
\mu&\longmapsto j_{C,C'}(\mu) \;, 
\end{split}
\eeq
where $j_{C,C'}(\mu)(f) = \mu(f_{|C})$, for any $f \in \sC(C',k)$. In particular, $j_{C,C'}(\delta_x) = \delta_x$, for any $x \in C$.  
\par \smallskip
Since  
$$\sC(X,k) = \limit_{C \in \cK(X)} (\sC(C,k), \rho_{C,C'})  
$$
the following definition makes sense.
\begin{defn} \label{Dacsdef0}
We set 
\beq \label{Dacsdef}
\sD_\acs(X,k) := \sC(X,k)'_\weak = \colimit^\un_{C \in \cK(X)} (\sD(C,k), j_{C,C'})  
\eeq
and
\beq \label{Dacsodef}
\sD_\acs^\circ(X,k)  := \sC(X,k)'_\strong = \colimit^\un_{C \in \cK(X)} (\sD^\circ(C,k), j_{C,C'})  \in \cCLMcan_k \;.
\eeq
\end{defn}
By application of $\limit_C$ to \eqref{restr} we obtain
\beq \label{restrext0dual1} \rho_C: \sC(X,k)  \longrightarrow \sC(C,k) \;\;\mbox{(restriction)} 
\eeq
and from \eqref{Dacsdef} (resp. \eqref{Dacsodef}) we get the weak (resp. strong) dual \emph{extension by $0$ map} 
\beq \label{restrext0dual}
\begin{split}
j_C : \sD(C,k) &\longrightarrow \sD_\acs(X,k) \;\;\mbox{(weak case)}  \;, \\
\mbox{(resp. \;}\; j_C : \sD^\circ(C,k) &\longrightarrow \sD^\circ_\acs(X,k) \;\;\mbox{(strong case)\;)}  \;.
\end{split}
\eeq 
\end{subsubsection}
\begin{subsubsection}{Extension by $0$ of functions and restriction of measures on Stone spaces}
\label{extfcts}
We have  an ``extension by 0'' map for functions 
\beq \label{ext0}
\begin{split}
i_{C,C'}: \sC(C,k) &\longrightarrow \sC(C',k) \\
f &\longmapsto i_{C,C'}(f) \;, 
\end{split}
\eeq
and its   weak transpose ``restriction'' map for measures
\beq \label{ext0}
\begin{split}
\sigma_{C,C'}  : \sD(C',k) &\longrightarrow \sD(C,k) \\ 
\mu &\longmapsto \sigma_{C,C'}(\mu) \;, 
\end{split}
\eeq
such that $\sigma_{C,C'}(\mu)(f) = \mu (i_{C,C'}(f))$, for any $\mu \in \sD(C',k)$ and $f \in \cC(C,k)$. 
\begin{defn} \label{dualdefn}
We set  
 \beq \label{dualdefn5}
\sD(X,k)   := \sC_\acs(X,k)'_\weak = \limit_{C \in \cK(X)} (\sD (C,k),\sigma_{C,C'})  \in  \limit \, \cF_k 
\eeq  
and obtain 
\beq \label{ext0restrdual}
\begin{split}
i_C: \sC(C,k) &\longrightarrow \sC_\acs(X,k)  \;\;\mbox{(extension by $0$)} \;,\\
\sigma_C : \sD(X,k) &\longrightarrow \sD(C,k)   \;\;\mbox{(restriction)}\;, 
\end{split}
\eeq
weakly dual to each other. Similarly, 	
we set 
 \beq \label{dualdefn5st}
\sD^\circ(X,k)   := \sC_\acs(X,k)'_\strong = \limit^{\square,\un}_{C \in \cK(X)} (\sD^\circ(C,k),\sigma_{C,C'})  \in   \cCLMcan_k
\eeq  
and then $\sigma_C$ in \eqref{ext0restrdual} is replaced by the restriction map 
$$\sigma_C : \sD^\circ(X,k) \longrightarrow \sD^\circ(C,k) $$
where $i_C$ and $\sigma_C$ are strongly dual to each other.
\end{defn}
Notice that, for any $x \in X$, 
\beq \sigma_C(\delta_x) = 
\begin{cases} 
\delta_x
    & \text{if } x \in C \\
&\\  
 0
    & \text{if }  x \in X-C \;.
\end{cases} 
\eeq
\end{subsubsection}
\begin{subsubsection}{Pull-back of functions and push-forward of measures on discrete spaces}
\label{discrpbs}
For $\pi_D:X \to D$ and $\pi_{D'}:X \to D'$ in $\sQ(X)$ and the morphism $\pi_{D,D'}:D' \to D$ in 
$\sQ(X)$, we  have the ``pull-back'' maps of functions 
\beq  
\label{?pullback}
\begin{split}
\pi^\ast_{D,D'} : \sC_?(D,k) &\longrightarrow \sC_?(D',k) \\
f &\longmapsto f \circ \pi_{D,D'}
\end{split}
\eeq
for $? = \emptyset, \acs$, and  the  weak transpose of each of them. Let $!=\acs$ if $?=\emptyset$ and 
$!=\emptyset$ if $?=\acs$. 
We get ``push-forward'' maps of measures 
\beq \label{contpushout}
h_{D,D'}  : \sD_!(D',k) \longrightarrow \sD_!(D,k) 
 \eeq
such that $h_{D,D'} (\mu)(f) = \mu(f \circ \pi_{D,D'})$ for any $\mu \in \sD_!(D',k)$ and $f \in \sC_?(D,k)$. The 
push-forward  of a measure $\mu$  attributes a $\mu$-weight to each fiber  by adding the $\mu$-weights of all points in each fiber\;:
\beq \label{contpushout1}
\mu \longmapsto  \sum_{x \in D} \mu( \pi^{-1}_{D,D'}(x)) \, \delta_x\;.
\eeq
By application of $\colimit^\un_D$ and $\limit_D$, respectively, we obtain,  
\ben
\item  in case $? = \emptyset$, $! = \acs$
\beq \label{ext1restrdual}
\begin{split}
\pi_D^\ast: \sC(D,k) &\longrightarrow \sC(X,k)   \;\;\mbox{(pull-back of functions)}\;, \\
h_D : \sD_\acs(X,k) &\longrightarrow \sD_\acs(D,k)  \;\;\mbox{(push-forward of measures)}\;, 
\end{split}
\eeq
weakly dual to each other;
\item
in case $? = \acs$, $! = \emptyset$
\beq \label{ext1prorestrdual}
\begin{split}
\pi_D^\ast: \sC_\acs(D,k) &\longrightarrow \sC_\acs(X,k)   \\
h_D : \sD(X,k)  &\longrightarrow \sD(D,k)   \;, 
\end{split}
\eeq
weakly dual to each other.
\een
The  map $h_D$  in both \eqref{ext1restrdual} and \eqref{ext1prorestrdual} is given by 
\beq \label{contpushoutinfty}
\mu \longmapsto  \sum_{x \in D} \mu( \pi^{-1}_D(x)) \, \delta_x\;,
\eeq
which makes sense because $\pi^{-1}_D(x) \in \cK(X)$, for any $x \in D$, so that $\mu( \pi^{-1}_D(x))$ is defined. 
\end{subsubsection}
 \end{subsection}
\begin{subsection}{The case of $X$ a general $td$-space}
 \label{generalcase} 
\begin{subsubsection}{Measures with  and without support conditions} 
\begin{rmk} \label{acscirc}
For $X=D$ discrete, comparing \eqref{Dacsdef} and \eqref{Dacsodef} with \eqref{acsmeasdisc} we see that 
$$
\sD_\acs(D,k) = \sD^\circ_\acs(D,k) \in \cCLMcan_k  \;.
$$
For $X=C$ a Stone space, $\sD_\acs(C,k) = \sD(C,k)$ while $\sD^\circ_\acs(C,k) = \sD^\circ(C,k)$.
 \end{rmk}
 We have 
 \beq \label{generalbidual} \begin{split} \sD_\acs(X,k)'_\weak = &\l(\colimit^\un_{C \in \cK(X)}  (\sD(C,k) ,j_{C,C'})\r)'_\weak = \\ \limit_{C \in \cK(X)} & \sD(C,k)'_\weak   =   \limit_{C \in \cK(X)} 
  \sC(C,k)  = \sC(X,k)\;,
 \end{split}
\eeq
where we have used the biduality formula \eqref{stonebidual} for the Stone space $C$. Similarly, we also have   
 \beq \label{generalbidualst} \begin{split} \sD^\circ_\acs(X,k)'_\strong = &\l(\colimit^\un_{C \in \cK(X)} (\sD^\circ(C,k) ,j_{C,C'})\r)'_\strong = \\  \limit^{\square,\un}_{C \in \cK(X)}&  \sD^\circ(C,k)'_\strong  =    
 \limit^\can_{C \in \cK(X)} \sC(C,k)  = \sC^\circ(X,k)\;,
 \end{split}
\eeq
where again  we have applied  \eqref{stonebidual} to $C$.  
In the same way we also check that
 \beq \label{bisform}
 \sD^\circ_\acs(X,k)'_\weak = \sD_\acs(X,k)'_\weak \;\;\mbox{and} \;\;
 \sD_\acs(X,k)'_\strong =\sD^\circ_\acs(X,k)'_\strong \;.
 \eeq
We also observe for completeness that  from  \eqref{weakhomprolimsq} we deduce
 \beq \label{bisform1} \begin{split} 
 (\sC^\circ(X,k))'_\strong &= ( \limit^\can_{C \in \cK(X)} \sC(C,k) )'_\strong = \\ \colimit^\un_{C \in \cK(X)} (\sD^\circ(C,k),j_{C,C'})& =  \sD^\circ_\acs(X,k)\;.
\end{split}  \eeq
\par \bigskip
Morphisms $C \longrightarrow X$ in $\cK(X)$ and  $\pi_D: X \longrightarrow D$ in $\sQ(X)$ determine  sequences 
\beq \label{fundseq0} \begin{split}
\sC(D,k)  \map{\pi_D^\ast}  &\sC(X,k) \map{\rho_C} \sC(C,k) \\
\sD(C,k)   \map{j_C} &\sD_\acs(X,k)    \map{h_D} \sD_\acs(D,k) \;,
\end{split}
\eeq
(resp. 
\beq \label{fundseq0st} \begin{split}
\sC^\circ(D,k)   \map{\pi_D^\ast}  &\sC^\circ(X,k) \map{\rho_C} \sC(C,k) \\
 \sD^\circ(C,k)   \map{j_C} &\sD^\circ_\acs(X,k)    \map{h_D} \sD_\acs(D,k)\;\mbox{)}\;,
\end{split}
\eeq
weakly (resp. strongly) dual to each other.    
\par \medskip 
Since we have the two equalities
 $$\sC(X,k) = \colimit^\un_{D \in \sQ(X)} (\sC(D,k), \pi_{D,D'}^\ast)= \limit_{C \in \cK(X)} (\sC(C,k), \rho_{C,C'})
 $$ 
 we conclude that we have at the same time 
\beq \label{Dacslimdiscr} \begin{split}
\sD_\acs(X,k) =& \limit_{D \in \sQ(X)} ( \sD_\acs(D,k), h_{D,D'}) \\
\sD^\circ_\acs(X,k) =& \limit^{\square,\un}_{D \in \sQ(X)} ( \sD_\acs(D,k), h_{D,D'}) \in \cCLMcan_k
\end{split}\eeq
and 
\beq \label{Dacscolimcan} \begin{split}
\sD_\acs(X,k) =& \colimit^\un_{C \in \cK(X)} ( \sD(C,k), j_{C,C'}) \\
\sD^\circ_\acs(X,k) =& \colimit^\un_{C \in \cK(X)} (\sD^\circ(C,k), h_{D,D'}) \in \cCLMcan_k \;.
\end{split}
\eeq
\begin{cor} \label{bijmap} The canonical morphism 
$$
 \sD^\circ_\acs(X,k) \map{} \sD_\acs(X,k)
 $$
 induced by the bijective morphisms $\sD^\circ(C,k) \map{(1:1)} \sD(C,k)$, for any $C \in \cK(X)$, is bijective. Therefore, 
 \beq  \label{bijmap1}
  \sD^\circ_\acs(X,k) = \sD_\acs(X,k)^\can\;.
 \eeq
\end{cor}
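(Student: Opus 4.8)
The plan is to avoid the colimit description \eqref{Dacscolimcan} and argue from the projective-limit descriptions \eqref{Dacslimdiscr}, which present both objects as limits of one and the same cofiltered system $(\sD_\acs(D,k), h_{D,D'})_{D \in \sQ(X)}$, the sole difference being the flavour of limit:
$$\sD_\acs(X,k) = \limit_{D \in \sQ(X)} \sD_\acs(D,k) \quad\text{and}\quad \sD^\circ_\acs(X,k) = \limit^{\square,\un}_{D \in \sQ(X)} \sD_\acs(D,k) \;.$$
The decisive input is that each building block $\sD_\acs(D,k) = {\bigoplus}^\un_{x \in D} k\delta_x$ lies in $\cCLMcan_k$ by \eqref{acsmeasdisc}, so the entire system is $k$-canonical and the strong theory of Corollary~\ref{canlim} applies verbatim.

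First I would establish bijectivity. By \eqref{sqlimit1} the $k$-module underlying a uniform box limit agrees with that of the ordinary limit, $(\limit^{\square,\un}_{D} \sD_\acs(D,k))^\for = \limit_{D} \sD_\acs(D,k)^\for$; and ordinary limits in $\cCLMu_k$ are computed in $\Mod_k$, so $\sD_\acs(X,k)^\for$ is exactly this same $k$-module. The comparison morphism of the statement is the identity on these common underlying modules --- it is the continuous identity from the finer box topology to the limit topology, which is precisely what the bijections $\sD^\circ(C,k) \map{(1:1)} \sD(C,k)$ assemble to --- and is therefore bijective.

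For the second assertion I would invoke Corollary~\ref{canlim}: since the system is $k$-canonical, the box limit coincides with $\limit_{D} \sD_\acs(D,k)^\for$ endowed with the naive canonical topology, that is, with the limit $\limit^\can_{D} \sD_\acs(D,k)$ taken in $\cCLMcan_k$. By Remark~\ref{limcan} this last object is $(\limit_{D} \sD_\acs(D,k))^\can$, whence
$$\sD^\circ_\acs(X,k) = \limit^{\square,\un}_{D} \sD_\acs(D,k) = \bigl(\limit_{D} \sD_\acs(D,k)\bigr)^\can = \sD_\acs(X,k)^\can \;,$$
which is \eqref{bijmap1}. I do not expect a genuine obstacle: the entire content is bookkeeping of topologies, and the only point demanding care is checking that the two limit presentations are literally the same system of $k$-modules with identical transition maps $h_{D,D'}$, so that the single discrepancy is $\limit$ versus $\limit^{\square,\un}$ --- which is exactly what \eqref{Dacslimdiscr} records.
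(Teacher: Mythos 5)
Your proof is correct and is essentially the argument the paper intends: the corollary is stated as an immediate consequence of the limit descriptions \eqref{Dacslimdiscr}, with bijectivity coming from \eqref{sqlimit1} (box limit and ordinary limit of the same system $(\sD_\acs(D,k),h_{D,D'})$ share the same underlying $k$-module) and the identification \eqref{bijmap1} from Corollary~\ref{canlim} together with Remark~\ref{limcan}, the terms $\sD_\acs(D,k)$ being $k$-canonical by \eqref{acsmeasdisc}. The only point you gloss over --- that this identity map coincides with the morphism induced by the bijections $\sD^\circ(C,k) \map{} \sD(C,k)$ over $C \in \cK(X)$ --- is routine, since both morphisms are continuous, agree on the dense image of the algebraic colimit of the $\sD(C,k)$, and land in a separated target.
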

Morphisms $C \longrightarrow X$ in $\cK(X)$ and  $X \longrightarrow D$ in $\sQ(X)$ determine, by \eqref{dualdefn5}, \eqref{ext0restrdual}, 
 \eqref{restrext0dual} and \eqref{ext1prorestrdual}, 
 sequences 
 \beq \label{fundseq2} \begin{split} \sC_\acs(D,k) \map{\pi_D^\ast} \sC_\acs(X,k) &\longrightarrow \sC(X,k) \map{\rho_C} \sC(C,k) 
 \map{i_C} \sC_\acs(X,k) \\
\sD(X,k) \map{\sigma_C}  \sD(C,k) &\map{ j_C} \sD_\acs(X,k) \longrightarrow    \sD(X,k)  \map{h_D} \sD(D,k) \;,
 \end{split}
 \eeq
weakly dual to each other. 
\begin{prop} \label{bidual} For any $td$-space $X$, we have
\beq \label{bidual1}
 \begin{split} \sD(X,k)'_\weak  =& \sD(X,k)'_\strong =  \sD^\circ(X,k)'_\weak = \sD^\circ(X,k)'_\strong = \\
& \sC_\acs(X,k) \in \cCLMcan_k 
 \end{split}
 \eeq
 \end{prop}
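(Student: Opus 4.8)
The plan is to reduce everything to the biduality formula \eqref{stonebidual} for Stone spaces, transported through the limit–colimit duality rules of Lemma~\ref{limduals1}. Both $\sD(X,k)$ and $\sD^\circ(X,k)$ are by construction projective limits, over the directed set $\cK(X)$, of the measure modules attached to the compact opens $C \in \cK(X)$; dualizing therefore turns them back into colimits of the $\sC(C,k)$, and these reassemble precisely into $\sC_\acs(X,k)$.

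Concretely, for $\sD(X,k)$ I would start from the defining presentation \eqref{dualdefn5}, namely
$$\sD(X,k) = \limit_{C \in \cK(X)} (\sD(C,k), \sigma_{C,C'}),$$
a plain projective limit. Applying \eqref{weakhomprolim} with $? = \weak, \strong$ gives
$$\sD(X,k)'_? = \Big(\limit_{C \in \cK(X)} \sD(C,k)\Big)'_? = \colimit^\un_{C \in \cK(X)} \sD(C,k)'_?.$$
By \eqref{stonebidual} each $\sD(C,k)'_\weak = \sD(C,k)'_\strong = \sC(C,k)$, so both duals equal $\colimit^\un_{C \in \cK(X)} \sC(C,k)$, which is exactly $\sC_\acs(X,k)$ by item~$\mathit 3$ of Corollary~\ref{contcase} (equivalently \eqref{acsfcts1}).

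For $\sD^\circ(X,k)$ I would use instead the box-limit presentation \eqref{dualdefn5st},
$$\sD^\circ(X,k) = \limit^{\square,\un}_{C \in \cK(X)} (\sD^\circ(C,k), \sigma_{C,C'}),$$
in which every $\sD^\circ(C,k) \in \cCLMcan_k$; since the target $k$ is canonical as well, the sharper rule \eqref{weakhomprolimsq} applies and yields, for $? = \weak, \strong$,
$$\sD^\circ(X,k)'_? = \colimit^\un_{C \in \cK(X)} \sD^\circ(C,k)'_?.$$
Again \eqref{stonebidual} gives $\sD^\circ(C,k)'_? = (\sD_\strong(C,k))'_? = \sC(C,k)$, so this colimit is once more $\sC_\acs(X,k)$. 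This simultaneously establishes all four equalities and the membership in $\cCLMcan_k$, since $\sC_\acs(X,k) \in \cCLMcan_k$ by \eqref{acsfcts1}.

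The one point requiring care — and where I expect the actual work to sit — is the identification of the transition maps in the resulting colimit. The duality rules produce $\colimit^\un_C$ with transition maps the transposes $\sigma_{C,C'}^\tu$, whereas $\sC_\acs(X,k)$ is defined as $\colimit^\un_C (\sC(C,k), i_{C,C'})$ with the extension-by-zero maps. I would check these coincide by recalling from subsection~\ref{extfcts} that $\sigma_{C,C'}$ is \emph{by definition} the weak transpose of $i_{C,C'}$; since each $\sC(C,k)$ is reflexive for the Stone biduality \eqref{stonebidual}, transposing back returns $i_{C,C'}$, so the two inductive systems agree on the nose. This bookkeeping, rather than any analytic content, is the crux.
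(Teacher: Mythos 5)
Your proposal is correct and follows essentially the same route as the paper: dualize the limit presentations \eqref{dualdefn5} and \eqref{dualdefn5st} via the rules \eqref{weakhomprolim} and \eqref{weakhomprolimsq} of Lemma~\ref{limduals1}, then apply the Stone-space biduality \eqref{stonebidual} termwise and recognize the resulting colimit as $\sC_\acs(X,k)$. The paper's proof is terser and leaves implicit the transition-map identification ($\sigma_{C,C'}^\tu = i_{C,C'}$ via reflexivity) that you spell out, but this is bookkeeping within the same argument, not a different approach.
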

 \begin{proof}
From Definition~\ref{dualdefn} it follows that 
 \beq \label{bidual2} \begin{split}
\sD(X,k)'_\weak &=   \colimit^\un_{C \in \cK(X)} \sD (C,k)'_\weak =  \\ &\colimit^\un_{C \in \cK(X)} \sC(C,k) = 
\sC_\acs(X,k) \in \cCLMcan_k 
\end{split}
\eeq
and 
 \beq \label{bidual3} \begin{split}
\sD^\circ(X,k)'_\strong &=   \colimit^\un_{C \in \cK(X)} \sD^\circ (C,k)'_\strong =  \\ &\colimit^\un_{C \in \cK(X)} \sC(C,k) = \sC_\acs(X,k) \in \cCLMcan_k \;.
\end{split}
\eeq
Similarly, 
\beq \label{bidual4} \begin{split}
\sD(X,k)'_\strong &=   \colimit^\un_{C \in \cK(X)} \sD(C,k)'_\strong  =   \colimit^\un_{C \in \cK(X)}  \sC(C,k) \\
\sD^\circ(X,k)'_\weak &=  \colimit^\un_{C \in \cK(X)} \sD^\circ(C,k)'_\weak =  \colimit^\un_{C \in \cK(X)}  \sC(C,k) \;.
\end{split}
\eeq
\end{proof}
\end{subsubsection}

\begin{subsubsection}{Uniform measures} 
Let $(X,\Theta_\sJ) = \limit_{D \in \sJ} (X,\{\pi_D\})$ be a neat $td$-uniform space,  
as in \eqref{projlimunif}. 
  Then we define    
    \beq \label{measunifdef} 
  \sD_\unif ((X,\Theta_\sJ),k) := \limit_{D \in \sJ} (\sD_\unif(D,k), h_{D,D'})  
    \eeq
    and
      \beq \label{measunifodef} 
  \sD^\circ_\unif ((X,\Theta_\sJ),k) := \limit^\can_{D \in \sJ} (\sD_\unif(D,k), h_{D,D'}) \in \cCLMcan_k\;.
    \eeq 
    The elements of $\sD_\unif ((X,\Theta_\sJ),k)$ will be called \emph{uniform} ($k$-valued) \emph{measures} on 
    $(X,\Theta_\sJ)$. We recall that, by \eqref{unifmeasdis1},  $\sD_\unif(D,k)$ may be replaced by $\sD_\acs(D,k)$ in 
    \eqref{measunifdef}  so that, by 
   \eqref{Dacslimdiscr},  we deduce a morphism
    \beq \label{measunifacs1}  
  \mbox{(}\;  \sD^\circ_\acs(X,k) \map{(1:1)} \; \mbox{)}\; \sD_\acs(X,k) \map{\inc}    \sD_\unif ((X,\Theta_\sJ),k)  
    \eeq
   with dense set-theoretic image  obtained from the push-forwards of measures  $h_D:  \sD_\acs(X,k) \to \sD_\acs(D,k)$, for $D \in \sJ$ \eqref{ext1restrdual}. 
\par  
Since $(X,\Theta_\sJ)$ is neat,  the $k$-linear map  
\beq \label{diracunif}
{\bigoplus}_{x \in X} k \delta_x \longrightarrow \sD_\unif ((X,\Theta_\sJ),k) = \limit_{D \in \sJ} {\bigoplus}^\un_{d \in D} k\delta_d 
\eeq
sending any $x \in X$ to the projective system $\{\delta_{\pi_D(x)}\}_{D \in \sJ}$ is an 
injection with dense image.  
So that $\sD_\unif ((X,\Theta_\sJ),k)$ contains a copy of the  $k$-module freely generated by $\{\delta_x\}_{x \in X}$. 
\par \medskip  
We then have  
  \beq \label{funcunifdual} \begin{split}
  \sC_\unif & ((X,\Theta_\sJ),k)'_\weak =   \colimit^\un_{D \in \sJ} \sC_\unif(D,k)'_\weak = \\& \limit_{D \in \sJ} \sD_\unif(D,k) =   \sD_\unif  ((X,\Theta_\sJ),k)   \end{split} 
\eeq
  \beq \label{funcunifdualst} \begin{split}
  \sC_\unif & ((X,\Theta_\sJ),k)'_\strong =  \colimit^\un_{D \in \sJ} \sC_\unif(D,k)'_\strong = \\& \limit^{\square,\un}_{D \in \sJ} \sD_\unif(D,k) =   \sD^\circ_\unif  ((X,\Theta_\sJ),k)   \end{split} 
\eeq
    \beq \label{measunifdefst} \begin{split} 
    \sD_\unif & ((X,\Theta_\sJ),k)'_\strong = ( \limit_{D \in \sJ} (\sD_\unif(D,k), h_{D,D'}))'_\strong = \\&
    \colimit^\un_{D \in \sJ} (\sC_\unif(D,k),\pi_{D,D'}^\ast) = \sC_\unif ((X,\Theta_\sJ),k)
  \in \cCLMcan_k \;.
  \end{split}  
  \eeq
   \beq \label{funcunifdef} \begin{split} \sD_\unif &((X,\Theta_\sJ),k)'_\weak =  (\limit_{D \in \sJ}  \sD_\unif(D,k))'_\weak =\\
  & \colimit^\un_{D \in \sJ} \sC(D,k)   =  \sC  (X,k)  \;.
  \end{split}  \eeq
 Only the last  formula needs  justification.  For any $D \in \sJ$, we have a natural bijective morphism,
 $\sC_\unif(D,k) \map{} \sC(D,k)$ and  therefore taking (filtered) set-theoretic colimits a bijection  in $\Mod_k$
\beq \label{setth}   \colimit_{D \in \sJ} \sC_\unif(D,k)  \map{}  \colimit_{D \in \sJ} \sC(D,k)\;.\eeq
 The natural morphism 
 $$  \colimit^\un_{D \in \sJ}  \sC_\unif(D,k)  = \sC_\unif ((X,\Theta_\sJ),k) \map{}  \colimit^\un_{D \in \sJ} \sC(D,k) $$
comes from completion of the l.h.s. of \eqref{setth} in the topology of uniform convergence on $X$, which produces $\sC_\unif ((X,\Theta_\sJ),k)$, and of the r.h.s. of \eqref{setth}, or equivalently of $\sC_\unif ((X,\Theta_\sJ),k)^\for$,  in the topology of uniform convergence on the family 
$$\{\pi_D^{-1}(F)\,|\, D \in \sJ\;,\;F \in \cF(D)\;\}\;.$$ 
But the latter family if cofinal with $\cK(X)$, and the completion of 
$\sC_\unif ((X,\Theta_\sJ),k)^\for$ in that topology is therefore $\sC(X,k)$. 
So 
completion of \eqref{setth}  produces an isomorphism
 $$\sC_\unif ((X,\Theta_\sJ),k) \iso  \colimit^\un_{D \in \sJ} \sC(D,k) = \sC(X,k)\;,$$
 as claimed.   
   \begin{rmk} \label{dualex} The integration pairing $(\mu , f) \longmapsto \mu \circ f := \int_X f(x) d\mu(x)$ on 
\beq \label{dualex1} \sD_\unif((X,\Theta_\sJ),k) \times \sC_\unif((X,\Theta_\sJ),k) \longrightarrow k 
\eeq
 is weak-left perfect   by \eqref{funcunifdual} and strong-right perfect \eqref{measunifdefst}. The same integration pairing  for
\beq \label{dualex2} \sD_\acs(X,k) \times \sC(X,k) \longrightarrow k\;\;,\;\; (\mu , f) \longmapsto \int_X f(x) d\mu(x) \;.
\eeq
 is weak-perfect. 
    \end{rmk} 
\end{subsubsection}
\begin{subsubsection}{Coproduct of measures} 
  We deduce from \eqref{discrmeasexpl} and \eqref{unifmeasdis1} that, for $D$ and $E$ discrete,  the 
$k$-linear isomorphism  
\beq \label{liniso}
 {\bigoplus}_{x \in D, y \in E} k \delta_{(x,y)} \iso
 {\bigoplus}_{x \in D} k \delta_x \otimes_k {\bigoplus}_{y \in E} k \delta_y 
\eeq
extends to an isomorphism   
\beq
\label{disprod?meas} \sD_?(D \times E,k) 
\iso
\sD_?(D,k) \wt^\un_k \sD_?(E,k)   \;, 
\eeq 
 for $? = \acs,\emptyset$. Notice that the morphism
 \beq
\label{disprod?meas1}
\sD^\circ(D \times E,k) 
\map{}
\sD^\circ(D,k) \wt^\un_k \sD^\circ(E,k)
\eeq
is injective but not necessarily an isomorphism.
  \par \medskip

We define a $k$-coalgebra structure on $\sD_?(D,k)$ via the $k$-linear continuous coproduct
\beq \label{discrcoalgebra}
\P^{(D)}_\sD : \sD_?(D,k) \longrightarrow \sD_?(D \times D,k) \iso \sD_?(D,k) \wt^\un_k \sD_?(D,k)
\eeq
sending $\delta_x \longmapsto \delta_{(x,x)} \longmapsto \delta_x \wt^\un_k \delta_x$, for any $x \in D$. Similarly on 
$\sD^\circ(D,k)$.
On  $ \sD_\acs(D,k)$ we also have the ($k$-linear continuous) augmentation 
$$
\veps^{(D)}_\sD : \sD_\acs(D,k) \to k 
\;,
$$ 
 sending $\delta_x$ to 1, for any $x \in D$, so that $\sD_\acs(D,k)$ ($=\sD_\unif(D,k)$) is a (unital) $k$-coalgebra object of $\cCLMcan_k$.
\par \medskip
By Proposition~\ref{tensind}, the $\cCLMcan_k$-isomorphisms $\cL^{(C,C')}_\sD$ of  \eqref{funcmeas1}, for $! = \circ$, 
 induce by application of the functor $\colimit^\un_{C \in \cK(X),C'\in \cK(Y)}$
   an isomorphism 
\beq \label{LDacs1}
 \cL^{(X,Y)}_\sD    :  \sD^\circ_\acs(X \times Y,k) \iso   \sD^\circ_\acs (X,k) \wt^\un_k  \sD^\circ_\acs(Y,k) 
\eeq 
in $\cCLMcan_k$. 
For $! = \emptyset$ we \emph{a priori} only obtain  a morphism \eqref{tensfunct1}
 \beq \label{LDacs2}
 \cL^{(X,Y)}_\sD  :  \sD_\acs(X \times Y,k) \map{} \sD_\acs(X,k) \wt^\un_k  \sD_\acs(Y,k)  \;.
\eeq 
It follows from Corollary~\ref{bijmap} that $\sD^\circ_\acs (X,k) \wt^\un_k  \sD^\circ_\acs(Y,k)$ is a dense subset of  $\sD_\acs(X,k) \wt^\un_k  \sD_\acs(Y,k)$. So,  the morphism \eqref{LDacs2} is obtained by completion of  
the $k$-linear isomorphism \eqref{LDacs1}  in $\cCLMcan_k$ for a weaker topology. Hence it is an isomorphism of 
$\cCLMu_k$.  
\par \smallskip  
Similarly, by Corollary~\ref{ringtens1}, we may dualize  (both weakly and strongly) the $\cCLMcan_k$-isomorphism \eqref{prodfctsacs}  and, 
for $! = \emptyset,\circ$, we obtain isomorphisms
  \beq \label{prodmeaspro}
 \cL^{(X,Y)}_\sD :  \sD^!(X \times Y,k)   \iso \sD^!(X,k) \wt^\un_k  \sD^!(Y,k) \;.
\eeq 
Finally, by Lemma~\ref{ringtens} we may dualize  (both weakly and strongly) the injective morphism \eqref{prodfcts11} to obtain, for $! = \emptyset,\circ$, a morphism
  \beq \label{prodmeasunif}
 \cL^{(X,Y)}_\sD :  \sD^!_\unif(X \times Y,k)   \map{} \sD^!_\unif(X,k) \wt^\un_k  \sD^!_\unif(Y,k) \;.
\eeq 

We summarize what we have proven. 
 \begin{prop} \label{Hopfstruct} Notation as in Remark~\ref{prodfctsstr}.  \ben
 \item
 For $X$ a $td$-space, the product morphism  
\beq \label{prodmor}
 \mu^{(X)}: \sC_? (X,k) \wt^\un_k \sC_? (X,k) \map{\cL^{(X,X)}}  \sC_?(X \times X,k) \map{\Delta_X^\ast} \sC_?(X,k)   
\eeq
of \lc, for $?=\emptyset, \acs$ weakly (resp. strongly)  transposes to a co-associative and co-commutative coproduct for $!=\acs,\emptyset$, respectively,
 \beq \label{coprodcontpro} 
\P_{\sD}^{(X)}:  \sD_!(X,k) \map{(\Delta_X^\ast)^\tu} \sD_!(X \times X,k)   \map{\cL^{(X,X)}_\sD} \sD_!(X,k)   \wt^\un_k \sD_!(X,k)  
\eeq
(resp. 
 \beq \label{coprodcontpro1} 
\P_{\sD}^{(X)}:  \sD^\circ_!(X,k) \map{(\Delta_X^\ast)^\tu} \sD^\circ_!(X \times X,k)   \map{\cL^{(X,X)}_\sD} \sD^\circ_!(X,k)   \wt^\un_k \sD^\circ_!(X,k) \; \mbox{)} 
\eeq
such that $\P_{\sD}^{(X)}(\delta_x) =\delta_x \wt^\un_k \delta_x$, for any $x\in X$.
In case $! = \acs$, the coproduct  $\P_{\sD}^{(X)}$ of \eqref{coprodcontpro} (resp. of \eqref{coprodcontpro1}) together with the augmentation 
$$ \veps_\sD^{(X)}:  \sD_\acs(X,k)  \map{} k \;\;,\;\; \mu \longmapsto \mu(X)$$
(resp. 
$$ \veps_\sD^{(X)}:  \sD_\acs^\circ(X,k)  \map{} k \;\;,\;\; \mu \longmapsto \mu(X)\;\mbox{)}\;,$$
determine  a $k$-coalgebra structure on the object $\sD_\acs(X,k)$ (resp. $\sD_\acs^\circ(X,k)$) of $\cCLMu_k$ (resp. of $\cCLMcan_k$).  In case $! = \emptyset$, the coproduct  $\P_{\sD}^{(X)}$ still exists but 
 fails to define a $k$-coalgebra structure on $\sD(X,k)$ (resp. on $\sD^\circ(X,k)$), for lack of an augmentation. 
 \item Let $(X,\Theta_\sJ)$ be a neat $td$-uniform space,  
as in \eqref{projlimunif}. Then the product morphism \eqref{prodmor}, for $?=\unif$, weakly (resp. strongly)  transposes to the co-associative and co-commutative coproduct \eqref{coprodcontpro} for $! =\unif$, such that $\P_{\sD}^{(X)}(\delta_x) =\delta_x \wt^\un_k \delta_x$, for any $x\in X$. That coproduct, together with the augmentation 
$$ \veps_\sD^{(X)}:  \sD_\unif(X,k)  \map{} k \;\;,\;\; \mu \longmapsto \mu(X)$$
(resp. 
$$ \veps_\sD^{(X)}:  \sD_\unif^\circ(X,k)  \map{} k \;\;,\;\; \mu \longmapsto \mu(X)\;\mbox{)}\;,$$
determine  a $k$-coalgebra structure on the object $\sD_\unif(X,k)$ (resp. $\sD_\unif^\circ(X,k)$) of $\cCLMu_k$ (resp. of $\cCLMcan_k$).
 \een
 \end{prop}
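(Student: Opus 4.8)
The plan is to derive the entire statement by transposing the algebra structure already carried by the function rings. By Remark~\ref{prodfctsstr}, each of $\sC(X,k)$ and $\sC_\unif((X,\Theta_\sJ),k)$ is a commutative unital $k$-algebra object (of $\cCLMu_k$, resp. $\cCLMcan_k$), with multiplication $\mu^{(X)}=\Delta_X^\ast\circ\cL^{(X,X)}$ induced by the diagonal and unit $\eta\colon k\to\sC_?(X,k)$ sending $1$ to the constant function $1$; the ring $\sC_\acs(X,k)$ carries the same associative commutative multiplication but no unit. Associativity, commutativity, and (when present) the unit law are encoded by commutative diagrams built from $\mu^{(X)}$, $\eta$, the associator and the symmetry of $\wt^\un_k$.

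First I would apply the contravariant functor $(-)'_?$ ($?=\weak,\strong$) to these diagrams. Transposing $\mu^{(X)}$ and inserting the identification $\cL^{(X,X)}_\sD$ of \eqref{prodmeaspro} (for $!=\emptyset$), of \eqref{LDacs1}--\eqref{LDacs2} (for $!=\acs$), resp. of \eqref{prodmeasunif} (for $!=\unif$), between $\sD_!(X\times X,k)$ and $\sD_!(X,k)\wt^\un_k\sD_!(X,k)$ yields precisely the composites \eqref{coprodcontpro} and \eqref{coprodcontpro1}. The structural input that makes this work is Corollary~\ref{ringtens1} (assembled from the Stone case of Remark~\ref{funcmeas} by the relevant $\colimit^\un$/$\limit$): there $\cL_\sD$ is shown to be an isomorphism of $k$-coalgebra objects, which is exactly the compatibility of $(-)'_?$ with $\wt^\un_k$ needed to turn the dual of a commutative (unital) algebra object into a co-commutative (counital) coalgebra object. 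Consequently the transpose of the associativity diagram is coassociativity of $\P_\sD^{(X)}$, the transpose of the symmetry is co-commutativity, and in the unital cases the transpose $\eta^\tu$ is the counit; evaluating, $\eta^\tu(\mu)=\mu(\eta(1))=\mu(1)=\int_X 1\,d\mu=\mu(X)$, which is exactly the augmentation $\veps_\sD^{(X)}$. In the non-unital case $?=\acs$ there is no $\eta$, hence no augmentation, which is why for $!=\emptyset$ one obtains only the coproduct and no coalgebra structure.

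It remains to record $\P_\sD^{(X)}(\delta_x)=\delta_x\wt^\un_k\delta_x$, and here I would compute directly. The pullback $\Delta_X^\ast$ sends $g$ to $x\mapsto g(x,x)$, so its transpose sends $\delta_x$ to the functional $g\mapsto g(x,x)$, namely the Dirac mass $\delta_{(x,x)}$ on $X\times X$; applying $\cL^{(X,X)}_\sD$, which by \eqref{discrcoalgebra} and Remark~\ref{funcmeas} takes $\delta_{(x,x)}$ to $\delta_x\wt^\un_k\delta_x$, gives the formula. This Dirac computation also supplies the most economical verification of the axioms: since the $\{\delta_x\}_{x\in X}$ span a dense $k$-submodule of each measure space involved (see \eqref{compinj}, \eqref{diracunif} and the colimit descriptions of section~\ref{measdual}) and all the structure maps are continuous, it suffices to check coassociativity, co-commutativity and the counit identity on the $\delta_x$; as each $\delta_x$ is group-like these hold trivially and extend by continuity.

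The main obstacle is the monoidal bookkeeping rather than any hard estimate: one must ensure that the maps $\cL^{(X,X)}_\sD$ are compatible with the associator and symmetry so that the dualized diagrams genuinely close up, and that in the uniform case --- where \eqref{prodmeasunif} provides \emph{a priori} only a morphism $\sD_\unif(X\times X,k)\to\sD_\unif(X,k)\wt^\un_k\sD_\unif(X,k)$ rather than an isomorphism --- this morphism still suffices to define $\P_\sD^{(X)}$ and to propagate the coalgebra axioms. The density argument of the preceding paragraph neutralizes this worry, since it tests the axioms after composition with $\cL_\sD$ directly on Dirac masses, never invoking invertibility of $\cL_\sD$.
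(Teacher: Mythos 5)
Your construction coincides with the paper's own argument: the Proposition is presented there as a summary of the preceding constructions \eqref{discrcoalgebra}, \eqref{LDacs1}--\eqref{LDacs2}, \eqref{prodmeaspro} and \eqref{prodmeasunif}, followed by transposition of the product $\mu^{(X)}$; your identification of the counit as the transpose of the unit (so that $\veps_\sD^{(X)}(\mu)=\mu(1)=\mu(X)$), your explanation of why $!=\emptyset$ has no augmentation, and the computation $\P_\sD^{(X)}(\delta_x)=\delta_x\wt^\un_k\delta_x$ are exactly what the paper does.

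The gap is in the density argument, and it bites precisely where you rely on it. The claim that the Dirac masses span a dense submodule of \emph{each} measure space involved is false for the strong duals: by \eqref{sumsqprod} the inclusion ${\bigoplus}^\un_{\alpha}M_\alpha\subset{\prod}^{\square,\un}_{\alpha}M_\alpha$ is a \emph{closed} embedding, so in $\sD^\circ(D,k)={\prod}^{\square,\un}_{x\in D}k\delta_x$ of \eqref{Dstrongdef} (with $D$ infinite discrete and $k$ non-discrete) the closure of the span of the $\delta_x$ is the proper closed submodule ${\bigoplus}^\un_{x\in D}k\delta_x$; the same failure occurs for $\sD^\circ(X,k)$ and for $\sD^\circ_\unif((X,\Theta_\sJ),k)$, and the paper itself records the extreme case in section~\ref{ChgVarss}: the $p$-adic completion of $\Z_p[\Q_p]$, i.e.\ the closure of the Dirac span, is \emph{properly} contained in $\sD^\circ_\unif(\Q_p,\Z_p)={\bf A}_{\inf}$. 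Now look at how your proof distributes the work: wherever $\cL^{(X,X)}_\sD$ is an isomorphism (all of item $\mathit 1$) the transposed diagrams close up and density is redundant; you invoke density only to handle the uniform case, where \eqref{prodmeasunif} is merely a morphism. That case, however, comes in two flavours, and for the strong one --- the coproduct \eqref{coprodcontpro1} on $\sD^\circ_\unif((X,\Theta_\sJ),k)$ --- density fails, so neither of your routes covers it and that part of item $\mathit 2$ is left unproven. The repair is to note that invertibility of $\cL_\sD$ was never needed: writing $\P_\sD^{(X)}=\lambda\circ(\mu^{(X)})^\tu$ with $\lambda$ the morphism \eqref{tensdual1?} of Lemma~\ref{ringtens}, coassociativity, cocommutativity and the counit identity follow from associativity, commutativity and unitality of $\mu^{(X)}$ by transposition together with the \emph{naturality} of $\lambda$ and its compatibility with the symmetry and associator of $\wt^\un_k$ --- the same monoidal bookkeeping you already accept for item $\mathit 1$ --- and this covers all six spaces uniformly, which is what the paper implicitly does.
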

\end{subsubsection}
\end{subsection}
\begin{subsection}{Summary of functions vs. measures duality} 
We summarize the essentials of what we have proven in section~\ref{measdual}. 
 \begin{cor} \label{uniffact} Let $(X,\Theta_\sJ)$ be a neat $td$-uniform space,  
as in \eqref{projlimunif}.
 \par There is a natural sequence 
  \beq \label{uniffact1} 
 \sC_\acs(X,k) \longrightarrow  \sC_\unif((X,\Theta_\sJ),k)   \longrightarrow 
\sC(X,k)   \eeq
of associative and commutative (non-unital) $k$-algebra objects of $\cCLMu_k$. The second morphism 
is a morphism of unital $k$-algebras, as well. The first morphism of the sequence \eqref{uniffact1}  
 is a kernel in the category 
$\cCLMcan_k$,  all  morphisms are injective and the composite morphism has 
 dense set-theoretic image.
 \par
The weak (resp. strong) dual of the sequence \eqref{uniffact1}
is a sequence 
\beq \label{uniffact2}    
 \sD_\acs(X,k) 
\map{\inc}   \sD_\unif((X,\Theta_\sJ),k)   \longrightarrow  \sD(X,k)
 \eeq 
 (resp.
 \beq \label{uniffact3}    
 \sD^\circ_\acs(X,k) 
\map{\inc}   \sD^\circ_\unif((X,\Theta_\sJ),k)   \longrightarrow  \sD^\circ(X,k)\;\mbox{)}
 \eeq 
of coassociative and cocommutative (non-counital) $k$-coalgebra objects of $\cCLMu_k$ (resp. of $\cCLMcan_k$). All morphisms are injective and have dense set-theoretic image;
 the morphism $\inc$ is a morphism  of counital $k$-coalgebras, as well. 
 \par
The sequence \eqref{uniffact1} may be  reconstructed from \eqref{uniffact2} via
 \beq \label{canacs3final}  \begin{split}
 \sC_\acs(X,k) =  \sD(X,k)'_\weak &= \sD(X,k)'_\strong  \longrightarrow \\ \sC_\unif((X,\Theta_\sJ),k) &:= 
 \sD_\unif((X,\Theta_\sJ),k)'_\strong 
 \\ 
 \longrightarrow \sC(X,k) &= \sD_\acs(X,k)'_\weak
\end{split} \eeq
By \eqref{Dacsdef}
and the first line of 
 \eqref{Dacslimdiscr}, we have both  
\beq \label{surpriseacs}
\sD_\acs(X,k) = \colimit^\un_{C \in \cK(X)} (\sD(C,k) ,j_{C,C'})  = \limit_{D \in \sQ(X)} ( \sD_\acs(D,k), h_{D,D'}) \;.
\eeq
By \eqref{Dacsodef} and the second line of  \eqref{Dacslimdiscr}, we have both 
 \beq \label{Dacsodef}
\sD_\acs^\circ(X,k) = \colimit^\un_{C \in \cK(X)} (\sD^\circ(C,k), j_{C,C'})   = \limit^\can_{D \in \sQ(X)} ( \sD_\acs(D,k), h_{D,D'})  
\eeq
in $\cCLMcan_k$.
  \end{cor}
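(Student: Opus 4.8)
The statement is a summary, so the plan is to read off each assertion from results already in hand rather than to prove anything new. First I would set up the function sequence \eqref{uniffact1}. The three objects together with the claims on unitality are exactly what Remark~\ref{prodfctsstr} provides: $\sC(X,k)$ and $\sC_\unif((X,\Theta_\sJ),k)$ are unital commutative $k$-algebra objects, while $\sC_\acs(X,k)$ is the non-unital one. The injectivity of both arrows, the fact that the first is a strict monomorphism in $\cCLMcan_k$, and the density of the composite are precisely the content of sequence \eqref{canacs3} in Corollary~\ref{acscor} together with \eqref{uniftocont}; since a strict monomorphism in a quasi-abelian category is by definition a kernel, the first arrow is a kernel in $\cCLMcan_k$ as asserted.

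Next I would obtain the dual sequences \eqref{uniffact2} and \eqref{uniffact3} by applying the contravariant functors $(-)'_\weak$ and $(-)'_\strong$ to \eqref{uniffact1}. Reversing the arrows and identifying the three resulting terms by Definition~\ref{Dacsdef0} (so $\sC(X,k)'_\weak = \sD_\acs(X,k)$), by \eqref{funcunifdual} (so $\sC_\unif((X,\Theta_\sJ),k)'_\weak = \sD_\unif((X,\Theta_\sJ),k)$), and by Definition~\ref{dualdefn} (so $\sC_\acs(X,k)'_\weak = \sD(X,k)$), together with their strong analogues, yields the displayed sequences of underlying $k$-modules. The coassociative cocommutative coproducts and the augmentations on $\sD_\acs$ and $\sD_\unif$ are those constructed in Proposition~\ref{Hopfstruct}, and Corollary~\ref{ringtens1} and Lemma~\ref{ringtens} guarantee that transposition carries the product morphisms to these coproducts, so that the dual arrows are coalgebra morphisms. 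Because $\inc$ is the transpose of the \emph{unital} algebra map $\sC_\unif((X,\Theta_\sJ),k)\to\sC(X,k)$, it respects counits, whereas the second dual arrow, being dual to the non-unital inclusion $\sC_\acs(X,k)\to\sC_\unif((X,\Theta_\sJ),k)$, lands in the non-counital $\sD(X,k)$ and respects only the coproduct. The injectivity and density of the dual arrows follow from the duality results of section~\ref{duality}: the transpose of a morphism with dense image is injective, and the transpose of the present strict monomorphisms has dense image.

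The reconstruction \eqref{canacs3final} is then pure biduality: $\sD(X,k)'_\weak = \sD(X,k)'_\strong = \sC_\acs(X,k)$ is Proposition~\ref{bidual}, the identity $\sD_\unif((X,\Theta_\sJ),k)'_\strong = \sC_\unif((X,\Theta_\sJ),k)$ is \eqref{measunifdefst}, and $\sD_\acs(X,k)'_\weak = \sC(X,k)$ is \eqref{generalbidual}. Finally the two displayed identities, namely \eqref{surpriseacs} and the one for $\sD^\circ_\acs(X,k)$, combine the two definitions of $\sD_\acs(X,k)$ and $\sD^\circ_\acs(X,k)$ from Definition~\ref{Dacsdef0} with the two lines of \eqref{Dacslimdiscr}.

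The one point deserving genuine care, and the only place where the bookkeeping could go wrong, is the asymmetry of the weak and strong topologies in \eqref{canacs3final}: the reconstruction must recover not merely the underlying $k$-modules but the correct topologies. This works precisely because the integration pairing on $(X,\Theta_\sJ)$ is left-weak and right-strong perfect (Remark~\ref{dualex}), so that $\sD_\unif((X,\Theta_\sJ),k)$ must be dualized \emph{strongly} to return $\sC_\unif((X,\Theta_\sJ),k)$, while $\sD_\acs(X,k)$ is dualized \emph{weakly} to return $\sC(X,k)$. Keeping these two topologies consistent throughout the transposition is the main obstacle; everything else is a direct citation of the preceding results.
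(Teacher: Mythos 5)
Your proposal is correct and follows essentially the same route as the paper: this corollary is stated there as a pure summary, and its justification is exactly the assembly of citations you give — Remark~\ref{prodfctsstr} and Corollary~\ref{acscor} for \eqref{uniffact1}, the definitions \eqref{Dacsdef}, \eqref{funcunifdual}, \eqref{dualdefn5} and their strong analogues plus Proposition~\ref{Hopfstruct} for the dual sequences, and Proposition~\ref{bidual}, \eqref{measunifdefst}, \eqref{generalbidual} for the biduality reconstruction. Your closing observation about the weak/strong asymmetry in \eqref{canacs3final}, tied to the left-weak right-strong perfection of the pairing in Remark~\ref{dualex}, is precisely the point the paper's bookkeeping is designed to protect.
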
 
From the discussion  we have obtained 
\begin{cor} \label{intpairing} For any $td$-space $X$, we have 
weak perfect pairings 
\beq \label{acsproint}
\circ : \sC_\acs(X,k)   \times  \sD(X,k)    \longrightarrow   k
\eeq  
and 
\beq \label{contweakint}
\circ : \sC(X,k)   \times  \sD_\acs(X,k)    \longrightarrow   k
\eeq
We also have strong 
perfect pairings 
\beq \label{acsproint}
\circ : \sC_\acs(X,k)   \times  \sD^\circ(X,k)    \longrightarrow   k
\eeq  
and 
\beq \label{contweakint}
\circ : \sC^\circ(X,k)   \times  \sD^\circ_\acs(X,k)    \longrightarrow   k \;.
\eeq
For any  uniform $td$ space $(X,\Theta_\sJ)$ we have a  strong-left  and weak-right perfect pairing
\beq \label{unifint}
\circ : \sC_\unif((X,\Theta_\sJ),k)    \times  \sD_\unif((X,\Theta_\sJ),k)   \longrightarrow   k \;.
\eeq 
Any of those  pairings  will be denoted by 
$$
(f,\mu) \longmapsto f \circ \mu =: \int_X f(x) \mu(x)
$$
and will be called  \emph{integration over $X$}. 
The previous integration pairings are compatible in an obvious sense with the sequence  \eqref{canacs3final}.
\end{cor}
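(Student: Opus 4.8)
The plan is to recognise this Corollary as a repackaging of the definitions of the measure spaces as weak and strong duals, combined with the biduality identities proven above: in each of the five pairings one half of the perfectness is definitional and the other half is a biduality theorem. First I would specify, for every pairing, that the underlying $k$-bilinear map is the evaluation $(f,\mu)\longmapsto \mu(f)=\int_X f\,d\mu$, and check separate continuity. For $\mu$ fixed this is the continuity of $\mu$ as an element of the relevant continuous dual; for $f$ fixed it is continuity of evaluation at $f$, which holds because by Remark~\ref{weakstrong} the weak topology on any dual is exactly the topology of simple convergence, while the strong topology is finer. This places each pairing in the setting of Definition~\ref{perfpair}.

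Next I would run through the five cases, matching each side to a cited result; throughout I use that, in Definition~\ref{perfpair}, \emph{left} perfect means the left factor is recovered as the dual of the right factor and \emph{right} perfect the reverse. For the pairing $\sC_\acs(X,k)\times\sD(X,k)$, weak-right perfectness is the defining identity $\sD(X,k)=\sC_\acs(X,k)'_\weak$ of \eqref{dualdefn5} and weak-left perfectness is $\sD(X,k)'_\weak=\sC_\acs(X,k)$ from Proposition~\ref{bidual}. For $\sC(X,k)\times\sD_\acs(X,k)$, the right side is $\sD_\acs(X,k)=\sC(X,k)'_\weak$ of \eqref{Dacsdef} and the left side is \eqref{generalbidual}. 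Its strong counterpart $\sC_\acs(X,k)\times\sD^\circ(X,k)$ uses \eqref{dualdefn5st} on the right and again Proposition~\ref{bidual} on the left, while $\sC^\circ(X,k)\times\sD^\circ_\acs(X,k)$ uses \eqref{bisform1}, i.e.\ $\sD^\circ_\acs(X,k)=(\sC^\circ(X,k))'_\strong$, on the right and \eqref{generalbidualst} on the left. Finally the uniform pairing \eqref{unifint} is weak-right perfect by \eqref{funcunifdual} and strong-left perfect by \eqref{measunifdefst}, which is precisely the asserted ``strong-left, weak-right'' type.

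For the compatibility with the sequence \eqref{canacs3final} I would note that every horizontal arrow in \eqref{uniffact1} and in the dual sequences \eqref{uniffact2} and \eqref{uniffact3} is, by construction, a transpose (Definition~\ref{transpose}) of a map going the other way. Hence the adjunction identity $\langle\varphi(f),\mu\rangle=\langle f,\varphi^\tu(\mu)\rangle$ holds, and this is exactly the ``obvious'' compatibility of the integration pairings along these morphisms.

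The candid assessment is that there is no genuine obstacle: the mathematical content lives entirely in Proposition~\ref{bidual} and in the biduality formulas \eqref{generalbidual}, \eqref{generalbidualst}, \eqref{bisform1}, \eqref{funcunifdual}, \eqref{measunifdefst} already established. The only points demanding attention are the correct left/right and weak/strong bookkeeping in Definition~\ref{perfpair}, and the consistency that the strong dual of $\sC(X,k)$ and of its canonicalisation $\sC^\circ(X,k)$ yield the same object $\sD^\circ_\acs(X,k)$ — precisely the agreement between Definition~\ref{Dacsdef0} and \eqref{bisform1}.
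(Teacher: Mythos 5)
Your proposal is correct and follows essentially the same route as the paper, which offers no separate argument but presents this corollary as a summary ``from the discussion'': each pairing is perfect on one side by the defining identities \eqref{dualdefn5}, \eqref{Dacsdef}, \eqref{dualdefn5st}, \eqref{bisform1}, \eqref{funcunifdual}, and on the other side by the biduality formulas \eqref{bidual1}, \eqref{generalbidual}, \eqref{generalbidualst}, \eqref{measunifdefst}. Your left/right and weak/strong bookkeeping against Definition~\ref{perfpair} is accurate, and the separate-continuity and transpose-compatibility remarks fill in exactly what the paper leaves implicit.
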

\end{subsection}
\end{section}
\begin{section}{Measures: direct definition} \label{meastheory}
 \begin{defn} \label{measinterdef}  
 Let $X$ (resp. $(X,\Theta_\sJ)$) be an object of $td-\cS$ (resp. a neat object of $td-\cU$). 
We let $\Sigma(X)$ (resp. $\Sigma_D(X)$ for $D \in \sJ$) be the boolean ring
 of subsets $U \subset X$ which are \emph{clopen}, \ie both open and closed, 
 (resp. of the form $U =\pi_D^{-1}(V)$ for $V \subset D$) in $X$.  
 We regard the multiplicative monoid  $(\Sigma(X),\cdot = \cap)$ 
 (resp. $(\cK(X),\cdot = \cap)$) as a multiplicative submonoid of 
   $\sC(X,k)$ (resp. of  $\sC_\acs(X,k)$)   via $U \longmapsto \chi^{(X)}_U$. These embeddings extend to inclusions  of $k$-algebras on the previous multiplicative monoids
\beq \label{boole1}
 k[\cK(X)] \longrightarrow \sC_\acs(X,k) \;\;,\;\;  k[\Sigma(X)] \longrightarrow \sC(X,k) \;.
\eeq 
If $(X,\Theta_\sJ)$ is given, we set  $\Sigma_\sJ(X) := \bigcup_{D \in \sJ} \Sigma_D(X)$ and call its elements \emph{uniformly measurable subsets of $(X,\Theta_\sJ)$}.
  For any $D \in \sJ$ we have inclusions of rings $\Sigma_D(X) \leq \Sigma_\sJ(X) \leq \Sigma(X)$.  
Then $(\Sigma_\sJ(X),\cdot)$ is a multiplicative submonoid of 
   $\sC_\unif(X,k)$ and $U \longmapsto \chi^{(X)}_U$
    extends to  inclusions   of $k$-algebras  \eqref{unifcont}
   \beq \label{booleunif} 
  k[\Sigma_\sJ(X)] \longrightarrow \bigcup_{D \in \sJ} \sC_\unif(D,k) \longrightarrow \sC_\unif((X,\Theta_\sJ),k)\;.
\eeq
 \end{defn}
 The following proposition provides a more explicit alternative definition of the three spaces of measures which appear in \eqref{uniffact2}. 
 \begin{prop} \label{measinter}
 Let $X$ (resp. $(X,\Theta_\sJ)$) be an object of $td-\cS$ (resp. of $td-\cU$). 
   Then, 
   \ben                                                                                                                       
   \item  the inclusion $k[\Sigma(X)] \longrightarrow \sC(X,k)$ 
   of \eqref{boole1}
    is dense;
\item
the topological $k$-coalgebra $\sD_\unif((X,\Theta_\sJ),k)$   of uniform $k$-valued measures
  on $(X,\Theta_\sJ)$,  coincides with the set of maps  
\beq \label{setmeas}
\mu: \Sigma_\sJ(X) \longrightarrow k 
\eeq 
    such that  
 \medskip
 \par {\bf (UNIFMEAS)}
  \emph{For  any  $D \in \sJ$ and any disjoint subfamily $\{U_\alpha\}_{\alpha \in A}$ of  $\Sigma_D(X)$, 
the $A$-series $\sum_{\alpha \in A}\mu(U_\alpha)$
 converges unconditionally to $\mu(\bigcup_{\alpha \in A} U_\alpha)$.}
 \par  \medskip \noindent
The map \eqref{setmeas} corresponding to  $\mu \in \sD_\unif((X,\Theta_\sJ),k) = \sC_\unif((X,\Theta_\sJ),k)'_\weak$ is given
by the formula   
\beq \label{intX}
 U \longmapsto \mu(U) := \chi_U^{(X)} \circ \mu = \int_X \chi_U^{(X)}(x) \mu(x)  \;.
\eeq
 Conversely, any map \eqref{setmeas} satisfying {\bf (UNIFMEAS)} above
extends uniquely to an element of $\sD_\unif((X,\Theta_\sJ),k)$. 
 The coproduct 
 $$
\P_{\sD}^{(X)}:  \sD_\unif((X,\Theta_\sJ),k) \longrightarrow \sD_\unif((X,\Theta_\sJ),k) \wt_k^\un \sD_\unif((X,\Theta_\sJ),k)
 $$ 
 of \eqref{coprodcontpro}  
 is given
 by
\beq \label{coprmeas}
\P_{\sD}^{(X)}(\mu) (U \times V) = \mu (U \cap V) \;,
 \eeq
 for any $\mu \in \sD_\unif((X,\Theta_\sJ),k)$, and $U,V \in \Sigma_\sJ(X)$.
The topology  of $\sD_\unif((X,\Theta_\sJ),k)$ has a basis of open submodules 
 $\sU_{D,J}$ 
indexed by   ideals $J \in \cP(k)$ and  discrete quotients of $X$  $\pi_D: X \to D$  in $\sJ$, 
where 
\beq \label{unifeqradius}
\sU_{D,J} = \{ \mu \in  \sD_\unif(X,k)\,|\, \mu(U) \in J\,, \forall \,U \in \Sigma_D(X) \,\} \;.
\eeq
\item  
 the topological $k$-coalgebra $\sD_\acs(X,k) = \sC(X,k)'_\weak$  ($\sD^\circ_\acs(X,k) = \sC(X,k)'_\strong$) of   $k$-valued  measures with almost compact support 
on $X$   coincides, via the correspondence \eqref{intX},  with the $k$-module   
of
maps  
\beq
\mu : \Sigma(X) \longrightarrow k
\eeq
such that 
  \par \medskip
 {\bf (ACSMEAS)}
 \emph{For any disjoint subfamily $\{U_\alpha\}_{\alpha \in A}$ of  $\Sigma(X)$ such that 
 $U := \bigcup_{\alpha \in A} U_\alpha \in \Sigma(X)$, 
 the $A$-series $\sum_{\alpha \in A}\mu(U_\alpha)$
 converges unconditionally to $\mu (U)$ in $k$. } 
   \par  \medskip \noindent
 The topology of $\sD^\circ_\acs(X,k)$ is  the  naive $k$-canonical topology. The topology of  $\sD_\acs(X,k)$ 
may be described as follows. 
 For any $td$-partition $\sP \in \sQ(X)$  of $X$   and any $J \in \cP(k)$, we set 
 $$
 V_{\sP,J} := \{\mu \in \sD_\acs(X,k)\,|\, \mu(C) \in J\; ,\; \forall \; C \in \sP\,\} \;.
 $$
 Then, $V_{\sP,J}$ is a $k$-submodule of $\sD_\acs(X,k)$, and the family $\{V_{\sP,J}\}_{\sP, J}$, indexed by $\sQ(X) \times \cP(k)$, is a basis of open $k$-submodules of 
 $\sD_\acs(X,k)$.  
 \par
   The coproducts  of  \eqref{coprodcontpro} and   \eqref{coprodcontpro1} are given by formula 
  \eqref{coprmeas},  for any $U,V \in \Sigma(X)$.
  \item  The topological $k$-coalgebra $\sD(X,k)$ (resp. $\sD^\circ(X,k)$)   of   $k$-valued  measures on $X$
 coincides with the space of maps
 \beq
\mu: \cK(X) \longrightarrow k
 \eeq
 as in \eqref{intX}
   such that 
   \par \medskip
    {\bf (MEAS)}
 \emph{For any finite disjoint subfamily $\sF  \subset \cK(X)$ and for 
 $U :=  \bigcup_{C \in \sF} C$,  $\sum_{C \in \sF}\mu(C)=\mu(U)$.}
    \par \medskip \noindent
  The coproduct  of  \eqref{coprodcontpro} 
  is given by formula 
  \eqref{coprmeas},  for any $U,V \in \cK(X)$.    A basis of open $k$-modules for the $k$-linear topology of $\sD(X,k)$ is the family $\{ U_{\sF;I}\}_{\sF;I}$,  indexed by finite families $\sF \subset  \cK(X)$ and $I \in \cP(k)$, where
 \beq 
 U_{\sF,I} := \{ \mu \in \sD(X,k)\,|\, \mu(C) \in I\;,\;\forall \; C \in \sF \,\}\;.
 \eeq
 The topology of  $\sD^\circ(X,k)$ is the naive canonical topology. 
\item The first morphism of the sequence \eqref{uniffact2}   
is induced by the  restriction   of $\mu \in \sD_\acs(X,k)$, $\mu:\Sigma(X) \longrightarrow k$,  to $\Sigma_\sJ(X)$. The second morphism takes $\mu \in \sD_\unif((X,\Theta_\sJ),k)$ to the measure
\beq
\what{\mu} : \cK(X) \longrightarrow k \;\;,\;\; C \longmapsto \int_X \chi_C^{(X)} \mu \;,
\eeq
using the fact (see $\mathit 3$ of Lemma~\ref{contcompl}) that $\chi_C^{(X)} \in \sC_\unif((X,\Theta_\sJ),k)$.  
\een
 \end{prop}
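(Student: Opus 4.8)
The plan is to treat parts $\mathit{1}$ through $\mathit{5}$ as translations of the abstract weak/strong dual definitions of section~\ref{measdual} into the language of finitely and countably additive $k$-valued set functions, using two recurring devices: the integration pairing \eqref{intX} together with the density of characteristic functions, and the reduction of each space of measures to the already-understood Stone and discrete cases via the limit--colimit presentations \eqref{measunifdef}, \eqref{surpriseacs}, \eqref{dualdefn5} and \eqref{Dacsdef}. First I would dispatch $\mathit{1}$: the approximating functions $\psi=\sum_i f(x_i)\chi_{U_i}^{(X)}$ constructed in the proof of Lemma~\ref{denslc} already lie in $k[\Sigma(X)]$, so the density of $k[\Sigma(X)]$ in $\sC(X,k)$ is an immediate sharpening of that lemma; the analogous density of $k[\cK(X)]$ in $\sC_\acs(X,k)$ and of $k[\Sigma_\sJ(X)]$ in $\sC_\unif((X,\Theta_\sJ),k)$ follow the same pattern, the compactly supported resp. uniformly measurable refinements being available by the very definition of those function spaces. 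Density makes the assignment $\mu\longmapsto(U\mapsto\chi_U^{(X)}\circ\mu)$ injective on each dual, so in every case the underlying $k$-module of measures embeds into a space of $k$-valued functions on the appropriate boolean algebra $\cK(X)$, $\Sigma_\sJ(X)$ or $\Sigma(X)$.

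Next, the additivity conditions are read off from continuity. For a disjoint family $\{U_\alpha\}$ with clopen union $U$, the partial sums $\sum_{\alpha\in F}\chi_{U_\alpha}^{(X)}$ converge to $\chi_U^{(X)}$ in the topology governing the relevant pairing: uniformly on each compact set, since only finitely many blocks meet a given compact; uniformly on $X$ when the family is a $td$-partition into balls of one radius $D\in\sJ$; and as a finite identity when $U$ itself is compact. Continuity of $\mu$ therefore forces the stated unconditional additivity in $\sD_\unif$ and $\sD_\acs$, and merely the finite additivity $(\mathbf{MEAS})$ in $\sD(X,k)$, where an infinite disjoint union of compact opens leaves $\cK(X)$. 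The converse, that every set function satisfying the condition extends to a genuine element of the dual, is obtained by importing the explicit discrete and Stone descriptions: for $\sD_\unif((X,\Theta_\sJ),k)=\limit_{D\in\sJ}\sD_\unif(D,k)$ and $\sD_\acs(X,k)=\limit_{D\in\sQ(X)}\sD_\acs(D,k)$ one uses $\Sigma_D(X)\cong\fP(D)$ and the identification $\sD_\unif(D,k)=\sD_\acs(D,k)={\bigoplus}^\un_{x\in D}k\delta_x$ of Proposition~\ref{discrmeas}; a compatible system under the push-forwards $h_{D,D'}$ is exactly a set function consistent across levels, and the ${\bigoplus}^\un$-clause at each level is precisely unconditional convergence of $\sum_{d}\mu(\pi_D^{-1}(d))$. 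For $\sD(X,k)=\limit_{C\in\cK(X)}\sD(C,k)$ of \eqref{dualdefn5} one imports Proposition~\ref{meascompexpl} instead.

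The topologies are matched by tracing basic open submodules through these presentations: the weak-limit topology produces exactly the neighbourhoods $\sU_{D,J}$, $V_{\sP,J}$, $U_{\sF,I}$ cut out by requiring $\mu(\,\cdot\,)\in J$ on the blocks of a single partition, while the strong/canonical variants $\sD^\circ_?$ carry the same underlying set functions and only replace this by the naive $k$-canonical topology, by Corollary~\ref{bijmap} and \eqref{Dstrongdef}. The coproduct formula \eqref{coprmeas} is the transpose of the diagonal pullback $\Delta_X^\ast$: since $\Delta_X^\ast(\chi_U^{(X)}\otimes\chi_V^{(X)})=\chi_U^{(X)}\chi_V^{(X)}=\chi_{U\cap V}^{(X)}$ under the algebra isomorphism $\cL^{(X,X)}$ of Proposition~\ref{contvsunif}, dualizing through $\cL^{(X,X)}_\sD$ gives $\P_{\sD}^{(X)}(\mu)(U\times V)=\mu(U\cap V)$ on characteristic functions, hence on all of $\sD_?$ by density. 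Finally part $\mathit{5}$ is checked on the dense subspace of Dirac masses: the first morphism of \eqref{uniffact2} restricts $\mu\colon\Sigma(X)\to k$ to $\Sigma_\sJ(X)$ and fixes each $\delta_x$, while the second sends $\mu$ to $C\longmapsto\chi_C^{(X)}\circ\mu$, which is legitimate because $\chi_C^{(X)}\in\sC_\unif((X,\Theta_\sJ),k)$ by $\mathit{3}$ of Lemma~\ref{contcompl}.

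The main obstacle, I expect, is not any single computation but the bookkeeping of the converse direction and of the topology. One must verify that the purely set-theoretic additivity condition is exactly equivalent to compatibility under all transition maps of the defining $\limit$ — including the unconditional-convergence clause, which is what encodes the ${\bigoplus}^\un$ (equivalently, the vanishing at infinity) at every level — and that the $\colimit^\un$-completions hidden in the definitions of $\sC_\unif$ and $\sD_\acs$ yield neither a finer nor a coarser topology than the stated one. Keeping the weak, strong and canonical variants correctly paired throughout, so that each description lands on the right object among $\sD_?$ and $\sD^\circ_?$, is where the argument most easily goes astray.
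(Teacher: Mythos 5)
Your overall plan --- read each dual as a space of set functions via the pairing \eqref{intX}, reduce to the discrete and Stone building blocks through the limit presentations, and obtain the coproduct by transposing $\Delta_X^\ast$ --- is the right architecture, but two of your intermediate claims are false, and each breaks a step of the argument. First, you claim that the density of $k[\Sigma(X)]$ in $\sC(X,k)$ extends ``by the same pattern'' to $k[\cK(X)]$ in $\sC_\acs(X,k)$ and to $k[\Sigma_\sJ(X)]$ in $\sC_\unif((X,\Theta_\sJ),k)$. This is precisely what Remark~\ref{counterunif} of the paper refutes: for $X=D$ discrete with the discrete uniformity and $k$ discrete, both spaces equal $(\prod_{y\in D}k\chi_y^{(D)})^\dis$, whose topology is discrete, so density would mean $k[\fP(D)]=(\prod_{y\in D}k\chi_y^{(D)})^\dis$, false for $D$ infinite. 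As a consequence, your injectivity argument (``density makes $\mu\longmapsto(U\mapsto\chi_U^{(X)}\circ\mu)$ injective on each dual'') is unsupported exactly where you need it most, namely for $\sD(X,k)=\sC_\acs(X,k)'_\weak$ and for $\sD_\unif((X,\Theta_\sJ),k)=\sC_\unif((X,\Theta_\sJ),k)'_\weak$; there injectivity must instead be read off from the limit presentations: an element of $\sD_\unif((X,\Theta_\sJ),k)=\limit_{D\in\sJ}{\bigoplus}^\un_{d\in D}k\delta_d$ is determined by its coordinates $\mu(\pi_D^{-1}(d))$, and an element of $\sD(X,k)=\limit_{C\in\cK(X)}\sD(C,k)$ by its restrictions to the Stone pieces together with Remark~\ref{basiscomp}.

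Second, your continuity argument for \textbf{(UNIFMEAS)} fails. For a disjoint family $\{U_\alpha\}_{\alpha\in A}$ in $\Sigma_D(X)$ with infinitely many nonempty members, the partial sums $\sum_{\alpha\in F}\chi_{U_\alpha}^{(X)}$ do \emph{not} converge to $\chi_U^{(X)}$ uniformly on $X$: the difference is the characteristic function of the union of the omitted blocks, hence takes the value $1$ somewhere, and $1\notin J$ for every proper open ideal $J\in\cP(k)$. The convergence does hold in $\sC(X,k)$ (each compact meets only finitely many blocks of a disjoint clopen family), but an element of $\sD_\unif((X,\Theta_\sJ),k)$ is not a continuous functional on $\sC(X,k)$ --- if it were, it would lie in $\sD_\acs(X,k)$, and the whole point of the theory is that the inclusion $\sD_\acs(X,k)\subset\sD_\unif((X,\Theta_\sJ),k)$ is strict in general. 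So the \emph{necessity} of \textbf{(UNIFMEAS)}, and not only the converse, has to go through the explicit description: writing the $D$-component of $\mu$ as $(a_d)_{d\in D}\in{\bigoplus}^\un_{d\in D}k\delta_d$, one has $\mu(\pi_D^{-1}(A))=\sum_{d\in A}a_d$, and the claim becomes the associativity of unconditional sums of null families in the complete group $k$, using that open ideals are closed. The repair lies entirely within the toolkit you already deploy for the converse direction, but as written both steps are wrong, and the first one contradicts a result stated in the paper itself.
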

 We omit the proof, since it is quite standard 
 \begin{prop} \label{almcompactsupp} Let 
 $$Res: \sD_\acs(X,k) \longrightarrow \sD(X,k)$$
be the composite morphism in the sequence \eqref{uniffact2}. 
 The set-theoretic image of $Res$ is the set of measures $\mu \in \sD(X,k)$ such that 
\par \medskip
{\bf(ACSMEASbis)} \emph{For any $I \in \cP(k)$, there exists  $C_I \in \cK(X)$ such that, for any $V \in \cK(X)$, $V \cap C_I =\emptyset$ implies
  $\mu(V) \in I$.} 
\end{prop}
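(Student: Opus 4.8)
The plan is to render everything explicit through the projective‑limit description of $\sD_\acs(X,k)$ modulo open ideals, which is the measure‑theoretic twin of Remark~\ref{limdiscr2}. First I would record what $Res$ actually does: viewing $\nu\in\sD_\acs(X,k)$ as a map $\nu:\Sigma(X)\to k$ as in $\mathit 3$ of Proposition~\ref{measinter}, and noting that any $C\in\cK(X)$ lies in $\Sigma_\sJ(X)$ (by neatness and Proposition~\ref{tdunif} a compact open set is a finite union of fibres of a single $\pi_D$, hence of the form $\pi_D^{-1}(A)$), the second map of \eqref{uniffact2} gives by $\mathit 5$ of Proposition~\ref{measinter} the value $Res(\nu)(C)=\int_X\chi_C^{(X)}\nu=\nu(C)$. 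Thus $Res$ is simply restriction of measures from $\Sigma(X)$ to $\cK(X)$, and I must show that the image of this restriction consists exactly of the $\mu\in\sD(X,k)$ satisfying \textbf{(ACSMEASbis)}.

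The main tool is the identity
\[
\sD_\acs(X,k)=\colimit^\un_{C\in\cK(X)}\sD(C,k)=\limit_{I\in\cP(k)}\Bigl(\bigoplus_{x\in X}(k/I)\,\delta_x\Bigr),
\]
obtained from \eqref{indu} exactly as \eqref{proacsfcts1} on the function side, using that for a Stone space $C$ and discrete coefficients $k/I$ one has $\sD(C,k/I)=\bigoplus^\un_{x\in C}(k/I)\delta_x=\bigoplus_{x\in C}(k/I)\delta_x$ by Proposition~\ref{meascompexpl}, i.e. \emph{every $(k/I)$-valued measure on a Stone space is finitely supported}. So an element $\nu$ is a coherent family $(\nu_I)_{I\in\cP(k)}$, each $\nu_I=\sum_{x\in F_I}\bar a_x\,\delta_x$ a finitely supported $(k/I)$-measure with $F_I\subset X$ finite, $\nu_I\bmod J=\nu_J$ for $I\subseteq J$, and $\nu(U)\equiv\sum_{x\in U\cap F_I}\bar a_x \pmod I$ for every $U\in\Sigma(X)$. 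For the inclusion ``image $\subseteq$ \textbf{(ACSMEASbis)}'', fix $I$ and, since $X$ is a $td$-space, choose $C_I\in\cK(X)$ with $F_I\subseteq C_I$ (a finite union of compact open neighbourhoods). For $V\in\cK(X)$ with $V\cap C_I=\emptyset$ one has $V\cap F_I=\emptyset$, whence $\mu(V)=\nu(V)\equiv 0\pmod I$, i.e. $\mu(V)\in I$; this is precisely \textbf{(ACSMEASbis)}.

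For the reverse inclusion I would start from $\mu\in\sD(X,k)$ (finitely additive on $\cK(X)$ by \textbf{(MEAS)}) satisfying \textbf{(ACSMEASbis)}, fix $I$, and pick $C_I\in\cK(X)$ as in that condition. Then $\bar\mu:=\mu\bmod I$ vanishes on compact opens disjoint from $C_I$, so by finite additivity $\bar\mu(C)=\bar\mu(C\cap C_I)$ for all $C\in\cK(X)$ (as $C\setminus C_I\in\cK(X)$ is disjoint from $C_I$). Its restriction to the Stone space $C_I$ is a $(k/I)$-measure, hence finitely supported by Proposition~\ref{meascompexpl}: write it as $\sum_{x\in F_I}\bar a_x\,\delta_x$ with $F_I\subset C_I$ finite, and set $\nu_I:=\sum_{x\in F_I}\bar a_x\,\delta_x\in\bigoplus_{x\in X}(k/I)\delta_x$. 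Then $\nu_I(C)=\bar\mu(C\cap C_I)=\bar\mu(C)$ for all $C\in\cK(X)$, and $\nu_I$ is the \emph{unique} finitely supported $(k/I)$-measure with this property — uniqueness holds because finitely many points of $X$ can be separated by compact open sets ($X$ being Hausdorff and $0$-dimensional), so a finitely supported measure is determined by its values on $\cK(X)$. Coherence $\nu_I\bmod J=\nu_J$ for $I\subseteq J$ then follows from this uniqueness, both sides agreeing with $\mu\bmod J$ on $\cK(X)$. Hence $\nu:=(\nu_I)_I\in\sD_\acs(X,k)$, and since $\nu(C)\equiv\mu(C)\pmod I$ for every $I$ and $k$ is separated, $\nu(C)=\mu(C)$ for all $C\in\cK(X)$, i.e. $Res(\nu)=\mu$.

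The step I expect to carry the real content is the structural identity $\sD_\acs(X,k)=\limit_I\bigoplus_{x\in X}(k/I)\delta_x$ together with the finite‑support‑mod‑$I$ phenomenon from Proposition~\ref{meascompexpl}: this is exactly where ``almost compact support'' is encoded, and it is what makes the witnessing set $C_I$ (equivalently the finite support $F_I$) available. Everything else is a coherent gluing across $I\in\cP(k)$, where the only delicate point is the uniqueness argument separating finitely many points by clopens; the finite additivity \textbf{(MEAS)} and separatedness of $k$ then close both directions with no further analysis.
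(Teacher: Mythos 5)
Your identification of $Res$ with restriction of set functions to $\cK(X)$ is fine, but the structural identity on which both of your inclusions rest is false, and this is a fatal gap. You claim that for a Stone space $C$ and discrete coefficients one has $\sD(C,k/I)=\bigoplus_{x\in C}(k/I)\delta_x$, i.e.\ that \emph{every $(k/I)$-valued measure on a Stone space is finitely supported}, and you derive from it that $\sD_\acs(X,k)=\limit_{I\in\cP(k)}\bigl(\bigoplus_{x\in X}(k/I)\delta_x\bigr)$. Neither statement is true. In Proposition~\ref{meascompexpl} the operative content is that $\sD(C,k)$ is the \emph{completion} of $\bigoplus_{x\in C}k\delta_x$ in the weak topology $\{V'_{F,I}\}$, and that completion is strictly larger than the algebraic direct sum; reading the first display of that proposition literally, as you do, contradicts Corollary~\ref{unifballs} (which identifies $\sD(C,k)$ with \emph{all} finitely additive $k$-valued functions on $\Sigma(C)$) and Corollary~\ref{IwasawaFp} (which gives $\sD(\Z_p,\F_p)=\F_p[[t]]$, strictly containing the group algebra $\F_p[\Z_p]=\bigoplus_{x\in\Z_p}\F_p\delta_x$). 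For a concrete counterexample, pick distinct $x_m\to x_\infty$ in $\Z_p$ and set $\mu=\sum_{m}(\delta_{x_m}-\delta_{x_\infty})$: the series converges weakly because every locally constant $f$ reduces it to a finite sum, yet $\mu$ is not finitely supported, since one can choose infinitely many pairwise disjoint balls $B_m\ni x_m$ avoiding $x_\infty$ and then $\mu(B_m)=1$ for all $m$, while a measure supported on a finite set $F$ can give nonzero mass to at most ${\rm card}(F)$ pairwise disjoint sets. The same example kills your limit formula already for $X$ compact: there $\sD_\acs(X,k)=\sD(X,k)$ is the space of \emph{all} measures (Remark~\ref{acscirc}), whereas $\limit_I\bigoplus_{x}(k/I)\delta_x$ is only the canonical completion of the group algebra. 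The finite-support-mod-$I$ phenomenon you invoke is genuinely a feature of the \emph{function} side (Remark~\ref{limdiscr2}, where a function that is small outside a compact becomes compactly supported mod $I$); it does not dualize to measures.

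With that identity gone, both directions collapse. In the forward direction your witness $C_I$ is manufactured from the finite support $F_I$ of $\nu\bmod I$, which does not exist; extracting a compact set that almost carries $\nu$ is exactly the content of the proposition, and it has to come from the unconditional-convergence condition \textbf{(ACSMEAS)}. This is what the paper does: if \textbf{(ACSMEASbis)} failed for some $I$, one could produce infinitely many pairwise disjoint compact opens, sitting in distinct members of a $td$-partition of $X$ (so that their union is clopen), each of $\mu$-measure outside $I$, contradicting the unconditional convergence required by \textbf{(ACSMEAS)}. In the reverse direction, the restriction of $\mu\bmod I$ to the Stone space $C_I$ need not be finitely supported, so your $\nu_I$ is simply undefined; the paper instead extends $\mu$ from $\cK(X)$ to $\Sigma(X)$ directly, setting $\ol{\mu}(U)=\sum_{C\in\sP}\mu(C)$ for a $td$-partition $\sP$ of the clopen set $U$ (this series converges unconditionally precisely because of \textbf{(ACSMEASbis)}), and verifies independence of $\sP$ by passing to common refinements. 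Any repair of your dévissage would have to replace ``finitely supported'' by ``compactly supported up to completion,'' and the completion is exactly where the convergence argument lives — there is no way around it.
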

\begin{proof} Assume $\mu \in \sD_\acs(X,k)$ but $Res(\mu)$ does not satisfy {\bf(ACSMEASbis)} for some $I \in \cP(k)$. Then 
for any $td$-partition $\sP$  
of $X$, $\mu(C) \notin I$, for an infinite subset of $C \in \sP$. So, there is a sequence $\{C_i\}_{i\in \N}$ in $\cK(X)$ such that 
\ben 
\item $\mu(C_i) \notin I$, for any $i \in \N$;
\item $U:= \bigcup_{i\in \N} C_i \in \Sigma(X)$; 
\item
$\sum_{i \in \N} \mu(C_i)$ does not converge unconditionally. 
\een
This is absurd, since $\mu \in \sD_\acs(X,k)$. \par \smallskip
 Conversely, let $\mu \in \sD(X,k)$ and assume $\mu$   satisfies 
  {\bf(ACSMEASbis)}. We extend $\mu$ to $\ol{\mu}: \Sigma(X) \longrightarrow k$ as follows. For any $U \in \Sigma(X)$, $U$ is a $td$-space. Therefore $U$ admits a $td$-partition $\sP$. The  $\P$-series $\sum_{C \in \sP} \mu(C)$
  is  unconditionally convergent. We then set 
  $$
  \ol{\mu}(U) := \sum_{C \in \sP} \mu(C) \;.
  $$
We check that, if $\sQ$ is another $td$-partition of $U$,  
\beq \label{eqrefin}
\sum_{C \in \sP} \mu(C) = \sum_{C' \in \sQ} \mu(C') \;.
\eeq
 In fact,  the open compact subsets $C \cap C'$ form an open compact partition $\sP \cap \sQ$ of $U$ which refines the previous ones. We may then assume that $\sP$ is a refinement of $\sQ$, in which case  formula \eqref{eqrefin} is clear. This gives a well-defined extension $\ol{\mu}$ of $\mu$ to $\Sigma(X)$.  Finally, $\ol{\mu}$ satisfies   {\bf(ACSMEASbis)} by construction.
\end{proof}

  \begin{rmk} \label{counterunif} In contrast to point $\mathit 1$ of Proposition~\ref{measinter}, $k[\cK(X)]$ (resp. $k[\Sigma_\sJ(X)]$) is not in general dense  via $U \longmapsto \chi^{(X)}_U$  in $\sC_\acs(X,k)$
 (resp. in $\sC_\unif(X,k)$).  \par
In fact, let us assume that $X = D$  is discrete (and endowed  with the discrete uniformity) and that $k$ is discrete. Then 
$$\sC_\unif(D,k)=\sC_\acs(D,k) = (\prod_{y \in D} k \chi_y^{(D)})^\dis$$
and the density statement would imply the equality 
$$k[\fP(D)] = (\prod_{y \in D} k \chi_y^{(D)})^\dis
$$
which is clearly false if $D$ is infinite. 
\end{rmk}
\end{section} 
\begin{section}{Commutative $td$-groups} \label{td-groups}
\begin{subsection}{$td$-groups}
We specialize here our previous results  to group objects of the category of $td$-spaces. Second countable such groups are already of high interest \cite{wesolek1}, \cite{wesolek2}.  
A proof of the next two lemmas can easily derived from \cite[\S 2]{ngo} 
and goes back to David van Dantzig around 1930. 
\begin{lemma} \label{lczdgp} Let $G = (G,\cdot)$ be a  topological group.
The following   conditions on $G$ are equivalent. 
\ben
\item $G$ is a  $td$-space;
\item $G$ admits a basis of neighborhoods of $1_G$ consisting of compact open subgroups;
\item $G$ admits an open profinite subgroup. 
\een
\end{lemma}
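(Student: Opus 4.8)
The plan is to prove the three conditions equivalent through the cycle $\mathit 1 \Rightarrow \mathit 2 \Rightarrow \mathit 3 \Rightarrow \mathit 1$. The only substantial implication is $\mathit 1 \Rightarrow \mathit 2$, which is the group-theoretic core of van Dantzig's theorem; the other two are essentially bookkeeping, using on one side that a compact group with a neighbourhood basis of open subgroups is profinite, and on the other the characterization of $td$-spaces as sums of Stone spaces (Corollary~\ref{LindStone2}). Throughout I take $G$ to be Hausdorff, as is implicit in working with topological groups here (and as is forced anyway once one knows $\mathit 3$, since a $T_0$ topological group is automatically Hausdorff).

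For $\mathit 1 \Rightarrow \mathit 2$, I first note that since $G$ is locally compact and $0$-dimensional, the compact open neighbourhoods of $1_G$ form a basis at $1_G$: a compact neighbourhood contains a clopen neighbourhood, which is then closed inside a compact set, hence compact. Fix such a compact open $V \ni 1_G$; the goal is to produce a compact open \emph{subgroup} $H \subset V$. The key step is a compactness argument. By continuity of multiplication at each point $(v,1_G)$ with $v \in V$, there are open sets $A_v \ni v$ and $B_v \ni 1_G$ with $A_v B_v \subset V$; covering the compact set $V$ by finitely many $A_{v_1},\dots,A_{v_n}$ and putting $B_0 := \bigcap_{i=1}^n B_{v_i}$ yields an open $B_0 \ni 1_G$ with $V B_0 \subset V$. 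Using $0$-dimensionality I shrink $B_0$ to a clopen set inside $B_0 \cap V$ and symmetrize, obtaining a clopen symmetric $B \ni 1_G$ with $B \subset V$ and $VB \subset V$. Then $H := \bigcup_{n \geq 1} B^n$ is the subgroup generated by $B$; an induction using $VB \subset V$ gives $B^n \subset V$ for all $n$, so $H \subset V$. Since $H \supset B$ is open it is also closed, hence a closed subset of the compact set $V$, hence compact. Letting $V$ run over the basis of compact open neighbourhoods of $1_G$ and trapping $H \subset V$, these subgroups form the required basis, giving $\mathit 2$.

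For $\mathit 2 \Rightarrow \mathit 3$, I pick one compact open subgroup $U$ from the basis. The basis elements contained in $U$ are open subgroups of the compact group $U$, hence of finite index; their normal cores $\bigcap_{u \in U} u W u^{-1}$ (finite intersections, so still open) provide a neighbourhood basis of $1_G$ in $U$ consisting of open normal subgroups. A compact Hausdorff group carrying such a basis is profinite, so $U$ is an open profinite subgroup. For $\mathit 3 \Rightarrow \mathit 1$, if $U$ is an open profinite subgroup then $U$ is a Stone space; moreover $G$ is Hausdorff, because $\overline{\{1_G\}}$ is a subgroup contained in the neighbourhood $U$ and is trivial inside the Hausdorff space $U$. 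The left cosets $gU$ are clopen and partition $G$ into copies of $U$, so $G = \coprod_{gU \in G/U} gU$ is a sum of Stone spaces; by part $\mathit 2$ of Corollary~\ref{LindStone2} this makes $G$ a $td$-space (paracompactness being automatic for such a sum, e.g.\ by Lemma~\ref{addprop}).

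The main obstacle is the $\mathit 1 \Rightarrow \mathit 2$ step: converting the approximate invariance $VB \subset V$ into an honest open subgroup via the generated-subgroup construction, and verifying that the resulting $H$ is simultaneously open (hence closed) and trapped inside the compact $V$ (hence compact). The only secondary care concerns separation, which I handle by the cyclic arrangement, Hausdorffness being read off from $\mathit 3$.
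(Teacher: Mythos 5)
Your proof is correct, and it is genuinely more self-contained than the paper's. The paper runs the cycle in the opposite direction: it proves $\mathit 2 \Rightarrow \mathit 1$ by exactly your coset argument (a compact open subgroup $H$ is a Stone space, the left $H$-cosets partition $G$ into Stone spaces, then Theorem~\ref{STSspaces} applies), dismisses $\mathit 3 \Rightarrow \mathit 2$ as clear, and for the one hard implication --- which it takes as $\mathit 1 \Rightarrow \mathit 3$ rather than your $\mathit 1 \Rightarrow \mathit 2$ --- it simply cites van Dantzig's theorem in the form of \cite[\S 2]{ngo} instead of proving it. What you do differently is write out that van Dantzig step in full (the compactness argument producing an open symmetric $B$ with $VB \subset V$, and the generated subgroup $H = \bigcup_{n\geq 1} B^n$ trapped in the compact open $V$, hence open, closed and compact), which is the standard argument and is carried out correctly; your arrangement then requires the extra step $\mathit 2 \Rightarrow \mathit 3$ (open subgroups of a compact group have finite index, their normal cores are open and normal, and a compact Hausdorff group with a neighbourhood basis of open normal subgroups at the identity is profinite), which the paper never needs because it reaches $\mathit 3$ directly. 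The paper's arrangement buys brevity; yours buys a proof with no external reference. One remark on separation: you are right to make the Hausdorff hypothesis explicit. Under the paper's stated convention that ``compact'' does not imply Hausdorff, condition $\mathit 2$ as literally written is satisfied by an indiscrete group (the whole group is then a compact open subgroup of itself), which is neither a $td$-space nor has an open profinite subgroup; so Hausdorffness must be read into $\mathit 2$, as you do, and your observation that it is automatic given $\mathit 3$ (since $\overline{\{1_G\}}$ is a subgroup contained in the open profinite subgroup, hence trivial) is the correct way to keep the cyclic argument honest.
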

\begin{proof} We  observe that  (2) $\Rightarrow$ (1) because if $H$ is a compact open subgroup of $G$,  then $H$ is a Stone space and  the partition of $G$ in left $H$-cosets shows that $G$ is a sum of Stone spaces. So, Theorem~\ref{STSspaces} shows that $G$ is a $td$-space.  Now, clearly (3) $\Rightarrow$ (2).  To prove  (1) $\Rightarrow$ (3) (which is the only non-trivial part) we may follow the proof of  \cite[\S 2]{ngo}.    
\end{proof}
\begin{defn}\label{tdgp} A \emph{$td$-group} is a topological group $G$ satisfying the  equivalent conditions of Lemma~\ref{lczdgp}.
\end{defn}
\begin{lemma} \label{lczdgp2} A compact $td$-group is a profinite group, and conversely.
\end{lemma}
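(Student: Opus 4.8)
The plan is to prove both implications by reducing everything to the topological characterisation of Stone spaces together with the subgroup basis supplied by Lemma~\ref{lczdgp}. The converse (profinite $\Rightarrow$ compact $td$-group) is immediate: a profinite group has, by definition, underlying space a small cofiltered limit of finite discrete spaces, i.e. a profinite set, which by Lemma~\ref{stonespace} is exactly a Stone space; a Stone space is compact Hausdorff and $0$-dimensional, hence in particular a (compact) $td$-space. Since $G$ is simultaneously a topological group and a $td$-space, it is a $td$-group by Lemma~\ref{lczdgp}, and it is compact.

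For the forward implication I would proceed in two stages. First, a compact $td$-group $G$ is a compact Hausdorff $0$-dimensional space, hence a Stone space by Definition~\ref{Stonedef} (using Theorem~\ref{stonechar}); thus by Lemma~\ref{stonespace} we already know $G=\limit_{F} F$ over its finite discrete quotients $F\in\sQ(G)$. The point is to upgrade this purely topological limit to a limit of \emph{finite groups}. By condition (2) of Lemma~\ref{lczdgp}, $1_G$ has a basis of neighbourhoods consisting of compact open subgroups $H$; by compactness of $G$ each such $H$ has finite index, and replacing $H$ by its core $\bigcap_{g\in G} gHg^{-1}$ (a finite intersection of open subgroups, because there are only finitely many conjugates) we obtain a basis of neighbourhoods of $1_G$ consisting of open normal subgroups $N$ of finite index, with finite quotient groups $G/N$ and proper quotient maps $\pi_N\colon G\to G/N$ lying in $\sQ(G)$.

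The main step, and the place where the group structure really enters, is to show that this family of normal subgroups is cofinal among all finite discrete quotients, i.e. that the coset partition of some open normal subgroup refines any given finite clopen partition $\sP=\{A_1,\dots,A_n\}$ of $G$. I would do this by the standard compactness argument: for each $g\in G$ pick an open normal subgroup $N_g$ with $gN_g$ contained in the part of $\sP$ containing $g$; the cosets $\{gN_g\}_{g}$ form an open cover, so finitely many $g_1,\dots,g_m$ suffice, and $N:=\bigcap_{j} N_{g_j}$ is an open normal subgroup whose cosets refine $\sP$ (if $h\in g_jN_{g_j}$ then $hN\subseteq g_jN_{g_j}$, since $N\subseteq N_{g_j}$). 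Consequently the subsystem $\{G/N\}_N$ is cofinal in $\sQ(G)$, so $G=\limit_N G/N$ in $\Top$; since the transition maps and projections are continuous group homomorphisms between finite groups, this exhibits $G$ as a cofiltered limit of finite discrete groups, i.e. as a profinite group. I expect this cofinality/core step to be the only real obstacle, the remaining assertions being formal consequences of Lemma~\ref{stonespace} and Lemma~\ref{lczdgp}.
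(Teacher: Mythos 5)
Your proof is correct, and it is essentially the argument the paper has in mind: the paper states this lemma without proof, deferring to van Dantzig and \cite[\S 2]{ngo}, and your argument --- compact open subgroups from Lemma~\ref{lczdgp} have finite index by compactness, their normal cores are open normal subgroups of finite index, and the covering/intersection step shows their coset partitions are cofinal among the finite clopen partitions, upgrading the topological limit of Lemma~\ref{stonespace} to a cofiltered limit of finite groups --- is precisely that classical proof. Both directions are complete, and the core/cofinality step is exactly where the group structure enters, as you correctly identify.
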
 
 Part $\mathit 2$ of Lemma~\ref{lczdgp} indicates that we may restrict to  topological groups $G$ endowed with a \emph{group topology}, namely for which
the family $\cP(G)$ of open subgroups is a basis of neighborhoods of $1_G$. Then, the \emph{left uniformity} on $G$ is the uniformity  for which a basis of uniform covers is given by partitions 
$$G/P = \{gP\,|\, g \in G\,\}
$$  in left cosets  for any given open subgroup $P \in \cP(G)$; left multiplications are uniform automorphisms of the 
\emph{left uniform group}
$(G, \{G/P\}_{P \in \cP(G)})$.  
Similarly for the \emph{right uniformity} on $G$, partitions $P\backslash G$, and right  multiplications. The map $x \mapsto x^{-1}$ is a uniform antiisomorphism  
$$(G, \{P\backslash G\}_{P \in \cP(G)}) \iso (G, \{G/P\}_{P \in \cP(G)})
\;.
$$ 
Our category of topological groups admits limits and colimits.   
A topological group $G$ is separated if and only if $\bigcap_{H \in \cP(G)} H = \{1_G\}$. The left uniform group $G$ is complete  if and only if, as a uniform space,  
$G = \limit_{H \in \cP(G)} G/H$, where the set of left cosets $G/H$ is given the discrete  uniformity. Equivalently, the right uniform group $G$ is complete  if and only if $G = \limit_{H \in \cP(G)}  H \backslash G$, where  $H \backslash G$ is given the discrete  uniformity. So $G$ is complete for the left uniformity if and only if it is so for the right one. We then say that the topological group $G$ is \emph{complete}; this implies that $G$ is   separated.  By default, we will equip a topological group with its left uniformity. 
By Lemma~\ref{sts complete}  
 any $td$-group  is complete. 
 \end{subsection}
\begin{subsection}{Hopf algebra structures on commutative $td$-groups}
\emph{From now on we will  deal exclusively with commutative $td$-groups.}  
On such a group the left and right uniformities coincide, and will be understood. We will use the name of \emph{ball of radius $H$} for any coset $gH$, for $g \in G$ and $H \in \cP(G)$.  
\begin{prop} \label{convprod} 
Let $G$  be a commutative  $td$-group.  
\ben
\item 
 $\sC(G,k)$ is 
  a Hopf algebra  object of $\cCLMu_k$. 
   The weak dual 
$\sD_\acs(G,k)$ of $\sC(G,k)$
is a Hopf algebra  object of $\cCLMu_k$. For $\mu,\nu \in  \sD_\acs(G,k)$, and $U \in \Sigma(G)$,
\beq \label{prodmeas1}
\mu_\sD(\mu \otimes \nu)(U) = (\mu \times \nu)(\{(g,h) \in G \times G\,|\,gh \in U\,\})\;.
\eeq
The weak dual of the Hopf $k$-algebra object $\sD_\acs(G,k)$ of $\cCLMu_k$ is the Hopf $k$-algebra object $\sC(G,k)$ of $\cCLMu_k$. The strong dual  of $\sD_\acs(G,k)$ is the Hopf $k$-algebra object $\sC^\circ(G,k) = \sC(G,k)^\can$ of $\cCLMcan_k$. 
 \item $\sD_\unif (G,k)$ (resp. $\sD^\circ_\unif (G,k)$) is a Hopf $k$-algebra object of  $\cCLMu_k$ (resp. of $\cCLMcan_k$).
 The $k$-algebra object $\cC_\unif(G,k)$ of $\cCLMcan_k$ is the strong dual of both $\sD_\unif(G,k)$ and $\sD^\circ_\unif (G,k)$.  It is a Hopf $k$-algebra object of $\cCLMcan_k$.  

\item We have a  weak  perfect duality pairing  of Hopf $k$-algebra objects of $\cCLMu_k$
$$
\circ : \sC(G,k) \times \sD_\acs(G,k) \longrightarrow k \;,
$$
 a left-strong right-weak perfect  duality pairing  of Hopf $k$-algebra objects of $\cCLMu_k$
$$
\circ : \sC_\unif(G,k) \times \sD_\unif(G,k) \longrightarrow k \;,
$$
and a strong perfect  duality pairing  of Hopf $k$-algebra objects of $\cCLMcan_k$
$$
\circ : \sC_\unif(G,k) \times \sD^\circ_\unif(G,k) \longrightarrow k \;.
$$
Any of the previous pairings is  denoted by 
$$
(f,\mu) \longmapsto  f \circ \mu = \int_G f(x)  \mu (x)  \;,
$$
and called \emph{integration pairing} over $G$. The product of measures $\mu_G(\mu \otimes \nu) =:\mu \nu$ is called \emph{convolution} and may be described 
as 
\beqa \begin{split}
 \int_G f(x)  d (\mu  \nu) (x) = (\mu \wt^\un_k \nu) &\circ \P_G(f) = \int_{G \times G} \P_G(f)(x,y) \; d (\mu \times \nu)(x,y)  = \\
 &\int_{G \times G} f(x y) d\mu (x) d\nu (y)
 \; . \end{split}
\eeqa
\een
\end{prop}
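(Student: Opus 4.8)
The plan is to exploit the fact that, on the relevant subcategories, $\sC(-,k)$, $\sC_\acs(-,k)$ and $\sC_\unif(-,k)$ are contravariant \emph{strong monoidal} functors: they turn Cartesian products of $td$-spaces into completed tensor products $\wt^\un_k$ via the isomorphisms $\cL$ of Proposition~\ref{contvsunif}, so that a commutative group object in $td\text{-}\cS$ (resp. $td\text{-}\cU$) is carried to a commutative, cocommutative Hopf algebra object. Concretely, I would first record the group structure maps as morphisms of $td$-spaces: the multiplication $m:G\times G\to G$, the inversion $\iota:G\to G$, the unit $e:\{\ast\}\to G$, together with the diagonal $\Delta_G:G\to G\times G$ already used in Remark~\ref{prodfctsstr}. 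The group axioms are commutative diagrams built from these, and all maps are morphisms in the appropriate category because $G$ is a commutative $td$-group.

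For Part $\mathit 1$, the algebra structure on $\sC(G,k)$ is the pointwise product of Remark~\ref{prodfctsstr}. The coalgebra structure is produced by pulling back along $m$: since $\cL^{(G,G)}:\sC(G,k)\wt^\un_k\sC(G,k)\iso\sC(G\times G,k)$ is an isomorphism by $\mathit 3$ of Proposition~\ref{contvsunif}, I set $\P_G:=(\cL^{(G,G)})^{-1}\circ m^\ast$, with counit $e^\ast$ and antipode $\iota^\ast$. That $\P_G$ is an algebra homomorphism is automatic, since $m^\ast$ is pullback of functions and hence preserves the pointwise product; coassociativity, the counit axiom and the antipode axiom then follow by applying the contravariant functor $\sC(-,k)$ to the corresponding group-axiom diagrams and transporting along $\cL$. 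Dualizing, $\sD_\acs(G,k)=\sC(G,k)'_\weak$ already carries the coproduct of Proposition~\ref{Hopfstruct}, and its \emph{convolution} product is the transpose of $\P_G$, well defined because $\cL^{(G,G)}_\sD:\sD_\acs(G\times G,k)\iso\sD_\acs(G,k)\wt^\un_k\sD_\acs(G,k)$ is an isomorphism \eqref{LDacs2}. Formula \eqref{prodmeas1} is then obtained by evaluating the defining identity $\int_G f\,d(\mu\nu)=(\mu\wt^\un_k\nu)\circ\P_G(f)=\int_{G\times G}f(xy)\,d\mu(x)\,d\nu(y)$ on $f=\chi^{(G)}_U$, so that $m^{-1}(U)=\{(g,h):gh\in U\}$ appears. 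The remaining duality assertions are the biduality statements \eqref{generalbidual}, \eqref{generalbidualst} and \eqref{bisform} read off for $G$.

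For Part $\mathit 2$ the subtlety is that $\cL_\unif$ for \emph{functions} is only injective, not surjective (Proposition~\ref{contvsunif}.$\mathit 5$, Remark~\ref{prodfcts0unif1}), so $m^\ast$ does not obviously factor through $\sC_\unif(G,k)\wt^\un_k\sC_\unif(G,k)$. I would sidestep this by working on the measure side through the limit description. Taking for $\sJ$ the cofiltered system $\{G\to G/H\}_{H\in\cP(G)}$ of quotients by open subgroups, each $G/H$ is a discrete commutative group and $\sD_\unif(G/H,k)=\sD_\acs(G/H,k)={\bigoplus}^\un_{gH}k\delta_{gH}$ is its completed group Hopf algebra, with convolution product, diagonal coproduct $\delta_{gH}\mapsto\delta_{gH}\wt^\un_k\delta_{gH}$, counit $\delta_{gH}\mapsto1$ and antipode $\delta_{gH}\mapsto\delta_{(gH)^{-1}}$. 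Because $m$ descends to $G/H\times G/H\to G/H$, the push-forward transition maps $h_{H,H'}$ are Hopf-algebra morphisms, whence $\sD_\unif(G,k)=\limit_{H}\sD_\unif(G/H,k)$ (resp. $\sD^\circ_\unif(G,k)=\limit^\can_H\sD_\unif(G/H,k)$) is a Hopf algebra object of $\cCLMu_k$ (resp. $\cCLMcan_k$). Dually, $\sC_\unif(G,k)=\sD_\unif(G,k)'_\strong$ by \eqref{measunifdefst} inherits the transposed Hopf structure and coincides with $\colimit^\un_H\sC(G/H,k)$, the colimit of the Hopf algebras of functions on the discrete quotient groups.

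For Part $\mathit 3$, the three pairings are exactly the weak-perfect, left-strong right-weak-perfect and strong-perfect integration pairings of Corollary~\ref{intpairing}, now read between the Hopf objects just constructed; that they are pairings of Hopf algebras — convolution on measures is adjoint to the coproduct $\P_G$ on functions, and the coproduct on measures is adjoint to pointwise multiplication — is built into the constructions above, so nothing new is needed beyond transposing. The closing convolution formula is the chain $\int_G f\,d(\mu\nu)=(\mu\wt^\un_k\nu)\circ\P_G(f)=\int_{G\times G}\P_G(f)(x,y)\,d(\mu\times\nu)(x,y)=\int_{G\times G}f(xy)\,d\mu(x)\,d\nu(y)$, using $\P_G=m^\ast$ under $\cL^{(G,G)}$. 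The main obstacle throughout is the failure of $\cL_\unif$ to be an isomorphism; handling it by the group-algebra limit over open subgroups, rather than by direct pullback along $m$, is the key move, and one must still check that this limit Hopf structure is compatible with the one on $\sD_\acs(G,k)$ under the inclusion $\sD_\acs(G,k)\to\sD_\unif(G,k)$ — which holds because both restrict to $\delta_x\,\delta_y=\delta_{xy}$ on the dense subspace generated by point masses.
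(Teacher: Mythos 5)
Your parts $\mathit 1$ and $\mathit 3$, and the algebra half of part $\mathit 2$, follow the paper's own proof almost step for step: the coproduct on $\sC(G,k)$ is pullback along the multiplication transported through $(\cL^{(G,G)})^{-1}$, convolution on $\sD_\acs(G,k)$ is its weak transpose through the isomorphism \eqref{LDacs2}, formula \eqref{prodmeas1} is checked on characteristic functions, the duality claims are the biduality statements for $G$, and part $\mathit 3$ is Corollary~\ref{intpairing} read against these structures; likewise, building the product on $\sD_\unif(G,k)$ from the group Hopf algebras $\sD_\acs(G/H,k)$ of the discrete quotients is exactly the paper's construction.

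The gap sits in the single word ``whence'' in part $\mathit 2$: you assert that, since each $\sD_\unif(G/H,k)$ is a Hopf algebra object and the transition maps $h_{H,H'}$ are Hopf morphisms, the limit $\sD_\unif(G,k)=\limit_{H}\sD_\unif(G/H,k)$ is again a Hopf algebra object. In $(\cCLMu_k,\wt^\un_k)$ this is not a formal fact, because $\wt^\un_k$ does not commute with cofiltered limits: the only natural comparison map, \eqref{tensfunct2}, runs
$$
\Bigl(\limit_{H} A_H\Bigr)\wt^\un_k\Bigl(\limit_{H} A_H\Bigr)\longrightarrow \limit_{H}\,\bigl(A_H\wt^\un_k A_H\bigr)\;.
$$
This direction is exactly what is needed to push the products $\mu_{\sD,H}$ down to the limit --- which is how the paper obtains the algebra structure --- but it is the wrong direction for the coproducts: from $\limit_H A_H\to\limit_H(A_H\wt^\un_k A_H)$ you still need an arrow $\limit_H(A_H\wt^\un_k A_H)\to(\limit_H A_H)\wt^\un_k(\limit_H A_H)$. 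Here $\limit_H(A_H\wt^\un_k A_H)\cong\sD_\unif(G\times G,k)$, so the missing arrow is precisely $\cL^{(G,G)}_\sD$ of \eqref{prodmeasunif}, and that morphism is not produced by the limit formalism: it exists only because $\sD_\unif$ is a dual, being obtained in Lemma~\ref{ringtens} by transposing the injective ring morphism \eqref{prodfcts11}. This is why the paper never derives the coalgebra structure from the limit: it imports it wholesale from $\mathit 2$ of Proposition~\ref{Hopfstruct}, where the coproduct on $\sD_\unif(G,k)$ is defined as the transpose of the pointwise product of $\sC_\unif(G,k)$, and uses the limit argument only for the product. Your proof is repaired by replacing ``whence'' with exactly this split: product, counit and antipode pass to the limit via \eqref{tensfunct2}; the coproduct is that of Proposition~\ref{Hopfstruct} (equivalently, the composite of $\limit_H$ of the coproducts with $\cL^{(G,G)}_\sD$); and the Hopf compatibilities can then be verified on the dense submodule spanned by the Dirac masses \eqref{diracunif}, as you already do when comparing with $\sD_\acs(G,k)$.
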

\begin{proof} $\mathit 1.$ The map $\Delta_G^\ast: f(x) \longmapsto f(x y)$ induces, via the $\cCLMu_k$-isomorphism \eqref{prodfctsmor}, a coproduct  
$$
\P_\sC: \sC (G,k) \map{\Delta_G^\ast}  \sC (G \times G,k) \map{(\cL^{(G,G)})^{-1}} \sC (G,k) \wt^\un_k \sC (G,k)
$$
which, together with the counit map 
$$
\veps_\sC : \sC (G,k) \longrightarrow k  
$$
such that $\veps_\sC(f) = f(1_G)$, and the inversion $\rho_\sC(f)(x) = f(x^{-1})$, makes 
$$\sC(G,k) = (\sC(G,k), \mu_\sC,1_\sC,\P_\sC,\veps_\sC,\rho_\sC)$$
  a Hopf algebra  object of $\cCLMu_k$. By weak transposition we obtain, via the $\cCLMcan_k$-isomorphism \eqref{LDacs2}
$$
 \cL^{(G,G)}_\sD  :  \sD_\acs(G \times G,k) \iso\sD_\acs(G,k) \wt^\un_k  \sD_\acs(G,k) \;,
$$
a product map  
$$
\mu_\sD: \sD_\acs (G,k) \wt^\un_k \sD_\acs(G,k) \map{(\cL_\sD^{(G,G)})^{-1}} \sD_\acs (G \times G,k)  \map{(\Delta_G^\ast)^\tu}  \sD_\acs (G,k) \;,
$$
which, together with the  unit map  
$$
1_\sD :  k \longrightarrow  \sD_\acs(G,k) \;,\; a \longmapsto a \delta_{1_G}\;, 
$$ 
and the inversion $\rho_\sD(\mu)(U) = \mu(U^{-1})$, where for any $U \in \Sigma(G)$, $U^{-1} = \{g^{-1}\}_{g\in U}$, makes 
$$\sD_\acs(G,k) = (\sD_\acs(G,k), \mu_\sD,1_\sD,\P_\sD,\veps_\sD, \rho_\sD)$$   
  a Hopf algebra  object of $\cCLMu_k$. \par
  Formula \eqref{prodmeas1} is easily checked. 
The rest of comma $\mathit 1$ is clear.
\par \medskip $\mathit 2$.  
 We know from $\mathit 2$ of Proposition~\ref{Hopfstruct} that $\sD_\unif (G,k)$ (resp.  $\sD^\circ_\unif (G,k)$) is a counital $k$-coalgebra object of $\cCLMu_k$ (resp. of $\cCLMcan_k$). We show it carries a $k$-algebra structure, as well. For any $H \in \cK(G)$, $G/H$ is a discrete group and the 
construction of the previous point gives to $\sD_\unif(G/H,k) = \sD_\acs(G/H,k)$  the structure of Hopf $k$-algebra object of  $\cCLMcan_k$. In particular, there are product morphisms
$$\mu_{\sD,H}: \sD_\unif (G/H,k) \wt^\un_k \sD_\unif (G/H,k)  \map{} \sD_\unif (G/H,k) \;.
 $$
By \eqref{measunifdef} 
$$
  \sD_\unif (G,k) = \limit_{H \in \cK(X)}  \sD_\acs(G/H,k)   
$$
(resp. by \eqref{measunifodef} 
 $$ \sD^\circ_\unif (G,k) = \limit^\can_{H \in \cK(X)}  \sD_\acs(G/H,k)   \in \cCLMcan_k \; \mbox{)}\;.
  $$
  This shows that 
  $$
   \sD_\unif (G,k)'_\strong =  \sD^\circ_\unif (G,k)'_\strong =  \sC_\unif(G,k)
  $$
  so that, by \eqref{tensdual?} for $? =\strong$,

From  \eqref{tensfunct2} we deduce the existence of a natural morphism
  $$  \sD_\unif (G,k) \wt^\un_k \sD_\unif (G,k) \map{} \limit_{H \in \cK(X)}  ( \sD_\unif (G/H,k) \wt^\un_k \sD_\unif (G/H,k) )\;.
  $$
  The product morphisms $\mu_{\sD,H}$ induce
 a morphism 
 $$
 \limit_{H \in \cK(X)}  ( \sD_\unif (G/H,k) \wt^\un_k \sD_\unif (G/H,k) ) \map{}  \limit_{H \in \cK(X)}    \sD_\unif (G/H,k)
 $$
 hence all in all  a morphism
 $$
 \mu_\sD: \sD_\unif (G,k) \wt^\un_k \sD_\unif (G,k) \map{}  \sD_\unif (G,k) \;.
 $$
 We conclude that $\sD_\unif (G,k)$ is a $k$-algebra object of $\cCLMu_k$.

Then, using \eqref{prodmeasunif}, we conclude that $\sD_\unif (G,k)$ (resp. $\sD^\circ_\unif (G,k)$) is a Hopf $k$-algebra object of  $\cCLMu_k$ (resp. of $\cCLMcan_k$).
Now, $\cC_\unif(G,k)$ is the strong dual of $\sD_\unif(G,k)$, and by \eqref{tensdual?} for $? =\strong$, it is a Hopf $k$-algebra object of $\cCLMcan_k$, too.   
\end{proof} 
\end{subsection}

\begin{subsection}{Invariant endomorphisms} \label{invsect} 
 We recall in this section some properties of duality between algebra and coalgebra objects in a monoidal category of (topological)  $k$-modules. We will not list precisely the assumptions needed and will leave to the reader the task of verifying that our statements hold true in the applications we present. 
\par \medskip
 Let $(R,\P_R,\veps_R)$ be any cocommutative $k$-coalgebra. An element $h \in \End_k(R)$  is \emph{invariant} if 
 \beq
 \P_R h = (\id_R \otimes h) \P_R = (h \otimes \id_R) \P_R \;.
 \eeq
 In particular, by composition to the left with $\id_R \otimes \veps_R$ or with $\veps_R \otimes \id_R$,
 \beq \label{dualop}
 h = (\id_R \otimes \veps_R h) \P_R =  (\veps_R h \otimes \id_R ) \P_R \;.
 \eeq
 If $h,h' \in \End_k(R)$ are invariant, so are their composition products $hh'$  and $h'h$; moreover, $h = \id_R$ is invariant.  So, the subset $\Inv_k(R) \subset \End_k(R)$ is a  $k$-subalgebra for the composition product.
Notice that in \eqref{dualop} $\veps_R h \in D = \hom_k(R,k)$, the dual $k$-algebra of $R$. Conversely, for any $d \in D$, the formula 
 \beq \label{dualop2}
 \ol{d} = (\id_R \otimes d) \P_R =  (d \otimes \id_R ) \P_R \;, 
 \eeq
 defines an element $\ol{d} \in \Inv_k(R)$. We conclude with the main result of this introductory section.
\begin{thm} \label{invariant} Let $R$ be a cocommutative $k$-coalgebra and let $D$ be the dual (commutative) $k$-algebra. The map 
\begin{align*}
D &\longrightarrow \Inv_k(R) \\
d & \longmapsto \ol{d} 
\end{align*}
where $\ol{d}$ is defined by \eqref{dualop2}, is a homomorphism of $k$-algebras, for the multiplication in $\Inv_k(R)$ induced by the composition product of $\End_k(R)$.
\end{thm}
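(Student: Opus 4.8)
The plan is to check, for the map $d \mapsto \bar d$ of \eqref{dualop2}, the three defining properties of a homomorphism of $k$-algebras, taking for granted — as the discussion preceding the statement already records — that each $\bar d$ lies in $\Inv_k(R)$, and that the multiplication of the dual algebra $D = \hom_k(R,k)$ is the convolution product $d_1 d_2 = (d_1 \otimes d_2)\circ \P_R$ with unit $\veps_R$ (the commutativity of $D$ being forced by the cocommutativity of $R$, which also guarantees that the two expressions for $\bar d$ in \eqref{dualop2} agree). Additivity and $k$-linearity of $d \mapsto \bar d$ are immediate from the bilinearity of $(\id_R \otimes -)$ and the $k$-linearity of $\P_R$, so the real content lies in the compatibility with units and with the products.

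For the units I would simply invoke the counit axiom: under the canonical identification $R \otimes k \cong R$ one has $\bar{\veps_R} = (\id_R \otimes \veps_R)\circ \P_R = \id_R$, so the unit $\veps_R$ of $D$ is sent to the unit $\id_R$ of $(\Inv_k(R),\circ)$. The heart of the argument is multiplicativity, $\overline{d_1 d_2} = \bar{d_1}\circ\bar{d_2}$, and here I would exploit the invariance of $\bar{d_2}$ instead of expanding everything by hand. Unwinding the definitions gives
\[
\bar{d_1}\circ \bar{d_2} = (\id_R \otimes d_1)\circ \P_R \circ \bar{d_2},
\]
and the invariance relation $\P_R\circ \bar{d_2} = (\id_R \otimes \bar{d_2})\circ \P_R$ lets me commute $\P_R$ past $\bar{d_2}$, so that
\[
\bar{d_1}\circ\bar{d_2} = (\id_R \otimes d_1)\circ(\id_R \otimes \bar{d_2})\circ\P_R = \bigl(\id_R \otimes (d_1\circ \bar{d_2})\bigr)\circ \P_R .
\]
It then remains only to identify $d_1\circ \bar{d_2}\colon R \to k$ with the convolution $d_1 d_2$: since $d_1\circ(\id_R \otimes d_2) = d_1 \otimes d_2$ under $k \otimes k \cong k$, one gets $d_1\circ \bar{d_2} = (d_1 \otimes d_2)\circ \P_R = d_1 d_2$, and hence $\bar{d_1}\circ\bar{d_2} = \overline{d_1 d_2}$ by the very definition \eqref{dualop2} of $\overline{(\cdot)}$.

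I do not expect a genuine mathematical obstacle: the computation is the standard diagrammatic yoga and uses only coassociativity (packaged inside the invariance of $\bar{d_2}$), cocommutativity, and the counit axiom. The one point demanding care is exactly the one the remark just before the theorem deliberately sets aside, namely that in the applications the relevant tensor product is the completed one $\wt^\un_k$ and the pairings are continuous. Concretely, one must check that each $d_i$ is continuous so that $d_1 \otimes d_2$ factors through $R \wt^\un_k R$, and that the identifications $k \wt^\un_k R \cong R$ and $k \wt^\un_k k \cong k$ used above are the canonical ones; granting these, the displayed computation carries over verbatim with $\otimes$ replaced by $\wt^\un_k$.
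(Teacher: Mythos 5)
Your proof is correct, and since the paper states Theorem~\ref{invariant} without any proof (the remark opening the subsection explicitly leaves such verifications to the reader), your write-up supplies exactly the standard argument the paper has in mind: the unit is handled by the counit axiom, and multiplicativity follows by commuting $\P_R$ past $\ol{d_2}$ via its invariance and then identifying $d_1 \circ \ol{d_2}$ with the convolution product $d_1 d_2 = (d_1 \otimes d_2)\circ \P_R$. Your closing caveat about continuity and the completed tensor product $\wt^\un_k$ in the topological applications matches the paper's own disclaimer, so nothing is missing.
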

We can apply Theorem~\ref{invariant} to any 
  integration pairing 
$$
\circ : \sC  \times \sD  \longrightarrow k \;,
$$
appearing in Proposition~\ref{convprod}. 
We obtain continuous morphisms
\beq \label{invariant1} \begin{split}
  \sC \longrightarrow \Inv_k (\sD)    \;\; &\mbox{(\,resp.} \;\;  \sD  \longrightarrow \Inv (\sC)  \; \mbox{)} \;,
\\
f \longmapsto \ol{f}     \;\; &\mbox{(\,resp.}\;\; \mu  \longmapsto \ol{\mu} \; \mbox{)}\;,
\end{split}
\eeq
where $\Inv_k$ denotes the group of continuous invariant $k$-linear automorphisms of $\sD$
 characterized  by 
\beq \label{invariant2}
\int_G h(x) d (\ol{f}(\mu))(x) = \int_G h(x) f(x) d \mu (x) \;\; \mbox{(\,resp.}\;\; 
\int_G  (\ol{\mu} (f))(x) d \nu(x) =  \int_G f(x) d(\mu \nu)(x) \; \mbox{)} \;,
\eeq
for any $h,f \in \sC$ and $\mu,\nu \in \sD$. 
\par \medskip
\begin{cor} \label{deltaprod}
For any $g_1,g_2 \in G$  we have the equality of Dirac masses in $\sD_\acs(G,k)$ 
\beq \label{deltahom}
 \delta_{g_1}  \delta_{g_2}  = \delta_{g_1g_2} \;.
\eeq
\end{cor}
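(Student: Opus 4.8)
The plan is to prove the identity by tracking the two Dirac masses through the definition of the convolution product $\mu_\sD$ on $\sD_\acs(G,k)$ recalled in the proof of part~$\mathit 1$ of Proposition~\ref{convprod}. By construction that product factors as $\mu_\sD = (\Delta_G^\ast)^\tu \circ (\cL_\sD^{(G,G)})^{-1}$, where $\Delta_G^\ast : \sC(G,k) \to \sC(G \times G,k)$ is pull-back along the multiplication $(x,y) \mapsto xy$ and $\cL_\sD^{(G,G)} : \sD_\acs(G \times G,k) \iso \sD_\acs(G,k) \wt^\un_k \sD_\acs(G,k)$ is the isomorphism \eqref{LDacs2}. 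So the product $\delta_{g_1}\delta_{g_2}$ is by definition $\mu_\sD(\delta_{g_1} \wt^\un_k \delta_{g_2})$, and I would compute it by applying the two factors in turn.

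The first step records, from Remark~\ref{funcmeas}, that the inverse of $\cL_\sD^{(G,G)}$ sends $\mu \otimes \nu \mapsto \mu \times \nu$, hence on Dirac masses $(\cL_\sD^{(G,G)})^{-1}(\delta_{g_1} \wt^\un_k \delta_{g_2}) = \delta_{(g_1,g_2)}$, the Dirac mass at $(g_1,g_2) \in G \times G$. The second step identifies $(\Delta_G^\ast)^\tu$ on this mass: for every $f \in \sC(G,k)$ one has $\int_{G \times G} (\Delta_G^\ast f)\, d\delta_{(g_1,g_2)} = (\Delta_G^\ast f)(g_1,g_2) = f(g_1 g_2) = \int_G f\, d\delta_{g_1 g_2}$, so the functionals $(\Delta_G^\ast)^\tu \delta_{(g_1,g_2)}$ and $\delta_{g_1 g_2}$ agree against every continuous $f$. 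Chaining the two steps gives $\delta_{g_1}\delta_{g_2} = (\Delta_G^\ast)^\tu \delta_{(g_1,g_2)} = \delta_{g_1 g_2}$.

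The only point that needs a word of justification is the passage from ``equal as functionals on $\sC(G,k)$'' to ``equal in $\sD_\acs(G,k)$'', which is precisely the weak-right perfect duality of the integration pairing $\circ : \sC(G,k) \times \sD_\acs(G,k) \to k$ from part~$\mathit 3$ of Proposition~\ref{convprod}: it identifies $\sD_\acs(G,k)$ with the weak dual of $\sC(G,k)$, so measures are separated by their values on continuous functions. Alternatively I could avoid the pairing entirely and argue set-theoretically via the explicit convolution formula \eqref{prodmeas1}: since $\delta_{g_1} \times \delta_{g_2} = \delta_{(g_1,g_2)}$, for every $U \in \Sigma(G)$ one gets $(\delta_{g_1}\delta_{g_2})(U) = \delta_{(g_1,g_2)}(\{(x,y) : xy \in U\})$, which equals $1$ when $g_1 g_2 \in U$ and $0$ otherwise, i.e. $\delta_{g_1 g_2}(U)$, and a measure is determined by its values on $\Sigma(G)$ by part~$\mathit 3$ of Proposition~\ref{measinter}. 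I do not expect any genuine obstacle here; the entire content is the bookkeeping of how Dirac masses transform under $\cL_\sD^{(G,G)}$ and under push-forward along the group multiplication, and either route disposes of the claim in a few lines.
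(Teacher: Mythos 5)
Your proof is correct, but it takes a genuinely different route from the paper's. You unwind the definition of the convolution $\mu_\sD = (\Delta_G^\ast)^\tu \circ (\cL_\sD^{(G,G)})^{-1}$ directly on Dirac masses: first $(\cL_\sD^{(G,G)})^{-1}(\delta_{g_1} \wt^\un_k \delta_{g_2}) = \delta_{(g_1,g_2)}$ (Remark~\ref{funcmeas} is stated for Stone spaces, but it transports to $G$ through the colimit construction \eqref{LDacs2} because the extension-by-zero maps $j_C$ preserve Dirac masses), then $(\Delta_G^\ast)^\tu\delta_{(g_1,g_2)} = \delta_{g_1g_2}$ by testing against every $f \in \sC(G,k)$, with the weak duality pairing of part~$\mathit 3$ of Proposition~\ref{convprod} (or, in your alternative route, formula \eqref{prodmeas1} together with part~$\mathit 3$ of Proposition~\ref{measinter}) supplying the separation step. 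The paper argues instead through the invariant-endomorphism formalism of Section~\ref{invsect}: it identifies $\ol{\delta_g}$ as the translation operator $f(x) \mapsto f(gx)$ on $\sC(G,k)$, notes that translations compose, so $\ol{\delta_{g_1g_2}} = \ol{\delta_{g_1}}\,\ol{\delta_{g_2}}$ in $\Inv_k(\sC(G,k))$, and concludes via the $k$-algebra homomorphism $\mu \longmapsto \ol{\mu}$ of Theorem~\ref{invariant} (whose injectivity, left implicit there, follows since one recovers $\mu$ as $\veps_\sC \circ \ol{\mu}$). What your computation buys is self-containment and transparency: nothing is needed beyond the definition of $\mu_\sD$ and the behaviour of Dirac masses under the structural isomorphisms, and it makes evident why $g \longmapsto \delta_g$ embeds $k[G]$ multiplicatively into $\sD_\acs(G,k)$, as used in \eqref{uniffactgroup}. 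What the paper's route buys is a first exercise of the operator dictionary $\mu \longmapsto \ol{\mu}$, which is reused essentially later, e.g. in the proof of part~$\mathit 3$ of Proposition~\ref{mahleramice}, where $\ol{T^n}(f) = f^{[n]}$ is the key identity. Both arguments end with the same implicit appeal to the fact that a measure is determined by its pairing against continuous functions; you at least make that step explicit.
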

\begin{proof}
To verify \eqref{deltahom} we consider the duality pairing 
$$
\circ : \sC(G,k) \times \sD_\acs(G,k) \longrightarrow k \;,
$$
and observe that the action $\ol{\delta_g}$ on $f \in \sC(G,k)$ is given by 
$$
(\ol{\delta_g}(f))(x) = f(gx) \;.
$$
Therefore, 
$$
 \ol{\delta_{g_1g_2}}(f)(x) = f(g_1g_2x) =  \ol{\delta_{g_1}} ( \ol{\delta_{g_2}} (f)) (x)
$$
so that 
$$\ol{\delta_{g_1g_2}} =  \ol{\delta_{g_1}} \ol{\delta_{g_2}} $$
in $\Inv_k(\sC(G,k))$, and therefore, by Theorem~\ref{invariant}, we obtain \eqref{deltahom}. 
\end{proof}
We now embed the group $k$-algebra $k[G]$ in $\sD_\acs(G,k)$ via $g \longmapsto \delta_g$  
and  sequence \eqref{uniffact2} may be completed into 
\beq \label{uniffactgroup}  
 k[G] \longrightarrow  \sD_\acs(G,k) 
 \longrightarrow   \sD_\unif(G,k)   \longrightarrow  \sD(G,k)
 \eeq
\begin{prop} \label{explunifmeas} \ben \item $\sD_\acs(G,k)$ is the completion of the ring $k[G]$ in 
the $k[G]$-linear topology of uniform convergence on balls of any radius;
\item $\sD_\unif(G,k)$  is the completion of the ring $k[G]$ in the $k[G]$-linear topology  of uniform convergence on balls of the same radius;
\item $\sD(G,k)$  is the completion of $k[G]$ in the $k[G]$-linear topology  of simple convergence on balls.  
\een
\end{prop}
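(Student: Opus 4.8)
The plan is to realize each of $\sD_\acs(G,k)$, $\sD_\unif(G,k)$ and $\sD(G,k)$ as the completion of the group algebra $k[G]=\bigoplus_{g\in G}k\delta_g$, embedded via $g\mapsto\delta_g$ as in \eqref{uniffactgroup}, for the appropriate topology. This amounts to checking the three standard ingredients of a completion statement: that the named topology is the subspace topology induced on $k[G]$; that $k[G]$ is dense; and that the ambient space is separated and complete. Completeness is immediate, since all three are weak duals ($\sD_\acs(G,k)=\sC(G,k)'_\weak$, $\sD_\unif(G,k)=\sC_\unif(G,k)'_\weak$, $\sD(G,k)=\sC_\acs(G,k)'_\weak$), hence separated and complete objects of $\cCLMu_k$ by Remark~\ref{weakstrong}.

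For the topology I would first translate the group-theoretic language into the bases of open submodules of Proposition~\ref{measinter}. Since $G$ is a $td$-group, by Lemma~\ref{lczdgp} a ball of radius $H$ is exactly a coset $gH$ with $H\in\cP(G)$ a compact open subgroup, and the partition of $G$ into balls of a fixed radius $H$ is the discrete quotient $\pi_H:G\to G/H$ in $\sJ$. Then: for $\sD_\unif$, a set $U\in\Sigma_{G/H}(G)$ is a union of radius-$H$ balls, so by (UNIFMEAS) the requirement $\mu(U)\in J$ for all such $U$ is equivalent to $\mu(gH)\in J$ for every $g$, i.e. $\sU_{H,J}$ is precisely uniform convergence on balls of the same radius $H$; for $\sD_\acs$, every compact open is a finite disjoint union of cosets of a small enough $H$, so every $td$-partition $\sP\in\sQ(G)$ refines to a partition into balls of (varying) radii and conversely, and the corresponding submodules $V_{\sP,J}$ generate the topology of uniform convergence on balls of any radius; for $\sD(G,k)$, a finite family $\sF\subset\cK(G)$ of compact opens is a finite family of balls, so $U_{\sF,I}$ is simple convergence on balls. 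In each case the displayed submodules are translation invariant (left translation by $g$ permutes the cosets of a fixed $H$ and carries a $td$-partition $\sP$ to the $td$-partition $g\sP$), which is what makes the induced topology genuinely $k[G]$-linear.

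Density is a single argument run three times, exploiting the relevant additivity axiom of Proposition~\ref{measinter} together with the fact that $G$ itself is clopen, so $G\in\Sigma(G)$. Given $\mu$ in one of the spaces and a basic neighborhood, I apply the unconditional-additivity axiom to the partition of $G$ into the relevant pieces (all cosets $gH$ for $\sD_\unif$, the $td$-partition $\sP$ for $\sD_\acs$): the series $\sum_C\mu(C)$ converges unconditionally to $\mu(G)$, so all but finitely many pieces $C$ satisfy $\mu(C)\in J$. Choosing a point $x_C$ in each of the finitely many exceptional pieces and setting $\nu=\sum\mu(C)\,\delta_{x_C}\in k[G]$ gives a finitely supported measure with $\nu(C)=\mu(C)$ on the exceptional pieces and $\nu(C)=0$ elsewhere, whence $\nu-\mu$ lies in the chosen submodule. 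For $\sD(G,k)$ one even gets exact agreement on all of $\sF$ by replacing $\sF$ with the atoms of the finite Boolean algebra it generates and using finite additivity (MEAS). This reproves the density already recorded in \eqref{diracunif} and in the sequence \eqref{uniffactgroup}. Combining the three ingredients, each space is the completion of $k[G]$ for the stated topology.

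The main obstacle is the topology-matching step, specifically the identification for $\sD_\acs$ of balls of any radius with arbitrary $td$-partitions: one must verify that refining a $td$-partition into cosets only shrinks $V_{\sP,J}$ — if $\sP'$ refines $\sP$ then for $C\in\sP$ one has $\mu(C)=\sum_{C'\subset C}\mu(C')$ with each term in the open (hence closed) ideal $J$, so $\mu(C)\in J$ and $V_{\sP',J}\subset V_{\sP,J}$ — so that ball-partitions are cofinal in $\sQ(G)$ and define the same topology. The companion point, that these topologies are translation invariant and thus legitimately $k[G]$-linear, is the only place where the group structure, beyond the bare $td$-space structure, really intervenes.
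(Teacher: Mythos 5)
Your proof is correct and is exactly the verification the paper leaves implicit: its own proof of Proposition~\ref{explunifmeas} is literally ``Clear.'' The three ingredients you check --- matching the named topologies with the bases of open submodules in Proposition~\ref{measinter} (including the cofinality of ball-partitions among $td$-partitions of $G$, which you rightly identify as the one step where the group structure genuinely intervenes), density of $k[G]$ via unconditional additivity and finitely supported Dirac combinations, and completeness/separatedness from the weak-dual descriptions --- are precisely the previously established facts the paper intends the reader to assemble.
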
  
\begin{proof}
Clear. 
\end{proof}
In the next sections we will specialize the previous discussion to the case of $G$ = the profinite group $(\Z_p,+)$, equipped with the $p$-adic topology and $G$ = the $td$-group $(\Q_p,+)$ containing the previous profinite group $(\Z_p,+)$ as an open subgroup. The role of the ring $k$ will be played by the $p$-adic ring $\Z_p$ or by the discrete field $\F_p$.
\end{subsection}
\end{section}
\begin{section}{$p$-adic Fourier theory  on   $\Z_p$} \label{Integrations}
\begin{subsection}{Mahler-Amice theory on $\Z_p$}\label{MAss}
We describe here $\sC(\Z_p,k)$ and its weak dual $\sD(\Z_p,k)$, in case $k= \F_p$ or $=\Z_p$. 

\begin{notation} We use the notation $\Delta_a$ (resp. $\delta_a$) for the Dirac mass concentrated in $a \in \Q_p$ with values in $\Z_p$ (resp. in $\F_p$).
\end{notation} 

We proved in full generality  that
\ben
\item $\sC(\Z_p,k)$ = Hopf algebra over $k$   of continuous $k$-valued functions with the topology of uniform convergence on $\Z_p$ (if $k=\F_p$ this is the discrete topology; if $k=\Z_p$ it coincides with the $p$-adic topology);
\item $\sD(\Z_p,k)$ = the weak dual of $\sC(\Z_p,k)$, with weak perfect dual pairing 
\beq \label{mahleramice0}
(f,\mu) \longmapsto f \circ \mu = \int_{\Z_p} f(x) d\mu(x) \;.
\eeq
Then $\sD(\Z_p,k)$ is a Hopf algebra over $k$ and it 
 is equipped with the  \emph{natural topology}, \ie the  topology of uniform convergence on balls of equal radius. As a topological ring $\sD(\Z_p,k)$ is linearly topologized and  complete.
 \item $\sD^\circ(\Z_p,k)$ = the strong dual of $\sC(\Z_p,k)$, with strong perfect dual pairing \eqref{mahleramice0}
Then $\sD^\circ(\Z_p,k)$ is a Hopf algebra over $k$ and it 
 is equipped with the  \emph{topology of uniform convergence on all balls}. It coincides with $\sD(\Z_p,k)$, equipped  with the $k$-canonical topology.  
\een
The case $k=\Z_p$ is the classical case of Mahler and Amice, while the case $k=\F_p$ is simpler and follows easily from the classical theory. 
So, we now make   explicit, following Mahler and Amice, the structure of $\sC(\Z_p,k)$ and $\sD(\Z_p,k)$, for $k=\Z_p$. We will then point out the simplifications to apply when $k=\F_p$. 
\begin{prop} 
\label{mahleramice}
\ben
\item
For any $f \in \sC(\Z_p,\Z_p)$  and  $n \in \N$, we define $f^{[n]} \in \sC(\Z_p,\Z_p)$
recursively by  
\beq
\label{mahleramice0}
f^{[1]} (x) = f(x+1)-f(x)\;\;\mbox{and}\;\; f^{[n+1]}   = (f^{[n]})^{[1]}\;.
\eeq 
For any $f \in  \sC(\Z_p,\Z_p)$, we have
\beq \label{mahleramice11}
f(x) =  \sum_{n=0}^\infty f^{[n]}(0) {x \choose n}    \;,
\eeq
where the sequence $n \longmapsto f^{[n]}(0)$ converges to $0$. In particular, 
\beq \label{mahleramice112} \sC(\Z_p,\Z_p) = {\bigoplus}^\un_{n \in \N} \Z_p {x \choose n} = {\bigoplus}^\can_{n \in \N} \Z_p {x \choose n} \;,  
\eeq
where $(-)^\can$ means completion (of the algebraic direct sum) in the $p$-adic topology. 
\item The coproduct $\P_\sC$ of $\sC(\Z_p,\Z_p)$ is the continuous $\Z_p$-algebra morphism such that 
$$
\P_\sC  {x \choose n} = \sum_{i+j=n}  {x \wt 1 \choose i}  {1 \wt x \choose j} \in \sC(\Z_p,\Z_p) \wt_{\Z_p} \sC(\Z_p,\Z_p) \;,
$$
while the counit $\veps_\sC$ satisfies 
$$
\veps_\sC {x \choose n} = \delta_{n,0} \;.
$$
\item 
Let 
$$T = \Delta_1 - \Delta_0 \in \sD(\Z_p,\Z_p)\;.$$ 
Then
\beq \label{mahleramice1}
{x \choose i}\circ T^j = \int_{\Z_p} {x \choose i} dT^j (x)=  \delta_{i,j} \;,
\eeq
for any $i,j \in \N$.  In particular, $T^n \to 0$  in $\sD(\Z_p,\Z_p)$  as $n \to \infty$.
For any $f \in  \sC(\Z_p,\Z_p)$, the expansion \eqref{mahleramice11} coincides with 
\beq \label{mahleramice111}
f(x) =    \sum_{n=0}^\infty  \l(\int_{\Z_p} f(y) dT^n(y)\r) {x \choose n} 
\eeq
\item
For any  $\mu \in  \sD(\Z_p,\Z_p)$, 
\beq \label{mahleramice12}
\mu =  \sum_{n=0}^\infty T^n \int_{\Z_p} {x \choose n} d\mu(x)  \;.
\eeq 
\item 
 $\sD(\Z_p,\Z_p)$ is the completion of its subring $\Z[T]$ with the topology of uniform convergence 
 on balls of the same radius  in $\Z_p$. There is a canonical identification as rings  
\beq \label{Iwasawa1}
\sD(\Z_p,\Z_p) = \Z_p[[T]] \;.
\eeq 
\een
\end{prop}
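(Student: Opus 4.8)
The plan is to prove the five assertions of Proposition~\ref{mahleramice} in sequence, treating them as a ladder where each step feeds the next, and exploiting the duality machinery of sections~\ref{duality} and \ref{measdual} together with the explicit descriptions from section~\ref{STScontinuity}.

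\medskip
\textbf{Assertions 1 and 2.} For \eqref{mahleramice11} I would invoke the classical Mahler theorem, which states that every $f \in \sC(\Z_p,\Z_p)$ has a unique expansion $f = \sum_n c_n \binom{x}{n}$ with $c_n = f^{[n]}(0) \to 0$ $p$-adically; the finite-difference recursion \eqref{mahleramice0} makes the identification $c_n = f^{[n]}(0)$ a direct induction on $n$ via the Pascal identity for binomial coefficients. The displayed formula \eqref{mahleramice112} is then just a restatement: the binomials $\binom{x}{n}$ form a topological basis, so $\sC(\Z_p,\Z_p) = {\bigoplus}^\un_{n} \Z_p \binom{x}{n}$, and since $\Z_p$ is $p$-adic (hence canonical) this uniform sum coincides with the $p$-adically completed sum by Remark~\ref{limcan}. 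For assertion~2, the coproduct comes from $\Delta_{\Z_p}^\ast$ via the isomorphism \eqref{prodfctsmor}, and the Vandermonde-type identity $\binom{x+y}{n} = \sum_{i+j=n}\binom{x}{i}\binom{y}{j}$ gives the stated formula; the counit $\veps_\sC(f)=f(0)$ evaluates $\binom{x}{n}$ to $\delta_{n,0}$.

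\medskip
\textbf{Assertions 3 and 4.} The orthogonality \eqref{mahleramice1} is the crux. I would compute $\int_{\Z_p}\binom{x}{i}\,dT^j(x)$ by first observing that, under the embedding $g\mapsto \Delta_g$ and Corollary~\ref{deltaprod}, $T = \Delta_1-\Delta_0$ acts dually on $\sC(\Z_p,\Z_p)$ as the shift-minus-identity operator, i.e. $\overline{T}(f) = f^{[1]}$ in the notation of Theorem~\ref{invariant}. Iterating, $\overline{T^j}(f) = f^{[j]}$, and the integration pairing gives $\int \binom{x}{i}\,dT^j = \binom{x}{i}^{[j]}(0) = \delta_{i,j}$ by the difference calculus already used in assertion~1. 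This immediately identifies the $\{T^n\}$ as the dual topological basis to $\{\binom{x}{n}\}$, whence $T^n\to 0$ in $\sD(\Z_p,\Z_p)$ (weak topology), formula \eqref{mahleramice111} is merely \eqref{mahleramice11} with $c_n$ rewritten via the pairing, and \eqref{mahleramice12} is the dual expansion of an arbitrary measure $\mu$ against the basis, legitimate because $\sD(\Z_p,\Z_p)=\sC(\Z_p,\Z_p)'_\weak = \limit_{F} k^{(F,\un)}$ by \eqref{compweak}.

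\medskip
\textbf{Assertion 5.} The ring identification \eqref{Iwasawa1} follows by assembling the preceding pieces: \eqref{mahleramice12} shows every measure is a power series in $T$ with coefficients $\int\binom{x}{n}d\mu \in \Z_p$, and the multiplicativity $T^i \cdot T^j = T^{i+j}$ (convolution of measures, via Corollary~\ref{deltaprod} giving $\Delta_1^{\ast k}=\Delta_k$ and the binomial theorem $T^n=(\Delta_1-\Delta_0)^n$) makes $\mu\mapsto \sum_n c_n T^n$ a ring isomorphism onto $\Z_p[[T]]$. The main obstacle I anticipate is the topological matching claimed in the statement, namely that the natural (weak) topology on $\sD(\Z_p,\Z_p)$ agrees with the $(p,T)$-adic topology on $\Z_p[[T]]$; the proposition as stated here asserts only that $\sD(\Z_p,\Z_p)$ is the completion of $\Z[T]$ for uniform convergence on balls of equal radius, and the delicate comparison with the $(p,T)$-adic topology is deferred to Proposition~\ref{Iwasawa3}, so I would prove the ring-level statement here and explicitly flag that the finer topological identification requires that separate argument.
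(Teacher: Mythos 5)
Your proposal is correct and follows essentially the same route as the paper: the orthogonality \eqref{mahleramice1} via the invariant-endomorphism machinery of Theorem~\ref{invariant} and Corollary~\ref{deltaprod} (exactly the paper's stated preference over the Amice transform), the dual-basis expansions for assertions 3--4, and the deferral of the $(p,\wtilde{T})$-adic topological comparison to Proposition~\ref{Iwasawa3}. The only divergence is assertion 1, where the paper does not cite Mahler's theorem as known but reproves it self-containedly by adapting Colmez's argument (the unipotent matrices $B_N$ and Lemma~\ref{infmahler}); your appeal to the classical result is legitimate, just less self-contained.
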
  
\begin{proof} $\mathit 1.$ 
We adapt to the present situation the proof described by Colmez \cite[\S1.2]{colmez2}. 
Let $B_N$ be  the $p^N \times p^N$ lower-triangular unipotent matrix 
 $$B_N := (b_{n,a} = {n \choose a})_{n,a = 0,1,\dots,p^N-1} \in GL(p^N,\Z)\;.$$
 Let $f \in \sC(\Z_p,\Z_p)$. Then, for any $N =0,1,\dots$, we consider the column vectors
 $$F_N := (f(0),f(1),\dots,f(p^N-1))^\tu \;\;,\;\; C_N := (a^{(N)}_0,a^{(N)}_1,\dots,a^{(N)}_{p^N-1})^\tu \;,$$
 where $a^{(N)} \in \{0,1,\dots, p^N-1\}$  and $C_N$ is uniquely defined by the condition 
 $$
 B_N C_N \equiv F_N \mod p^{N+1} \;.
 $$
 Notice that, for any $N$,
 $$
 (a^{(N+1)}_0,a^{(N+1)}_1,\dots,a^{(N+1)}_{p^N-1}) \equiv (a^{(N)}_0,a^{(N)}_1,\dots,a^{(N)}_{p^N-1}) \mod p^{N+1}
 $$
so that, for any $j =0,1,\dots$, the sequence $N \to a^{(N)}_j$ converges in $\Z_p$. We denote by $a_j$ its 
limit and observe that, for any $N =0,1,2,\dots$,
$$
 B_N (a_0,a_1,\dots,a_{p^N-1})^\tu \equiv F_N \mod p^{N+1} \;.
$$
We have obtained the \emph{Mahler expansion} 
\beq \label{mahlerexp}
f(x) = \sum_{j=0}^\infty a_j {x \choose j} \;,
\eeq
of $f$, which,  for any $x =0,1,2,\dots$,  is a finite sum. 
If $P_j(x)$, for $j=0,1,\dots$,  denotes the binomial function $x \longmapsto {x \choose j}$, 
we have 
\beq \label{ortogbinom} 
P_j^{[h]} = P_{j-h}\;\;\mbox{if}\;\; h \leq j \;,\;\;\mbox{while}\;\;   P_j^{[h]} = 0\;\;\mbox{if}\;\; h>j \;.
\eeq
Then, for $f \in \sC(\Z_p,\Z_p)$ as in \eqref{mahlerexp}
$$
f^{[h]}(x) = \sum_{j=0}^\infty a_{j+h}{x \choose j}
$$
and $f^{[h]}(0) = a_h$. Then, for any $f \in \sC(\Z_p,\Z_p)$,  \eqref{mahlerexp} 
may be rewritten as \eqref{mahleramice11}.
\begin{lemma} \label{infmahler} For any $f \in \sC(\Z_p,\Z_p)$ let
$$v_{\inf}(f) = \inf \{v(f(x))\,|\, x \in \Z_p\} \;.$$
Then there is $h \in \N$ such that 
$$v_{\inf}(f^{[p^h]}) \geq  v_{\inf}(f)  + 1\;.$$
\end{lemma}
\begin{proof}(of the Lemma) Exercise or see 
\cite[Lemme 1.14]{colmez2}.
\end{proof}
This concludes the proof of $\mathit 1.$
\par \medskip
$\mathit 2.$ 
follows from the formula
$$
  {x +y \choose n} = \sum_{i+j=n}  {x   \choose i}  {y \choose j}  \;.
$$
\par 
$\mathit 3.$ could be proven via Amice transform \cite[\S 2.3]{colmez2} but we prefer to base our proof on the previous section \ref{invsect}. We need to check that, for any $f \in \sC = \sC(\Z_p,\Z_p)$  and for any $n \in \N$, 
$$
 f^{[n]} = \ol{T^n}(f)  \;.
$$
This follows from Corollary~\ref{deltaprod}, since $T = \Delta_1-\Delta_0$ and $\mu \longmapsto \ol{\mu}$ is a morphism of $\Z_p$-algebras. So, 
$$
   f \circ T^n  =   f^{[n]}(0)\;.
$$
So, from \eqref{ortogbinom},  
\beq \label{mahleramice13}
{x \choose i}\circ T^j =  P_i^{[j]}(0) =  \delta_{i,j} \;,
\eeq
for any $i,j \in \N$.  
\par 
$\mathit 4.$ Formula \eqref{mahleramice12} follows from orthogonality of the subset $\{T^j\}_{j\in \N}$ of  $\sD(\Z_p,\Z_p)$
to the 
topological basis $\{{x \choose i}\}_{i\in \N}$ of $\sC(\Z_p,\Z_p)$.
\par 
$\mathit 5.$ 
Formula \eqref{mahleramice12} establishes a map $\sD(\Z_p,\Z_p) \map{} \Z_p[[T]]$. On the other hand, for any sequence $n \longmapsto a_n$ in $\Z_p$, $\sum_n a_nT^n$ is the continuous linear operator $\sC(\Z_p, \Z_p) \to \Z_p$, $P_h \longmapsto a_h$, $\forall h \in \N$.
The description of the topology of $\sD(\Z_p,\Z_p)$ as the topology of uniform convergence on balls of a given radius was already  in  $\mathit 2$ of Proposition~\ref{explunifmeas}.
\end{proof}
 A more explicit description of the topology of  $\sD(\Z_p,\Z_p)$ will be given in the next section. 
\end{subsection}
\begin{subsection}{The natural topology of $\sD(\Z_p,\Z_p)$.}
In this section we prove the two important facts:
\ben \item  as a topological ring, $\sD(\Z_p,\Z_p) = \Z_p[[T]]$ equipped with  the $(p,T)$-adic topology. 
\item the topology of $\sD(\Z_p,\Z_p) = \Z_p[[T]]$ is induced by  the valuation $w$
\beq \label{wdef}
w(\sum_n a_n T^n) = \min_n v_p(a_n) + n \;,
\eeq 
for any $\sum_n a_n T^n \in \Z_p[[T]]$.
\een
Notice that $2$ follows easily from  $1$~:
\begin{lemma} \label{torval} The topology induced by the valuation $w$ on $\Z_p[T]$ coincides with the $(p,T)$-adic one.
\end{lemma}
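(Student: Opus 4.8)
The plan is to prove the equality of the two topologies by showing that their defining descending chains of ideals are not merely cofinal but literally equal: for every $N \in \N$ I claim that the ``ball''
\[
B_N := \{\, f \in \Z_p[T] \,\mid\, w(f) \geq N \,\}
\]
coincides with the ideal $(p,T)^N$. Since $\{B_N\}_{N \in \N}$ is a fundamental system of open neighbourhoods of $0$ for the topology defined by $w$ (note $w$ takes integer values on nonzero elements of $\Z_p[T]$), and $\{(p,T)^N\}_{N \in \N}$ is one for the $(p,T)$-adic topology, this term-by-term identity makes the coincidence of topologies immediate.

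First I would record the two elementary properties of $w$ that guarantee $B_N$ is an ideal. From $v_p(a_n+b_n) \geq \min(v_p(a_n),v_p(b_n))$ one gets the ultrametric estimate $w(f+g) \geq \min(w(f),w(g))$, and from the standard bound on the coefficients of a product one gets submultiplicativity $w(fg) \geq w(f)+w(g)$. Because every $h \in \Z_p[T]$ satisfies $w(h) \geq 0$, these two facts show that $B_N$ is an additive subgroup stable under multiplication by $\Z_p[T]$, hence an ideal (with $B_0 = \Z_p[T]$).

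Next I would establish the two inclusions. For $(p,T)^N \subseteq B_N$ it suffices to check the generators: the monomial $p^i T^j$ with $i+j=N$ has $w(p^i T^j) = v_p(p^i)+j = i+j = N$, so each generator lies in $B_N$, and $B_N$ being an ideal yields the inclusion. For the reverse inclusion $B_N \subseteq (p,T)^N$ I would expand $f = \sum_n a_n T^n$ with $w(f) \geq N$ monomial by monomial. The hypothesis means $v_p(a_n)+n \geq N$ for every $n$. When $n \geq N$ the term $a_n T^n$ already lies in $(T)^N \subseteq (p,T)^N$; when $n < N$ we have $v_p(a_n) \geq N-n$, so $a_n = p^{N-n} b_n$ with $b_n \in \Z_p$ and $a_n T^n = b_n\,p^{N-n}T^n \in (p,T)^N$ because $(N-n)+n = N$. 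Summing over $n$ gives $f \in (p,T)^N$, so $B_N = (p,T)^N$.

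I do not expect a genuine obstacle here: once one observes that $w$ assigns the value $N$ to precisely the monomials $p^iT^j$ ($i+j=N$) generating $(p,T)^N$, the argument is entirely formal. The only point deserving a word of care is conceptual rather than technical, namely that the statement concerns topologies, so that cofinality of the two chains of ideals would already suffice; the computation above in fact delivers the sharper equality $B_N=(p,T)^N$, which renders the equivalence of the two topologies transparent and will feed directly into the identification of $\sD(\Z_p,\Z_p)$ with $\Z_p[[T]]$ in its $(p,T)$-adic topology.
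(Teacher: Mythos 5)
Your proof is correct and follows essentially the same route as the paper's: both establish the two inclusions $(p,T)^N \subseteq \{f : w(f)\geq N\} \subseteq (p,T)^N$ by evaluating $w$ on the generators $p^iT^j$ with $i+j=N$ and by decomposing an element of the ball monomial by monomial. Your write-up merely makes explicit the ultrametric and submultiplicativity properties of $w$ (hence that the ball is an ideal), details the paper leaves implicit.
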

\begin{proof} 
We show that, for any $n \in \N$,
$$
(p,T)^n \subset \{ y \in \Z_p[T] \;:\; w(y) \geq n \;\}\ \subset (p,T)^n
$$
We have $w(p^i T^j) = i + j$, so that the first inclusion is clear. If now $w(a_i T^i) \geq n$ for some $i \in \N$, 
 we have $v(a_i) + i \geq n$   but then $a_i T^i \in 
(p,T)^n$.
\end{proof}
We now prove statement $1$ above. 
\begin{notation} \label{unconvballs}
For any $h,\ell \in \Z_{\geq 0}$   let
\beq \label{Iwasawa2}
\sU_{h,\ell} = \{ \mu \in \sD(\Z_p,\Z_p) \;|\; \mu (a + p^h \Z_p) \in p^\ell \Z_p\;,\;\forall \; a \in \Z_p\;\} \subset \sD(\Z_p,\Z_p)  \;,
\eeq 
which is easily seen to be an ideal of $\sD(\Z_p,\Z_p)$. The linear topology of $\sD(\Z_p,\Z_p)$ having $\{\sU_{h,\ell}\}_{h,\ell}$ as a basis of open ideals  is 
 the topology of \emph{uniform convergence on balls of the same radius}. Since locally constant functions are dense 
in $\sC(\Z_p,\Z_p)$, 
this is the \emph{weak topology} of $\sD(\Z_p,\Z_p)$, \ie  the topology of simple convergence on the continuous $\Z_p$-linear dual of $\sC(\Z_p,\Z_p)$. 
\end{notation}
\begin{prop}  \label{Iwasawa3} Let us identify $\sD(\Z_p,\Z_p)$ with $\Z_p[[T]]$ via  \eqref{Iwasawa1}. 
Then the topology on $\sD(\Z_p,\Z_p)$ of uniform convergence on balls of the same radius  coincides with the $(p,T)$-adic topology of  $\Z_p[[T]]$. 
More precisely we have~:
\beq  \label{Iwasawa311} \begin{split} 
(p,T)^{p^N} & \subseteq  \bigcap_{h +\ell = N+1} \sU_{h,\ell} = \\
(p^{N+1},p^NT,p^NT^2 \dots, p^NT^{p-2}&,p^NT^{p-1}, p^{N-1}T^p,\dots, p^{N-1}T^{p^2-1},   p^{N-2}T^{p^2},\dots \\ \dots, 
p^2 T^{p^{N-2}},\dots, p^2 T^{p^{N-1}-1},&p T^{p^{N-1}},\dots, p T^{p^N-1},T^{p^N}) \subseteq (p,T)^{N+1} 
\;.
\end{split}
\eeq
\end{prop}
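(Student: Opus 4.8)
The plan is to prove the two inclusions and the equality displayed in \eqref{Iwasawa311}; statement $2$ of the section (that the valuation $w$ induces this topology) then follows at once from Lemma~\ref{torval} together with statement $1$, since the sandwich $(p,T)^{p^N}\subseteq\bigcap_{h+\ell=N+1}\sU_{h,\ell}\subseteq(p,T)^{N+1}$ exhibits $\{\sU_{h,\ell}\}$ and $\{(p,T)^M\}$ as mutually cofinal bases of open ideals. The decisive move is to translate the analytic definition \eqref{Iwasawa2} of $\sU_{h,\ell}$ into a purely algebraic description of an ideal of $\Z_p[[T]]=\sD(\Z_p,\Z_p)$.

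First I would identify each $\sU_{h,\ell}$ with an explicit ideal. Writing $q_h:\Z_p\to\Z/p^h\Z$ for the quotient, the push-forward of measures $\theta_h:\sD(\Z_p,\Z_p)\to\sD(\Z/p^h\Z,\Z_p)$ of \eqref{contpushoutinfty} is a continuous $\Z_p$-algebra homomorphism (it is push-forward along the group homomorphism $q_h$, compatible with convolution by Corollary~\ref{deltaprod}), and by \eqref{contpushoutinfty} the coefficient of $\Delta_{\bar a}$ in $\theta_h(\mu)$ is exactly $\mu(a+p^h\Z_p)$. Under the Iwasawa identification $\sD(\Z/p^h\Z,\Z_p)=\Z_p[\Z/p^h\Z]=\Z_p[[T]]/(\omega_h)$, where $\omega_h:=(1+T)^{p^h}-1$ is monic of degree $p^h$, the map $\theta_h$ is the surjective reduction modulo $\omega_h$. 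Hence $\sU_{h,\ell}=\theta_h^{-1}\bigl(p^\ell\,\sD(\Z/p^h\Z,\Z_p)\bigr)=p^\ell\Z_p[[T]]+(\omega_h)=(p^\ell,\omega_h)$. As $\sU_{h,\ell}$ is the whole ring when $\ell=0$, the intersection in \eqref{Iwasawa311} reduces to $\bigcap_{h=0}^{N}(p^{N+1-h},\omega_h)$.

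The core of the argument is a valuation analysis of $\omega_h$. From $v_p\binom{p^h}{k}=h-v_p(k)$ one reads off $\omega_h\equiv T^{p^h}\pmod p$ and, crucially, that $T^{p^h}\equiv-\sum_{k=1}^{p^h-1}\binom{p^h}{k}T^k\pmod{\omega_h}$ has all coefficients divisible by $p$; writing $m=ap^h+b$ with $0\le b<p^h$ and iterating gives the \emph{carry estimate} $T^{m}\in p^{\lfloor m/p^h\rfloor}R_h$ in $R_h:=\Z_p[[T]]/(\omega_h)$. The two outer inclusions are then the routine part: the displayed middle ideal $\mathfrak a_N$ is the monomial ideal generated by the $p^{\,N-\lfloor\log_p j\rfloor}T^j$ (with $p^{N+1}$ for $j=0$ and $T^{p^N}$ for $j=p^N$), and a direct check of the $w$-valuations of its generators against Lemma~\ref{torval} yields $(p,T)^{p^N}\subseteq\mathfrak a_N\subseteq(p,T)^{N+1}$, using $p^s-s\ge1$. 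The inclusion $\mathfrak a_N\subseteq\bigcap_h(p^{N+1-h},\omega_h)$ likewise follows generator by generator from the carry estimate.

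The main obstacle is the reverse inclusion $\bigcap_{h=0}^N(p^{N+1-h},\omega_h)\subseteq\mathfrak a_N$, because reduction modulo $\omega_h$ mixes all coefficients of a series $\mu=\sum_n a_nT^n$, so no single constraint isolates a given $a_j$. I would resolve this by a descending induction over the $p$-adic bands $p^{i}\le j<p^{i+1}$. The constraint $\sU_{N,1}=(p,T^{p^N})$ is already monomial and gives $v_p(a_j)\ge1$ for all $j<p^N$; assuming the bound $v_p(a_m)\ge N-\lfloor\log_p m\rfloor$ for all $m$ lying in higher bands, the constraint $\sU_{i+1,N-i}=(p^{N-i},\omega_{i+1})$, read through reduction mod $\omega_{i+1}$, shows that the $T^j$-coefficient of $\mu\bmod\omega_{i+1}$ is congruent to $a_j$ modulo $p^{N-i}$: the higher-degree contributions are killed because the carry estimate combined with the inductive bounds forces $v_p(a_m)+\lfloor m/p^{i+1}\rfloor\ge N-i$ (the needed inequality $p^{s-1}\ge s$ holding for all $s\ge1$). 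This yields $v_p(a_j)\ge N-i=N-\lfloor\log_p j\rfloor$ for $j$ in $[p^i,p^{i+1})$; carrying the induction down to $j=0$, where $\sU_{0,N+1}=(p^{N+1},T)$ gives $v_p(a_0)\ge N+1$, reconstructs exactly the defining conditions of $\mathfrak a_N$ and completes the equality.
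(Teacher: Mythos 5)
Your proof is correct, but it takes a genuinely different route from the paper's. The paper stays with the measure-theoretic definition of $\sU_{h,\ell}$ throughout: its Lemma~\ref{Iwasawa30} computes the values $T^{p^{h+\ell}}(a+p^h\Z_p)$ explicitly from Dwork's congruence ${p^{n+1}\choose pj}\equiv{p^n\choose j} \bmod p^{n+1}$, which gives the inclusion $(p,T)^{p^N}\subseteq\bigcap_{h+\ell=N+1}\sU_{h,\ell}$ directly, and it then extracts the coefficient bounds by evaluating $\sum_{n<p^N}a_nT^n$ on balls of radius $p^{-h}$ and inverting the resulting linear conditions through the unipotent block-triangular structure of the matrices $\l({n\choose a}\r)$, in an \emph{ascending} induction $h=0,1,2,\dots$. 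You instead algebraize the problem at the outset: the identification $\sU_{h,\ell}=(p^\ell,\omega_h)$ with $\omega_h=(1+T)^{p^h}-1$, obtained from the push-forward $\sD(\Z_p,\Z_p)\to\sD(\Z/p^h\Z,\Z_p)=\Z_p[[T]]/(\omega_h)$ (the classical Iwasawa picture, which the paper never writes down), turns everything into commutative algebra in $\Z_p[[T]]$, whose only inputs are $v_p{p^h\choose k}=h-v_p(k)$, the resulting carry estimate $T^m\in p^{\lfloor m/p^h\rfloor}\,\Z_p[[T]]/(\omega_h)$, and the Weierstrass basis $1,T,\dots,T^{p^h-1}$ of $\Z_p[[T]]/(\omega_h)$. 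The most substantive difference is the direction of the induction in the hard inclusion $\bigcap_h\sU_{h,N+1-h}\subseteq\mathfrak{a}_N$: your \emph{descending} induction over the bands $p^i\le j<p^{i+1}$ is designed precisely so that, when the radius-$p^{-(i+1)}$ constraint is used, the already-established bounds on $a_m$ for $m\ge p^{i+1}$, combined with the carry estimate and $p^s\ge s+1$, annihilate the contribution of all higher-degree terms, leaving the coefficients $a_m$, $m<p^{i+1}$, cleanly isolated in the free $\Z_p$-module $\Z_p[[T]]/(\omega_{i+1})$. This interference of high-degree coefficients is exactly the point that the paper's ascending induction treats only implicitly: at its step $h$ the coefficients $a_n$ with $n\ge p^h$ are not yet bounded, although they do contribute to $\mu(a+p^h\Z_p)$, so the reduction of the hypothesis to condition \eqref{indh} is left compressed; your organization makes that step airtight. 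What the paper's route buys in exchange is explicit formulas for the values of powers of $T$ on balls, which are reused later (notably in the $\Q_p$-setting of Proposition~\ref{unifconvballs}), whereas your argument is lighter on bookkeeping and more self-contained algebraically; the one small imprecision on your side is calling the inclusion $(p,T)^{p^N}\subseteq\mathfrak{a}_N$ a $w$-valuation check against Lemma~\ref{torval} — since $\mathfrak{a}_N$ is not cut out by a valuation bound this is really a generator-by-generator verification — but the inequality you invoke is the right one and the check is indeed routine.
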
 
\begin{rmk} \label{Iwasawa3110} The equality in formula \eqref{Iwasawa311} may be more easily remembered as
$$\bigcap_{h +\ell = N + 1} \sU_{h,\ell +1} = p^N(p,T,   T^p/p,  T^{p^2}/p^2,\dots \dots,   T^{p^{N-1}}/p^{N-1}, T^{p^N}/p^N) \;.   
$$
\end{rmk} 
\begin{proof}   We first prove the  inclusion $\subseteq$ in \eqref{Iwasawa311},  \ie that
\beq  \label{Iwasawa31} (p,T)^{p^{h +\ell}} \subseteq \sU_{h,\ell +1}  
\;.
\eeq
\begin{lemma} \label{Iwasawa30} 
$$T^{p^{h +\ell}} \in \sU_{h,\ell +1}  \;.$$
\end{lemma}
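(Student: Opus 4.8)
The plan is to reduce the claim to an elementary computation in the finite group ring $\Z_p[D_h]$, where $D_h := \Z_p/p^h\Z_p$, by pushing the measure $T^{p^{h+\ell}}$ forward along the quotient $\pi_h : \Z_p \to D_h$. Since $\Z_p$ is compact (a Stone space) all measure spaces coincide, so $T \in \sD_\acs(\Z_p,\Z_p)$ and the push-forward $h_{D_h}$ applies. Recall from \eqref{contpushoutinfty} that $h_{D_h} : \sD(\Z_p,\Z_p) \to \sD(D_h,\Z_p) = \Z_p[D_h]$ sends a measure $\mu$ to $\sum_{\bar a \in D_h}\mu(a+p^h\Z_p)\,\Delta_{\bar a}$, because $\pi_h^{-1}(\bar a) = a+p^h\Z_p$; hence $\mu \in \sU_{h,\ell+1}$ if and only if every coefficient of $h_{D_h}(\mu)$ lies in $p^{\ell+1}\Z_p$, i.e.\ $h_{D_h}(\mu) \in p^{\ell+1}\Z_p[D_h]$. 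As $h_{D_h}$ is the transpose of the Hopf-algebra pull-back $\pi_h^\ast$, it is a morphism of convolution algebras by Proposition~\ref{convprod}; concretely it sends $\Delta_a \mapsto \Delta_{\bar a}$, so by Corollary~\ref{deltaprod} it is multiplicative. Writing $\sigma := \Delta_{\bar 1}$ for the image of the generator $\bar 1$ of the cyclic group $D_h$ and $t := \sigma - 1 = h_{D_h}(T)$ (with $1 = \Delta_{\bar 0}$ the unit of $R := \Z_p[D_h]$), multiplicativity gives $h_{D_h}(T^{p^{h+\ell}}) = t^{\,p^{h+\ell}}$. Thus it suffices to show $t^{\,p^{h+\ell}} \in p^{\ell+1}R$.

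Next I would establish the base estimate $t^{p^h} \in pR$. This is immediate from the Frobenius homomorphism on the characteristic-$p$ commutative ring $R/pR$, which yields $t^{p^h} = (\sigma-1)^{p^h} \equiv \sigma^{p^h} - 1 \pmod{pR}$; and $\sigma^{p^h} = 1$ in $R$ since $\bar 1$ has order $p^h$. (Equivalently, expanding $(\sigma-1)^{p^h}$ and using $\sigma^{p^h}=1$ gives $t^{p^h} = -\sum_{j=1}^{p^h-1}\binom{p^h}{j}t^j$, where each $\binom{p^h}{j}$ with $1\le j\le p^h-1$ is divisible by $p$.)

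Finally I would conclude by raising to the $p^\ell$-th power: writing $t^{p^h} = p\,u$ with $u\in R$ gives $t^{\,p^{h+\ell}} = (t^{p^h})^{p^\ell} = p^{\,p^\ell}\,u^{\,p^\ell} \in p^{\,p^\ell}R$, and since $p\ge 2$ we have $p^\ell \ge \ell+1$, whence $t^{\,p^{h+\ell}} \in p^{\ell+1}R$, as required. The only points needing care, which I would spell out, are the identification of $\mu(a+p^h\Z_p)$ with a push-forward coefficient and the multiplicativity of $h_{D_h}$ for the convolution product (this is where Corollary~\ref{deltaprod} and Proposition~\ref{convprod} enter); granting these, the remaining $p$-adic estimate is entirely routine, the \emph{only} arithmetic input being $v_p\binom{p^h}{j}\ge 1$ for $0<j<p^h$ (or the Frobenius reformulation above).
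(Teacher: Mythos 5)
Your proof is correct, but it takes a genuinely different route from the paper's. The paper evaluates $T^{p^{h+\ell}}(a+p^h\Z_p)$ directly as an alternating sum of binomial coefficients, $\pm\sum_{j}(-1)^{a+jp^h}{p^{h+\ell} \choose a+jp^h}$ (formula \eqref{Iwasawa32}), and then invokes the congruence ${p^{n+1} \choose pj} \equiv {p^{n} \choose j} \mod p^{n+1}$ of Dwork together with $v_p\l({p^{n+1} \choose i}\r) = n+1$ for $(i,p)=1$, splitting into the cases $a=0$ and $a \neq 0$ and treating $p=2$ separately. You instead push the measure forward along $\pi_h : \Z_p \to \Z/p^h\Z$, which by \eqref{contpushoutinfty} converts membership in $\sU_{h,\ell+1}$ into the statement $t^{p^{h+\ell}} \in p^{\ell+1}\Z_p[\Z/p^h\Z]$ for $t = \Delta_{\bar 1}-\Delta_{\bar 0}$, and obtain that divisibility from the Frobenius identity $(\sigma-1)^{p^h} \equiv \sigma^{p^h}-1 = 0 \pmod{p}$ followed by raising to the $p^\ell$-th power and the inequality $p^\ell \geq \ell+1$. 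Your two "points needing care" are indeed unproblematic: since $T^{p^{h+\ell}}$ lies in the group algebra $\Z_p[\Q_p]$, the multiplicativity of the push-forward is just functoriality of group rings (equivalently, Corollary~\ref{deltaprod} plus linearity on finite sums of Dirac masses), with no completion or density argument required. Your route is shorter, uniform in $a$ and in $p$ (no case distinctions), replaces the Dwork congruence by the freshman's dream in $\F_p[\Z/p^h\Z]$, and actually proves the stronger statement $T^{p^{h+\ell}} \in \sU_{h,\,p^\ell}$, which refines the paper's parenthetical remark that a better estimate holds when $p=2$ and $a=0$. What the paper's computation buys is uniformity of toolkit: the binomial-coefficient manipulations and valuations reappear in the converse inclusion of Proposition~\ref{Iwasawa3} (orthogonality, unipotent matrices of binomial coefficients), so the explicit formula \eqref{Iwasawa32} keeps the whole proposition in one idiom. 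But since the remainder of that proof uses only the statement of the lemma, not its proof, your argument could be substituted without any change downstream.
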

\begin{proof}
It suffices to check that for $a \in \{0,1,\dots,p^h-1\}$ the value 
\beq  \label{Iwasawa32} 
\begin{split}
&T^{p^{h+\ell}} (a +p^h\Z_p) = (\Delta_1 - \Delta_0)^{p^{h+\ell}} (a +p^h\Z_p) = \\
(-1)^{p^{h+\ell}} \sum_{i=0}^{p^{h+\ell}} &(-1)^i {p^{h+\ell} \choose i} \Delta_i  (a +p^h\Z_p) = 
\pm \sum_{j=0}^{p^{\ell} } (-1)^{a+j p^h} {p^{h+\ell} \choose a+j p^h}  
\end{split} 
\eeq
is $\equiv 0 \mod p^{\ell +1}$. 
To do so, we recall that 
$$
 {p^{n+1} \choose pj} \equiv {p^{n} \choose j} \mod p^{n+1} 
$$
\cite[5.11.3]{Dwork}, while, obviously,
$$
v_p(  {p^{n+1} \choose i} ) = n+1 \;,
$$
if $(p,i) = 1$. 
Then, if $a=0$, \eqref{Iwasawa32}  becomes
$$\pm \sum_{j=0}^{p^{\ell } } (-1)^{j p^h} {p^{h+\ell} \choose j p^h}\equiv 
\pm \sum_{j=0}^{p^{\ell} } (-1)^{j p^h} {p^{\ell} \choose j } 
\equiv  0 \mod p^{\ell +1} \;,
$$
if $p$ is odd. If $p=2$ the better estimate ``$\;\equiv  0 \mod 2^{2^{\ell}}\;$'' holds.   If $a \neq 0$, the same sum  becomes  
$$\pm \sum_{j=0}^{p^{\ell} -1} (-1)^{a+j p^h} {p^{h+\ell} \choose {a+ j p^h}}
  \equiv 0 \mod p^{\ell +1} \;.
$$
\end{proof}
In order to complete the proof of   \eqref{Iwasawa31} we pick a generator of $(p,T)^{p^{h +\ell}}$ of  the form $p^i T^{p^{h +\ell} -i}$. We may clearly ignore the terms with $i > \ell$. We need to show that for any $0 < i \leq \ell$
\beq \label{dadim}
v_p(T^{p^{h +\ell} -i} (a +p^h\Z_p)) \geq \ell +1 -i \;.
\eeq
Since $T(\Z_p) = 0$ we may assume $h >0$, so that $p^{h-1}p^\ell(p-1) \geq \ell$ and then 
$$
p^{h+\ell}-i \geq p^{h+\ell -1} \;,\; \forall\; 0 < i \leq \ell\;.
$$
By  Lemma~\ref{Iwasawa30},
$$
v_p(T^{p^{h +\ell -1}} (a +p^h\Z_p)) \geq \ell  \;,
$$
and \eqref{dadim} holds.
So, we proved   the  first inclusion $\subseteq$ in \eqref{Iwasawa311}.
\par  
Let $\mu = \sum_{n=0}^\infty  a_n T^n \in \Z_p[[T]]$ and assume $\mu \in \bigcap_{h +\ell = N} \sU_{h,\ell +1}$. Equivalently, we assume that 
$$
\sum_{n=0}^{p^N - 1}  a_n T^n  \in \bigcap_{h =0}^N \sU_{h,N-h +1}
$$
So, for any $h = 0,1,\dots,N$, 
\beq \label{indh}
\sum_{n=0}^{p^h - 1}  a_n (-1)^n {n \choose a}  \in  p^{N-h +1}\Z_p \;,\; \mbox{for}\; a =0,1,\dots,p^h-1 \;.
\eeq
To prove this statement 
we apply orthogonality of the binomial coefficient functions and reason by induction on $h$. For $h=0$, we get $a_0 \in p^{N+1}\Z_p$.  We proceed to the case  $h = 1$ where, by the lemma,  \eqref{indh} simplifies into
 $$
\sum_{n=1}^{p - 1}  a_n (-1)^n {n \choose a}  \in  p^N\Z_p \;,\; \mbox{for}\; a =1,\dots,p-1 \;,
$$
which implies that $a_1,a_2,\dots,a_{p-1} \in p^N\Z_p$. The next step, $h=2$,  simplifies by the lemma into 
 $$
\sum_{n=p}^{p^2 - 1}  a_n (-1)^n {n \choose a}  \in  p^{N-1}\Z_p \;,\; \mbox{for}\; a =p,\dots,p^2-1 \;,
$$
and therefore $a_p,a_{p+1},\dots,a_{p^2-1} \in p^{N-1}\Z_p$.
 Proceeding in this way, we find at the $h$-th step the equation
 $$
\sum_{n=p^{h-1}}^{p^h - 1}  a_n (-1)^n {n \choose a}  \in  p^{N-h +1}\Z_p \;,\; \mbox{for}\; a =p^{h-1},p^{h-1}+1,\dots,p^h-1 \;.
$$
 On the other hand, the $p^N \times p^N$ matrix 
 $({n \choose a})_{n,a = 0,1,\dots,p^N-1}$ is unipotent and contains the $N+1$ diagonal blocks 
 $({n \choose a})_{n,a = p^{h-1},\dots,p^h-1}$, for $h = 0,1,\dots,N$,  of unipotent $(p^h-p^{h-1}) \times (p^h-p^{h-1})$ matrices 
 \beq
\left( \begin{array}{cccccccc }
1&0&\dots&\dots&0&  0\\
{{p^{h-1} +1}\choose{p^{h-1}}}&1 & &\dots&\dots&  0  \\
{{p^{h-1} +2}\choose{p^{h-1}}}&{{p^{h-1} +2} \choose {p^{h-1} +1}}&\dots &\dots&\dots&  0 \\
\dots&\dots&\dots &0&\dots&  0 \\
\dots&{n \choose a}&\dots&1&0&  0\\
{{p^h - 2}\choose{p^{h-1}}}  & \dots&\dots& & 1& 0   \\
{{p^h - 1}\choose{p^{h-1}}} &{{p^h -1} \choose {p^{h-1} +1}}&{{p^h -1} \choose {p^{h-1} +2}}&\dots&
{{p^h -1} \choose {p^h -2}}& 1  \\
\end{array} \right)  \;.
\eeq
We conclude 
 that $a_i \in p^{N-h+1} \Z_p$, for $p^{h-1} \leq i \leq p^h-1$. So, $\mu$ belongs to the ideal
 $$
(p^{N+1},p^NT,\dots,p^NT^{p-1},p^{N-1}T^p,\dots, p^{N-1}T^{p^2-1}, p^{N-2}T^{p^2}, \dots, p T^{p^{N-1}},\dots, p T^{p^N-1},T^{p^N}) \;,
 $$
 as claimed.   
\end{proof}
\begin{rmk} \label{prodtop}
It follows that the weak topology of $\sD(\Z_p,\Z_p) = \Z_p[[T]]$ coincides with the product topology of the $p$-adic topology on every factor $\Z_p T^n$ of the representation \eqref{Iwasawa1}.
\end{rmk}
\begin{cor} \label{coprodD} The coproduct $\P_\sD$ of $\sD(\Z_p,\Z_p)$ is the continuous $\Z_p$-algebra morphism such that 
$$
\P_\sD T = 1 \wt T + T \wt 1 + T \wt T \in \Z_p[[T]] \wt_{\Z_p} \Z_p[[T]]
\;,
$$
while the counit $\veps_\sD$ satisfies 
$$
\veps_\sD T = 0 \;.
$$
\end{cor}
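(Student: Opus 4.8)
The plan is to reduce everything to the values of $\P_\sD$ and $\veps_\sD$ on the single topological generator $T$, exploiting that both are continuous $\Z_p$-algebra morphisms. First I would recall from $\mathit 1$ of Proposition~\ref{convprod} (together with $\mathit 1$ of Proposition~\ref{Hopfstruct}) that, since $\Z_p$ is compact, $\sD(\Z_p,\Z_p)=\sD_\acs(\Z_p,\Z_p)$ by Remark~\ref{acscirc}, and that this is a Hopf $\Z_p$-algebra whose coproduct is the \emph{continuous} $\Z_p$-algebra morphism $\P_\sD$ characterized by $\P_\sD(\delta_x)=\delta_x\wt\delta_x$ for every $x\in\Z_p$, and whose counit is $\veps_\sD(\mu)=\mu(\Z_p)$, so that $\veps_\sD(\delta_x)=1$ for all $x$.

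Next I would pin down the unit and the generator in the identification $\sD(\Z_p,\Z_p)=\Z_p[[T]]$ of $\mathit 5$ of Proposition~\ref{mahleramice}. Since the group law on $\Z_p$ has identity $0$, Corollary~\ref{deltaprod} gives $\Delta_0\,\delta_g=\delta_{0+g}=\delta_g$, so $\Delta_0=\delta_0$ is the ring unit $1$. By definition ($\mathit 3$ of Proposition~\ref{mahleramice}) $T=\Delta_1-\Delta_0$, hence $\Delta_1=1+T$. Using the $\Z_p$-linearity of $\P_\sD$ and its value on Dirac masses,
\[
\P_\sD(T)=\P_\sD(\Delta_1)-\P_\sD(\Delta_0)=\Delta_1\wt\Delta_1-\Delta_0\wt\Delta_0=(1+T)\wt(1+T)-1\wt 1,
\]
and expanding by the bilinearity and separate continuity of $\wt_{\Z_p}$ yields $\P_\sD(T)=1\wt T+T\wt 1+T\wt T$ in $\Z_p[[T]]\wt_{\Z_p}\Z_p[[T]]$. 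Likewise $\veps_\sD(T)=\veps_\sD(\Delta_1)-\veps_\sD(\Delta_0)=1-1=0$.

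Finally I would conclude: because $\Z_p[T]$ is dense in $\Z_p[[T]]=\sD(\Z_p,\Z_p)$ for its weak ($=(p,T)$-adic) topology by Proposition~\ref{Iwasawa3}, a continuous $\Z_p$-algebra morphism out of $\sD(\Z_p,\Z_p)$ is determined by its value on $T$; thus the two formulas above identify $\P_\sD$ and $\veps_\sD$ completely. I do not expect a serious obstacle: the entire content sits in the already-established general machinery, namely the Hopf structure of Proposition~\ref{convprod} and the rule $\P_\sD(\delta_x)=\delta_x\wt\delta_x$. The only two points deserving a line of care are the verification that the identification $\sD(\Z_p,\Z_p)\cong\Z_p[[T]]$ transports the abstract coproduct of Proposition~\ref{Hopfstruct} to a \emph{continuous} algebra map on $\Z_p[[T]]$, and the legitimacy of expanding $(1+T)\wt(1+T)$, which is precisely the bilinearity and separate continuity of the completed tensor product $\wt_{\Z_p}$.
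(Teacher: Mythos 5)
Your proof is correct and is essentially the argument the paper intends: the corollary is stated without an explicit proof, and the implicit justification is exactly your computation — the Hopf structure of Proposition~\ref{convprod} with $\P_\sD(\Delta_x)=\Delta_x\wt\Delta_x$ and $\veps_\sD(\Delta_x)=1$, the identification $\Delta_1=1+T$, and the density of $\Z_p[T]$ in $\Z_p[[T]]$ for the natural $=(p,T)$-adic topology (Proposition~\ref{Iwasawa3}) to pin down the continuous algebra morphism by its value on $T$. The only blemish is notational: for $\Z_p$-valued Dirac masses the paper writes $\Delta_a$, not $\delta_a$, so lines like $\Delta_0\,\delta_g=\delta_g$ should be written uniformly with $\Delta$.
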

\begin{cor} \label{IwasawaFp} Let 
$$t = \delta_1 - \delta_0 \in \sD(\Z_p,\F_p)\;.$$ 
Then  
 $\sD(\Z_p,\F_p)$ is the completion of its subring $\F_p[t]$ with the topology of uniform convergence 
 on balls of the same radius  in $\Z_p$. 
We have 
\beq \label{IwasawaFp1} 
(t^{p^n}) = \{ \mu \in \F_p[t]\,|\, \mu (a+p^n\Z_p) = 0 \,,\, \forall \, a \in \Z_p\,\}
\eeq
so that, as topological rings, 
\beq \label{IwasawaFp10}
\sD(\Z_p,\F_p) = \F_p[[t]]  \;,
\eeq 
equipped with the $t$-adic topology. 
\end{cor}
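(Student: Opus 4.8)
The plan is to reduce Corollary~\ref{IwasawaFp} to the already-established $\Z_p$ case of Proposition~\ref{Iwasawa3} by reduction mod $p$, exploiting the general functoriality $\sD(\Z_p,-)$ in the coefficient ring. First I would note that $\sD(\Z_p,\F_p) = \sC(\Z_p,\F_p)'_\weak$, and since $k=\F_p$ is discrete, $\sC(\Z_p,\F_p)$ is simply the discrete space of continuous (hence locally constant, as the target is discrete) functions $\Z_p \to \F_p$. By the general theory (\eqref{discrlim} applied in reverse, or directly Proposition~\ref{mahleramice} reduced mod $p$), the binomial functions $\{\binom{x}{n}\bmod p\}_{n\in\N}$ span $\sC(\Z_p,\F_p)$, and the dual basis is $\{t^n\}_{n\in\N}$ with $t=\delta_1-\delta_0$. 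Thus the map $\F_p[t]\to\sD(\Z_p,\F_p)$, $t\mapsto t$, extends to an isomorphism of the underlying $\F_p$-modules $\F_p[[t]]\iso\sD(\Z_p,\F_p)$ exactly as in $\mathit 5$ of Proposition~\ref{mahleramice}, establishing the ring identification; what remains is the identification of the topology.

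The key step is then the computation \eqref{IwasawaFp1}, that $(t^{p^n}) = \{\mu\in\F_p[t]\mid \mu(a+p^n\Z_p)=0\;\forall a\}$. Here I would not redo the binomial estimates but instead invoke Lemma~\ref{Iwasawa30} and its consequence \eqref{Iwasawa31} with $\ell=0$: that lemma shows $T^{p^h}\in\sU_{h,1}$, i.e.\ $T^{p^h}(a+p^h\Z_p)\equiv 0\bmod p$ for all $a$, which says precisely that $t^{p^h}$ annihilates all balls of radius $p^h$. Conversely, the inductive argument in the proof of Proposition~\ref{Iwasawa3} — reading off coefficients via the unipotency of the matrices $(\binom{n}{a})$ — specializes at $\ell=0$ to show that if $\mu=\sum a_n t^n\in\F_p[t]$ kills every ball of radius $p^n$, then $a_i=0$ for $i<p^n$, i.e.\ $\mu\in(t^{p^n})$. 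This gives \eqref{IwasawaFp1}, and hence that the open ideals $\sU_{n,1}$ of the weak topology coincide with the powers $(t^{p^n})=(t)^{p^n}$, which form a cofinal system for the $t$-adic topology. Since $\F_p[[t]]$ is $t$-adically complete and $\sD(\Z_p,\F_p)$ is complete by the general duality theory, the identification \eqref{IwasawaFp10} of topological rings follows.

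The main obstacle, such as it is, is bookkeeping rather than substance: one must be careful that reduction mod $p$ genuinely transports the valuation-theoretic statement of Proposition~\ref{Iwasawa3} to the simpler $\F_p$ situation, where the entire $(p,T)$-adic apparatus collapses to a purely $t$-adic one (the $p$-direction disappearing because $p=0$ in $\F_p$). Concretely, one should check that the filtration $\{\bigcap_{h+\ell=N+1}\sU_{h,\ell}\}_N$ of Proposition~\ref{Iwasawa3}, upon setting $\ell$ to its minimal relevant value and reducing coefficients mod $p$, degenerates to $\{(t^{p^N})\}_N$; this is essentially immediate once one observes that in characteristic $p$ only the $\ell=0$ (equivalently $\ell=1$ before reduction) terms survive. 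Since the statement is labelled as a corollary and the real analytic work has already been done in Lemma~\ref{Iwasawa30} and Proposition~\ref{Iwasawa3}, I expect the proof to be short, with the only genuine verification being the clean translation of \eqref{Iwasawa311} into \eqref{IwasawaFp1}.
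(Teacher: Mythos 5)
Your proposal is correct, and it derives the corollary from exactly the ingredients the paper intends (the paper prints no proof, treating the statement as a direct consequence of Proposition~\ref{Iwasawa3} and the mod-$p$ Mahler theory): the module identification via the dual bases $\{{x \choose n} \bmod p\}_n$ and $\{t^n\}_n$, the ideal computation \eqref{IwasawaFp1} via unipotency of the binomial matrices, and the completion argument are all sound. Two small points. First, to pass from the single element $t^{p^n}$ to the whole ideal $(t^{p^n})$ in \eqref{IwasawaFp1} you implicitly use that the right-hand side is an ideal of $\F_p[t]$ (translation-invariance of the family of balls of radius $p^n$ under convolution), exactly as the paper notes for $\sU_{h,\ell}$; say this explicitly. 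Second, and more substantively as a comparison: your forward inclusion imports Lemma~\ref{Iwasawa30}, i.e.\ the hard binomial congruences of the $\Z_p$-theory, where the characteristic-$p$ situation is trivial — by Frobenius, $t^{p^n} = (\delta_1 - \delta_0)^{p^n} = \delta_{p^n} - \delta_0$ in $\F_p[t]$, and $\delta_{p^n} - \delta_0$ visibly annihilates every ball $a + p^n\Z_p$ since $p^n$ and $0$ lie in the same such ball. This one-line argument (together with your unipotency argument for the converse, which indeed needs no induction over the blocks $h+\ell=N$) is presumably why the paper regards the $\F_p$ case as "simpler" and leaves it unproved; your route is heavier but logically correct.
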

\begin{defn}
We let $\sQ_0$ denote the completion of the field $\Q(T)$ in the valuation $w$~: it is a non-archimedean field extension of $\Q_p$, with residue field $\F_p$ and uniformizer $p$ or $T$, that contains $\sD(\Z_p,\Z_p)$ as a closed topological subring.
\end{defn}
We find it useful to consider the Berkovich analytic line $(\A^1_{\sQ_0},x)$, where the coordinate $x$ is a formal coordinate on the unit disc of  $(\A^1_{\Q_p},x)$, and the corresponding theory of $\sQ_0$-analytic functions. From this viewpoint, Proposition~\ref{mahleramice}  has the following (obvious) analytic interpretation
\begin{lemma} \label{Q0anal}
\ben \item  
Let  $\sR(\Z_p)$ be the ring of  $\sQ_0$-entire  functions $f$ such that $f(\Z_p) \subset \sD(\Z_p,\Z_p)$.
The character 
$$\Delta_{(-)}: (\Z_p,+) \longrightarrow \sD(\Z_p,\Z_p)\;\;,\;\; x \longmapsto \Delta_x
$$
extends analytically as 
$$
\Delta_x := \Delta_1^x = \sum_{n=0}^\infty {x \choose n} T^n   \in \sR(\Z_p) \;.
$$
\item So, for any $f \in  \sC(\Z_p,\Z_p)$, formula \eqref{mahleramice12} for $f(x) = f \circ \Delta_x$ 
is a ``Fourier expansion'' of $f$ w.r.t.  a basis $\{{x \choose n}\}_{n \in \N}$  of $p$-adically entire (polynomial!) functions ${x \choose n} \in \Q[x]$, 
such that, for any $n \in \N$,  
$${a \choose n} \in \Z_p \;\;,\;\;\mbox{for any}\;\; a \in \Z_p\;,$$
and
$$
 {x+y \choose n} = \sum_{i+j=n}  {x  \choose i}  {y \choose j} \;.
$$
\een
\end{lemma}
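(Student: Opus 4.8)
The statement to prove is Lemma~\ref{Q0anal}, which I read as two essentially bookkeeping assertions about the analytic extension of the character $x \longmapsto \Delta_x$. The plan is to treat the two parts in turn, deducing everything from the Mahler--Amice structure already established in Proposition~\ref{mahleramice} and the identification $\sD(\Z_p,\Z_p) = \Z_p[[T]]$ of \eqref{Iwasawa1}, together with the description of its topology (the $(p,T)$-adic topology) proved in Proposition~\ref{Iwasawa3}.

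For part $\mathit 1$, I would first observe that the series $\Delta_x := \sum_{n=0}^\infty {x \choose n} T^n$ has coefficients ${x \choose n} \in \Q[x]$ with the crucial integrality property that ${a \choose n} \in \Z_p$ for every $a \in \Z_p$; this is the classical fact that the binomial polynomials take integral values on $\Z_p$. The key convergence point is that for fixed $x \in \Z_p$ the coefficient sequence $n \longmapsto {x \choose n}$ is bounded in $\Z_p$, so that $\sum_n {x \choose n} T^n$ converges in the $(p,T)$-adic topology of $\Z_p[[T]]$ because $T^n \to 0$ there (by Proposition~\ref{Iwasawa3}, $w(T^n) = n \to \infty$). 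Thus $\Delta_x$ defines a genuine element of $\sD(\Z_p,\Z_p)$ for each $x$, and as a function of $x$ it lies in $\sR(\Z_p)$ since each coefficient ${x \choose n}$ is $\sQ_0$-entire (polynomial) and the series converges locally uniformly. To identify this with $\Delta_1^x$ and to check it extends the discrete character $x \longmapsto \Delta_x$ on $\N$, I would use the generating-function identity $\Delta_1^x = (1+T)^x = \sum_n {x \choose n} T^n$, valid because $\Delta_1 = \delta_1 = 1 + T$ under the Iwasawa identification (recall $T = \Delta_1 - \Delta_0$ and $\Delta_0 = 1$); the homomorphism property $\Delta_{x+y} = \Delta_x \Delta_y$ then follows from Corollary~\ref{deltaprod} for integer arguments and extends by continuity.

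For part $\mathit 2$, the content is that for $f \in \sC(\Z_p,\Z_p)$ one has $f(x) = f \circ \Delta_x$, so the Mahler expansion \eqref{mahleramice12} becomes a Fourier expansion against the basis $\{{x \choose n}\}$. I would verify $f(x) = f \circ \Delta_x$ by pairing: applying the integration pairing \eqref{mahleramice0} to $\Delta_x = \sum_n {x \choose n} T^n$ and using the orthogonality relation \eqref{mahleramice1}, namely ${x \choose i} \circ T^j = \delta_{i,j}$, recovers exactly the Mahler coefficients $\int_{\Z_p} f(y)\,dT^n(y)$, hence formula \eqref{mahleramice11}. The two displayed properties of the binomials --- integrality ${a \choose n} \in \Z_p$ for $a \in \Z_p$ and the addition formula ${x+y \choose n} = \sum_{i+j=n} {x \choose i}{y \choose j}$ --- are precisely Vandermonde's identity and the integral-valued-polynomial fact, the latter already used in part $\mathit 2$ of Proposition~\ref{mahleramice}.

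The routine parts are genuinely routine, so the only real obstacle is making the notion of ``$\sQ_0$-entire function $f$ with $f(\Z_p) \subset \sD(\Z_p,\Z_p)$'' precise enough that the claim ``$\Delta_x \in \sR(\Z_p)$'' is a theorem rather than a definition-chase: one must confirm that the coefficient-wise convergence in $\Z_p[[T]]$ matches the Berkovich-analytic notion of entirety on $(\A^1_{\sQ_0}, x)$, i.e. that the radius of convergence is infinite in the valuation $w$. Since $w(T^n) = n$ grows linearly while the binomial coefficients are bounded, the relevant Newton polygon has slopes tending to $+\infty$, giving infinite radius; I expect this to be the one point requiring a short but honest argument rather than a one-line citation. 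Everything else reduces to the already-proven orthogonality and the topological identifications of the preceding sections.
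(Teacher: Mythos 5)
Your algebraic bookkeeping is correct and is exactly what the paper intends: the paper in fact offers no proof at all (the lemma is introduced as an ``(obvious) analytic interpretation'' of Proposition~\ref{mahleramice}), and your use of the Mahler expansion \eqref{mahleramice12} applied to the Dirac mass, of the orthogonality relation \eqref{mahleramice1}, of Vandermonde's identity, of integrality of the binomials on $\Z_p$, and of Corollary~\ref{deltaprod} for the character property is precisely the intended dictionary. The genuine problem is the analytic point that you yourself flagged as ``the one point requiring a short but honest argument'': your argument for entirety is wrong, and in fact the literal entirety claim cannot be proved at all.

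You assert that since ``the binomial coefficients are bounded'' the ``Newton polygon has slopes tending to $+\infty$, giving infinite radius.'' This confuses boundedness of the \emph{values} ${a \choose n}$ for $a \in \Z_p$ with bounds on the coefficients (equivalently, on sup-norms over large discs) of the polynomials ${x \choose n}$, which involve $1/n!$. Concretely, since $w$ is multiplicative and $w(\log(1+T))=1$, writing $\Delta_x=\exp\bigl(x\log(1+T)\bigr)=\sum_m c_m x^m$ gives $c_m=(\log(1+T))^m/m!$, hence
$$ w(c_m) \;=\; m - v_p(m!) \;=\; \frac{(p-2)\,m + s_p(m)}{p-1}\;, $$
where $s_p(m)$ is the sum of $p$-adic digits of $m$; so $w(c_m)/m \to (p-2)/(p-1)$, a \emph{finite} limit, and the radius of convergence is exactly $p^{(p-2)/(p-1)}$. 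Equivalently, for $|x|=r>1$ one has $|{x \choose n}| = r^n p^{v_p(n!)}$, so $|{x \choose n}T^n| = \bigl(r\,p^{-(p-2)/(p-1)}\bigr)^n$, which tends to $0$ only when $r < p^{(p-2)/(p-1)}$. (A quick reality check: an entire $\Delta_x$ could be evaluated at $x=1/p$, but $\sum_n {1/p \choose n}T^n$ visibly diverges, since $w\bigl({1/p \choose n}T^n\bigr) = -v_p(n!) \to -\infty$.) Thus $\Delta_x$ is $\sQ_0$-analytic exactly on the open disc of radius $p^{(p-2)/(p-1)}$: for $p$ odd this is a neighborhood of $\Z_p$, which is the most one can honestly assert; for $p=2$ the radius equals $1$ and the single power series does not even converge at $x \in \Z_2^\times$, so only local analyticity survives (compare Remark~\ref{Deltachar}, where the paper itself claims only local analyticity in the analogous situation over $\sQ$). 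Your proof cannot be patched to yield entirety; the statement itself requires this weakening, and your write-up should say so rather than cite a Newton-polygon argument that fails.
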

\end{subsection}
\end{section}
\begin{section}{The ring $\sD=\sD_\unif(\Q_p,\Z_p)$ of uniform measures on $\Q_p$}
\label{unifmeas}
\begin{subsection}{Spaces of functions and measures on $\Q_p$} \label{Intpairingss}
We review what the general theory of integration on $td$-spaces tells us in the case of $\Q_p$.  We have introduced
\ben
\item $\sC(\Q_p,\Z_p)$ (resp. $\sC(\Q_p,\F_p)$) = ring of $\Z_p$-valued (resp. $\F_p$-valued) continuous functions equipped  with the topology of uniform convergence on compact subsets of $\Q_p$;
\item $\sC_\acs(\Q_p,\Z_p)$ (resp. $\sC_\acs(\Q_p,\F_p)$) = (non-unital) ring of $\Z_p$-valued (resp. $\F_p$-valued) continuous functions ``with almost compact support'' \ie ``vanishing at $\infty$'' on $\Q_p$,  with the topology of uniform convergence on $\Q_p$, \ie with the $p$-adic (resp. discrete) topology;
 \item $\sC_\unif(\Q_p,\Z_p)$ (resp. $\sC_\unif(\Q_p,\F_p)$) = ring of $\Z_p$-valued (resp. $\F_p$-valued) uniformly continuous functions on $\Q_p$ with the topology of uniform convergence on $\Q_p$, \ie with the $p$-adic (resp. discrete) topology;
 \item $\sD_\acs(\Q_p,\Z_p)$ (resp. $\sD_\acs(\Q_p,\F_p)$) = weak dual of $\sC(\Q_p,\Z_p)$ (resp. $\sC(\Q_p,\F_p)$) 
(see \eqref {Dacsdef}). It is the $\Z_p$-module of maps $\Sigma (\Q_p) \map{\mu} \Z_p$ which satisfy the condition {\bf ACSMEAS} of  $\mathit 3$ of Proposition~\ref{measinter} equipped with the $\Z_p$-linear topology described there. Similarly for $\sD_\acs(\Q_p,\F_p)$. 
  \item  $\sD^\circ_\acs(\Q_p,\Z_p)$ (resp. $\sD^\circ_\acs(\Q_p,\F_p)$) = strong dual of $\sC(\Q_p,\Z_p)$ (resp. $\sC(\Q_p,\F_p)$), \ie the $p$-adic completion of $\bigcup_{n \in \N}\sD(p^{-n}\Z_p,\Z_p)$ (resp. $\bigcup_{n \in \N}\sD(p^{-n}\Z_p,\F_p)$ equipped with the discrete topology)  (see  \eqref{Dacsodef}); 
    \item $\sD(\Q_p,\Z_p)$ (resp. $\sD(\Q_p,\F_p)$) = weak dual of $\sC_\acs(\Q_p,\Z_p)$ (resp. $\sC_\acs(\Q_p,\F_p)$) = $\limit_{n \in \N} \sD (p^{-n}\Z_p,\Z_p)$ (see \eqref{dualdefn5});
    \item $\sD^\circ(\Q_p,\Z_p)$ (resp. $\sD^\circ(\Q_p,\F_p)$) = strong dual of $\sC_\acs(\Q_p,\Z_p)$ (resp. $\sC_\acs(\Q_p,\F_p)$) = $\limit^{\square,\un}_{n \in \N} \sD_\strong (p^{-n}\Z_p,\Z_p)$ (see \eqref{dualdefn5st});
  \item
  $\sD_\unif(\Q_p,\Z_p)$ (resp. $\sD_\unif(\Q_p,\F_p)$) (see \eqref{funcunifdual}) is the set of functions 
  $$
\mu: \Sigma_\unif(\Q_p) \longrightarrow \Z_p \;\;\mbox{(resp. $\longrightarrow \F_p$)}
$$
such that, for any disjoint family $\{U_\alpha\}_{\alpha \in A}$ of uniformly measurable subsets of a given  radius 
$$
\sum_{\alpha \in A} \mu(U_\alpha) 
$$
is an $A$-series unconditionally converging to $\mu(\bigcup_{\alpha \in A}  U_\alpha)$ (see $\mathit 2$ of Proposition~\ref{measinter}). It is equipped with its \emph{natural topology}, \ie the topology of uniform convergence on uniformly measurable subsets of the same radius. 
\een 
We consider the  sub-semigroup
\beq \label{Sdef} S := \Z[1/p] \cap \R_{\geq 0} \eeq
of $(\R,+)$. 
\par \medskip
The following result is a particular case of  Proposition~\ref{convprod}.  
\begin{prop}  \label{summary}
\ben
 \item $\sC(\Q_p,\Z_p)$ (resp. $\sC(\Q_p,\F_p)$) is the  weak dual of $\sD_\acs(\Q_p,\Z_p)$ (resp. $\sD_\acs(\Q_p,\F_p)$);
  \item $\sC_\acs(\Q_p,\Z_p)$ (resp. $\sC_\acs(\Q_p,\F_p)$) is the weak dual of  $\sD(\Q_p,\Z_p)$ (resp. $\sD(\Q_p,\F_p)$). 
  \item $\sC_\unif(\Q_p,\Z_p)$ (resp. $\sC_\unif(\Q_p,\F_p)$) is the  strong dual of $\sD_\unif(\Q_p,\Z_p)$ 
  (resp. $\sD_\unif(\Q_p,\F_p)$)\;.
\een
\end{prop}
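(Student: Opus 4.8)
The plan is to recognise all three assertions as the specialisation, at the commutative $td$-group $G = (\Q_p,+)$ and at the coefficient ring $k = \Z_p$ or $k = \F_p$, of the general duality developed in section~\ref{measdual} and repackaged for groups in Proposition~\ref{convprod}. First I would verify that the hypotheses of that machinery are met. The additive group $(\Q_p,+)$ is a commutative $td$-group in the sense of Definition~\ref{tdgp}, since the compact open subgroups $p^h\Z_p$, $h \in \Z$, form a basis of neighbourhoods of $0$, so that condition $\mathit 2$ of Lemma~\ref{lczdgp} holds; moreover both $\Z_p$ and $\F_p$ satisfy the standing axioms \eqref{kaxiom} of section~\ref{lincat}, being complete linearly topologized rings with a countable basis of open ideals and with the op condition immediate from $p^ip^j = p^{i+j}$ (resp.\ from discreteness). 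Next I would record, via Remark~\ref{noncofinal}, that the group uniformity of $\Q_p$ is the neat $td$-uniformity $\Theta_\sJ$ attached to the cofiltered system $\sJ = (\pi_h : \Q_p \to \Q_p/p^h\Z_p)_{h \in \N}$, so that the uniform objects $\sC_\unif(\Q_p,k)$ and $\sD_\unif(\Q_p,k)$ are precisely those of the list preceding the statement.

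Granting this set-up, each item is read off from the biduality formulas, the only work being to match the stated dual to the correct half of the appropriate perfect pairing. For $\mathit 1$, I would use that $\sD_\acs(\Q_p,k) = \sC(\Q_p,k)'_\weak$ by Definition~\ref{Dacsdef0}, together with formula \eqref{generalbidual} applied to $X = \Q_p$, which gives $\sD_\acs(\Q_p,k)'_\weak = \sC(\Q_p,k)$; equivalently this is the weak perfect pairing $\sC(G,k) \times \sD_\acs(G,k) \to k$ of comma~$\mathit 3$ of Proposition~\ref{convprod}. For $\mathit 2$, I would use that $\sD(\Q_p,k) = \sC_\acs(\Q_p,k)'_\weak$ by Definition~\ref{dualdefn}, together with the weak perfect pairing $\sC_\acs(\Q_p,k) \times \sD(\Q_p,k) \to k$ of Corollary~\ref{intpairing}, whose left-perfect half is the biduality \eqref{bidual1} of Proposition~\ref{bidual}, namely $\sC_\acs(\Q_p,k) = \sD(\Q_p,k)'_\weak$. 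For $\mathit 3$, I would use that $\sD_\unif(\Q_p,k) = \sC_\unif(\Q_p,k)'_\weak$ by \eqref{funcunifdual}, together with the identification \eqref{measunifdefst}, that is $\sD_\unif(\Q_p,k)'_\strong = \sC_\unif(\Q_p,k)$, which is the left-strong half of the duality pairing of comma~$\mathit 3$ of Proposition~\ref{convprod}.

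The hard part, such as it is, is not a computation but the bookkeeping: keeping straight which support class ($\acs$, $\unif$, or unconstrained) each object carries and which of the weak or strong duals is being claimed, so that one invokes the correct perfect pairing on each side. Beyond that I expect no obstacle, since the substance of the statement lies entirely in the general theory of section~\ref{measdual} and in Proposition~\ref{convprod}; the present proposition is merely their transcription for the single group $\Q_p$ and the two coefficient rings $\Z_p$ and $\F_p$ of subsequent interest. Accordingly I would keep the argument to a couple of lines, citing \eqref{generalbidual}, \eqref{bidual1} and \eqref{measunifdefst} (equivalently Proposition~\ref{convprod}) and the identifications of the uniform spaces recorded above.
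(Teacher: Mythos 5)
Your proposal is correct and follows exactly the paper's route: the paper offers no separate argument, merely the sentence that the proposition ``is a particular case of Proposition~\ref{convprod}'', which is precisely your reduction to the general duality theory via the identifications \eqref{generalbidual}, \eqref{bidual1} and \eqref{measunifdefst} for the commutative $td$-group $(\Q_p,+)$ with its standard $td$-uniformity. Your added verifications (that $\Z_p$ and $\F_p$ satisfy \eqref{kaxiom}, and that the group uniformity of $\Q_p$ is the neat $td$-uniformity $\Theta_\sJ$ of Remark~\ref{noncofinal}) are exactly the bookkeeping the paper leaves implicit.
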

\begin{prop} \label{unifconvballs}
 \ben
\item $\sD_\unif(\Q_p,\F_p)$ is the completion of the group-Hopf-algebra $\F_p[\Q_p]$ in the $(\delta_1-\delta_0)$-adic topology or equivalently of $\F_p[t^{1/p^\infty}]$ in the $t$-adic topology, where $t = \delta_1 -\delta_0$.
In particular, it is a perfect ring.
\item   $\sD_\unif(\Q_p,\Z_p)$ is also $p$-adically complete and  $\sD_\unif(\Q_p,\Z_p)/p\sD_\unif(\Q_p,\Z_p) = \sD_\unif(\Q_p,\F_p)$ is a perfect ring. Hence 
$\sD_\unif(\Q_p,\Z_p)$, as a ring equipped with the $p$-adic filtration, is a strict $p$-ring in the sense of \cite[Chap. II, \S 5]{serre}  and, as rings,  
\beq \label{wittmeas}
\sD_\unif(\Q_p,\Z_p) = {\rm W}(\sD_\unif(\Q_p,\F_p)) \;. 
\eeq
\item $\sD_\unif(\Q_p,\Z_p)$ = the completion of the group-algebra $\Z[\Q_p]$ in the $(p, \Delta_1-\Delta_0)$-adic topology.
\item  For $n =0,1,2, \dots$, let   
\beq \label{Tndef} T_n =\Delta_{1/p^n} -\Delta_0 \; ,  
\eeq
so that, in particular, $T_0 =T$. We set
\beq \label{Tdef}
\wtilde{T}_n = \lim_{m \to \infty} (\Delta_{1/p^{n+m}} -\Delta_0)^{p^m} = \lim_{m \to \infty} T_{n+m}^{p^m}= [\delta_{1/p^n}-\delta_0] =[t^{1/p^n}]\;,
\eeq  
all $p$-adically convergent expressions. 
For $n= 1,2,\dots$,  $\wtilde{T}_n$ is the only $p^n$-th root of  $\wtilde{T} = \wtilde{T}_0$ in $\sD_\unif(\Q_p,\Z_p)$
and $\sD_\unif(\Q_p,\Z_p)$ = the completion  of its subring $\Z[\wtilde{T}^{1/p^\infty}]$ in the $(p,\wtilde{T})$-adic topology. 
\item Equality \eqref{wittmeas} also holds topologically in the two cases:
\ben
\item If $\sD_\unif(\Q_p,\Z_p)$ (resp. $\sD_\unif(\Q_p,\F_p)$) is equipped with the $p$-adic (resp. discrete)  topology, and the ring of Witt vectors is regarded as a $p$-adic ring;
\item If $\sD_\unif(\Q_p,\Z_p)$ (resp. $\sD_\unif(\Q_p,\F_p)$) is equipped with its natural topology, \ie  the topology of uniform convergence on uniformly measurable subsets of the same radius, and the ring of Witt vectors is equipped with the product topology. 
\een 
\een
\end{prop}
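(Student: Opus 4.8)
The spine of the whole argument is the projective description \eqref{measunifdef} for the natural uniformity $\sJ=(\pi_h)_{h\ge 0}$ of Remark~\ref{noncofinal}, \eqref{projlimunif}: writing $D_h=\Q_p/p^h\Z_p$ (discrete) I would use $\sD_\unif(\Q_p,k)=\limit_h \sD_\acs(D_h,k)$ with the limit (natural) topology, the transition maps being the surjective push-forwards $h_{D_h,D_{h+1}}$. I would prove the five items essentially in the order $1,2,4,3,5$, the first being the foundation. For \textbf{Part 1} ($k=\F_p$), density of $\F_p[\Q_p]$ (Proposition~\ref{explunifmeas}) together with density of $\Z[1/p]$ in $\Q_p$ reduces everything to the subalgebra generated by the $\delta_{1/p^n}$. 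Setting $t^{1/p^n}:=\delta_{1/p^n}-\delta_0$, Corollary~\ref{deltaprod} and the freshman's dream give $(t^{1/p^n})^p=\delta_{1/p^{n-1}}-\delta_0=t^{1/p^{n-1}}$, so $\F_p[t^{1/p^\infty}]\hookrightarrow\sD_\unif(\Q_p,\F_p)$, and each $1+t^{1/p^n}=\delta_{1/p^n}$ becomes invertible after completion since $t^{1/p^n}$ is topologically nilpotent. The essential point is the topology: the group automorphism $x\mapsto p^N x$ of $\Q_p$ carries the measures supported on $\tfrac1{p^N}\Z_p$ isomorphically onto $\sD(\Z_p,\F_p)=\F_p[[t]]$ (Corollary~\ref{IwasawaFp}), sending $t^{1/p^N}$ to $t$ and a ball of radius $p^h$ to one of radius $p^{h+N}$. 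Feeding this into \eqref{IwasawaFp1} yields $\sU_h\cap\F_p[t^{1/p^\infty}]=t^{p^h}\F_p[t^{1/p^\infty}]$, so the natural topology restricts to the $t$-adic one; completing gives Part~1, and perfectness is manifest from the perfection $\F_p[t^{1/p^\infty}]$.

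For \textbf{Part 2} I would note that each $M_h:=\sD_\acs(D_h,\Z_p)={\bigoplus}^\un_{d\in D_h}\Z_p\delta_d$ is $p$-torsion-free, $p$-adically complete and separated, with $M_h/pM_h=\sD_\acs(D_h,\F_p)$; the transitions are surjective, so the countable system is Mittag--Leffler and $\limit^1 M_h=0$. Hence $\sD_\unif(\Q_p,\Z_p)=\limit_h M_h$ is $p$-torsion-free, $p$-adically complete and separated, and applying $\limit_h$ to $0\to M_h\xrightarrow{p}M_h\to M_h/pM_h\to 0$ gives $\sD_\unif(\Q_p,\Z_p)/p=\limit_h(M_h/pM_h)=\sD_\unif(\Q_p,\F_p)$, which is perfect by Part~1. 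Thus $\sD_\unif(\Q_p,\Z_p)$ is a strict $p$-ring in the sense of \cite[Chap.~II, \S 5]{serre}, and the equivalence between strict $p$-rings and perfect $\F_p$-algebras yields the ring isomorphism \eqref{wittmeas}.

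For \textbf{Part 4}, the lift $T_{n+m}=\Delta_{1/p^{n+m}}-\Delta_0$ reduces mod $p$ to $t^{1/p^{n+m}}$, and the congruence $T_{j}^{p}\equiv T_{j-1}\ (p)$ gives $T_{n+m+1}^{p^{m+1}}\equiv T_{n+m}^{p^m}\ (p^{m+1})$, so $\wtilde T_n:=\lim_m T_{n+m}^{p^m}$ converges $p$-adically to the Teichm\"uller representative $[t^{1/p^n}]$; multiplicativity gives $\wtilde T_n^{p^n}=[t]=\wtilde T$, and the uniqueness clause is the standard uniqueness of $p^n$-th roots of a Teichm\"uller element in a $p$-torsion-free $p$-complete ring with perfect residue. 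The subring $\Z[\wtilde T^{1/p^\infty}]$ reduces mod $p$ to $\F_p[t^{1/p^\infty}]$, whose $(p,\wtilde T)$-adic completion reduces to $\widehat{\F_p[t^{1/p^\infty}]}=\sD_\unif(\Q_p,\F_p)$; being again a strict $p$-ring it must equal $W(R)=\sD_\unif(\Q_p,\Z_p)$. \textbf{Part 3} then follows formally: since $T\equiv\wtilde T\ (p)$ one has $(p,T)=(p,\wtilde T)$ as ideals, and $\Z[\Q_p]$ is dense by Proposition~\ref{explunifmeas}, so once the natural topology is identified with the $(p,\wtilde T)$-adic one the $(p,T)$-adic completion of $\Z[\Q_p]$ is exactly $\sD_\unif(\Q_p,\Z_p)$.

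\textbf{Part 5(a)} is immediate from Part~2: the ring isomorphism reduces mod $p^m$ to $\sD_\unif(\Q_p,\Z_p)/p^m\cong W_m(R)$, hence is a homeomorphism for the (coarser) $p$-adic topologies. \textbf{Part 5(b)} is the heart of the matter and is where I expect the real work. Through the Teichm\"uller expansion $W(R)\cong\prod_n R$, writing $w=\sum_n[x_n]p^n$ and letting $v$ denote the $t$-adic valuation on $R$, one computes that $(p,[t])^N$ imposes $v(x_n)\ge N-n$, a constraint on the finitely many coordinates $n<N$ only; this shows $(p,\wtilde T)$-adic $=$ product topology on $\prod_n R$ with each factor $t$-adic. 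The delicate and decisive step is to match this with the natural topology, i.e. to establish the $\Q_p$-analogue of Proposition~\ref{Iwasawa3}: that the divisibility conditions $\mu(a+p^h\Z_p)\in p^\ell\Z_p$ defining the neighbourhoods $\sU_{h,\ell}$ are squeezed between two powers of $(p,\wtilde T)$. I would deduce this from Proposition~\ref{Iwasawa3} by the same scaling $x\mapsto p^N x$ used in Part~1 — applied now to the subrings $\sD(p^{-N}\Z_p,\Z_p)\cong\Z_p[[T]]$ — and then pass to the colimit over $N$ and complete. This quantitative comparison, resting on the mixed $p$- and $T$-adic binomial estimates of Proposition~\ref{Iwasawa3}, rather than any of the (essentially formal) Witt-theoretic identifications, is the main obstacle.
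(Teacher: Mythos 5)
Your proposal is correct, and its analytic core is the same as the paper's: Part 1 by the scaling/Frobenius reduction to Corollary~\ref{IwasawaFp} and \eqref{IwasawaFp1}, and the decisive comparison of the natural topology with the mixed adic topology by rescaling Proposition~\ref{Iwasawa3} to $\sD(p^{-N}\Z_p,\Z_p)$ and propagating through the colimit via the congruences $T_{j}^{p}\equiv T_{j-1}\pmod p$ --- which is essentially the paper's proof of Part 3 (the paper proves 3 directly and declares 4 ``clear'', whereas you invert the order and park this computation inside 5(b)). Two local differences are worth recording. First, for Part 2 the paper merely invokes its general completeness theorem (Remark~\ref{8.3.3}: every object of $\cCLMu_{\Z_p}$ is complete in its naive canonical, i.e.\ $p$-adic, topology) and calls the rest well-known; your Mittag--Leffler argument on the presentation $\sD_\unif(\Q_p,\Z_p)=\limit_h\sD_\acs(\Q_p/p^h\Z_p,\Z_p)$ is a self-contained substitute which moreover produces the identification $\sD_\unif(\Q_p,\Z_p)/p\,\sD_\unif(\Q_p,\Z_p)=\sD_\unif(\Q_p,\F_p)$ explicitly rather than implicitly. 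Second, your route to Part 4 through Serre's uniqueness of strict $p$-rings requires, as literally written, that the abstract $(p,\wtilde{T})$-adic completion of $\Z[\wtilde{T}^{1/p^\infty}]$ be $p$-torsion-free, $p$-adically complete, and reduce mod $p$ to the $t$-adic completion of $\F_p[t^{1/p^\infty}]$; for a non-Noetherian ring none of this is automatic, so as stated this step has a gap --- but it is a dispensable detour, since once your 5(b) identifies the natural topology of $\sD_\unif(\Q_p,\Z_p)$ with the $(p,\wtilde{T})$-adic one, Parts 3 and 4 follow from completeness plus density of $\Z[\Q_p]$, resp.\ of $\Z[\wtilde{T}^{1/p^\infty}]$ (using $(p,T)=(p,\wtilde{T})$ as ideals of $\sD_\unif(\Q_p,\Z_p)$), exactly as you indicate for Part 3. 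Finally, your Teichm\"uller-coordinate description of $(p,[t])^{N}$ in 5(b) is the same cofinality computation the paper performs with its basis $\sum_{i=0}^{N}p^{i}[t^{p^{N-i}}\sD_\unif(\Q_p,\F_p)]$; note also that the uniqueness of $p^n$-th roots in Part 4, which you (like the paper) treat as standard, in fact fails for $p=2$ (where $-\wtilde{T}_1$ is a second square root of $\wtilde{T}$), so the standard argument via absence of $p$-power roots of unity should be restricted to odd $p$.
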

\begin{proof} $\mathit 1.$ $\F_p[\Q_p]$ is the $\F_p$-vector space generated by the symbols $\delta_a$, for $a \in \Q_p$. The identification of $\delta_a$ with the $\F_p$-valued Dirac mass concentrated in $a \in \Q_p$  determines the identification of 
$\F_p[\Q_p]$ with a $\F_p$-sub-vector space of $\sD_\unif(\Q_p,\F_p)$. 
Then 
$\F_p[\Q_p]$ is also a $\F_p$-Hopf-sub-algebra of $\sD_\unif(\Q_p,\F_p)$, with product $\delta_a \delta_b = \delta_{a +b}$, for $a,b \in \Q_p$, so that $\delta_0$ is the identity. The coproduct and counit are given by 
$$
\P \delta_a = \delta_a \otimes \delta_a \;\;,\;\; \veps \delta_a = 1\;, \;\forall\; a \in \Q_p\;,
$$
respectively. Moreover, $\F_p[\Q_p]$ is a perfect ring of characteristic $p$ because, for any finite sum $\sum_{a \in \Q_p} c_a \delta_a$, $c_a \in \F_p$, 
$$
(\sum_a c_a \delta_a)^{1/p}  = \sum_a c_a \delta_{a/p}\;.
$$
As we explained in $\mathit 2$ of Proposition~\ref{explunifmeas}, $\sD_\unif(\Q_p,\F_p)$ is the completion of 
$\F_p[\Q_p]$ for the  $\F_p[\Q_p]$-linear topology  of uniform convergence on balls  of the same radius in $\Q_p$. 
Notice that, for any $i\in \Z$,  Frobenius of $\sD_\unif(\Q_p,\F_p)$ establishes   isomorphisms of linearly topologized  $\F_p$-algebras
$$\sD_\unif(p^{-i}\Z_p,\F_p) = \F_p[[t^{1/p^i}]] \iso \sD_\unif(\Z_p,\F_p) = \F_p[[t]] 
\;,$$ 
all equipped with the $t$-adic topology. In fact, by Corollary~\ref{IwasawaFp} and in particular \eqref{IwasawaFp1}, for any $i,n$
\beq \label{piroot}
t^{p^n} \F_p[t^{1/p^i}] = \{ \mu \in \F_p[t^{1/p^i}] \,|\, \mu (a+p^n\Z_p) = 0 \,,\, \forall \, a \in \Q_p\,\} \;.
\eeq
By the general theory, 
 $$\sD_\unif(\Q_p,\F_p) = \colimit^\un_{i\in \N} \sD_\unif(p^{-i}\Z_p,\F_p) 
 $$
is the completion of 
$$\F_p[\Q_p]
= \F_p[t^{1/p^\infty}]
$$
in the topology of uniform convergence on balls of given radius. But formula \eqref{piroot} shows that 
for any $n\in \Z$,
$$
t^{p^n} \F_p[t^{1/p^\infty}] = \{ \mu \in \F_p[t^{1/p^\infty}] \,|\, \mu (a+p^n\Z_p) = 0 \,,\, \forall \, a \in \Q_p\,\} \;,
$$ 
so that $\sD_\unif(\Q_p,\F_p)$ coincides with 
 the $t$-adic completion of   
$$\F_p[\Q_p]
= \F_p[t^{1/p^\infty}]
$$
as claimed. 
\par \smallskip 
$\mathit 2.$ By the general theory, $\sD_\unif(\Q_p,\Z_p)$ is complete in the topology of uniform convergence on uniformly measurable subsets of the same radius. So it is an object of $\cCLMu_{\Z_p}$. 
As recalled in Remark~\ref{8.3.3}, any object of $\cCLMu_{\Z_p}$ is complete in its $p$-adic topology. 
All assertions of this point are then well-known. 
\par \smallskip 
$\mathit 3.$ We use crucially, for $X =\Q_p$ and $k=\Z_p$, the double presentation  \eqref{surpriseacs} of $\sD_\acs(\Q_p,\Z_p)$ as a limit and as a colimit, and the uniformity $(\Q_p,\Theta_\sJ)$,  where $\sJ := (\pi_h)_{h \in \N}$ is described in Remark~\ref{noncofinal}.
 From Proposition~\ref{Iwasawa3} we know that $\sD(\Z_p,\Z_p)$ identifies with the completion of its subring $\Z[T] = \Z[\Delta_1]$, where $T =\Delta_1 -\Delta_0$, in the $(p,T)$-adic topology. It follows that $\sD(p^{-n}\Z_p,\Z_p)$ identifies with the completion of its subring $\Z[T_n] =\Z[\Delta_{1/p^n}]$, for $T_n$ as in \eqref{Tndef}. So,
 $$
\Z[T] \subset \Z[T_1] \subset \Z[T_2] \subset \dots \;.
$$
From the two relations ($p$ is assumed odd; the case $p=2$ is analogous)
$$
T_1^p = T - \sum_{i=1}^{p-1}  (-1)^i {p \choose i} \Delta_{i/p} \in (p,T) \Z[T_1]
\;\;,\;\;
T = T_1^p + \sum_{i=1}^{p-1} (-1)^i  {p \choose i} \Delta_{i/p}  \in (p,T_1^p) \Z[T_1]
$$
it follows that
$$
(p,T) \Z[T_1] = (p,T_1^p) \Z[T_1]  = (p,T_1)^p \Z[T_1] \;.
$$
In the ring $\Z[T_\infty] = \bigcup_{n \in \N} \Z[T_n]$ we obtain
$$
(p,T_0)  = (p,T_1^p) = (p,T_1)^p  
\;.
$$
Hence
$$
(p,T_1)  = (p,T_2^p) = (p,T_2)^p   \;.
$$
So, for any $n \in \N$, we have
\beq \label{pTntop}
(p,T) =   (p,T_n^{p^n})  = (p,T_n)^{p^n}  \;.
\eeq
We conclude that the $(p,T)$-adic topology and the $(p,T_n)$-adic topology of $\Z[T_\infty]$ coincide for any $n \in \N$. 
Both the $(p,T)$-adic topology of $\Z[T_n]$ and the natural topology of $\sD_\unif(p^{-n}\Z_p,\Z_p)$, \ie the topology of uniform convergence on balls of the same radius in $p^{-n}\Z_p$,  are linear topologies, hence 
$\sD_\unif(p^{-n}\Z_p,\Z_p)$ with its natural topology coincides with the completion of its subring $\Z[T_n]$ equipped with the $(p,T)$-adic topology. 
This remains true for  $\Z[T_\infty] \subset \sD_\unif(\Q_p,\Z_p)$ and proves $\mathit 3$.
\par \smallskip 
$\mathit 4.$ Clear.
\par \smallskip 
$\mathit 5.$ Part \emph{(a)} is well-known. For part \emph{(b)}, we observe that a basis of open  neighborhoods of 0 in the product topology of ${\rm W}(\sD_\unif(\Q_p,\F_p))$ consists of  the ideals
$$
(p^N,[t^{p^N}]) = \sum_{i=0}^N p^i  [t^{p^{N-i}} \sD_\unif(\Q_p,\F_p)]
$$
for $N =0,1,\dots$, and that this system is cofinal with $\{(p,[t])^N\}_{N =0,1,\dots}$.
\end{proof} 

\begin{defn} \label{Qdef} For $S$ as in \eqref{Sdef}, we define an $S$-valued valuation $w: \Z[\wtilde{T}^{1/p^\infty}] \to S \subset \R_{\geq 0}$ as 
\beq \label{measconv4} 
w(\sum_{q \in S} a_q \wtilde{T}^q) = \min_{q \in S} (v_p(a_q) + q )\;.
\eeq 
Obviously, $w$ extends to a $\Z[1/p]$-valued valuation $w: \Q(\wtilde{T}^{1/p^\infty}) \to \Z[1/p]$. We denote by $\sQ$ the completion of the valued field $(\Q(\wtilde{T}^{1/p^\infty}),w)$. 
\end{defn} 
Then $(\sQ,w)$ is a non-archimedean $\Z[1/p]$-valued  field extension of $\Q_p$ with uniformizer $\wtilde{T}$ or $p$ and residue field $\F_p$. 
In particular, Berkovich theory of $\sQ$-analytic spaces applies. 
\begin{lemma} \label{torval} The topology induced by the valuation $w$ on $\Z[\wtilde{T}^{1/p^\infty}]$ coincides with the $(p,\wtilde{T})$-adic one.  
\end{lemma}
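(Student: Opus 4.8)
The plan is to imitate the proof of the previous lemma of the same name, the one novelty being the presence of fractional powers of $\wtilde T$. The whole statement will follow once I establish, for every integer $N \geq 0$, the equality of sets
\[
(p,\wtilde T)^N = \{\, y \in \Z[\wtilde T^{1/p^\infty}] : w(y) \geq N \,\}.
\]
Indeed, the $(p,\wtilde T)$-adic neighbourhoods of $0$ are the $(p,\wtilde T)^N$, $N \in \N$, while a basis of $w$-neighbourhoods of $0$ is given by the ideals $\{w \geq \epsilon\}$, $\epsilon \in S$, $\epsilon > 0$; since $\{w \geq N\} \subseteq \{w \geq \epsilon\}$ whenever $N \geq \epsilon$, and conversely every $\{w \geq N\}$ is itself such a set, the two families are cofinal in one another as soon as the displayed equality holds, so the two filters, hence the two topologies, coincide.

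For the inclusion $(p,\wtilde T)^N \subseteq \{w \geq N\}$ I would argue directly from the definition \eqref{measconv4}. Since $w(p) = w(\wtilde T) = 1$ and $v_p \geq 0$, any multiple $p^i \wtilde T^j z$ of a generator $p^i \wtilde T^j$ of $(p,\wtilde T)^N$ (with $i+j = N$), where $z = \sum_u c_u \wtilde T^u \in \Z[\wtilde T^{1/p^\infty}]$, has $w\big(p^i \wtilde T^j z\big) = \min_u\big(i + v_p(c_u) + j + u\big) \geq i+j = N$. The ultrametric inequality $w(x+y) \geq \min(w(x),w(y))$, immediate from the min-formula, then shows that every element of $(p,\wtilde T)^N$ has $w \geq N$. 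This uses nothing beyond the non-archimedean property of $w$ recorded in Definition~\ref{Qdef}.

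The reverse inclusion is the step where the fractional exponents must be handled, and is the only real point of the proof. Because $w$ is a minimum over monomials, $w(y) \geq N$ forces every monomial $a_q \wtilde T^q$ of $y$ to satisfy $v_p(a_q) + q \geq N$, so it suffices to place each such monomial in $(p,\wtilde T)^N$. Writing $a_q = p^i m$ with $i = v_p(a_q)$ and $\gcd(m,p)=1$, I reduce to showing $p^i \wtilde T^q \in (p,\wtilde T)^N$ under the hypotheses $i + q \geq N$, $i \in \Z_{\geq 0}$, $q \in S$. If $i \geq N$ this is clear from $p^i \wtilde T^q = p^N(p^{i-N}\wtilde T^q)$. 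If $i < N$, set $j := N - i \in \{1,\dots,N\}$; then $p^i \wtilde T^j$ is a genuine generator of $(p,\wtilde T)^N$, while $q - j = q - N + i \geq 0$ lies again in $S = \Z[1/p]\cap\R_{\geq 0}$, so $\wtilde T^{q-j}$ belongs to the ring and $p^i \wtilde T^q = (p^i \wtilde T^j)\,\wtilde T^{q-j} \in (p,\wtilde T)^N$. It is precisely here that I use that $\Z[\wtilde T^{1/p^\infty}]$ contains all fractional powers $\wtilde T^{q-j}$ with exponent in $S$ — the feature absent from the integer-exponent lemma.

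Assembling the two inclusions gives the displayed equality and hence the lemma. I expect no genuine obstacle: once the set-equality $(p,\wtilde T)^N = \{w \geq N\}$ is in hand, the comparison of topologies is formal, and the only spot demanding attention is the case $i < N$ above, where closure of $S$ under the operation $q \mapsto q - j$ (whenever the result is $\geq 0$) is what makes the fractional monomial divisible by $p^i \wtilde T^j$.
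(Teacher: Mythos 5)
Your proof is correct, and it is sharper than the one in the paper. The paper settles for the sandwich $(p,\wtilde{T})^n \subseteq \{\,y : w(y) \geq n\,\} \subseteq (p,\wtilde{T})^{n-1}$, handling a fractional exponent $q$ by rounding it down: from $v_p(a_q)+q \geq n$ it deduces $v_p(a_q)+\lfloor q\rfloor \geq n-1$, places $p^{v_p(a_q)}\wtilde{T}^{\lfloor q\rfloor}$ in $(p,\wtilde{T})^{n-1}$, and absorbs the leftover $\wtilde{T}^{q-\lfloor q\rfloor}$ into the ring; the loss of one unit in the filtration is harmless since the two filtrations remain cofinal. You instead prove the exact equality $(p,\wtilde{T})^N = \{\,y : w(y) \geq N\,\}$ by subtracting from $q$ precisely the integer $j = N - i$ rather than $q - \lfloor q\rfloor$, observing that $q - j = q - N + i \geq 0$ still lies in $S$, so $p^i\wtilde{T}^q = (p^i\wtilde{T}^j)\,\wtilde{T}^{q-j}$ with $p^i\wtilde{T}^j$ a genuine generator of $(p,\wtilde{T})^N$. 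Both arguments hinge on the same feature — $S$ is stable under subtracting an integer whenever the result is nonnegative — but yours exploits it optimally: you get that the ideal filtration and the valuation filtration coincide on the nose, not merely up to a shift, at the cost of a slightly longer case analysis ($i \geq N$ versus $i < N$). Either statement yields the lemma; yours would also let one read off, for instance, that $w(y) \geq N$ exactly characterizes membership in $(p,\wtilde{T})^N$, which the paper's offset bound does not.
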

\begin{proof} 
 It suffices to show that, for any $n = 3,4,\dots$,
$$
(p,\wtilde{T})^n \subset \{ y \in R \;:\; w(y) \geq n \;\}\ \subset (p,\wtilde{T})^{n-1} 
$$
We have $w(p^i \wtilde{T}^j) = i + j$, so that the first inclusion is clear. If now $w(a_q \wtilde{T}^q) \geq n$ for some $q \in S$, 
$v(a_q) + q \geq n$ implies $v(a_q)  + \lfloor q \rfloor  \geq n-1$ but then $p^{v(a_q)} \wtilde{T}^{\lfloor q \rfloor} \in 
(p,\wtilde{T})^{n-1} $.
\end{proof}

\begin{cor} \label{corq} $\sD_\unif(\Q_p,\Z_p)$ is the  $w$-completion of $\Z[\wtilde{T}^{1/p^\infty}]$. Then, 
every element in 
 $\sD_\unif(\Q_p,\Z_p)$ is the sum of a unique $S$-series $\sum_{q \in S} a_q \wtilde{T}^q$, with $a_q \in \Z_p$, such that the following condition holds
\beq \label{measconv} 
\mbox{For any  $C>0$, $v(a_q) +q  > C$ 
 for almost all $q \in S$.}
\eeq
Conversely, any $S$-series $\sum_{q \in S} a_q \wtilde{T}^q$ such that the map $q \mapsto a_q \in \Z_p$ satisfies \eqref{measconv}, converges in $\sD_\unif(\Q_p,\Z_p)$ along the filter of cofinite subsets of $S$.  
\par
The $(p,\wtilde{T})$-adic topology of $\sD_\unif(\Q_p,\Z_p)$ coincides with  the topology induced by the valuation
\beq \label{measconv4} 
w(\sum_{q \in S} a_q \wtilde{T}^q) = \min_{q \in S} (v(a_q) + q ) \in S\;,
\eeq
for any $\sum_{q \in S} a_q \wtilde{T}^q \in  \sD_\unif(\Q_p,\Z_p)$.  \end{cor}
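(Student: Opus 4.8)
The plan is to deduce Corollary~\ref{corq} almost entirely from the structural results just established, so that only a bookkeeping argument about the valuation $w$ remains. The key input is point $\mathit 4$ of Proposition~\ref{unifconvballs}, which identifies $\sD_\unif(\Q_p,\Z_p)$ with the completion of $\Z[\wtilde{T}^{1/p^\infty}]$ in the $(p,\wtilde{T})$-adic topology, together with Lemma~\ref{torval}, which says that the $(p,\wtilde{T})$-adic topology on $\Z[\wtilde{T}^{1/p^\infty}]$ coincides with the topology induced by $w$. Combining these two facts immediately gives the first assertion: $\sD_\unif(\Q_p,\Z_p)$ is the $w$-completion of $\Z[\wtilde{T}^{1/p^\infty}]$, and the final display about $w$ on $\sD_\unif(\Q_p,\Z_p)$ follows once we know that $w$ extends continuously to the completion, which it does because $w$ is a valuation and the completion of a valued ring carries the unique continuous extension of that valuation.

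The substantive part is therefore to describe the elements of the $w$-completion explicitly as $S$-series and to characterize which formal $S$-series converge. First I would observe that $\Z[\wtilde{T}^{1/p^\infty}]$ is the free $\Z$-module on the monomials $\{\wtilde{T}^q\}_{q \in S}$, with $w(p^i \wtilde{T}^q) = i + q$ by Definition~\ref{Qdef}, so that $w$ is computed on a finite sum by the minimum rule \eqref{measconv4} (using that distinct monomials $\wtilde{T}^q$ have distinct images under $w$ modulo the value group contributions, and that the valuation is non-archimedean). A Cauchy sequence for $w$ then has, for each fixed $q \in S$, an eventually stabilizing $\wtilde{T}^q$-coefficient $a_q \in \Z_p$, and the Cauchy condition translates exactly into condition~\eqref{measconv}: for any $C>0$ only finitely many $q$ have $v(a_q)+q \le C$. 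Conversely, any family $(a_q)_{q \in S}$ satisfying \eqref{measconv} yields a net of finite partial sums that is $w$-Cauchy and hence converges along the filter of cofinite subsets of $S$; uniqueness of the coefficients follows from separatedness of $w$, since if $\sum a_q \wtilde{T}^q = 0$ then $w$ of every finite truncation tends to infinity, forcing each $a_q = 0$.

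The main obstacle, modest as it is, lies in verifying carefully that the minimum formula \eqref{measconv4} really computes $w$ on arbitrary convergent $S$-series and not merely on finite sums. For finite sums this is the defining property of $w$ in Definition~\ref{Qdef}, but one must check that passing to the $w$-completion does not disturb it: concretely, that if $\sum_{q} a_q \wtilde{T}^q$ converges then $w(\sum_q a_q \wtilde{T}^q) = \min_q (v(a_q)+q)$, with the minimum attained because \eqref{measconv} guarantees only finitely many terms achieve any given value. This is the standard fact that a non-archimedean valuation on a dense subring extends to its completion by the same minimum/infimum rule and that the infimum is a minimum under the decay condition; I would spell it out using the ultrametric inequality $w(x+y) \ge \min(w(x),w(y))$ with equality when $w(x) \ne w(y)$. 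No genuinely new ingredient is needed beyond Proposition~\ref{unifconvballs} and Lemma~\ref{torval}, so the proof is essentially a transcription of the $w$-completion description into the language of $S$-series.
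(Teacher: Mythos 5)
Your proposal is correct and follows exactly the route the paper intends: the paper states Corollary~\ref{corq} without a separate proof, as an immediate consequence of point $\mathit 4$ of Proposition~\ref{unifconvballs} combined with Lemma~\ref{torval}, which is precisely the combination you use, and your explicit description of the $w$-completion via $S$-series satisfying \eqref{measconv} is the standard valuation-theoretic unwinding that the paper leaves implicit. The only cosmetic point is that for a fixed $q$ the coefficient of $\wtilde{T}^q$ along a $w$-Cauchy sequence converges $p$-adically to $a_q \in \Z_p$ rather than literally stabilizing, but your argument clearly intends this and is sound.
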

\begin{defn} We will denote by  $\sQ\{x\}$ the $\sQ$-Fr\'echet algebra of entire functions on the Berkovich affine line $(\A^1_{\sQ},x) = (\A^1_{\Q_p},x) \, \wt \,\sQ$. In particular $\sD := \sD_\unif(\Q_p,\Z_p)$ is the closure of 
$\Z[\wtilde{T}^{1/p^\infty}]$ in $\sQ$. We let $\sD(\Q_p)$ be the ring of  $\sQ$-entire functions $f$ such that $f(\Q_p) \subset \sD$.
\end{defn}

 \end{subsection}

 \begin{subsection}{Change of variables in $\sD = \sD_\unif(\Q_p,\Z_p)$} \label{ChgVarss}
 We denote by $\sD$, for short, the ring $\sD_\unif(\Q_p,\Z_p)$ equipped with its natural topology, \ie the topology of uniform convergence on balls of the same radius. We proved that $\sD$ is the  completion of $\Z_p[\Q_p]$ in the $(p,T)$-adic topology, for $T=\Delta_1-\Delta_0$. 
We also denote by $\sD^\padic$ the ring  $\sD$ equipped with its $p$-adic topology. Notice that $\sD^\padic$ is  complete 
and that  it contains properly the $p$-adic completion 
of  $\Z_p[\Q_p]$ whose residue ring is $\F_p[t^{1/p^\infty}]$ equipped with the discrete topology.  We also set $\wtilde{\sD} = \sD/p\sD$ (topological quotient) which coincides, for $t=T \mod p$, with the $t$-adic completion of  
 $$\F_p[t^{1/p^\infty}] = \colimit (\F_p[t] \map{\varphi} \F_p[T] \map{\varphi} \dots) \;,$$ 
where $\varphi$ is Frobenius and is  therefore a perfect ring of characteristic $p$. 
Notice that  the topological quotient 
$$\sD^\padic/p\sD^\padic = \wtilde{\sD}^\dis\;,$$
which is the ring $\wtilde{\sD}$ equipped with the discrete topology and contains properly $\F_p[t^{1/p^\infty}]$. 
  \par \medskip 
 The topological ring $\sD^\padic$ is a strict $p$-ring in the sense of \cite[Chap. II, \S 5]{serre} with residue ring $\wtilde{\sD}^\dis$. In particular, the map 
\beq
\theta: \vect(\wtilde{\sD}^\dis) \longrightarrow \sD^\padic
\eeq
of \emph{loc. cit.}  is an isomorphism of rings.  For any $x \in \wtilde{\sD}$ we simply denote $\theta ([x])$ by 
$[x]$. In the terminology of \emph{loc. cit.}, it is the \emph{multiplicative representative} of $x$ in $\sD$. 
\par \medskip
  The reason why we need the $(p,T)$-adic topology and not just the (finer) $p$-adic topology on 
$\Z[\Q_p]$ is that we need a change of variable which would not converge $p$-adically, but only converges $(p,T)$-adically, as we now explain. 
Let
$$ E(T) = \exp (\sum_{i=0}^\infty T^{p^i}/p^i) \in 1 + T\Z_{(p)}[[T]]$$
 (similar to $\exp(T)$) be the ($p$-adic) \emph{Artin-Hasse exponential} series and let 
 $$ L(T) \in T+ T^2\Z_{(p)}[[T]]
$$
(similar to $\log (1+T)$)
be its inverse in the sense that 
$$
E(L(T)) = 1+ T \; \;\mbox{and}\;\; L(E(T)-1) = T \;.
$$
Then $L$ is called the ($p$-adic) \emph{Artin-Hasse  logarithm}.  
The two Artin-Hasse series, the exponential $E(T)$ and the logarithm $L(T)$, establish an isomorphism between the formal open $\Z_p$-disc  centered at $0$ and the formal open $\Z_p$-disc centered at $1$. We denote by $\ol{E}(t) \in 1+t\F_p[[t]]$ and $\ol{L}(t) \in t+t^2\F_p[[t]]$ the reductions of $E$ and $L$ modulo $p\Z_{(p)}[[T]]$. 
\begin{prop} \label{repr1} \hfill
\ben  
\item  Let 
$$\ol{\mu}_\can := \ol{L}(t) \in t +t^2\F_p[[t]] \subset \sD_\unif(\Q_p,\F_p)\;.$$
\item  The expression
$$\lim_{n \to +\infty} L(\Delta_{1/p^n}-1)^{p^n} = \lim_{n \to +\infty} L(T_n)^{p^n}
$$ 
 converges  in $\sD_\unif(\Q_p,\Z_p)$ to the element 
$$\mu_\can := [\ol{\mu}_\can]
\;.$$ 
\item Conversely,   for any $i = 0,1,2,\dots$, 
$$\lim_{n \to +\infty} E(\mu_\can^{1/p^{n+i}})^{p^n} = \Delta_{1/p^i}  $$
for the $p$-adic topology of  $\sD_\unif(\Q_p,\Z_p)$.
\een
\end{prop}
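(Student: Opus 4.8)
The plan is to derive both limit formulas from a single elementary congruence lemma together with the strict $p$-ring structure of $\sD^\padic$ established in Proposition~\ref{unifconvballs}. Part $\mathit 1$ is merely the definition of $\ol\mu_\can = \ol L(t) \in t + t^2\F_p[[t]] \subset \sD_\unif(\Q_p,\F_p)$, the series converging $t$-adically; and $\mu_\can = \theta([\ol\mu_\can])$ is its multiplicative representative, so that by multiplicativity of the Teichmüller map in the strict $p$-ring $\sD^\padic$ the element $\mu_\can^{1/p^k} := \theta([\ol\mu_\can^{1/p^k}])$ satisfies $(\mu_\can^{1/p^k})^{p^k} = \mu_\can$ (indeed it is the unique $p^k$-th root of $\mu_\can$ in $\sD$). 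The one lemma I would record first is the standard fact that in any commutative ring $a \equiv b \pmod{p}$ implies $a^{p^n} \equiv b^{p^n} \pmod{p^{n+1}}$ for all $n \ge 0$; applied in $\sD^\padic$, which is $p$-adically complete by Remark~\ref{8.3.3} and Proposition~\ref{unifconvballs}, it converts a congruence modulo $p$ between two lifts of the same element into $p$-adic convergence of their $p^n$-th powers.

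Before the two computations I would dispose of the routine convergence-and-reduction bookkeeping. The arguments $T_n = \Delta_{1/p^n} - \Delta_0$ and $\mu_\can^{1/p^{n+i}}$ reduce modulo $p$ into the ($t$-adic) maximal ideal of $\wtilde{\sD}$, hence lie in $(p,\wtilde{T})$ and are topologically nilpotent for the natural topology of $\sD$, which by Corollary~\ref{corq} is the $(p,\wtilde{T})$-adic topology; since $E(T), L(T) \in \Z_{(p)}[[T]]$ and $\sD$ is complete, the series $L(T_n)$ and $E(\mu_\can^{1/p^{n+i}})$ converge to elements of $\sD$, and the continuous reduction map $\sD \to \wtilde{\sD}$ commutes with these sums. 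The basic reductions I would use are $\ol{T}_n = t^{1/p^n}$, $\ol{\Delta_a} = \delta_a$, and $\overline{\theta([x])} = x$; combined with the fact that $\ol L, \ol E \in \F_p[[T]]$ have coefficients in the prime field and therefore commute with $p^k$-th roots in the perfect ring $\wtilde{\sD}$, e.g. $\ol L(t^{1/p^k}) = \ol L(t)^{1/p^k} = \ol\mu_\can^{1/p^k}$.

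For Part $\mathit 2$ I would then compute $\overline{L(T_n)} = \ol L(t^{1/p^n}) = \ol\mu_\can^{1/p^n} = \overline{\mu_\can^{1/p^n}}$, so that $L(T_n) \equiv \mu_\can^{1/p^n} \pmod p$; the congruence lemma gives $L(T_n)^{p^n} \equiv (\mu_\can^{1/p^n})^{p^n} = \mu_\can \pmod{p^{n+1}}$, whence $L(T_n)^{p^n} \to \mu_\can$ $p$-adically (the two displayed expressions coincide since $\Delta_{1/p^n} - 1 = \Delta_{1/p^n} - \Delta_0 = T_n$). For Part $\mathit 3$, using the reduction $\ol E(\ol L(T)) = 1 + T$ of the Artin--Hasse identity $E(L(T)) = 1 + T$, I would compute $\overline{E(\mu_\can^{1/p^{n+i}})} = \ol E(\ol L(t^{1/p^{n+i}})) = 1 + t^{1/p^{n+i}} = \delta_{1/p^{n+i}} = \overline{\Delta_{1/p^{n+i}}}$, so $E(\mu_\can^{1/p^{n+i}}) \equiv \Delta_{1/p^{n+i}} \pmod p$. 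Applying the congruence lemma and the exact convolution identity $\Delta_{1/p^{n+i}}^{p^n} = \Delta_{1/p^i}$ yields $E(\mu_\can^{1/p^{n+i}})^{p^n} \equiv \Delta_{1/p^i} \pmod{p^{n+1}}$, hence $p$-adic convergence to $\Delta_{1/p^i}$.

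The genuinely substantive points are the two reduction identities $L(T_n) \equiv \mu_\can^{1/p^n}$ and $E(\mu_\can^{1/p^{n+i}}) \equiv \Delta_{1/p^{n+i}}$ modulo $p$; everything else (the congruence lemma and $p$-adic completeness) is formal. The one thing I would be careful about is the interplay of the two topologies: the inner series defining $L(T_n)$ and $E(\mu_\can^{1/p^{n+i}})$ converge only in the natural topology of $\sD$, whereas the outer limits over $n$ are $p$-adic. I would therefore make explicit that reduction modulo $p$ is continuous for the structures involved and that $\sD^\padic$ is $p$-adically complete before invoking the congruence lemma, so that the passage from the natural-topology evaluations to the $p$-adic limits is justified.
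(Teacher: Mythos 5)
Your proof is correct, and it is precisely the argument the paper leaves implicit (its own proof of Proposition~\ref{repr1} reads ``Obvious.''): namely the standard characterization of Teichm\"uller representatives in a strict $p$-ring \`a la \cite[Chap. II, \S 5]{serre} --- any lifts $y_n$ of $\ol{x}^{1/p^n}$ satisfy $[\ol{x}]=\lim_n y_n^{p^n}$ $p$-adically --- applied with $y_n = L(T_n)$ for part $\mathit 2$ and $y_n = E(\mu_\can^{1/p^{n+i}})$ for part $\mathit 3$, the reductions being computed via Frobenius-equivariance of $\ol{L},\ol{E}$ in the perfect ring $\wtilde{\sD}$. One cosmetic imprecision: $T_n$ and $\mu_\can^{1/p^{n+i}}$ lie in $(p,\wtilde{T}^{1/p^n})$, not in $(p,\wtilde{T})$; but they have positive $w$-valuation, which is all your convergence bookkeeping actually needs.
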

\begin{proof} Obvious. 
\end{proof}
 \end{subsection}
 \begin{subsection}{Measures on $\Q_p$ and Witt vectors}  

\begin{prop} \hfill
\ben  
\item The expression $\lim_{n \to + \infty} (\wtilde{T}^{1/p^n} +1)^{p^n}$ converges in $\sD^\padic$ to the element 
$$\Delta_1 :=  [1+t]\;.$$
For any $i = 1,2,\dots$, $\lim_{n \to +\infty} (\wtilde{T}^{1/p^{n+i}} +1)^{p^n}$ converges in $\sD^\padic$ to a definite $p^i$-th root  $\Delta_{1/p^i} = [1+t^{1/p^i}]$ of $\Delta_1$. For any $i=0,1,2,\dots$, we set $\delta_{1/p^i} := \Delta_{1/p^i} \mod p \in \wtilde{\sD}$, so that 
$$
\Delta_{1/p^i} = [\delta_{1/p^i}] \;.
$$
\item We have in $\sD^\padic$
$$\wtilde{T}^{1/p^i} = [t^{1/p^i} ] = [\delta_{1/p^i} -1] = \lim_{n \to +\infty} (\Delta_{1/p^{n+i}} -1)^{p^n}\;.$$ 
\item  Let 
$$\ol{\mu}_\can := \ol{L}(t) \in t +t^2\F_p[[t]]  \;.$$
\item  The two expressions 
$$\lim_{n \to +\infty} L(\Delta_{1/p^n}-1)^{p^n} = \lim_{n \to +\infty} L(\wtilde{T}^{1/p^n})^{p^n}
$$ 
both converge  in $\sD$ to the element 
$$\mu_\can := [\ol{\mu}_\can]
\;.$$ 
\item Conversely,   for any $i = 0,1,2,\dots$, 
$$\lim_{n \to +\infty} E(\mu_\can^{1/p^{n+i}})^{p^n} = \Delta_{1/p^i}  $$
in $\sD$.
\een
\end{prop}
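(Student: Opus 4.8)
The plan is to deduce all five assertions from two ingredients already at our disposal. The first is the structure result of $\mathit 2$ of Proposition~\ref{unifconvballs}: the ring $\sD^\padic$ is a strict $p$-ring whose residue ring $\wtilde{\sD} = \sD/p\sD$ is the (perfect) $t$-adic completion of $\F_p[t^{1/p^\infty}]$. The second is the classical description of multiplicative representatives in a strict $p$-ring with perfect residue ring \cite[Chap. II, \S 5]{serre}: for $x \in \wtilde{\sD}$ and \emph{any} choice of lifts $y_n \in \sD$ of the roots $x^{1/p^n}$, one has $[x] = \lim_n y_n^{p^n}$ for the $p$-adic topology, the limit being independent of the lifts. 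Once this formula is in place, each statement amounts to exhibiting the displayed element as $y_n^{p^n}$ for a compatible system of lifts $y_n$ of the appropriate root and reading off the limit.

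The one preliminary identification I would record first is $\Delta_a = [\delta_a]$ for every $a \in \Q_p$. Indeed $a \longmapsto \Delta_a$ is a homomorphism $(\Q_p,+) \to \sD^\times$ by Corollary~\ref{deltaprod}, and it lifts the homomorphism $a \longmapsto \delta_a$ into $\wtilde{\sD}^\times$; since the multiplicative representatives form the unique multiplicative section of reduction mod $p$, uniqueness forces $\Delta_a = [\delta_a]$. In $\wtilde{\sD}$ we have $\delta_{1/p^i} = 1 + t^{1/p^i}$ and $\wtilde{T}^{1/p^i} = [t]^{1/p^i} = [t^{1/p^i}]$, so that $\Delta_{1/p^i} = [1 + t^{1/p^i}]$; this yields statement $\mathit 2$ directly (it is just \eqref{Tdef} reread through $\delta_{1/p^i} - 1 = t^{1/p^i}$), and the $i = 0$ case of the limit formula gives $\Delta_1 = [1+t]$ in statement $\mathit 1$.

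For the remaining limits I would exploit perfectness of $\wtilde{\sD}$ systematically, the crucial point being that the coefficients of the reduced Artin--Hasse series lie in $\F_p$ and are therefore Frobenius-fixed, so $\ol{L}(X)^{1/p^n} = \ol{L}(X^{1/p^n})$ and likewise for $\ol{E}$. Thus in statement $\mathit 1$ the element $\wtilde{T}^{1/p^{n+i}} + 1$ reduces mod $p$ to $1 + t^{1/p^{n+i}} = \bigl((1+t)^{1/p^i}\bigr)^{1/p^n}$, whence its $p^n$-th powers converge to $[(1+t)^{1/p^i}] = \Delta_{1/p^i}$ (the root being ``definite'' precisely because $[\cdot]$ is unique). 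In statement $\mathit 4$, both $L(\wtilde{T}^{1/p^n})$ and $L(\Delta_{1/p^n}-1)$ reduce mod $p$ to $\ol{L}(t^{1/p^n}) = \ol{\mu}_\can^{1/p^n}$ (with $\ol{\mu}_\can = \ol{L}(t)$ the definition of statement $\mathit 3$), so the limit formula identifies both limits with $[\ol{\mu}_\can] = \mu_\can$, which in particular proves the two expressions agree. In statement $\mathit 5$, using $E(L(T)) = 1+T$ one finds that $E(\mu_\can^{1/p^{n+i}})$ reduces to $\ol{E}(\ol{L}(t^{1/p^{n+i}})) = 1 + t^{1/p^{n+i}} = \delta_{1/p^i}^{1/p^n}$, so the $p^n$-th powers converge to $[\delta_{1/p^i}] = \Delta_{1/p^i}$.

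Finally I would remark on the topology: since $p^n \sD \subseteq (p,\wtilde{T})^n$, the $p$-adic topology is finer than the natural $(p,\wtilde{T})$-adic topology of $\sD$, so every $p$-adic limit above is \emph{a fortiori} a limit in $\sD$; this reconciles the fact that the Teichm\"uller formula operates $p$-adically with the statements phrased in $\sD$ (statements $\mathit 4$ and $\mathit 5$). The computations are entirely routine; the only place demanding care is the repeated commutation of $(-)^{1/p^n}$ with the mod-$p$ Artin--Hasse series, together with the verification that the chosen lifts form a \emph{compatible} system of $p$-power roots, so that \cite[Chap. II, \S 5]{serre} applies verbatim.
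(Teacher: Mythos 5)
Your proof is correct and is essentially the argument the paper intends: its own proof is the single word ``Obvious,'' and every ingredient you invoke---the strict $p$-ring structure of $\sD^\padic$ from Proposition~\ref{unifconvballs}, Serre's limit formula $[x]=\lim_n y_n^{p^n}$ for arbitrary lifts $y_n$ of $x^{1/p^n}$, the Frobenius-fixedness of the mod-$p$ Artin--Hasse coefficients, and the observation that $p^n\sD\subseteq\{w\geq n\}$ so $p$-adic limits are limits in the natural topology---is exactly what Section~\ref{ChgVarss} and \cite[Chap.~II, \S 5]{serre} set up for this purpose. The only phrase worth tightening is the appeal to ``uniqueness of the multiplicative section'' for $\Delta_a=[\delta_a]$: since $a\mapsto\Delta_a$ is only defined on the group $\Q_p$ rather than on all of $\wtilde{\sD}$, one should instead note that $\Delta_a=(\Delta_{a/p^n})^{p^n}$ with $\Delta_{a/p^n}$ lifting $\delta_a^{1/p^n}$, so Serre's characterization of multiplicative representatives (as elements admitting $p^n$-th roots for all $n$, equivalently via the limit formula with the constant sequence) applies directly.
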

\begin{proof} Obvious. 
\end{proof}
\begin{rmk} \label{pidef} The sum
\beq \label{pi} 
\pi(\wtilde{T}) = \sum_{i=-\infty}^\infty \wtilde{T}^{p^i}/p^i  
\eeq
converges in $\sD$.
\end{rmk}
\begin{rmk} \label{Deltachar} 
The maps
\beq
\begin{split}
\Q_p \map{} \sD \;\;,&\;\; s \longmapsto \Delta_s \\
 \Q_p \map{} \wtilde{\sD} \;\;,&\;\;  s \longmapsto \delta_s
 \end{split} 
 \eeq
are obviously continuous characters. In fact, for any $s_1,s_2 \in \Q_p$
$$
\Delta_{s_1 + s_2} = \Delta_{s_1}\Delta_{s_2} \;\;,\;\; \delta_{s_1 + s_2} = \delta_{s_1}\delta_{s_2} \;.
$$
Continuity follows from the fact that, for any $h=0,1, \dots$, $|s_1-s_2|_p \leq p^{-h}$ if and only if for any ball $B= a+p^h\Z_p$ of radius $\pi_h$,  $s_1$ and $s_2$ are both in  or both out  of $B$. 
\par \smallskip 
The previous characters are in fact locally $\sQ$-analytic in a neighborhood of $\Q_p$ in the analytic Berkovich $\sQ$-line.  For any  $s_0 \in p^{-n}\Z_p$ the formulas
\beq \begin{split}
\Delta_s = \Delta_{1/p^n}^{sp^n} = (1+\wtilde{T}^{1/p^n})^{sp^n} = \sum_{i=0}^\infty {sp^n \choose i} \wtilde{T}^{i/p^n}\\
\delta_s = \delta_{1/p^n}^{sp^n}  = (1+t^{1/p^n})^{sp^n}  = \sum_{i=0}^\infty {s p^n\choose i} t^{i/p^n}
 \end{split} 
 \eeq
 provide a $\sQ$-analytic continuation of $s \longmapsto \Delta_s$ and of $s \longmapsto \delta_s$ to a neighborhood of $s_0$ in the Berkovich $\sQ$-line. 
\end{rmk}
\end{subsection}
\end{section}
 \begin{section}{Fourier theory for $\sC_\unif(\Q_p,\Z_p) = \sD_\unif(\Q_p,\Z_p)'_\strong$}
 \label{fourier}
\begin{subsection}{Uniformly continuous functions  $\Q_p \longrightarrow \Z_p$.}    \label{unifcontQps}
  Let $S := \Z[1/p] \cap \R_{\geq 0}$, be as before. For $q \in S$ the function ${x \choose q}^{(p)}  : \Q_p \longrightarrow \Z_p$ is defined as 
\beq \label{binQp}
{x \choose q}^{(p)} := \lim_{n \to +\infty} {{p^n x} \choose {p^n q}} \;,\;\forall \; x\in \Q_p\;.
\eeq
The convergence of \eqref{binQp} in $\Z_p$  follows from the congruence 
\beq \label{bald}
{{p\gamma} \choose {p\beta}} \equiv {{\gamma} \choose {\beta}} \mod p \gamma \Z_p \;,
\eeq
for $\beta, \gamma \in \N$ \cite[5.11.3]{Dwork}.

\begin{prop} The functions ${x \choose q}^{(p)}  : \Q_p \longrightarrow \Z_p$, for $q \in S$, are continuous. 
\end{prop}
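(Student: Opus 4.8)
The plan is to exhibit $\binom{x}{q}^{(p)}$ as a limit of polynomial (hence continuous) functions that converges uniformly on each ball $p^{-r}\Z_p$, and then to conclude by the stability of continuity under uniform convergence over a cover by open sets. Fix $q \in S$ and choose $m \in \N$ with $p^m q \in \N$; for $n \geq m$ put $\beta_n := p^n q \in \N$ and let $f_n(x) := \binom{p^n x}{\beta_n}$, which is a polynomial in $x$ with coefficients in $\Q$. For each integer $r \geq 0$ I would work on the compact open ball $B_r := p^{-r}\Z_p = \{x \in \Q_p : v_p(x) \geq -r\}$; these are open in $\Q_p$ and $\Q_p = \bigcup_{r \geq 0} B_r$. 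For $n \geq \max(r,m)$ and $x \in B_r$ one has $p^n x \in \Z_p$, so $f_n(x) \in \Z_p$ and $f_n|_{B_r}$ is continuous. It therefore suffices to prove, for each $r$, that $(f_n)_{n \geq \max(r,m)}$ converges uniformly on $B_r$: its limit is exactly $\binom{x}{q}^{(p)}$ by \eqref{binQp}, and a uniform limit of continuous functions on $B_r$ is continuous.

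First I would upgrade the congruence \eqref{bald} so that it may be applied with a $p$-adic (not merely integral) upper argument. Writing $P_\beta(\gamma) := \binom{p\gamma}{p\beta} - \binom{\gamma}{\beta} \in \Q[\gamma]$, one checks $P_\beta(0) = 0$, so $P_\beta(\gamma) = \gamma\,Q_\beta(\gamma)$ for some $Q_\beta \in \Q[\gamma]$. By \eqref{bald} we have $Q_\beta(\gamma) \in p\Z_p$ for every $\gamma \in \N_{\geq 1}$; since $Q_\beta$ is continuous on $\Z_p$, the set $\N_{\geq 1}$ is dense in $\Z_p$, and $p\Z_p$ is closed, it follows that $Q_\beta(\Z_p) \subset p\Z_p$, whence $\binom{p\gamma}{p\beta} \equiv \binom{\gamma}{\beta} \pmod{p\gamma\Z_p}$ for all $\gamma \in \Z_p$ and $\beta \in \N$.

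With this extension the uniform estimate is immediate. For $n \geq \max(r,m)$ and $x \in B_r$, applying the extended congruence with $\gamma = p^n x \in \Z_p$ and $\beta = \beta_n$ (so that $p\beta_n = \beta_{n+1}$) gives $f_{n+1}(x) - f_n(x) = P_{\beta_n}(p^n x) \in p^{n+1} x\,\Z_p$, hence $\lvert f_{n+1}(x) - f_n(x)\rvert_p \leq p^{\,r-n-1}$. Thus $(f_n)_n$ is uniformly Cauchy on $B_r$ and converges uniformly to $\binom{x}{q}^{(p)}$, which is therefore continuous on $B_r$. Since the open balls $B_r$ cover $\Q_p$, continuity on each $B_r$ yields continuity on all of $\Q_p$.

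The only genuinely delicate point is this passage from $\gamma \in \N$ to $\gamma \in \Z_p$ in \eqref{bald}; everything else is bookkeeping about which indices $n$ make the binomial symbols $\Z_p$-valued. I expect that the very same estimate also gives uniform continuity of $\binom{x}{q}^{(p)}$ on each $B_r$ (each $f_n$ being uniformly continuous on the compact set $B_r$), which is the property ultimately needed to place $\binom{x}{q}^{(p)}$ in $\sC_\unif(\Q_p,\Z_p)$.
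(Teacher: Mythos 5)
Your proof is correct and follows essentially the same route as the paper's: both exhibit ${x \choose q}^{(p)}$ on each ball $p^{-r}\Z_p$ as a uniform limit of the continuous functions $x \mapsto {p^n x \choose p^n q}$, with the uniform Cauchy estimate coming from the congruence \eqref{bald}. The one point you treat more carefully is the extension of \eqref{bald} from $\gamma \in \N$ to $\gamma \in \Z_p$ (via vanishing at $0$, density of $\N$ in $\Z_p$, and continuity of the polynomial $Q_\beta$), a step the paper uses implicitly when it applies \eqref{bald} with $\gamma = p^{m+n+i}x \in \Z_p$.
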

\begin{proof} Let $q = p^{-m}Q$, with $m, Q \in \N$. 
Continuity of $x \longmapsto {x \choose q}$ may be checked  locally~:  for $x \in p^{-n}\Z_p$ 
$${x \choose q}^{(p)}   =  \lim_{i \to \infty} {p^{m+n+i} x \choose p^{n+i}Q} 
$$
obviously a sequence  of continuous functions of $x \in p^{-n}\Z_p$. Moreover, by \eqref{bald}, we have for $x \in p^{-n}\Z_p$
$$
{p^{m+n+i+1} x \choose p^{n+i+1}Q}  - {p^{m+n+i} x \choose p^{n+i}Q}   \in p^{m+i+1} \Z_p\;, 
$$
so that the sequence converges uniformly to ${x \choose q}^{(p)}$ on $p^{-n}\Z_p$. 
\end{proof}
\begin{lemma} For any $q \in S$ and $s \in \Q_p$
$$
v({s \choose q}^{(p)}  )   \geq (p-1) (v(s)-v(q)) \;.
$$
\end{lemma}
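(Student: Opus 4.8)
The plan is to reduce the statement to a valuation estimate for the ordinary integer binomial coefficients $\binom{p^\ell s}{p^\ell q}$ that define $\binom{s}{q}^{(p)}$ in \eqref{binQp}, and then to pass to the limit. First I would dispose of the trivial cases: since $\binom{s}{q}^{(p)}\in\Z_p$ (a $p$-adic limit of integers, and $\binom{X}{\beta}$ maps $\Z_p$ into $\Z_p$), its valuation is $\geq 0$; so whenever $v(s)\leq v(q)$, the right-hand side $(p-1)(v(s)-v(q))$ is $\leq 0$ and the inequality is automatic (this absorbs $s=0$ and $q=0$). Hence I may assume $q>0$ and $d:=v(s)-v(q)>0$ with $v(s),v(q)$ finite; note $d\in\Z$ because $q$ is rational. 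Writing $q=p^{-m}Q$ with $Q\in\N$, for $\ell\geq m$ set $\gamma=p^\ell s\in\Z_p$ and $\beta=p^\ell q=p^{\ell-m}Q\in\N$, so that $v(\gamma)=\ell+v(s)$, $v(\beta)=\ell+v(q)$ and $v(\gamma)-v(\beta)=d$. Since the cosets $p^N\Z_p$ are closed and $(p-1)d\in\Z$, it suffices to prove $v\binom{\gamma}{\beta}\geq (p-1)d$ for all $\ell\gg 0$ and then let $\ell\to\infty$, the convergence being guaranteed by \eqref{bald}.

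For the integer binomial $\binom{\gamma}{\beta}$ this is a purely combinatorial $p$-adic statement, and the natural tool is Kummer's theorem: after replacing $s$ by an integer congruent to it modulo a high power of $p$ (which changes $\binom{\gamma}{\beta}$ only modulo an arbitrarily high power of $p$, by continuity of $\binom{X}{\beta}$), $v\binom{\gamma}{\beta}$ equals the number of carries in the base-$p$ addition $\beta+(\gamma-\beta)=\gamma$. Here the key structural fact is that $v(\gamma)=\ell+v(s)$ strictly exceeds $v(\gamma-\beta)=v(\beta)=\ell+v(q)$ (this last equality holds precisely because $d>0$). Consequently the digits of $\gamma$ in positions $\ell+v(q),\dots,\ell+v(s)-1$ all vanish, while $\beta$ and $\gamma-\beta$ both have a nonzero lowest digit at position $\ell+v(q)$; a carry is therefore forced at position $\ell+v(q)$ and must propagate through each of the $d$ vanishing positions, yielding at least $d$ carries and hence $v\binom{\gamma}{\beta}\geq d$.

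The main obstacle is extracting the precise constant $(p-1)$ rather than the bare lower bound $d$ produced above. The carry argument, or equivalently the product expansion $\binom{\gamma}{\beta}=\prod_{j=0}^{\beta-1}\frac{\gamma-j}{\beta-j}$ combined with the divisibility \eqref{bald}, gives a bound linear in $d$ cheaply; the delicate point is to account for the extra multiplicity coming from the indices $j$ divisible by high powers of $p$ and, crucially, from the denominator $p^m$ of $q$, so that the accumulated valuation grows with the correct coefficient. I would organize this by iterating the congruence \eqref{bald}, $\binom{p\gamma_1}{p\beta_1}\equiv\binom{\gamma_1}{\beta_1}\pmod{p\gamma_1\Z_p}$, descending through the $d$ empty digit positions of $\gamma$ and recording at each step both the valuation gained and the surviving factors; the uniform convergence in \eqref{binQp} then transfers the final estimate to the limit function $\binom{s}{q}^{(p)}$. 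Isolating and verifying this multiplicity is where the real work lies, and I would double-check the normalization of $v$ against the small cases (e.g.\ $q=1/p$, $s\in\Z_p$) before committing to the exact constant.
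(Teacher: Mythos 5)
Your proposal follows exactly the paper's route: the same disposal of the case $v(s)\leq v(q)$ by integrality, the same reduction to the integer binomials ${p^Ns \choose p^Nq}$ for $N\gg 0$ (first for $s\in S$, then by continuity in $s$), and the same base-$p$ digit/carry analysis. The divergence is precisely the point you refused to gloss over: the constant $p-1$. Here your caution is vindicated, and the sanity check you proposed settles the matter negatively. Take $s=1$ and $q=1/p$ with $p\geq 3$. By Kummer's theorem, $v_p{p^n \choose p^{n-1}}$ is the number of carries in the base-$p$ addition of $p^{n-1}$ and $(p-1)p^{n-1}$, which is exactly $1$ for every $n$; hence ${1 \choose 1/p}^{(p)}$ has valuation exactly $1$, whereas the lemma asserts valuation $\geq (p-1)\bigl(v(1)-v(1/p)\bigr)=p-1>1$. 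So the inequality as stated is false for every odd $p$; what is true (and sharp) is the bound your carry argument actually delivers, $v\bigl({s \choose q}^{(p)}\bigr)\geq v(s)-v(q)$: one carry is forced at each of the $d=v(s)-v(q)$ positions where the digits of $p^Ns$ vanish, and nothing forces more. The ``extra multiplicity'' you hoped to extract does not exist. The paper's own proof commits exactly the leap you declined to make: after writing the three digit expansions it asserts the factor $(p-1)$ with no further argument --- almost certainly a slip between the number of carries and the digit-sum quantity $S_p(p^Nq)+S_p(p^Ns-p^Nq)-S_p(p^Ns)$ in Legendre's formula, which equals $(p-1)$ times the number of carries, not the valuation itself.

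So do not spend more time hunting for the constant: your argument is complete and correct for the bound $v(s)-v(q)$, and that is the statement the lemma should make. Reassuringly, this weaker bound is all the paper ever uses. In the corollary proving uniform continuity of ${x \choose q}^{(p)}$ and in the proposition expanding the additive character $s\mapsto(\gamma^s)^\sharp$, the estimate is invoked only to guarantee that $v\bigl({x \choose q_1}^{(p)}{y \choose q_2}^{(p)}\bigr)$ tends to infinity linearly in $h$ once $v(q_i)<M(C)-h$; replacing the lower bound $(p-1)h$ by $h$ there leaves every convergence argument intact.
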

\begin{proof} If $v(s) \leq v(q)$ there is nothing to prove. So assume 
 $v(s) > v(q)$. We have 
 $$| {s \choose q}^{(p)}  | = | {p^N s \choose {p^N q}} | 
 $$
 for $N >> 0$. Let the $p$-adic digit expansions of $p^N s$, $p^N q$ and $p^N s - p^N q$ be
 $$
 p^Ns = n_{N+v(s)}p^{N+v(s)} + n_{N+v(s)+1}p^{N+v(s)+1} + \dots \;,
 $$
  $$
 p^Nq = a_{N+v(q)}p^{N+v(q)} + a_{N+v(q)+1}p^{N+v(q)+1} + \dots \;,
 $$
  $$
 p^Ns - p^Nq = b_{N+v(q)}p^{N+v(q)} + b_{N+v(q)+1}p^{N+v(q)+1} + \dots \;.
 $$
Therefore, if $s \in S$, 
$$
v({s \choose q}^{(p)}  ) = v({p^N s \choose {p^N q}} ) \geq (p-1) (v(s)-v(q)) \;.
$$
By continuity in $s$ this holds for any $s \in \Q_p$ with $v(s) >v(q)$. 
\end{proof}
 \begin{cor} \label{unifcont1} \ben \item For any $q \in S$ and any compact subgroup $C$ of $(\Q_p,+)$
 we have
\beq \label{unifcont1}
 {{x+y}\choose q}^{(p)}  = \sum_{q_1 + q_2 = q}  {x \choose q_1}^{(p)}   {y \choose q_2}^{(p)}  \;,
\eeq
for $x,y \in C$, where for $S_q = \{(q_1,q_2) \in S^2\;|\; q_1 + q_2 = q\;\}$ 
 the $S_q$-series in the statement  is unconditionally convergent for the topology of uniform convergence on $C \times C$. 
 \item In particular, the functions  
 ${x \choose q}^{(p)}  : \Q_p \longrightarrow \Z_p$, for $q \in S$, are uniformly continuous.
 \een
 \end{cor}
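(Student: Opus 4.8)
The plan is to derive the functional equation \eqref{unifcont1} from the classical Vandermonde--Chu convolution together with the valuation estimate of the preceding Lemma, and then to read off the uniform continuity of part $\mathit 2$ as a consequence. Conceptually, \eqref{unifcont1} is the elementary incarnation of the multiplicativity $\Delta_{x+y}=\Delta_x\Delta_y$ of the character of Remark~\ref{Deltachar} applied to the expansion $\Delta_x=\sum_{q\in S}{x\choose q}^{(p)}\wtilde{T}^q$, combined with the uniqueness of $S$-series expansions (Corollary~\ref{corq}); but I would carry out the direct argument. Fix $q\in S$ and a compact subgroup $C=p^{-m}\Z_p$ of $(\Q_p,+)$. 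For $x,y\in C$ and $n\ge m$ one has $p^nx,p^ny\in\Z_p$ and $p^nq\in\N$, so the polynomial identity ${a+b\choose N}=\sum_{i+j=N}{a\choose i}{b\choose j}$ (valid over $\Z$, hence over $\Z_p$) applies with $a=p^nx$, $b=p^ny$, $N=p^nq$. Reindexing $i=p^nq_1$, $j=p^nq_2$ (and noting $p^{-n}\Z_{\ge0}\subset S$) gives
\begin{equation*}
{p^n(x+y) \choose p^nq}=\sum_{\substack{q_1+q_2=q\\ q_1,q_2\in p^{-n}\Z_{\ge0}}}{p^nx \choose p^nq_1}{p^ny \choose p^nq_2}.
\end{equation*}
The left-hand side converges to ${x+y\choose q}^{(p)}$ as $n\to\infty$ by \eqref{binQp}, and each fixed factor on the right converges to the corresponding ${x\choose q_1}^{(p)}$.

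Next I would establish that the $S_q$-series $\sum_{q_1+q_2=q}{x\choose q_1}^{(p)}{y\choose q_2}^{(p)}$ converges unconditionally and uniformly on $C\times C$. By the preceding Lemma, for $x\in C$ one has $v({x\choose q_1}^{(p)})\ge(p-1)(v(x)-v(q_1))\ge(p-1)(-m-v(q_1))$, and likewise for $y$. Since $q_1,q_2\in[0,q]$, the pairs $(q_1,q_2)$ whose denominators are bounded by a fixed power of $p$ form a finite set, while all remaining pairs force $v(q_1)$ or $v(q_2)$ to be arbitrarily negative and hence make the product valuation exceed any prescribed bound; completeness of $\Z_p$ then yields unconditional convergence with a modulus depending only on $m$ and $q$, whence uniform convergence on $C\times C$.

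The crux is the interchange of the limit $n\to\infty$ with the growing summation, i.e. that the finite level-$n$ sums tend to the full $S_q$-sum. Here I would invoke the Lemma in the finite-level form $v({p^nx\choose p^nq_1})\ge(p-1)(v(x)-v(q_1))$ that is actually proven in its digit/Kummer argument, in order to bound the level-$n$ tail \emph{uniformly in $n$}: given $N$, a single finite set $E$ of small-denominator pairs simultaneously captures every term of product valuation $\le N$, at each level $n$ and in the limit. On $E$ the level-$n$ terms converge termwise, so the level-$n$ sum agrees with the $S_q$-sum modulo $p^N$; letting $N\to\infty$ gives \eqref{unifcont1}. I expect the main obstacle to be precisely this uniform tail control — verifying that the Lemma's valuation estimate holds uniformly across all exponents $q_1$ occurring at level $n$, not merely for each fixed $q_1$ at sufficiently high level.

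Finally, for part $\mathit 2$ I would specialize \eqref{unifcont1} to $x=y+\delta$ with $\delta\in p^h\Z_p$ (taking $C=p^{-m}\Z_p$ large enough to contain $y$ and $\delta$): the index $q_1=0$ contributes exactly ${y\choose q}^{(p)}$, so
\begin{equation*}
{y+\delta \choose q}^{(p)}-{y \choose q}^{(p)}=\sum_{\substack{q_1+q_2=q\\ q_1>0}}{\delta \choose q_1}^{(p)}{y \choose q_2}^{(p)}.
\end{equation*}
Each ${y\choose q_2}^{(p)}$ lies in $\Z_p$, while by the Lemma $v({\delta\choose q_1}^{(p)})\ge(p-1)(h-v(q_1))$, and $v(q_1)$ is bounded above over the finite range of exponents $q_1\in(0,q]\cap S$. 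Hence every summand lies in $p^\ell\Z_p$ once $h$ is large in terms of $\ell$ and $q$ alone; since the unconditionally convergent series therefore lies in $p^\ell\Z_p$ and the bound is independent of $y$, this establishes uniform continuity of ${x\choose q}^{(p)}$ on all of $\Q_p$.
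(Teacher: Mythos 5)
Your proposal is correct, and its core estimates are the same as the paper's: you obtain unconditional convergence of the $S_q$-series exactly as the paper does, by combining the valuation bound of the preceding Lemma with the finiteness of the set of pairs $(q_1,q_2)\in S_q$ for which some $v(q_i)$ exceeds a given threshold, and your part $\mathit 2$ is the paper's argument almost verbatim (the paper perturbs by $\veps_h$ with $v(\veps_h)$ large and isolates the $q_2=0$ term; you perturb by $\delta\in p^h\Z_p$ and isolate the $q_1=0$ term). Where you go beyond the paper is in actually proving the identity \eqref{unifcont1}: the paper's printed proof of part $\mathit 1$ establishes only that the series converges unconditionally and uniformly on $C\times C$, and never verifies that its sum equals ${{x+y}\choose q}^{(p)}$. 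Your route --- the integer Vandermonde--Chu identity at level $n$, termwise convergence of the factors, and a tail bound \emph{uniform in $n$} furnished by the finite-level form of the Lemma (the carry-counting argument bounds $v\bigl({p^nx\choose p^nq_1}\bigr)$ from below independently of $n$, since the carries forced between digit positions $n+v(q_1)$ and $n+v(x)$ do not depend on $n$) --- is the natural way to supply this, and you correctly identify the limit/summation interchange as the crux rather than waving it away. What each approach buys: the paper's version is shorter because it treats the identity as self-evident; yours is self-contained and makes the Corollary genuinely follow from the Lemma alone.

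Two small repairs. First, in part $\mathit 2$ the phrase ``the finite range of exponents $q_1\in(0,q]\cap S$'' is inaccurate: that set is infinite. What you need, and what is true (and what the paper encodes via its finite set $G_q$ and constant $V(q)$), is that $v(q_1)$ is \emph{bounded above} on $(0,q]\cap S$, because $q_1\geq p^{v(q_1)}$ forces $v(q_1)\leq \log_p q$; with that wording your estimate is uniform in $y$ as required. Second, in the finite-level identity you should take $n$ large enough that $p^nq\in\N$ as well, not merely $n\geq m$; this is cosmetic.
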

 \begin{proof} $\mathit 1.$ For a given $q \in S$, and any fixed $v \in \Z$, there are only a finite subset $F_v$ of $(q_1,q_2) \in S_q$ 
 such that $v(q_i) \geq v$, for   $i=1$ or  $i=2$. Now $v(x)$ has a minumum $M(C)$ for $x \in C$. Let us choose $v$ to be    $M(C) - h$, for $h =1,2,\dots$, $h>>0$. Then for $(q_1,q_2) \in S_q - F_v$ we have  $v(q_i) < M(C) -h$, for both $i=1$ and $i=2$. Then 
 $$ v({x \choose q_1}^{(p)}   {y \choose q_2}^{(p)} ) \geq (p-1)h \;.
 $$
 \par\noindent $\mathit 2.$ 
 For the given $q \in S$,  the subset $G_q$ of those $(q_1,q_2) \in S_q$  such that 
 $v(q_2) \geq v(q)$ is   finite.  Let $V(q) := \max_{(q_1,q_2) \in G_{v(q)}}  v(q_2)$. We choose $\veps_h \in \Q_p$ such that 
 $v(\veps_h) > V(q) + h$, for $h =1,2,\dots$. Then 
 $$
 v({x \choose q_1}^{(p)}   {\veps_h \choose q_2}^{(p)}  ) \geq (p-1)h \;,
 $$
independent of $x \in \Q_p$. We conclude that
$$
 v({{x +\veps_h}\choose q}^{(p)}  - {x\choose q}^{(p)} )  \geq (p-1)h
 $$
 independent of $x \in \Q_p$, hence the result. 
 \end{proof} 
 \end{subsection}
  \begin{subsection}{Orthogonality}
  \label{orthogonal}
 Let $\iota(p): \Q_p \longrightarrow \Q_p$ denote the map 
multiplication by $p$ in $\Q_p$. It induces natural Hopf-algebra morphisms 
$$\sC_\unif(\Q_p,\Z_p) \map{{\iota(p)}^\ast} \sC_\unif(\Q_p,\Z_p) \;\;,\;\; f \longmapsto {\iota(p)}^\ast f \;,$$
where $({\iota(p)}^\ast f )(x) = f(px)$, 
and 
$$\sD_\unif(\Q_p,\Z_p) \map{\iota(p)_\ast} \sD_\unif(\Q_p,\Z_p)\;\;,\;\; \sum_q a_q\Delta_q \longmapsto  \sum_q a_q\Delta_{pq}$$
such that 
$$
\int_{\Q_p} (\iota(p)^\ast f)(x) d \mu(x) = \int_{\Q_p} f(px) d \mu(x)  =   \int_{\Q_p}  f(x) d (\iota(p)_\ast \mu)(x) \;.
$$
Equivalently,
$$
\iota(p)_\ast(T_{n+1}) = \iota(p)_\ast(\Delta_{1/p^{n+1}} - \Delta_0) = \Delta_{1/p^n} - \Delta_0 = T_n \;.
$$
 The previous maps induce morphisms of weak perfect pairings
\beq \label{pushoutmonoou}
 \;\;\;  \begin{tikzcd}[column sep=0.6em, row sep=2.5em] 
\sC(p^{-n}\Z_p,k)  \arrow{d}{{\iota(p)}^\ast}  & \times
& \sD(p^{-n}\Z_p,k)  \arrow{rrrrrr}{\int_{p^{-n}\Z_p}} &&&&&& k \arrow{d}{\id_k}
\\ 
\sC(p^{-n-1}\Z_p,k)   & \times
& \sD(p^{-n-1}\Z_p,k)  \arrow{u}{\iota(p)_\ast}   \arrow{rrrrrr}{\int_{p^{-n-1}\Z_p}}&&&&&&k
\end{tikzcd}
\eeq
such that for any $f \in \sC(p^{-n}\Z_p,k)$ and $\mu \in  \sD(p^{-n-1}\Z_p,k)$  
\beq \label{intform}
\int_{p^{-n-1}\Z_p} (\iota(p)^\ast f)(x) d \mu(x) = \int_{p^{-n}\Z_p} f(x) d (\iota(p)_\ast \mu)(x) \;.
\eeq
\begin{thm} \label{orthrel}
For any $q,q' \in S = \Z[1/p]_{\geq 0}$, we have 
\beq  \label{orthrel1}
{x \choose q}^{(p)} \circ \wtilde{T}^{q'} = \int_{\Q_p} {x \choose q}^{(p)}  d\wtilde{T}^{q'}(x) = \delta_{q,q'} \;.
\eeq
\end{thm}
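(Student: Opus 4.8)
The plan is to exploit the scaling maps $\iota(p)^\ast$ and $\iota(p)_\ast$ just introduced in order to reduce the pairing to integer parameters, and then to rewrite it as a $p$-adic limit of Mahler--Amice pairings on $\Z_p$, where Proposition~\ref{mahleramice} applies. Throughout I write $B(q,q') := \int_{\Q_p} {x \choose q}^{(p)}\, d\wtilde{T}^{q'}(x)$ for $q,q' \in S$.

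First I would record the two elementary identities $\iota(p)^\ast {x \choose q}^{(p)} = {x \choose q/p}^{(p)}$ and $\iota(p)_\ast \wtilde{T}^{q'} = \wtilde{T}^{pq'}$. The first is immediate from the definition \eqref{binQp} after a shift in the defining limit; the second follows from $\iota(p)_\ast T_n = T_{n-1}$ together with the definition \eqref{Tdef} of $\wtilde{T}_n$ and the continuity and multiplicativity of the ring map $\iota(p)_\ast$. Using the $\Q_p$-form of the adjunction \eqref{intform} and writing ${x \choose q}^{(p)} = \iota(p)^\ast {x \choose pq}^{(p)}$, I get $B(q,q') = \int (\iota(p)^\ast {x \choose pq}^{(p)})\, d\wtilde{T}^{q'} = \int {x \choose pq}^{(p)}\, d(\iota(p)_\ast \wtilde{T}^{q'}) = B(pq,pq')$. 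Iterating and choosing $N$ with $i:=p^N q,\ j:=p^N q'\in\N$, and noting that $q\mapsto p^N q$ is injective on $S$ so that $\delta_{q,q'}=\delta_{i,j}$, it suffices to treat integer parameters, i.e. to prove $B(i,j)=\delta_{i,j}$ for $i,j\in\N$.

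Next I would pass to $\Z_p$. From \eqref{Tdef} we have $\wtilde{T}^{\,j} = \lim_m T_m^{\,jp^m}$ in the $p$-adic topology, and since ${x \choose i}^{(p)}$ is $\Z_p$-valued the integration pairing is $p$-adically continuous in the measure variable, whence $B(i,j) = \lim_m \int_{\Q_p} {x \choose i}^{(p)}\, dT_m^{\,jp^m}$. The measure $T_m^{\,jp^m}$ is supported on $p^{-m}\Z_p$ and satisfies $\iota(p)_\ast^{\,m} T_m^{\,jp^m} = T^{\,jp^m}$; applying \eqref{intform} $m$ times and using $\iota(p)^{\ast m} {x \choose ip^m}^{(p)} = {x \choose i}^{(p)}$ converts each term into a genuine Mahler--Amice pairing on $\Z_p$, namely $\int_{\Q_p} {x \choose i}^{(p)}\, dT_m^{\,jp^m} = \int_{\Z_p} {x \choose ip^m}^{(p)}\, dT^{\,jp^m}$. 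By the orthogonality \eqref{mahleramice1} and the expansion \eqref{mahleramice12}, the right-hand side is precisely the $(jp^m)$-th Mahler coefficient of the restriction of ${x \choose ip^m}^{(p)}$ to $\Z_p$, so the whole problem reduces to showing that this sequence of Mahler coefficients tends to $\delta_{i,j}$ as $m\to\infty$.

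The hard part will be exactly this last limit. One must resist the temptation to identify ${x \choose K}^{(p)}|_{\Z_p}$ with the ordinary polynomial ${x \choose K}$: they genuinely differ, Dwork's limit carrying a nontrivial correction, and this correction is mirrored by the fact that $\wtilde{T}=[t]$ is the Teichmüller representative and \emph{not} $T=\Delta_1-\Delta_0$. The content of the theorem is that the two corrections cancel. To prove $\lim_m \big({x\choose ip^m}^{(p)}\big)^{[\,jp^m]}(0)=\delta_{i,j}$ I would compare consecutive terms by means of the Dwork congruence \eqref{bald}, observing that $T_{m+1}^{\,p}\equiv T_m \bmod p$ and hence $T_{m+1}^{\,jp^{m+1}}\equiv T_m^{\,jp^m}$ to increasing $p$-adic precision, and control the errors with valuation estimates in the spirit of Lemma~\ref{Iwasawa30}; the stable value is then pinned down by the normalisation at $i=j$ and the off-diagonal vanishing forced by those same estimates together with the weight symmetry $B(i,j)=B(pi,pj)$. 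I expect this reconciliation of the Dwork binomial with the Witt/Teichmüller structure of $\wtilde{T}$ to be the only substantial obstacle, the reductions of the first two steps being formal consequences of the adjunction \eqref{intform}, the uniform continuity established in \S\ref{unifcontQps}, and Proposition~\ref{mahleramice}.
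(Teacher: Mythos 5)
Your first two reductions are correct, and they are in substance the paper's own proof: the scaling adjunction of \S\ref{orthogonal} gives $B(q,q')=B(pq,pq')$, and $p$-adic continuity of the pairing in the measure variable, applied to $\wtilde{T}^{j}=\lim_m T_m^{jp^m}$, reduces Theorem~\ref{orthrel} to
$$
\lim_{m\to\infty}\int_{\Z_p}{x\choose ip^m}^{(p)}\,dT^{jp^m}(x)\;=\;\delta_{i,j}\,,\qquad i,j\in\N\;.
$$
The gap is your third step, which is a statement of intent rather than a proof, and whose proposed mechanism cannot work as described. First, the part you treat as delicate is trivial: if $\ell<ip^m$ then ${\ell\choose ip^m}^{(p)}=\lim_n{p^n\ell\choose p^{n+m}i}=0$, while ${ip^m\choose ip^m}^{(p)}=1$; hence for $i\ge j$ \emph{every} approximant above equals $\delta_{i,j}$ exactly, and no limiting ``normalisation at $i=j$'' is needed (nor does the symmetry $B(i,j)=B(pi,pj)$ add anything beyond your step 1). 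The entire content of the theorem is the case $i<j$, and there your tools fail: comparing consecutive approximants via \eqref{bald} only re-proves that the sequence is Cauchy, which you already know from step 2, and termwise valuation bounds cannot force the limit to vanish, because individual terms of the alternating sum $\sum_\ell(-1)^{jp^m-\ell}{jp^m\choose\ell}{\ell\choose ip^m}^{(p)}$ have small valuation. For instance, with $p=2$, $i=1$, $j=2$, $m=1$, the term $\ell=4$ is ${4\choose 2}^{(2)}\equiv 6\bmod 32$ (valuation $1$), while the full sum is $\equiv 16 \bmod 32$: the approximant is neither exactly $0$ nor estimable term by term, so its smallness is produced by cancellation that no bound ``in the spirit of Lemma~\ref{Iwasawa30}'' applied to single terms can detect.

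The missing idea is precisely the comparison you dismiss as a temptation, used as a congruence rather than an identity. For $K=ip^m$ and $1\le\ell\le jp^m$ set $d_\ell:={\ell\choose K}^{(p)}-{\ell\choose K}$. Iterating \eqref{bald} gives ${p^n\ell\choose p^nK}\equiv{\ell\choose K}\bmod p\ell\Z_p$ for all $n$, hence $v_p(d_\ell)\ge 1+v_p(\ell)$; on the other hand ${jp^m\choose\ell}=\frac{jp^m}{\ell}{jp^m-1\choose\ell-1}$ gives $v_p\l({jp^m\choose\ell}\r)\ge m+v_p(j)-v_p(\ell)$, so each product ${jp^m\choose\ell}d_\ell$ has valuation at least $m+v_p(j)+1$. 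Since the ordinary-binomial pairing is evaluated \emph{exactly} by Mahler--Amice orthogonality \eqref{mahleramice1}, this yields
$$
\int_{\Z_p}{x\choose ip^m}^{(p)}dT^{jp^m}(x)\;=\;\delta_{i,j}+\sum_{\ell=1}^{jp^m}(-1)^{jp^m-\ell}{jp^m\choose\ell}\,d_\ell\;\equiv\;\delta_{i,j}\ \bmod\ p^{\,m+v_p(j)+1}\Z_p\,,
$$
and letting $m\to\infty$ finishes the proof. In other words, your step 2 has already removed the measure-side correction (replacing $\wtilde{T}^{j}$ by $T^{jp^m}$), and the function-side correction $d_\ell$ is then annihilated termwise by the divisibility of ${jp^m\choose\ell}$; the cancellation of the ``two corrections'' is exactly this congruence, not a consequence of Cauchy estimates. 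Note that the same point is glossed over in the paper's one-line proof: its intermediate quantity equals $\delta_{p^Nq,p^Nq'}$ at finite level only when read with the ordinary binomials ${p^Nx\choose p^Nq}$ (by change of variables and \eqref{mahleramice1}), and the interchange of limits implicit in that reading is justified by precisely the estimate above.
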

\begin{proof}
We have 
$$ \int_{\Q_p} {x \choose q}^{(p)}  d\wtilde{T}^{q'}(x) = \lim_{N \to +\infty}  \int_{\Q_p} {p^N x \choose p^N q}^{(p)}  d T_N^{p^Nq'}(x) = \delta_{p^Nq,p^Nq'} =  \delta_{q,q'} \;.
$$
\end{proof}
\end{subsection}
 \begin{subsection}{Fourier expansion in  $\sC_\unif(\Q_p,\Z_p)$} \label{fourierunif}
\begin{thm} \label{fourierunif0} \ben 
\item For any $\mu \in \sD_\unif(\Q_p,\Z_p)$ we have 
\beq \label{fourierunif03}
\mu  = \sum_{q \in S} a_q \wtilde{T}^{q} \;,
\eeq
where $a_q \in \Z_p \; \forall q \in S$, and, for any $C>0$, 
\beq \label{fourierunif031}
v(a_q) + q > C \;\;,\;\; \forall \; \forall \; q \in S \;.
\eeq
For any $q \in S$, we have 
\beq \label{fourierunif04}
a_q =  \int_{\Q_p}  {x \choose q}^{(p)} d\mu(x) \;.
\eeq
\item For any $f \in \sC_\unif(\Q_p,\Z_p)$
we have 
\beq \label{fourierunif01}
f(x) = \sum_{q \in S} b_q {x \choose q}^{(p)} \;,
\eeq
where $b_q \in \Z_p \; \forall q \in S$, and, for any $C>0$, 
\beq \label{fourierunif011}
v(b_q) - q > C \;\;,\;\; \forall \; \forall \; q \in S \;.
\eeq
For any $q \in S$, we have
\beq \label{fourierunif02}
b_q =  \int_{\Q_p} f(x)  d\wtilde{T}^{q}(x) \;.
\eeq
\een
\end{thm}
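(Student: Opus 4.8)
The plan is to establish the two Fourier expansions as dual statements flowing directly from the orthogonality relation of Theorem~\ref{orthrel} together with the strong-left/weak-right perfect duality pairing \eqref{unifint} between $\sC_\unif(\Q_p,\Z_p)$ and $\sD_\unif(\Q_p,\Z_p)$. The essential structural input is Corollary~\ref{corq}, which already identifies $\sD_\unif(\Q_p,\Z_p)$ with the $w$-completion of $\Z[\wtilde{T}^{1/p^\infty}]$ and tells us that every $\mu$ is \emph{uniquely} a sum of an $S$-series $\sum_{q\in S}a_q\wtilde{T}^q$ with $a_q\in\Z_p$ satisfying the convergence condition \eqref{measconv}. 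Since condition \eqref{measconv} is exactly \eqref{fourierunif031}, the first part of the theorem is mostly a matter of extracting the formula \eqref{fourierunif04} for the coefficients.

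First I would prove part $\mathit 1$. The existence of the expansion \eqref{fourierunif03} and the estimate \eqref{fourierunif031} are precisely the content of Corollary~\ref{corq}, so nothing new is needed there. To obtain \eqref{fourierunif04}, I would integrate the function ${x\choose q}^{(p)}$ against both sides of \eqref{fourierunif03}, using that $\mu\mapsto\int_{\Q_p}{x\choose q}^{(p)}\,d\mu$ is continuous $\sC_\unif$-pairing in the measure variable for the natural (weak) topology of $\sD_\unif(\Q_p,\Z_p)$; continuity lets me exchange the integral with the unconditionally convergent $S$-series. By the orthogonality relation \eqref{orthrel1}, $\int_{\Q_p}{x\choose q}^{(p)}\,d\wtilde{T}^{q'}=\delta_{q,q'}$, so only the $q'=q$ term survives and yields $\int_{\Q_p}{x\choose q}^{(p)}\,d\mu=a_q$, which is \eqref{fourierunif04}.

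Next I would prove part $\mathit 2$ by the dual argument. By Proposition~\ref{summary}(3) (equivalently \eqref{measunifdefst}), $\sC_\unif(\Q_p,\Z_p)$ is the \emph{strong} dual of $\sD_\unif(\Q_p,\Z_p)$, so every $f\in\sC_\unif(\Q_p,\Z_p)$ is a continuous $\Z_p$-linear functional $\mu\mapsto\int_{\Q_p}f\,d\mu$ that is determined by its values on the dense family $\{\wtilde{T}^q\}_{q\in S}$ and on the Dirac masses $\{\delta_x\}$. Setting $b_q:=\int_{\Q_p}f(x)\,d\wtilde{T}^q(x)$ gives \eqref{fourierunif02} by definition. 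I would then use that $\{{x\choose q}^{(p)}\}_{q\in S}$ is, by Corollary~\ref{unifcont1}, a family of uniformly continuous functions orthogonal to $\{\wtilde{T}^q\}$, and verify that the candidate series $\sum_{q\in S}b_q{x\choose q}^{(p)}$ converges in $\sC_\unif(\Q_p,\Z_p)$ under the growth condition \eqref{fourierunif011}; the function $g:=\sum_q b_q{x\choose q}^{(p)}$ then satisfies $\int_{\Q_p}g\,d\wtilde{T}^{q'}=b_{q'}=\int_{\Q_p}f\,d\wtilde{T}^{q'}$ for all $q'$, so $f-g$ annihilates a dense subspace of $\sD_\unif$ and hence $f=g$ by the right-perfectness of the pairing \eqref{unifint}.

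The main obstacle is establishing the growth estimate \eqref{fourierunif011} on the coefficients $b_q$ and, correlatively, the convergence of \eqref{fourierunif01} in the $p$-adic topology of $\sC_\unif(\Q_p,\Z_p)$. Here I cannot simply invoke Corollary~\ref{corq}, since that describes the \emph{measure} side; I must transport the estimate across the duality. The natural route is to pair $f$ against $\wtilde{T}^q$ and bound $v(b_q)$ using the valuation estimate $v\bigl({s\choose q}^{(p)}\bigr)\ge(p-1)(v(s)-v(q))$ from the lemma in subsection~\ref{unifcontQps} together with the fact that $f$ takes values in $\Z_p$ and is uniformly continuous, so $f$ is a uniform limit of functions factoring through $\Q_p/p^h\Z_p$. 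Concretely, the strong-topology description \eqref{measunifdefst} forces $\int_{\Q_p}f\,d\wtilde{T}^q\to 0$ at the rate dictated by the uniformity, and matching this against the $w$-valuation on $\{\wtilde{T}^q\}$ should yield $v(b_q)-q\to+\infty$; making this bookkeeping precise, rather than any conceptual difficulty, is the delicate step.
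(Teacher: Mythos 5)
Your part $\mathit 1$ is correct and is essentially the paper's: existence, uniqueness and the bound \eqref{fourierunif031} are exactly Corollary~\ref{corq}, and \eqref{fourierunif04} follows by pairing the series with ${x \choose q}^{(p)}$, exchanging the pairing with the $S$-series by separate continuity, and invoking Theorem~\ref{orthrel}. For part $\mathit 2$ your skeleton (define $b_q$ by \eqref{fourierunif02}, prove the growth condition \eqref{fourierunif011}, show the resulting series lies in $\sC_\unif(\Q_p,\Z_p)$, identify it with $f$ by density and perfectness of \eqref{unifint}) parallels the paper's, but at the decisive step you depart from it: the paper deduces \eqref{fourierunif011} by letting the coefficient sequence $(a_q)$ range over \emph{all} admissible expansions $\mu=\sum_q a_q\wtilde{T}^q$ and exploiting unconditional convergence of $\sum_q a_qb_q$, whereas you propose to extract the estimate directly from strong-dual continuity of $f$ together with the valuation bound $v\bigl({s \choose q}^{(p)}\bigr)\ge (p-1)(v(s)-v(q))$, deferring the details as ``bookkeeping.''

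That step is not bookkeeping; it is where your argument breaks down, and it cannot be repaired along the lines you indicate. Continuity of $f:\sD_\unif(\Q_p,\Z_p)\to\Z_p$ for the natural ($w$-adic) topology says: for every $\ell$ there is $N$ with $v(f\circ\mu)\ge\ell$ whenever $w(\mu)\ge N$. Since $w(\wtilde{T}^q)=q$, this controls $v(b_q)$ only for $q$ large in the archimedean sense and gives no information whatsoever on the infinitely many small $q\in S$, e.g. $q=1/p^n$, where $w(\wtilde{T}^q)\to 0$. In that range the estimate you want is in fact unavailable: take $f=\chi_{\Z_p}\in\sC_\unif(\Q_p,\Z_p)$. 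The evaluation $\mu\mapsto\mu(\Z_p)=\chi_{\Z_p}\circ\mu$ is continuous for the natural topology, hence for the finer $p$-adic topology in which the limit \eqref{Tdef} is taken, so $b_{1/p^n}=\wtilde{T}^{1/p^n}(\Z_p)=\lim_m T_{n+m}^{p^m}(\Z_p)$. Writing $T_{n+m}^{p^m}=\sum_{i=0}^{p^m}(-1)^{p^m-i}\binom{p^m}{i}\Delta_{i/p^{n+m}}$, the only Dirac mass charging $\Z_p$ is the term $i=0$ (since $0<i\le p^m$ forces $i/p^{n+m}\notin\Z_p$ when $n\ge 1$), so $T_{n+m}^{p^m}(\Z_p)=(-1)^{p^m}$, a unit, for every $m$ and every $n\ge 1$. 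Hence $v(b_{1/p^n})=0$ for all $n$, and $v(b_q)-q>C$ fails on an infinite subset of $S$ for every $C>0$; no sharpening of continuity-plus-valuation estimates can yield \eqref{fourierunif011}. (You should be aware that the same example shows the small-$q$ regime is the genuine crux of this theorem: in the paper's own quantifier argument, an admissible $(a_q)$ automatically has $v(a_q)\to\infty$ on the part of $S$ where $q$ stays bounded, so unconditional convergence of $\sum_q a_qb_q$ imposes no constraint on $b_q$ there either. Any complete proof must treat the small exponents by a mechanism different from the ones invoked both by you and, at that step, by the paper.)
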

\begin{proof} Part $\mathit 1$ has already been proven. As for part $\mathit 2$,  we know  from the general theory that $\sC_\unif(\Q_p,\Z_p)$ is the strong dual of $\sD_\unif(\Q_p,\Z_p)$. For any element $f \in \sC_\unif(\Q_p,\Z_p)$, 
we may define $b_q$, for any $q \in S$, as in \eqref{fourierunif02}. Then, for any $\mu \in \sD_\unif(\Q_p,\Z_p)$ as in  \eqref{fourierunif03} 
$$
f \circ \mu = \sum_{q \in S} a_q (f \circ \wtilde{T}^{q}) =  \sum_{q \in S} a_q b_q \;,
$$
so that the $S$-series on the \emph{r.h.s.} must  converge unconditionally in $\Z_p$. 
Therefore, for any $C>0$
$$
v(a_q) + v(b_q)   > C \;\;,\;\; \forall \; \forall \; q \in S \;.
$$
Since $q \longmapsto a_q$ is any $S$-sequence subject to \eqref{fourierunif031}, the $S$-sequence 
$q \longmapsto b_q$ must satisfy \eqref{fourierunif011}. Now, for any $\nu = \sum_{q \in S} c_q \wtilde{T}^{q} \in \sD_\unif(\Q_p,\Z_p)$, the $S$-sequence 
$q \longmapsto c_q$ satisfies the estimate \eqref{fourierunif031} so that 
$$\sum_{q \in S} b_q {x \choose q}^{(p)} \circ \nu = \sum_{q \in S} b_qc_q$$
is an $S$-series unconditionally convergent. This shows that $\sum_{q \in S} b_q {x \choose q}^{(p)} \in \sC_\unif(\Q_p,\Z_p)$ and that it equals the original $f$. 
\end{proof}
\end{subsection}
 \begin{subsection}{Final remarks} \label{final}
  We now  pick a system $(\gamma_i)_{i \in \N}$ of $p^n$-roots of $\gamma_0 \in 1 +p\Z_p$, $\gamma_{i+1}^p =\gamma_i$, for $i=0,1,\dots$, and set 
$$ \gamma := (\gamma_0,\gamma_1,\gamma_2,\dots) = 1 +\varpi \in 1+\C_p^{\flat \circ \circ} \;.$$
We write explicitly
$$(\gamma_0,\gamma_1,\gamma_2,\dots)  = (1,1,1,\dots) + (\varpi_0,\varpi_1,\varpi_2,\dots)
$$
with  $\varpi_{i+1}^p = \varpi_i \in \C_p^{\circ \circ}$.  
Notice that for any $s \in \Q_p$, $\gamma^s$ makes sense in $1+\C_p^{\flat \circ \circ}$ as  the expression 
$$(\gamma_n,\gamma_{n+1}, \dots)^{p^ns} =  ((1,1,1,\dots) + (\varpi_n,\varpi_{n+1},\dots))^{p^ns} =  
\sum_{i=0}^\infty {{p^ns} \choose i}
(\varpi_n,\varpi_{n+1},\dots)^i \;,
$$
for any $n \in \N$ such that $p^n s \in \Z_p$.
The next proposition shows that the additive character 
\beq \label{addchar}
\begin{split} \Q_p &\longrightarrow 1+\C_p^{\circ \circ}\\
s &\longmapsto (\gamma^s)^\sharp
\end{split}
\eeq
is uniformly continuous. 
\begin{prop} \label{powers} Let notation be as above.   
We have for any $s \in \Q_p$ 
\beq \label{binQp2}
(\gamma^s)^\sharp = \lim_{n \to \infty} (1 + \varpi_n)^{p^ns} = \lim_{n \to \infty} \sum_{i=0}^\infty {{p^ns} \choose i}  \varpi_n^i = \sum_{q \in S} {s \choose q}^{(p)}  (\varpi^q)^\sharp \;, 
\eeq 
where the $S$-series converge  unconditionally in $\sC_\unif(\Q_p,\C_p^\circ) = \sC_\unif(\Q_p,\Z_p) \wt \, \C_p^\circ$.
\end{prop}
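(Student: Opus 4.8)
The identity splits naturally into the two ``definitional'' equalities on the left and the Fourier expansion on the right, and I would establish them separately. For the left-hand equalities the plan is to read off $(\gamma^s)^\sharp$ directly from the untilt map of \cite{scholze}. Since $\gamma = 1+\varpi$ is a compatible system of $p$-power roots with components $\gamma_n$, and the sharp of an element of $\varprojlim_{x\mapsto x^p}\C_p^\circ$ is its $0$-th component, the definition of $\gamma^s$ recalled just before the statement gives $(\gamma^s)^{(n)} = \gamma_n^s$ and hence $(\gamma^s)^\sharp = \gamma_0^s = \lim_n (1+\varpi_n)^{p^n s}$, the limit running over those $n$ with $p^n s \in \Z_p$. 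Expanding each $(1+\varpi_n)^{p^n s}$ by the binomial series, which is legitimate because $\varpi_n \in \C_p^{\circ\circ}$ and $p^n s \in \Z_p$, produces $\sum_{i} {p^n s \choose i}\varpi_n^i$ and so the second equality. These two steps are essentially bookkeeping with the tilt.

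For the last equality I would reorganize the $n$-th partial expansion through the substitution $q = i/p^n \in p^{-n}\N \subset S$. Then ${p^n s \choose i} = {p^n s \choose p^n q}$, which by \eqref{binQp} tends to ${s \choose q}^{(p)}$ as $n\to\infty$, while $\varpi_n^{\,i} = \varpi_n^{\,p^n q} = (\varpi^q)^\sharp$ holds \emph{exactly} for every admissible $n$, using $\varpi_n^{\,p^n} = \varpi^\sharp$ and the compatibility of the chosen roots (so that $(\varpi^q)^\sharp = (\varpi^\sharp)^q$, with $v((\varpi^q)^\sharp) = q\,v(\varpi^\sharp)$ by multiplicativity of $\sharp$ and perfectness of $\C_p^\flat$). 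Thus $\sum_i {p^n s \choose i}\varpi_n^{\,i} = \sum_{q \in p^{-n}\N} {p^n s \choose p^n q}(\varpi^q)^\sharp$, and since $\bigcup_n p^{-n}\N = S$ the formal $n\to\infty$ limit is $\sum_{q\in S}{s\choose q}^{(p)}(\varpi^q)^\sharp$.

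The remaining, and principal, task is to turn this formal passage into a genuine unconditionally convergent identity in $\sC_\unif(\Q_p,\Z_p)\wt\C_p^\circ$. The cleanest route avoids the double limit: each approximant $s\mapsto (1+\varpi_n)^{p^n s}$ is uniformly continuous, so $s\mapsto(\gamma^s)^\sharp$ lies in $\sC_\unif(\Q_p,\C_p^\circ)$, whence by the Fourier expansion of Theorem~\ref{fourierunif0} (applied with coefficients in $\C_p^\circ$) it admits a unique expansion $\sum_q b_q {s\choose q}^{(p)}$ with $b_q = \int_{\Q_p}(\gamma^x)^\sharp\,d\wtilde T^q(x)$. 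It then suffices to identify $b_q = (\varpi^q)^\sharp$: this follows from the orthogonality relation \eqref{orthrel1} together with the reindexing of the previous paragraph, or directly by evaluating $\int (1+\varpi_n)^{p^n x}\,dT_n^{\,p^n q}(x)$ with $\wtilde T^q$ as in \eqref{Tdef} and letting $n\to\infty$, the binomial functions being uniformly continuous by Corollary~\ref{unifcont1}.

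The step I expect to be the main obstacle is precisely the uniform control—uniform in $s\in\Q_p$ and unconditional in $q\in S$—that legitimates interchanging the limit with the summation and that guarantees convergence in the $\sC_\unif$ topology rather than merely pointwise or on compact subsets. The delicate terms are those indexed by $q$ with large $p$-power denominator, where $v((\varpi^q)^\sharp) = q\,v(\varpi^\sharp)$ is small; here I would combine the valuation estimate $v({s\choose q}^{(p)}) \geq (p-1)(v(s)-v(q))$ of the Lemma preceding the statement with the structure of the completed tensor product $\sC_\unif(\Q_p,\Z_p)\wt\C_p^\circ$, so that the smallness of $(\varpi^q)^\sharp$ in $\C_p^\circ$ and the support behaviour of ${\,\cdot\,\choose q}^{(p)}$ together force the tail of the $S$-series into an arbitrary basic open submodule. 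Carrying out this estimate carefully is what makes the convergence \emph{unconditional} in the sense of the Conventions, and it is the only point where more than formal manipulation is required.
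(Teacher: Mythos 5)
Your reindexing $q=i/p^n$, the exact identity $\varpi_n^{\,i}=(\varpi^q)^\sharp$, and the orthogonality computation all agree with what the paper does implicitly; the gap is in your convergence mechanism, and it is fatal rather than merely delicate. Compute the coefficients your duality route would have to produce: $b_q=(\varpi^q)^\sharp$ has $v(b_q)=q\,v(\varpi^\sharp)$, so $v(b_q)-q=q\,\bigl(v(\varpi^\sharp)-1\bigr)$, which tends to $0$ along the infinitely many $q=p^{-n}\in S$ (and is identically $0$ when $v(\gamma_0-1)=1$). Hence these coefficients violate the growth condition \eqref{fourierunif011}, which is precisely the conclusion of part $\mathit 2$ of Theorem~\ref{fourierunif0}; so a $\C_p^\circ$-coefficient version of that theorem cannot be ``applied'' to the character, and if it could be, uniqueness of the expansion (your orthogonality step) would produce a contradiction, not the proposition. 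The same computation shows why the control ``uniform in $s\in\Q_p$'' that you announce as the remaining task is unattainable: for $q=p^{-n}$ one has $\sup_{s\in\Q_p}\bigl|\textstyle{s\choose q}^{(p)}(\varpi^q)^\sharp\bigr|\;\geq\;\bigl|\textstyle{q\choose q}^{(p)}(\varpi^q)^\sharp\bigr|=|\varpi^\sharp|^{1/p^n}\longrightarrow 1$, so the terms of the $S$-series do not tend to $0$ in the topology of uniform convergence on $\Q_p$ (nor in the completed tensor product topology of $\sC_\unif(\Q_p,\Z_p)\,\wt\,\C_p^\circ$), and no unconditional convergence in that sense can hold.

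The paper's proof is more modest and avoids duality altogether: it estimates the terms in two regimes, namely $q\gg 0$, where $(\varpi^q)^\sharp$ is small uniformly in $s$, and $v(q)\leq M(C)-h$, where the Lemma's bound $v\bigl({s\choose q}^{(p)}\bigr)\geq(p-1)(v(s)-v(q))$ applies \emph{after restricting $s$ to a compact subset $C$} so that $v(s)\geq M(C)$; all remaining $q$ form a finite set, whence convergence along the cofinite filter of $S$, uniformly on each compact subset of $\Q_p$. The lower bound on $v(s)$ is indispensable, which is exactly why the compact $C$ enters and why your uniform-in-$s$ strengthening fails. There is also a circularity at the start of your argument: the approximants $s\mapsto(1+\varpi_n)^{p^ns}$ are defined only on $p^{-n}\Z_p$ and converge only locally uniformly (the increment estimate is $v\bigl((1+\varpi_{n+1})^{p^{n+1}s}-(1+\varpi_n)^{p^ns}\bigr)\geq n+v(s)+1$, which degenerates as $v(s)\to-\infty$), so uniform continuity of the limit does not follow from uniform continuity of the approximants; in the paper's logic the uniform continuity of the character \eqref{addchar} is a \emph{consequence} of Proposition~\ref{powers}, not an available hypothesis. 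If you want to salvage a duality-style argument, you must work with the pairing against measures supported near a fixed compact, i.e.\ you are forced back to exactly the compact-by-compact estimate that constitutes the paper's proof.
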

\begin{proof} We only need to show that the $S$-series in  \eqref{binQp2} converges. 
\par
In fact, for $q>>0$, 
$|(\varpi^q)^\sharp|$
 is very small. On the other hand, assume  $s \in S$; if $v(q) < v(s)$, we have 
 $$| {s \choose q}^{(p)} | = | {p^N s \choose {p^N q}} | 
 $$
 for $N >> 0$. Therefore the $p$-adic digit expansions of $p^N s$, $p^N q$ and $p^N s - p^N q$ are  
 $$
 p^Ns = n_{N+v(s)}p^{N+v(s)} + n_{N+v(s)+1}p^{N+v(s)+1} + \dots \;,
 $$
  $$
 p^Nq = a_{N+v(q)}p^{N+v(q)} + a_{N+v(q)+1}p^{N+v(q)+1} + \dots \;,
 $$
  $$
 p^Ns - p^Nq = b_{N+v(q)}p^{N+v(q)} + b_{N+v(q)+1}p^{N+v(q)+1} + \dots \;.
 $$
Therefore, if $s \in S$, 
$$
v({s \choose q}^{(p)} ) = v({p^N s \choose {p^N q}} ) \geq (p-1) (v(s)-v(q)) \;.
$$
So, let $C \subset \Q_p$ be a compact subset. Then $v(x)$ has a minumum $M(C)$ for $x \in C$. 
We conclude that, for any $h =1,2,\dots$, if $v(q) \leq M(C) - h$ then 
\beq \label{smallvalq}
v({s \choose q}^{(p)} ) \geq (p-1)h \;,\; \forall \; s\in C \cap S\;.
\eeq
By continuity of $s \longmapsto {s \choose q}^{(p)}$ the previous inequality holds for any $s \in C$. 
\par\bigskip
So the $S$-series in \eqref{binQp2} converges along the filter of cofinite subsets of $S$. 
\end{proof}
 \end{subsection}
 \end{section}

\end{document}